\newtheorem {theorem}{Theorem}[section]
\newtheorem {lemma} [theorem] {Lemma}
\newtheorem {proposition} [theorem] {Proposition}
\newtheorem {corollary} [theorem] {Corollary}
\newtheorem {question} {Question}
\newtheorem {claim} {Claim}[theorem]
\newtheorem {assumption}[theorem]{Standing Assumption}
\theoremstyle{definition}
\newtheorem {definition} [theorem] {Definition}
\newtheorem {example} [theorem] {Example}
\newtheorem {remark} [theorem] {Remark}
\newtheorem {terminology} [theorem] {Terminology}
\newtheorem {notation} [theorem] {Notation}
\def\co{\colon\thinspace}
\def\R {\mathbb R}
\def\Z {\mathbb Z}
\def\mcP {\mathcal P}
\def\Isom {\mathrm{Isom}}
\def\mc {\mathcal}
\def\Aut {\mathrm{Aut}}
\def\ker {\mathrm{ker}}
\def\stab {\mathrm{Stab}}
\newcommand{\bZ}{\mathbb{Z}}
\newcommand{\bR}{\mathbb{R}}
\newcommand{\bH}{\mathbb{H}}
\newcommand{\bN}{\mathbb{N}}
\newcommand{\acts}{\curvearrowright}
\newcommand{\Pred}{\mc{P}^{\mathrm{red}}}
\newcommand{\barG}{\bar{G}}
\newcommand{\barP}{\bar{\mc{P}}}
\newcommand{\MultiEnded}{\mc{M}}
\newcommand{\ad}{\mathrm{ad}}
\begin{document}

\title[Dehn fillings and splittings]{Dehn fillings and elementary splittings}

\author[D. Groves]{Daniel Groves}
\address{Department of Mathematics, Statistics, and Computer Science,
University of Illinois at Chicago,
322 Science and Engineering Offices (M/C 249),
851 S. Morgan St.,
Chicago, IL 60607-7045}
\email{groves@math.uic.edu}

\author[J.F. Manning]{Jason Fox Manning}
\address{Department of Mathematics, 310 Malott Hall, Cornell University, Ithaca, NY 14853}
\email{jfmanning@math.cornell.edu}

\subjclass[2010]{20F65, 20F67, 57M50}

\thanks{The results in this paper were instigated at the Mathematisches Forschungsinstitut Oberwolfach in June, 2011, and we thank the MFO for its hospitality.  Both authors were supported in part by the NSF (under grants DMS-0953794 and DMS-1462263), who we thank for their support.}

\begin{abstract}
We consider conditions on relatively hyperbolic groups about the non-existence of certain kinds of splittings, and show these properties persist in long Dehn fillings.  We deduce that certain connectivity properties of the Bowditch boundary persist under long fillings.
\end{abstract}
\maketitle

\section{Introduction}
Thurston's Hyperbolic Dehn Filling Theorem \cite[Section 5.8]{thurston:notes} 
shows that sufficiently long (topological) Dehn fillings of a
$1$--cusped hyperbolic manifold are closed hyperbolic manifolds.  In
particular the fundamental groups of these fillings are \emph{one-ended} and \emph{word hyperbolic}.  Thurston's argument is to deform the
hyperbolic structure on the cusped manifold to one whose completion
is the filled manifold.  Gromov and Thurston's \emph{$2\pi$-Theorem} \cite{bleilerhodgson:twopi} makes the hypothesis of ``sufficiently long'' more quantitative, concluding that the filled manifold is negatively curved.  Agol and Lackenby's \emph{$6$-Theorem} \cite{agol:whds, lackenby:whds}
shows that the group-theoretic conclusions can be obtained by a
softer, more combinatorial argument.  This work was part of the inspiration for results
about purely group-theoretic Dehn filling obtained by Osin
\cite{osin:peripheral} and the authors \cite{rhds}, and generalized
still further in the work of Dahmani--Guirardel--Osin \cite{DGO}.
These results all have a ``hyperbolic-like'' conclusion analogous to that of the $6$-Theorem.  However, none say anything
about one-endedness of the quotient.  The following result remedies this.

\begin{theorem} \label{cor to main thm}
 Suppose that $(G,\mcP)$ is relatively hyperbolic, with $\mcP$ consisting of virtually polycyclic subgroups.  If $G$ does not admit any nontrivial elementary
 splittings then sufficiently long Dehn fillings of $G$ do not admit any nontrivial elementary splittings.
\end{theorem}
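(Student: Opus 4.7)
The natural approach is proof by contradiction: derive a nontrivial elementary splitting of $G$ itself from a hypothetical sequence of such splittings of longer and longer fillings. Suppose the theorem fails, and choose a sequence of fillings $\eta_n\co G\to \barG_n$ whose kernels avoid an exhausting family of finite subsets of $G\smallsetminus\{1\}$, together with, for each $n$, a minimal nontrivial action of $\barG_n$ on a simplicial tree $T_n$ with elementary (i.e.\ finite or two-ended) edge stabilizers. Pulling back via $\eta_n$ gives a $G$-action on each $T_n$, but the edge stabilizers of this action contain the non-elementary normal subgroup $\ker\eta_n$, so no immediate contradiction with the hypothesis on $G$ is available.

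The plan is to produce an honest elementary action of $G$ by rescaling and passing to a limit. Fix a finite generating set $S\subset G$, rescale the path metric on $T_n$ so that the maximum $S$-displacement equals $1$, and apply a Bestvina--Paulin compactness argument to obtain an equivariant Gromov--Hausdorff limit: a minimal nontrivial isometric action of $G$ on an $\bR$-tree $T_\infty$. The critical step is to show that arc stabilizers of $T_\infty$ are elementary in $(G,\mcP)$ and that each $P\in\mcP$ is elliptic. Here the polycyclic hypothesis is indispensable: polycyclic groups are Noetherian and every subgroup is finitely generated, so sequences of elementary subgroups have controlled limits. Combined with the Dehn filling technology from \cite{rhds,DGO} controlling parabolic subgroups in long fillings, this should force arc stabilizers of the limit to be elementary in $G$ and peripherals to fix points. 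Given such a limit action, Guirardel's version of the Rips machine (for finitely presented groups with slender arc stabilizers) yields a nontrivial simplicial splitting of $G$ with elementary edge groups in which each peripheral is elliptic, contradicting the hypothesis on $G$.

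The main obstacle is the stabilizer-control step. Edge stabilizers in $T_n$ grow to contain $\ker\eta_n$, so one must show this growth does not survive in the $\bR$-tree limit --- equivalently, that the subgroup of $G$ whose displacements remain bounded after rescaling is elementary modulo its intersections with peripherals. This is where the detailed geometry of Dehn fillings, together with the finiteness and slenderness properties afforded by virtual polycyclicity, does the real work. The main theorem of the paper (not visible in this excerpt) presumably packages exactly this control, so that the present corollary follows by verifying that the absence of elementary splittings is the kind of no-splitting hypothesis to which the main theorem applies.
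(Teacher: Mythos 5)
Your outline does coincide in broad strokes with the paper's strategy (argue by contradiction, pull back Bass--Serre trees of splittings of a stably faithful sequence of fillings, rescale, take a limiting $\mathbb{R}$--tree, and apply Guirardel's version of the Rips machine), but the step you yourself flag as the ``main obstacle'' --- controlling arc stabilizers of the limit --- is genuinely missing, and the mechanism you gesture at (Noetherianity/slenderness of polycyclic groups plus unspecified Dehn-filling technology) is not what makes it work, nor is it sufficient. The paper's actual mechanism is \emph{uniform acylindricity}: before passing to the limit, each elementary splitting of $\bar{G}_i$ is first replaced by a nontrivial $(2,C)$--acylindrical elementary splitting with $C$ independent of the filling. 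This requires uniform almost malnormality of the peripheral images in long fillings (Proposition \ref{prop:Calmostmalnormal}), a uniform bound on non-parabolic finite subgroups of fillings (Theorem \ref{thm:torsion in fillings}, Corollary \ref{cor:uniformC}), and tree-surgery arguments (Lemma \ref{lem:2csplit}, Proposition \ref{prop:acylindrical}, Proposition \ref{l:two-ended acylindrical}). Only with this uniform $(2,C)$--acylindricity of the approximating actions does the Sela-style estimate (Lemma \ref{lem:arc stab structure}) show that arc stabilizers of $T_\infty$ are finite-by-abelian, hence small, hence elementary; without some such uniform input, nothing prevents limit arc stabilizers from being huge, precisely because the edge stabilizers upstairs contain $\ker\eta_i$, as you note. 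Also note the statement is not proved directly but reduced to Theorem \ref{t:easy maintheorem}, whose hypotheses require $\MultiEnded$--finite, co-slender fillings; the polycyclic hypothesis enters through Lemma \ref{lem:small is fg} and through the observation that multi-ended polycyclic peripherals are two-ended, so that for long fillings their kernels are trivial or of finite index and one may discard trivially filled two-ended peripherals to make the filling $\MultiEnded$--finite.

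Two further problems with the sketch as written. First, your parenthetical ``elementary (i.e.\ finite or two-ended)'' is incorrect: elementary subgroups include parabolic ones, which for polycyclic peripherals can be one-ended (e.g.\ $\mathbb{Z}^2$); relatedly, your claim that \emph{every} $P\in\mathcal{P}$ is elliptic in $T_\infty$ is not justified and is not what the paper proves --- only multi-ended peripherals are shown elliptic (Lemma \ref{lem:cyclic is elliptic}, using $\MultiEnded$--finiteness so their images are finite), while one-ended peripherals are handled via Lemma \ref{lem:make peripheral elliptic} and Proposition \ref{prop:edge group parabolic} rather than by ellipticity. Second, the case in which the rescaling factors stay bounded needs a separate argument: there the limit is simplicial but the arc-stabilizer control above does not apply, and the paper instead shows an infinite non-parabolic edge stabilizer of the limit contains an RH-hyperbolic element whose image stays RH-hyperbolic in long fillings (Lemma \ref{lem:stably hyperbolic}), forcing the approximating splittings to be over two-ended non-parabolic subgroups, and then derives a contradiction from the fact that two-ended groups satisfy a law (no stably faithful sequence from a group containing a nonabelian free group). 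Your proposal neither distinguishes this case nor supplies these ingredients, so as it stands it is an outline of the right shape with the decisive steps unproved.
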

Let us clarify some terminology.  In this paper, when we say that $(G,\mcP)$ is \emph{relatively hyperbolic} (Definition \ref{def:RH}), we always assume $G$ is finitely generated, and that no $P\in \mcP$ is equal to $G$.  We do not assume that the elements of $\mcP$ are infinite or non-relatively hyperbolic.  An \emph{elementary} subgroup (Definition \ref{def:elementary}) is one which is either virtually cyclic or parabolic.  For \emph{sufficiently long Dehn fillings} see Definition \ref{def:fill}.

In Subsection \ref{ss:classical} we spell out the connection between Theorem \ref{cor to main thm} and Thurston's Hyperbolic Dehn Filling Theorem.  In particular we can deduce the one-endedness of fundamental groups of sufficiently long classical fillings of a hyperbolic $3$--manifold without using the fact they are nonpositively curved manifolds.

 Example \ref{ex:hypotheses are required} shows that in our more general setting it is not enough to assume that $G$ is one-ended in order
to infer that long fillings are one-ended (even when the elements of $\mc{P}$ are abelian).
Theorem \ref{cor to main thm} follows from a much more general result, Theorem \ref{t:easy maintheorem}, which we state in Subsection \ref{ss:Statements} below.

Another of our motivations is to understand the relationship between the
Bowditch boundary of a relatively hyperbolic group and the Bowditch boundary of long
Dehn fillings (or, in case the filled group is hyperbolic, the Gromov boundary).  In the case
of classical Dehn filling, the Bowditch boundary of the original group is $S^2$, as is the 
Gromov boundary of the filled group.  In Section \ref{s:boundaries}, we review work of Bowditch and others on the close relationship between connectedness properties of the boundary of a relatively hyperbolic group and (non-)existence of elementary splittings of the group.  Using these results, we prove:

\begin{restatable}{theorem}{nolocalcutpoints} \label{t:no local cutpoints}
  Suppose that $(G,\mc{P})$ is relatively hyperbolic, with $\mc{P}$ consisting of virtually polycyclic groups.  Suppose that $\partial(G,\mc{P}^{\mathrm{red}})$ is connected with no local cut points.  

Then for all sufficiently long fillings $(\bar{G},\bar{\mc{P}})$, we have $\partial(\bar{G},\bar{\mc{P}}^{\mathrm{red}})$  connected with no local cut points.
\end{restatable}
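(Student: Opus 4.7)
The plan is to use the correspondence, reviewed in Section~\ref{s:boundaries}, between connectivity properties of the Bowditch boundary of a relatively hyperbolic group and the non-existence of certain elementary splittings, and then to feed the resulting splitting-theoretic condition into the main filling theorem (Theorem~\ref{cor to main thm}, or more precisely its more general form Theorem~\ref{t:easy maintheorem}). In this way the assertion becomes a two-step translation: boundary $\to$ splittings $\to$ (apply main theorem) $\to$ splittings $\to$ boundary.

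The first step will be to translate the hypothesis on $\partial(G,\mc{P}^{\mathrm{red}})$ into a purely group-theoretic statement. Bowditch-type results (cf.\ Section~\ref{s:boundaries}) identify connectedness of $\partial(G,\mc{P}^{\mathrm{red}})$ with the statement that $(G,\mc{P}^{\mathrm{red}})$ admits no nontrivial splitting over a finite subgroup (relative to $\mc{P}^{\mathrm{red}}$); and, once connectedness is granted, absence of local cut points corresponds to non-existence of nontrivial splittings over two-ended subgroups and over parabolic subgroups of $(G,\mc{P}^{\mathrm{red}})$. Taken together, the hypothesis is equivalent to the statement that $(G,\mc{P}^{\mathrm{red}})$ admits no nontrivial elementary splittings. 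With this reformulation, I would invoke Theorem~\ref{t:easy maintheorem} to conclude that for all sufficiently long fillings the filled pair, with its reduced peripheral structure $(\bar{G},\bar{\mc{P}}^{\mathrm{red}})$, again admits no nontrivial elementary splittings. Applying the reverse direction of the Bowditch-type correspondence to the filled pair then yields that $\partial(\bar{G},\bar{\mc{P}}^{\mathrm{red}})$ is connected with no local cut points.

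The main obstacle I expect is the interaction between the reduction operation $(\cdot)^{\mathrm{red}}$ and the Dehn filling operation. Dehn fillings are defined with respect to the given peripheral structure $\mc{P}$, while the boundary whose connectivity we are controlling is built from the reduced structure $\mc{P}^{\mathrm{red}}$; one has to verify that long fillings with respect to $\mc{P}$ induce long-enough fillings with respect to $\mc{P}^{\mathrm{red}}$ for Theorem~\ref{t:easy maintheorem} to apply, and that the reduced peripheral structure of the filled group is precisely $\bar{\mc{P}}^{\mathrm{red}}$. A secondary issue is ensuring that the Bowditch-style equivalences between boundary connectivity phenomena and elementary splittings can be invoked in both directions under the sole hypothesis that peripherals are virtually polycyclic, in particular for the filled group whose peripheral subgroups are quotients of the original polycyclic ones (hence again virtually polycyclic). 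Once these bookkeeping matters are handled, the argument reduces cleanly to the two-step translation outlined above.
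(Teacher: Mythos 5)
Your overall skeleton (boundary $\to$ no elementary splittings $\to$ main filling theorem $\to$ no elementary splittings of the filled group $\to$ boundary) is the same as the paper's, and the reduction issues you flag are indeed minor: the peripherals discarded in passing to $\mc{P}^{\mathrm{red}}$ are finite or two-ended, so the elementary subgroups, and hence the elementary splittings, of $(G,\mc{P})$ and $(G,\mc{P}^{\mathrm{red}})$ coincide, and likewise for $(\bar{G},\bar{\mc{P}})$ and $(\bar{G},\bar{\mc{P}}^{\mathrm{red}})$; moreover, with virtually polycyclic peripherals, sufficiently long fillings reduce to $\MultiEnded$--finite, co-slender ones exactly as in the deduction of Theorem \ref{cor to main thm} from Theorem \ref{t:easy maintheorem}.

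The genuine gap is in your final step. The direction of the boundary--splitting correspondence you want to apply to the filled group (Theorem \ref{groffthm}, packaged as Corollary \ref{cor:elementarysplitting}) fails for virtually Fuchsian groups: a rigid hyperbolic triangle group has boundary $S^1$, every point of which is a local cut point, yet it admits no elementary splitting. Corollary \ref{cor:elementarysplitting} therefore assumes either a nonempty one-ended peripheral structure or that the group is not virtually Fuchsian. After filling, every element of $\bar{\mc{P}}$ may become finite or two-ended, so $\bar{\mc{P}}^{\mathrm{red}}$ may be empty, and you must rule out that a long filling $\bar{G}$ is virtually Fuchsian. This is not bookkeeping: it is precisely Theorem \ref{t:Fuchsian fillings}, proved in Section \ref{s:fuchsian} by a separate degeneration argument (diverging representations to $O(2,1)$ limiting to a stable $\bR$--tree with small arc stabilizers, then Bestvina--Feighn), whose hypotheses --- $G$ not virtually Fuchsian, no small splittings, small subgroups finitely generated --- follow here from the hypothesis on $\partial(G,\mc{P}^{\mathrm{red}})$ together with Lemmas \ref{lem:small is fg} and \ref{lem:small is elementary}. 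Without this ingredient your concluding application of the correspondence to $(\bar{G},\bar{\mc{P}}^{\mathrm{red}})$ does not go through. (For the forward direction applied to $G$ the issue is harmless: the hypothesis itself rules out $G$ being virtually Fuchsian, since its boundary would then be $S^1$ or a Cantor set.)
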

The peripheral structure $\mc{P}^{\mathrm{red}}$ is found by discarding the hyperbolic subgroups of $\mc{P}$.  This can be done without affecting the relative hyperbolicity of the group pair.  In the case of classical Dehn filling, it corresponds to considering the filled group as a hyperbolic group, and considering its Gromov boundary.
See Section \ref{s:boundaries} for more details.

\subsection{Main results} \label{ss:Statements}

We now proceed to give a description of the more technical Theorem \ref{t:easy maintheorem} and show why it suffices to prove Theorem \ref{cor to main thm}.  We also state a related result, Theorem \ref{t:maintheorem}, and give another application to the Bowditch boundary.

\begin{definition}\label{def:fill}
 Suppose that $G$ is a group and $\mc{P} = \{ P_1 ,\ldots , P_n \}$ is a collection of subgroups.  A {\em Dehn filling} (or just {\em filling}) of $(G,\mc{P})$ is a quotient map: $\phi \co G \to G/K$, where $K$ is the normal closure in $G$ of some collection $K_i \unlhd P_i$.  We write
 \[	G/K = G(K_1, \ldots, K_n)	\]
 for this quotient.  The subgroups $K_1, \ldots, K_n$ are called the {\em filling kernels}.  We also write $\phi\co (G,\mcP)\to (\bar{G},\bar{\mcP})$, where $\mcP$ is the collection of images of the $P\in \mcP$.
 
 We say that a property holds {\em for all sufficiently long fillings of $(G,\mc{P})$} if there is a finite set $\mc{B} \in G \smallsetminus \{ 1 \}$ so that whenever $K_i \cap \mc{B} = \emptyset$ for all $i$, the group $G/K$ has the property.
\end{definition}
See Subsection \ref{ss:Dehn} for more details.  See Definition \ref{def:RH} for the definition of what it means for $(G,\mc{P})$ to be a relatively hyperbolic group pair, and see Theorem \ref{t:RHDF} for the main result of relatively hyperbolic Dehn filling.

\begin{definition}
 A group $H$ is {\em small} if $H$ has no subgroup isomorphic to a nonabelian free group.
 
 A group $H$ is {\em slender} if every subgroup of $H$ is finitely generated.
\end{definition}

\begin{definition}
  Let $(G,\mcP)$ be a group pair and let $\MultiEnded$ be the class of all finitely generated groups with more than one end. 
    We say that a filling of $(G,\mc{P})$ is {\em $\MultiEnded$-finite} if for all $P \in \MultiEnded \cap \mc{P}$, the associated filling kernel $K \unlhd P$ has finite index in $P$.
 
 We say that a filling of $(G,\mc{P})$ is {\em co-slender} if for each $P \in \mc{P}$ with associated filling kernel $K$, the group $P/K$ is slender.
\end{definition}

The following is the first main result of this paper.

\begin{restatable}{theorem}{easymaintheorem} \label{t:easy maintheorem}
 Let $G$ be a group which is hyperbolic relative to a finite collection $\mc{P}$ of subgroups, and suppose that all small subgroups of $G$ are finitely generated.  Furthermore, suppose that $G$ admits no nontrivial elementary splittings. Then all sufficiently long $\MultiEnded$--finite co-slender fillings $(G,\mc{P}) \to (\bar{G},\bar{\mc{P}})$ have the property that $\bar{G}$ admits no nontrivial elementary splittings.
\end{restatable}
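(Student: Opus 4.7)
The plan is to argue by contradiction. Suppose there is a sequence of $\MultiEnded$-finite co-slender fillings $\phi_n\co (G,\mcP)\to (\bar{G}_n,\bar{\mcP}_n)$, increasingly long, so that each $\bar{G}_n$ admits a nontrivial elementary splitting, realized as a minimal nontrivial action of $\bar{G}_n$ on a simplicial tree $T_n$ with elementary edge stabilizers. Composing with $\phi_n$ yields an action $G \acts T_n$. Attempting to descend the splitting of $\bar{G}_n$ directly to a splitting of $G$ fails, since the preimages of the edge stabilizers contain $\ker\phi_n$ and need not be elementary in $G$. Instead, we use this sequence of actions to produce a limit action of $G$ on an $\R$-tree.

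After choosing basepoints and rescaling appropriately (tracking, for instance, the minimum displacement of a fixed finite generating set), apply a Bestvina--Paulin style compactness argument to extract a nontrivial limit action $G \acts T_\infty$ on an $\R$-tree. The relative hyperbolicity of $(G,\mcP)$ and the elementarity of the edge stabilizers in each $T_n$ force the arc stabilizers of $T_\infty$ to be elementary. The co-slender hypothesis implies that for each $P \in \mcP$ the image $P/(P\cap \ker\phi_n)$ is slender and therefore acts on $T_n$ fixing a point or preserving a line (Dunwoody--Sageev); the $\MultiEnded$-finite hypothesis further forces multi-ended peripherals to have finite image in $\bar{G}_n$. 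Combining these one verifies that every $P \in \mcP$ acts elliptically on $T_\infty$, so $T_\infty$ is a $(G,\mcP)$-tree with elementary arc stabilizers.

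Now apply the Guirardel version of the Rips machine to decompose the minimal $G$-subtree of $T_\infty$ into a graph of actions whose vertex actions are of simplicial, surface (IET), or axial type. The hypothesis that all small subgroups of $G$ are finitely generated, together with the elliptic behavior of peripherals, prevents pathological exotic components. Via the standard Bestvina--Feighn--Sela--Guirardel dictionary, each type of piece then produces a nontrivial splitting of $G$ over an elementary subgroup compatible with $\mcP$, contradicting the hypothesis on $G$.

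The main obstacle is nontriviality of the limit: one must rescale so that $G$ does not merely fix a point of $T_\infty$, and then verify that, after the Rips decomposition, some vertex action genuinely produces a nontrivial elementary splitting of $G$ rather than degenerating. This requires a normalization that encodes the fact that the fillings $\phi_n$ are getting longer (so that more and more of $G$ is killed, forcing the $T_n$ to ``spread out'' relative to any fixed finite set), and a careful analysis of each possible degenerate outcome of the Rips machine in the presence of the small/slender hypotheses. Once nontriviality is in hand, the extraction of an elementary splitting of $G$ is the standard output of the theory.
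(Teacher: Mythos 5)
Your overall strategy (limit of the pulled-back actions on the Bass--Serre trees of the fillings, then Rips/Guirardel theory to extract an elementary splitting of $G$) is the same as the paper's, but there is a genuine gap at the step you dispose of in one sentence: ``the relative hyperbolicity of $(G,\mcP)$ and the elementarity of the edge stabilizers in each $T_n$ force the arc stabilizers of $T_\infty$ to be elementary.''  This is not forced, and it is exactly the crux.  The group $G$ acts on $T_n$ only through the quotient $\bar{G}_n$, so the stabilizer in $G$ of an edge (or of a long segment) of $T_n$ is the $\eta_n$--preimage of an elementary subgroup of $\bar{G}_n$; such a preimage contains the entire filling kernel and is wildly non-elementary in $G$.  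Moreover the elementary subgroups of $\bar{G}_n$ include whole peripheral quotients $P/K_n$, which can be large, so knowing that approximating segment stabilizers are ``elementary downstairs'' gives, by itself, no bound at all on arc stabilizers of the limit.  The paper has to earn this control in two stages that your proposal omits entirely: first, the splittings of the fillings are upgraded to \emph{uniformly $(2,C)$--acylindrical} elementary splittings (Lemma \ref{lem:setup}), which rests on the persistence of $C$--almost malnormality under long fillings (Proposition \ref{prop:Calmostmalnormal}), the control of finite subgroups of fillings (Theorem \ref{thm:torsion in fillings}), and the co-slender hypothesis via Dunwoody--Sageev (Lemma \ref{lem:2csplit}); second, one combines this uniform acylindricity with \emph{stable faithfulness} of the sequence of fillings, Sela-style, to show that arc stabilizers of $T_\infty$ are finite-by-abelian (Lemma \ref{lem:arc stab structure}), hence small, hence elementary by Tukia (Lemma \ref{lem:small is elementary}).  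Your proposal never mentions stable faithfulness or acylindricity, and without them the limiting argument produces a tree about whose arc stabilizers nothing can be said.

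Two smaller points.  First, you must also treat the case where the rescaling constants stay bounded, so the limit tree is simplicial: there the splitting of $G$ one obtains need not obviously be elementary, and the paper rules out the bad case by noting that a non-elementary edge group contains a free subgroup, that an RH-hyperbolic element survives in long fillings (Lemma \ref{lem:stably hyperbolic}) forcing the downstairs edge groups to be two-ended, and that two-ended groups satisfy a law, contradicting stable faithfulness; your ``careful analysis of each possible degenerate outcome'' does not identify this argument.  Second, for this theorem you do not need the full graph-of-actions decomposition into simplicial, surface and axial pieces nor any dictionary converting each piece into a splitting: once arc stabilizers are known to be small and small subgroups of $G$ are finitely generated, Guirardel's splitting theorem (Theorem \ref{Guirardeltheorem:easy}) applies directly and yields a splitting over a small, hence elementary, subgroup.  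The finer decomposition and the shortening argument are only needed for Theorem \ref{t:maintheorem}.
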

\begin{remark}
  For $G$ hyperbolic, or $(G,\mcP)$ relatively hyperbolic with $\mcP$ consisting of slender groups, the hypothesis that small subgroups are finitely generated holds (see Lemma \ref{lem:small is fg} below).
\end{remark}

\begin{proof}[Proof of \ref{cor to main thm} from \ref{t:easy maintheorem}]
By Lemma \ref{lem:small is fg} below, if all elements of $\mc{P}$ are virtually polycyclic (or more generally slender) and $(G,\mc{P})$ is relatively hyperbolic, then all small subgroups of $G$ are finitely generated.  Moreover, in this case the multi-ended elements of $\mc{P}$ are all two-ended.  Suppose some $P_i\in\mcP$ is two-ended.  Then for all sufficiently long fillings $G(K_1,\ldots,K_n)$, the corresponding filling kernel $K_i$ is either trivial or finite index.  Letting $\mcP' = \mcP\smallsetminus\{P_i\}$, the pair $(G,\mcP')$ is also relatively hyperbolic, and has an elementary splitting if and only if $(G,\mcP)$ does.
Thus all sufficiently long fillings of $(G,\mcP)$ are $\MultiEnded$--finite fillings of some $(G,\mcP')$ with $\mcP'\subseteq \mcP$.
  Moreover, any quotient of a virtually polycyclic group is slender, so the assumption of the filling being co-slender is also satisfied.  Thus, Theorem \ref{cor to main thm} follows from Theorem \ref{t:easy maintheorem}.
\end{proof}

Theorem \ref{t:easy maintheorem} concerns groups which do not admit {\em any} elementary splittings.  However, many one-ended relatively hyperbolic groups admit some elementary splittings over virtually cyclic groups, but no splittings over parabolic subgroups.  It is natural to ask if the non-existence of splittings over parabolic subgroups persists under long fillings.
In this direction, the second main result of this paper is the following.  

\begin{restatable}{theorem}{maintheorem} \label{t:maintheorem}
 Let $G$ be a group which is hyperbolic relative to a finite collection $\mc{P}$ of subgroups, and suppose that all small subgroups of $G$ are finitely generated.  Furthermore, suppose that $G$ is one-ended and admits no proper peripheral splittings.  Then all sufficiently long $\MultiEnded$--finite co-slender fillings $(G,\mc{P}) \to (\bar{G},\bar{\mc{P}})$ have the property that $\bar{G}$ is one-ended and admits no splittings over parabolic subgroups.
\end{restatable}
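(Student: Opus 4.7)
The plan is to argue by contradiction, following the $\R$-tree-limit strategy used in the proof of Theorem \ref{t:easy maintheorem}. If the conclusion failed, there would be a sequence of $\MultiEnded$-finite co-slender fillings $\phi_n\co(G,\mc{P})\to(\bar{G}_n,\bar{\mc{P}}_n)$ avoiding an exhausting sequence $\mc{B}_n\nearrow G\smallsetminus\{1\}$ of finite sets, with each $\bar{G}_n$ either not one-ended or admitting a nontrivial splitting over a parabolic subgroup. By Stallings' theorem in the former case, in both cases I obtain a nontrivial minimal $\bar{G}_n$-tree $T_n$ with edge stabilizers that are either finite or parabolic in $(\bar{G}_n,\bar{\mc{P}}_n)$; pulled back along $\phi_n$, each $T_n$ carries a $G$-action.

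I would then apply the Bestvina--Paulin machinery for relatively hyperbolic groups --- the framework underlying Theorem \ref{t:easy maintheorem} --- to the rescaled sequence, extracting a nontrivial minimal action of $G$ on an $\R$-tree $T$. The co-slender hypothesis and the assumption that small subgroups of $G$ are finitely generated constrain arc stabilizers, so a Rips machine analysis produces a nontrivial simplicial splitting of $G$ with elementary edge groups, in which each $P\in\mc{P}$ is conjugate into some vertex group (using that each $\bar{P}\in\bar{\mc{P}}_n$ is slender and elliptic in $T_n$). Edge groups cannot be finite since $G$ is one-ended, and the $\MultiEnded$-finite hypothesis combined with the fact that each $T_n$ has finite or parabolic edges forces the limit edges to be parabolic rather than merely two-ended.

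This yields a peripheral splitting of $(G,\mc{P})$, which must further be proper: were every $P\in\mc{P}$ literally a vertex group, then for $n$ sufficiently large (so that the Bass--Serre relations lie in the images of the filling kernels) the splitting would descend to a trivial splitting of $\bar{G}_n$, contradicting nontriviality of $T_n$. This contradicts the hypothesis that $(G,\mc{P})$ admits no proper peripheral splittings. Incidentally, the non-one-ended subcase is ruled out along the way, since a non-one-ended $\bar{G}_n$ would produce a finite-edge $T_n$ whose limit would contradict the one-endedness of $G$.

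The main obstacle will be the passage from the $\R$-tree limit to a simplicial splitting with strictly parabolic edge groups and the verification of properness. The Rips machine output can be subtle --- particularly in distinguishing two-ended edge groups that are parabolic from those that are not --- and careful use of the $\MultiEnded$-finite and co-slender hypotheses is needed both to rule out non-peripheral two-ended edges and to confirm that peripheral subgroups do not already constitute the entire vertex groups of the limit splitting.
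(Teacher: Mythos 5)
There is a genuine gap, and it is at the heart of why the paper's proof of Theorem \ref{t:maintheorem} is harder than that of Theorem \ref{t:easy maintheorem}. Your plan is to run the limiting $\bR$--tree argument and then claim that the resulting splitting of $G$ has \emph{parabolic} edge groups, ``forced'' by $\MultiEnded$--finiteness together with the fact that each approximating tree $T_n$ has finite or parabolic edge stabilizers. That inheritance fails. Arc stabilizers of the rescaled limit need not act elliptically on the $T_n$: a two-ended non-parabolic subgroup $E<G$ containing an RH-hyperbolic element $g$ can stabilize an arc of $T_\infty$ as soon as the translation lengths of the images of its elements on $T_n$ are $o(D_n)$, even though (by Lemma \ref{lem:stably hyperbolic}) the image of $g$ in $\bar{G}_n$ is RH-hyperbolic, hence neither finite nor parabolic, for all large $n$. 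In fact the hypotheses of Theorem \ref{t:maintheorem} exclude exactly the outcome you are hoping for: if a limit edge group were parabolic, then since the multi-ended elements of $\mc{P}$ are elliptic one gets, via Lemma \ref{lem:make peripheral elliptic} and Proposition \ref{prop:edge group parabolic}, a proper peripheral splitting --- so parabolic edge groups are ruled out, and the generic output of the Rips machine is a splitting over a two-ended \emph{non-parabolic} subgroup (or a surface-type piece whose boundary subgroups contain RH-hyperbolic elements, Lemma \ref{lem:surface vertex}); see Lemma \ref{lem:not parabolic}. Such a splitting is perfectly consistent with ``$G$ one-ended with no proper peripheral splitting,'' so producing it is not a contradiction, and your argument stalls precisely there. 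The same problem defeats your side remark about the non-one-ended fillings: the limit of finite-edge trees has small (finite-by-abelian) arc stabilizers, not finite ones, so it need not contradict one-endedness of $G$.

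The missing ingredient is the shortening argument. The paper first normalizes each approximating action to be shortest in its $\Aut_{\mc{P}}(G)$--orbit (this preserves stable faithfulness by Lemma \ref{lem:retain stably faithful}; see Assumption \ref{assumption:shortest}), after arranging $(2,C)$--acylindricity of the $T_n$ so that arc, tripod and unstable-arc stabilizers of $T_\infty$ are controlled. Then, in the two remaining cases (surface-type components, or a simplicial $T_\infty$ with two-ended non-parabolic edge groups of infinite center), one constructs shortening automorphisms lying in $\Aut_{\mc{P}}(G)$ --- via the Reinfeldt--Weidmann adaptation of Rips--Sela for the surface case, and Dehn twists as in Example \ref{Ex:AutPG} for the simplicial case --- contradicting shortness of the approximating actions. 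Without some device of this kind your approach cannot convert the limit splitting into a contradiction, and no amount of massaging the $\MultiEnded$--finite and co-slender hypotheses will make the limit edge groups parabolic.
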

See Subsection \ref{ss:Peripheral Splittings} for the definition of a proper peripheral splitting.  We note here that if the filled groups in the conclusion admit no splittings over parabolic subgroups then they admit no proper peripheral splittings.

Like Theorem \ref{t:easy maintheorem}, Theorem \ref{t:maintheorem} has consequences for the Bowditch boundary.  Again the result is easiest to state for $(G,\mcP)$ with $\mcP$ consisting of virtually polycyclic groups.

\begin{restatable}{theorem}{nocutpoints}  \label{cor:nocutpoints}
  Suppose that $(G,\mc{P})$ is relatively hyperbolic, with $\mc{P}$ consisting of virtually polycyclic groups.  Suppose further that the Bowditch boundary $\partial(G,\mc{P})$ is connected with no cut point.

  Then for all sufficiently long $\MultiEnded$--finite fillings $(G,\mc{P}) \twoheadrightarrow (\bar{G},\bar{\mc{P}})$, the resulting boundary $\partial(\bar{G},\bar{\mc{P}}^{\infty})$
is connected and has no cut points.
\end{restatable}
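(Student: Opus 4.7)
The plan is to combine Theorem \ref{t:maintheorem} with Bowditch's characterization of connectedness and cut points of the Bowditch boundary in terms of splittings (reviewed in Section \ref{s:boundaries}).

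First I would check the hypotheses of Theorem \ref{t:maintheorem}. Each $P\in\mcP$ is virtually polycyclic and hence slender, so Lemma \ref{lem:small is fg} gives that all small subgroups of $G$ are finitely generated. Connectedness of $\partial(G,\mcP)$ together with the absence of global cut points gives, via Bowditch's theorem, that $G$ is one-ended and admits no proper peripheral splitting. Moreover, every quotient of a virtually polycyclic group is virtually polycyclic and hence slender, so any filling of $(G,\mcP)$ is automatically co-slender. Theorem \ref{t:maintheorem} therefore applies: for sufficiently long $\MultiEnded$--finite fillings $(G,\mcP)\to(\barG,\barP)$, the group $\barG$ is one-ended and admits no splittings over parabolic subgroups of $\barP$.

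To conclude about $\partial(\barG,\barP^\infty)$, I would pass to the reduced peripheral structure $\barP^\infty$ consisting of the infinite elements of $\barP$. Because multi-ended virtually polycyclic groups are two-ended, the $\MultiEnded$--finite hypothesis forces the image of every two-ended $P\in\mcP$ to be finite; hence $\barP^\infty$ consists only of (infinite) quotients of the one-ended virtually polycyclic members of $\mcP$, each of which is itself one-ended. Removing finite peripheral subgroups preserves relative hyperbolicity, so $(\barG,\barP^\infty)$ is again relatively hyperbolic with one-ended parabolic subgroups. Any splitting of $\barG$ over a $\barP^\infty$--parabolic is in particular a splitting over a $\barP$--parabolic, which by the previous step does not exist; in particular $\barG$ admits no proper peripheral splitting relative to $\barP^\infty$. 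Combined with the one-endedness of $\barG$, the reverse direction of Bowditch's theorem applied to $(\barG,\barP^\infty)$ yields that $\partial(\barG,\barP^\infty)$ is connected and has no global cut point.

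The main obstacle is correctly matching the hypotheses of Bowditch's theorem at each invocation: one needs the peripheral structures on both the original and the filled sides to consist of (infinite) one-ended subgroups. The virtually polycyclic assumption guarantees this on the original side, while the $\MultiEnded$--finite condition on the filling — which kills precisely the two-ended peripherals — is what forces it on the filled side after passing from $\barP$ to $\barP^\infty$.
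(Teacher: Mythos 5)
Your proposal follows essentially the same route as the paper's proof: virtually polycyclic peripherals give slenderness (hence co-slender fillings and, via Lemma \ref{lem:small is fg}, finitely generated small subgroups), Bowditch's Theorem \ref{bowditchthm:cutpoint} converts the no-cut-point hypothesis into the absence of proper peripheral splittings, Theorem \ref{t:maintheorem} is applied to sufficiently long $\MultiEnded$--finite fillings, and then Theorems \ref{bowditchthm:connected} and \ref{bowditchthm:cutpoint} are applied to $(\barG,\barP^{\infty})$ to get connectedness and absence of cut points.

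Two points of caution. First, you claim that connectedness plus no cut point gives one-endedness of $G$ ``via Bowditch's theorem.'' Theorem \ref{bowditchthm:connected} only rules out splittings of $G$ over finite subgroups \emph{relative to} $\mcP$, which is strictly weaker than one-endedness: the pair $(F,\{C\})$ with $F$ a once-punctured torus group (mentioned in the paper immediately after the theorem statement) has Bowditch boundary $S^1$, connected with no cut point, yet $F$ is free. One-endedness of $G$ is a hypothesis of Theorem \ref{t:maintheorem}, and the paper's proof invokes that theorem without comment on this point, so this is a subtlety you share with the paper rather than a divergence from it; but the specific deduction you attribute to Bowditch is not supplied by the cited results. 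Second, your assertion that every element of $\barP^{\infty}$ is one-ended is false: a one-ended virtually polycyclic peripheral can have a two-ended infinite quotient (e.g.\ $\Z^2\to\Z$ under a filling with kernel a primitive cyclic subgroup, which is $\MultiEnded$--finite since $\Z^2$ is one-ended). This does no damage to the argument, because Theorem \ref{bowditchthm:cutpoint} allows tame peripherals that are one- \emph{or} two-ended and Theorem \ref{bowditchthm:connected} only needs them infinite; but it means the ``main obstacle'' you identify --- needing one-ended peripherals on the filled side --- is not actually a requirement, and the claim that $\MultiEnded$--finiteness forces it does not hold.
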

Note that having $\bar{G}$ one-ended is a stronger condition than having the (reduced) Bowditch boundary of $(\bar{G}, \bar{\mc{P}})$ connected.  For example, consider the relatively hyperbolic pair $(F,\{ C \})$ where $F$ is the fundamental group of a once-punctured torus and $C$ is the cyclic group corresponding to the puncture.

\begin{question}
  For $(G,\mc{P})$ relatively hyperbolic with $\partial(G,\mc{P})$ connected without cut points, 
  is it true that $\partial(\barG,\barP^{\infty})$ is connected without cut points, for all sufficiently long fillings?
   If not, what hypotheses weaker than slenderness and tameness are required?\footnote{See Section \ref{s:boundaries} for the definition of tame.}
\end{question}

\begin{question}
  Our proofs are by contradiction, using limiting arguments.  Are there ``effective'' versions of our results?
\end{question}

\subsection{Classical Dehn filling}\label{ss:classical}
\begin{definition}
  Let $M^n$ be a manifold without boundary whose ends are all homeomorphic to $T^{n-1}\times \bR$.
  Remove neighborhoods of the ends $E_1,\ldots,E_k$, leaving a manifold $\bar{M}$ with boundary homeomorphic to a disjoint union of $k$ manifolds, each homeomorphic to $T^{n-1}$.  To each of these boundary components $B_i$ glue a copy of $D^2\times T^{n-2}$ by some homeomorphism $\phi_i\co \partial (D^2\times T^{n-2})\to B_i$.  We'll call the result a \emph{classical Dehn filling} of $M$.\end{definition}

The following corollary of Theorem \ref{cor to main thm} makes explicit the connection to classical Dehn filling and the $6$-Theorem.  (This corollary also follows from the fact that the fillings support Riemannian metrics of nonpositive curvature \cite{schroeder:cuspclosing}, so the universal cover is $\bR^n$.)
\begin{corollary}
  Let $n\geq 3$.  All sufficiently long classical Dehn fillings of a finite-volume hyperbolic $n$--manifold have one-ended fundamental group.
\end{corollary}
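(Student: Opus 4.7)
The plan is to apply Theorem \ref{cor to main thm} to $G = \pi_1(M)$, where $M$ is the unfilled finite-volume hyperbolic $n$-manifold. Let $\mc{P} = \{P_1, \ldots, P_k\}$ be the collection of cusp subgroups; each $P_i \cong \bZ^{n-1}$ and so is virtually polycyclic, and the pair $(G,\mc{P})$ is relatively hyperbolic by geometric finiteness of the action on $\bH^n$. A classical Dehn filling with gluing maps $\phi_i \co \partial(D^2 \times T^{n-2}) \to B_i$ induces, via Seifert--van Kampen, the group-theoretic Dehn filling $G(K_1,\ldots,K_k)$ with $K_i$ generated by the class of the meridian $\phi_i(\partial D^2 \times \{\ast\})$ inside $P_i$. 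The condition ``sufficiently long'' in the topological sense (meridians have large length on the cusp cross-section tori) translates into ``sufficiently long'' in the sense of Definition \ref{def:fill}: given a finite set $\mc{B} \subset G\smallsetminus\{1\}$, the elements of $\mc{B}$ lying in any fixed $P_i$ translate points on the associated horosphere by a bounded amount, so once each meridian is longer than all of these translation distances, $K_i\cap \mc{B} = \emptyset$.

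The main step, and the principal obstacle, is to verify the hypothesis of Theorem \ref{cor to main thm}: that $G$ admits no nontrivial elementary splitting. For this I would identify the Bowditch boundary $\partial(G,\mc{P})$ with the limit set of $G$ on $\partial\bH^n = S^{n-1}$, which equals all of $S^{n-1}$ since $G$ acts on $\bH^n$ with finite covolume. For $n\geq 3$ the sphere $S^{n-1}$ is connected and has neither cut points nor local cut points. Invoking the Bowditch-type correspondences reviewed in Section \ref{s:boundaries}, connectedness gives one-endedness of $G$ (hence no splittings over finite subgroups); absence of cut points rules out splittings over parabolic subgroups; and absence of local cut points rules out splittings over two-ended (virtually cyclic, non-parabolic) subgroups. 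Together these exhaust the class of nontrivial elementary splittings.

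Applying Theorem \ref{cor to main thm} then produces a threshold beyond which every filled group $\barG$ admits no nontrivial elementary splitting. Since for $n\geq 3$ the group $\barG$ contains the quotient $\bar{P}_i \cong \bZ^{n-2}$, which is infinite, and $\barG$ admits no nontrivial splitting over the trivial subgroup, Stallings' theorem implies $\barG$ is one-ended, as required.
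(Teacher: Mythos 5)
Your proposal is correct and follows essentially the same route as the paper: identify the Bowditch boundary of $(\pi_1 M,\mc{P})$ with the sphere at infinity (which is connected with no cut points or local cut points for $n\geq 3$), invoke the boundary--splitting correspondence (Corollary \ref{cor:elementarysplitting}) to rule out elementary splittings of $G$, and then apply Theorem \ref{cor to main thm}, concluding one-endedness of the (infinite) filled groups via Stallings. Your added details on translating classical fillings into group-theoretic ones and geometric length into the ``sufficiently long'' condition are correct but are simply left implicit in the paper's proof (note also that you need Stallings over \emph{finite}, not just trivial, subgroups, which is covered since finite splittings are elementary).
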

\begin{proof}
  Let $M$ be such a manifold, $G = \pi_1M$, and let $\mc{P}$ be the collection of fundamental groups of ends.  The elements of $\mcP$ are free abelian of rank at least $2$.  In particular they are one-ended and virtually polycyclic.
  The Bowditch boundary of $(G,\mc{P})$ can be identified with $S^{n-2}$, so it is connected and has no local or global cut point.  Work of Bowditch (see Corollary \ref{cor:elementarysplitting}) then implies that $G$ has no elementary splitting.  Theorem \ref{cor to main thm} implies that sufficiently long fillings are $1$-ended.
\end{proof}

Another group-theoretic consequence of Thurston's Hyperbolic Dehn Filling Theorem is that long fillings are torsion-free.  In Section \ref{sec:torsion} we analyze how torsion behaves under Dehn filling.  In particular,  Theorem \ref{thm:torsion in fillings} implies that for torsion-free relatively hyperbolic groups, long enough fillings where each $P_i/K_i$ is torsion-free are torsion-free.  It follows that fundamental groups of sufficiently long classical Dehn fillings of finite-volume hyperbolic manifolds are torsion-free.  (This result also follows from the above-cited result of Schroeder.)

\subsection{Cautionary Examples}

We collect here some examples to illustrate the importance of the hypotheses in Theorem \ref{t:maintheorem}.

\begin{example}
 Except in this example, when $(G,\mcP)$ is relatively hyperbolic, we assume no $P\in \mcP$ is equal to $G$ (in some sources, the pair is then said to be \emph{properly} relatively hyperbolic).
 The following shows the necessity of this assumption in Theorem \ref{t:maintheorem}.

 The group $G = \Z \oplus \Z$ (considered as a relatively hyperbolic group pair $(G,\{G\})$) admits longer and longer fillings onto $\Z$, which admits a free splitting.  But in this case $G$ is one-ended and admits no proper peripheral splitting.
\end{example}

\begin{example} \label{ex:hypotheses are required}
This example shows that the hypothesis of $\MultiEnded$--fi\-nite\-ness is necessary in Theorem \ref{t:maintheorem}.

Let $S$ be a closed orientable surface of genus $2$ and let $G=\pi_1(S)$.  Let $\gamma$ be a waist curve of $S$ and let $P_1$ be cyclic, generated by $\gamma$.  Any nontrivial filling of $G$ along the peripheral structure $\mc{P} = \{P_1\}$ gives a group which splits nontrivially over a finite group.

Of course the pair $(G,\mc{P})$ admits a proper peripheral splitting \[ G = F_2\ast_{P_1}P_1\ast_{P_1}F_2.\]  However, let $\gamma'$ be another simple closed curve so that $\gamma \cup \gamma'$ fills $S$, and let $P_2$ be the cyclic subgroup generated by $\gamma'$.  Let $\mc{P}' = \{ P_1 , P_2 \}$.  It is not hard to see that $(G,\mc{P}')$ has no proper peripheral splitting.  

The fillings $G(\langle \gamma^k \rangle, \{ 1 \})$ all split nontrivially over finite groups.

Note that this example shows that 
\begin{enumerate}
\item The condition that the filling is $\MultiEnded$-finite is required; and
\item It is not enough to assume that $G$ is one-ended for Theorem \ref{t:maintheorem}; the hypothesis of having no nontrivial peripheral splittings is also needed.
\end{enumerate}
\end{example}

\begin{example} \label{ex:slender required}
  The following example shows that the hypotheses of $\MultiEnded$--finite and co-slender can't be weakened just to every filling kernel being infinite.  Let $G = \pi_1\Sigma$ as above, and consider two simple closed curves $\alpha, \beta$ which fill $\Sigma$.  Choose a crossing point $p\in \alpha\cap\beta$ at which to base the fundamental group.  Then $P = \langle \alpha,\beta\rangle$ is the $\pi_1$--image of some immersed torus $T$ with a single boundary component.  We claim:
  \begin{enumerate}
  \item There are $\alpha$ and $\beta$ so that $P$ is malnormal in $G$.
  \item For any action of $G$ on a tree $T$ without global fixed point, some element (either $\alpha$ or $\beta$) of $P$ acts hyperbolically on $T$.
  \end{enumerate}
  The first condition ensures $(G,\{P\})$ is relatively hyperbolic \cite[Theorem 7.11]{bowditch:relhyp}.  The second implies there is no proper peripheral splitting.  But by choosing fillings $K_n\lhd P$ with $K_n$ normally generated by $\alpha^n$, we obtain arbitrarily long fillings of $(G,\{P\})$ which split over finite groups.

In order to obtain malnormality of $\langle \alpha,\beta \rangle$, one can choose $\alpha$ and $\beta$ such that there is a negatively curved cone metric on $\Sigma$ so that $\alpha$ and $\beta$ are represented by (local) geodesics which meet at a point $p$ with four angles of $\pi$.  This defines an immersion from the rose with two petals into $\Sigma$ which is locally convex.  Any elevation to the universal cover is therefore convex.  Different elevations meet at single points, from which it easily follows that the subgroup $P$ is malnormal.
\begin{figure}[htbp]
\begingroup%
  \makeatletter%
  \providecommand\color[2][]{%
    \errmessage{(Inkscape) Color is used for the text in Inkscape, but the package 'color.sty' is not loaded}%
    \renewcommand\color[2][]{}%
  }%
  \providecommand\transparent[1]{%
    \errmessage{(Inkscape) Transparency is used (non-zero) for the text in Inkscape, but the package 'transparent.sty' is not loaded}%
    \renewcommand\transparent[1]{}%
  }%
  \providecommand\rotatebox[2]{#2}%
  \ifx\svgwidth\undefined%
    \setlength{\unitlength}{386.45335693bp}%
    \ifx\svgscale\undefined%
      \relax%
    \else%
      \setlength{\unitlength}{\unitlength * \real{\svgscale}}%
    \fi%
  \else%
    \setlength{\unitlength}{\svgwidth}%
  \fi%
  \global\let\svgwidth\undefined%
  \global\let\svgscale\undefined%
  \makeatother%
  \begin{picture}(1,0.47055493)%
    \put(0,0){\includegraphics[width=\unitlength,page=1]{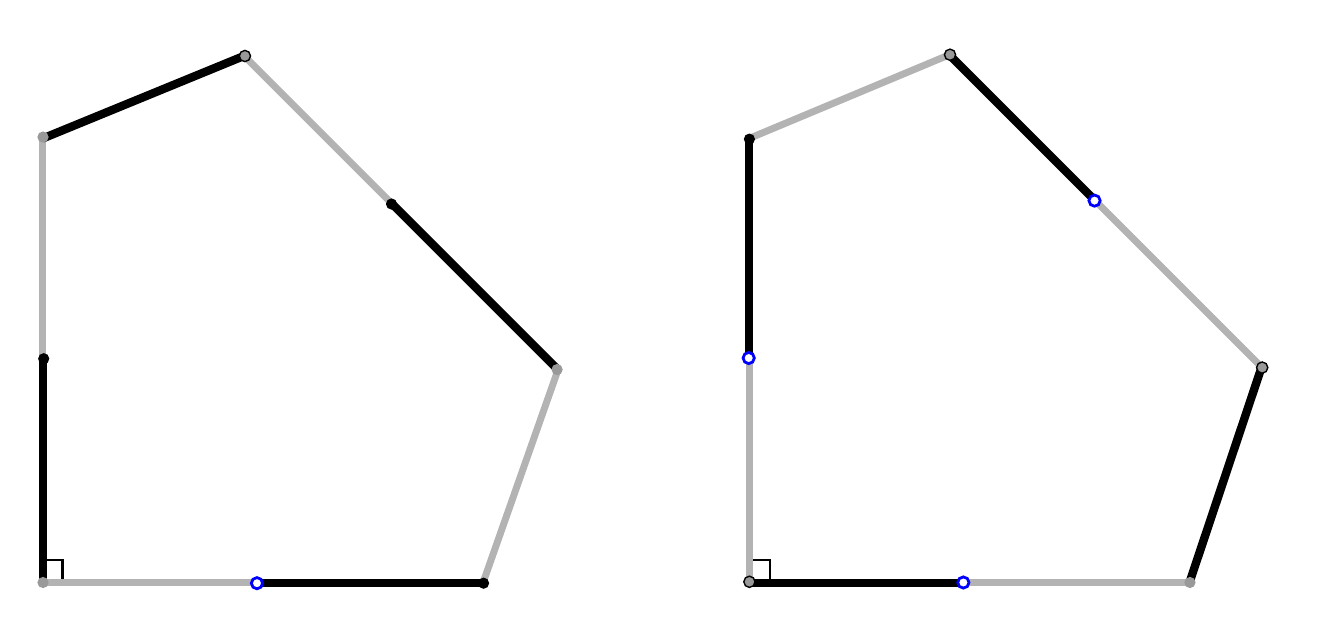}}%
    \put(0.17833305,0.44280951){\color[rgb]{0,0,0}\makebox(0,0)[lb]{\smash{}}}%
    \put(0.18720496,0.00365099){\color[rgb]{0,0,0}\makebox(0,0)[lb]{\smash{$*$}}}%
    \put(0.70916777,0.00512966){\color[rgb]{0,0,0}\makebox(0,0)[lb]{\smash{$*$}}}%
    \put(0.5228581,0.19143933){\color[rgb]{0,0,0}\makebox(0,0)[lb]{\smash{$*$}}}%
    \put(0.82893829,0.33191092){\color[rgb]{0,0,0}\makebox(0,0)[lb]{\smash{$*$}}}%
    \put(-0.00206202,0.19883248){\color[rgb]{0,0,0}\makebox(0,0)[lb]{\smash{$1$}}}%
    \put(0.36612137,0.00956553){\color[rgb]{0,0,0}\makebox(0,0)[lb]{\smash{$1$}}}%
    \put(0.53172997,0.37922755){\color[rgb]{0,0,0}\makebox(0,0)[lb]{\smash{$1$}}}%
    \put(0.29810354,0.3319108){\color[rgb]{0,0,0}\makebox(0,0)[lb]{\smash{$1$}}}%
    \put(0.01272445,0.0110442){\color[rgb]{0,0,0}\makebox(0,0)[lb]{\smash{$2$}}}%
    \put(0.88512685,0.0110442){\color[rgb]{0,0,0}\makebox(0,0)[lb]{\smash{$2$}}}%
    \put(0.42230999,0.1899606){\color[rgb]{0,0,0}\makebox(0,0)[lb]{\smash{$2$}}}%
    \put(0.00828852,0.38218488){\color[rgb]{0,0,0}\makebox(0,0)[lb]{\smash{$2$}}}%
    \put(0.18129035,0.45168138){\color[rgb]{0,0,0}\makebox(0,0)[lb]{\smash{$3$}}}%
    \put(0.53172997,0.00808693){\color[rgb]{0,0,0}\makebox(0,0)[lb]{\smash{$3$}}}%
    \put(0.9546234,0.19143927){\color[rgb]{0,0,0}\makebox(0,0)[lb]{\smash{$3$}}}%
    \put(0.7002959,0.44872408){\color[rgb]{0,0,0}\makebox(0,0)[lb]{\smash{$3$}}}%
  \end{picture}%
\endgroup%

  \caption{Gluing up the Euclidean polygons as shown gives a genus $2$ surface with a non-positively curved metric.  The polygons can be made slightly hyperbolic (preserving the angles of $\pi$ and $\frac{\pi}{2}$) to get a negatively curved metric.  The black and grey loops based at the vertex $(*)$ generate a malnormal free subgroup of rank $2$.}
  \label{fig:surface}
\end{figure}
See Figure \ref{fig:surface} for a specific example.
\end{example}

This shows that Theorem \ref{t:maintheorem} does not hold without the hypothesis the fillings are $\MultiEnded$-finite and/or co-slender.  However, in that example, the boundaries of the filled groups are still connected with no cut points (though there are local cut points before and after filling).

We do not have an example where all the hypotheses of Theorem \ref{t:maintheorem} hold except for the fillings being co-slender and the conclusion does not hold, but we use the assumption that the fillings are co-slender in various places in our proofs.  We do not know if this assumption is necessary.

\begin{question}
Is the assumption that the fillings be co-slender required in Theorems \ref{t:easy maintheorem} and \ref{t:maintheorem}?
\end{question}

\begin{question}
  Does Theorem \ref{t:easy maintheorem} hold for \emph{all} sufficiently long fillings, without assuming $\MultiEnded$--finite and co-slender?  
\end{question}

\subsection{Outline}
In Section \ref{sec:prelim}, we recall the relevant definitions about Dehn filling, peripheral splittings.  In Section \ref{sec:boundsplit}, we recall some results of Bowditch about elementary splittings, and begin to study how they behave under filling.

Section \ref{sec:torsion} contains a result which may be of independent interest, on finite subgroups of relatively hyperbolic groups obtained via Dehn filling.

In Section \ref{s:rtrees} we prove Theorem \ref{t:easy maintheorem} by contradiction: If arbitrarily long fillings of $(G,\mc{P})$ have elementary splittings, the Bass-Serre trees of those splittings limit to an $\bR$--tree, from which we deduce the existence of an elementary splitting of $(G,\mc{P})$.  This also gives the setup for the proof of Theorem \ref{t:maintheorem}, which we then prove using the Rips theory of groups acting on trees (see \cite{Guirardel:Rtrees}) and a version of Sela's ``shortening argument".

In Section \ref{s:fuchsian}, we give a result about Fuchsian fillings of a relatively hyperbolic group.

Finally in Section \ref{s:boundaries} we relate our results to the structure of the Bowditch boundary.

\subsection{Acknowledgments}  Thanks to John MacKay for pointing out a typo, and to the referee(s) for useful comments and corrections.

\section{Preliminaries} \label{sec:prelim}

In this section we record the definitions and results that we need in the proof of the main results.

Suppose that $G$ is a finitely generated group and $\mc{P} = \{ P_1, \ldots , P_k \}$ is a collection of proper, finitely generated subgroups.  In \cite[$\S$3]{rhds} the construction of a {\em cusped space} is given, by gluing {\em combinatorial horoballs} onto the left cosets of the $P_i$ in a Cayley graph for $G$.  We denote such a cusped space by $X(G,\mc{P})$ (ignoring for the moment the generating sets of $G$ and of the $P_i$).  This space is a locally finite graph, on which $G$ acts properly.

\begin{definition}\label{def:RH} \cite{rhds}
The pair $(G,\mc{P})$ is {\em relatively hyperbolic} if the cusped space $X(G,\mc{P})$ is $\delta$-hyperbolic for some $\delta$.
\end{definition}
In case $(G,\mc{P})$ is relatively hyperbolic, the cusped space $X(G,\mc{P})$ has a Gromov boundary, which we refer to as the \emph{Bowditch Boundary}.  Since our peripheral structure $\mc{P}$ is not assumed to be minimal, this may be different from what some other authors call the Bowditch boundary.  For more on this see Section \ref{s:boundaries}.

In \cite[Theorem 3.25]{rhds} it is proved that Definition \ref{def:RH} agrees with the other notions of relatively hyperbolicity for finitely generated $G$.
(In \cite{Hr08} an extension to the case where $G$ is not finitely generated is given, and this definition still agrees with the standard ones in this setting.  We consider only the finitely generated case in this paper.)  

\begin{remark}
 If $(G,\mc{P})$ is relatively hyperbolic, the family $\mc{P}$ is \emph{almost malnormal}, in the sense that if $P,P' \in \mc{P}$ and $P^g \cap P'$ is infinite for some $g \in G$ then $P = P'$ and $g \in P$ (see \cite[Example 1, p.819]{farb:relhyp}).  
In particular, each $P \in \mc{P}$ is almost malnormal.  We use this property of $\mcP$ frequently, often without explicit mention.
\end{remark}

\begin{terminology}
In this paper, we consider relatively hyperbolic groups acting on trees.  Both relatively hyperbolic groups and groups acting on trees have `hyperbolic' elements, and these are different notions.

When we talk about an element being hyperbolic in the sense of relatively hyperbolic groups, we call it an {\em RH-hyperbolic element}.  These RH-hyperbolic elements are defined in the following subsection.

When talking about a group element being hyperbolic when acting on a tree, we mention the tree.
\end{terminology}

\subsection{Small, slender, and elementary subgroups}
The contents of this section are well known, but we have been unable to find some of the exact statements that we require in the literature.

\begin{definition}\label{def:elementary}
  For a relatively hyperbolic pair $(G,\mc{P})$, a subgroup $E<G$ is \emph{elementary} if it is either virtually cyclic or \emph{parabolic}, ie conjugate into some $P\in \mc{P}$.
\end{definition}

It is more convenient to think about the action of (subgroups of) $G$ on the cusped space $X(G,\mc{P})$ so we record the following.

\begin{lemma}
 Let $(G,\mc{P})$ be relatively hyperbolic.  A subgroup $H \le G$ is elementary if and only if $H$ preserves a finite set in $X(G,\mc{P}) \cup \partial(G,\mc{P})$.
\end{lemma}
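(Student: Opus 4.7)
The plan is to exploit the action of $G$ on its $\delta$-hyperbolic cusped space $X = X(G,\mc{P})$, which is proper, together with the standard classification of subgroups of a relatively hyperbolic group acting on $X$.  Two general facts I will need are that any subgroup of $G$ with a finite orbit on the vertex set of $X$ is itself finite (from properness), and that any non-elementary subgroup of $(G,\mc{P})$ contains two RH-hyperbolic elements with pairwise disjoint sets of boundary fixed points.

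For the ``only if'' direction, I would split $H$ into cases.  If $H$ is parabolic, say $H \leq gP_ig^{-1}$, then by construction the horoball attached to the coset $gP_i$ has a unique limit point $\xi \in \partial X$, and $gP_ig^{-1}$ stabilizes $\xi$, so $H$ fixes $\{\xi\}$.  If $H$ is finite, then $Hv$ is a finite invariant subset of $X$ for any vertex $v$.  If $H$ is virtually cyclic and infinite, I would pass to a finite-index normal cyclic subgroup $N = \langle h_0\rangle \triangleleft H$ (the normal core of any finite-index cyclic subgroup).  The infinite-order generator $h_0$ is either RH-hyperbolic with boundary fixed-point set $\{\xi^+,\xi^-\}$, or infinite-order parabolic with a unique boundary fixed point $\xi$.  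Since $N$ is normal in $H$, each $h' \in H$ satisfies $h' h_0 (h')^{-1} = h_0^{\pm 1}$, which has the same boundary fixed-point set as $h_0$, so $h'$ permutes that $1$- or $2$-element set.

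For the ``if'' direction, suppose $H$ preserves a nonempty finite $F \subseteq X \cup \partial X$, and argue by contrapositive.  Assume $H$ is not elementary.  The classification cited above yields two RH-hyperbolic elements $h_1,h_2 \in H$ with pairwise disjoint boundary fixed-point sets; a ping-pong argument on sufficiently high powers then produces a nonabelian free subgroup and shows in particular that the limit set $\Lambda(H) \subseteq \partial X$ is infinite.  If $F \subseteq \partial X$, then $F$ is closed and $H$-invariant, so it contains $\Lambda(H)$, contradicting its infinitude.  If instead $F$ contains a vertex $v$ of $X$, then the orbit $Hv \subseteq F$ is finite; properness of the $G$-action on $X$ then forces $H$ to be finite, contradicting non-elementarity.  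Either way we reach a contradiction, so $H$ must be elementary.

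The main obstacle is the invocation of the trichotomy that every non-elementary subgroup of $(G,\mc{P})$ contains two independent RH-hyperbolic elements.  This is standard in the relatively hyperbolic literature (it follows, for instance, from Osin's work, or from the acylindrical hyperbolicity of the $G$-action on $X$), and it is precisely what lets me dispatch every non-elementary possibility in a single step rather than having to separately analyze dynamically parabolic subgroups, stabilizers of conical limit points, and so on.
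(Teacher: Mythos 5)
Your proof is correct, but it is worth noting that the paper offers no proof of this lemma at all: it is stated as a bare assertion, and its content is effectively absorbed into the surrounding material, namely the elliptic/parabolic/RH-hyperbolic classification of elements, Tukia's theorem (Theorem \ref{t:Tukia non-elem}), and Proposition \ref{p:elementary}, whose proof of the converse direction takes ``preserves a finite set'' as the working notion of elementary and analyzes the cases $|\Lambda H|=1,2$ directly. So you have supplied an argument where the authors supplied none. Your forward direction (finite orbit for finite $H$, the horoball's unique limit point for parabolic $H$, and the normal infinite cyclic core with its $1$- or $2$-point boundary fixed set for infinite virtually cyclic $H$) is exactly the routine verification being suppressed. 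Your converse rests on the strong form of Tukia's theorem --- every subgroup that is neither virtually cyclic nor parabolic contains two independent RH-hyperbolic elements --- which is literally the second sentence of Theorem \ref{t:Tukia non-elem} as quoted in the paper, so there is no circularity within the paper's framework; the paper reaches the analogous conclusions in Proposition \ref{p:elementary} by a more hands-on analysis of small limit sets instead. Two minor points you could tighten: a finite $H$-invariant subset of $X$ need not consist of vertices, but properness of the action bounds the stabilizer of any point of $X$, so the same argument applies; and the step ``a nonempty finite $H$-invariant $F\subseteq\partial(G,\mc{P})$ contains $\Lambda H$'' is immediate from the paper's definition of $\Lambda H$ as the minimal closed nonempty invariant set, though you could avoid $\Lambda H$ altogether by observing that North--South dynamics forces any finite invariant subset of the boundary to lie in $\mathrm{Fix}(h_1)\cap\mathrm{Fix}(h_2)=\emptyset$ for your two independent RH-hyperbolic elements.
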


  The action of a relatively hyperbolic group $(G,\mc{P})$ on its Bowditch boundary is \emph{geometrically finite} \cite[Proposition 6.15]{bowditch:relhyp}, meaning that every point is either a \emph{conical limit point} or a \emph{bounded parabolic point}.  (A converse to this statement was proved by Yaman \cite[Theorem 0.1]{yaman04}.)  The stabilizers of the parabolic points are exactly the conjugates of the elements of $\mc{P}$.
  
  An element of a relatively hyperbolic group $(G,\mc{P})$ is either finite order, infinite order parabolic (in which case it fixes a unique point in $\partial(G,\mc{P})$) or else {\em RH-hyperbolic}, in which case it fixes a pair of points in $\partial(G,\mc{P})$, preserves a quasi-geodesic axis between these points in the cusped space $X(G,\mc{P})$, and acts via {\em North-South} dynamics on $\partial (G,\mc{P})$.\footnote{Note that some finite order elements of $(G,\mc{P})$ may be parabolic, and others may be non-parabolic.}
 
\begin{definition}
 Let $(G,\mc{P})$ be relatively hyperbolic, and $\partial(G,\mc{P})$ the Bowditch boundary.   Suppose that $H \le G$ is infinite.  The {\em limit set} of $H$, denoted $\Lambda H$, is the minimal closed nonempty $H$-invariant subset of $\partial (G,\mc{P})$.
\end{definition}

\begin{definition}
Let $(G,\mc{P})$ be relatively hyperbolic.  A pair of RH-hyperbolic elements $g,h \in G$ are {\em independent} if their fixed sets in $\partial(G,\mc{P})$ are disjoint.
\end{definition}

The following theorem is due to Tukia.
\begin{theorem} \label{t:Tukia non-elem} \cite[Theorem 2U]{Tukia94}
Suppose that $H \le G$ has the property that $|\Lambda H| > 2$.  Then $H$ is non-elementary.  Every non-elementary subgroup contains a nonabelian free subgroup generated by two independent RH-hyperbolic elements. 
\end{theorem}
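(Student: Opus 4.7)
The plan is to prove the two statements in turn. The first is a straightforward contrapositive; the second centers on a ping-pong argument using the North-South dynamics of RH-hyperbolic elements on the Bowditch boundary.

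For the first statement, I would show that any elementary $H$ has $|\Lambda H|\leq 2$. Using the preceding lemma, $H$ preserves some finite set in $X(G,\mcP)\cup\partial(G,\mcP)$. If $H$ is parabolic, then $H$ lies in a conjugate of some $P\in\mcP$ and fixes the corresponding parabolic point; since $H$ acts properly on the cusped space with orbits accumulating only at that point, $\Lambda H$ is a single point. If $H$ is virtually cyclic, its finite-index cyclic subgroup is generated either by a parabolic element (reducing to the previous case) or by an RH-hyperbolic element, whose limit set is its pair of fixed points. In every case $|\Lambda H|\leq 2$, so $|\Lambda H|>2$ forces $H$ non-elementary.

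For the second statement, assume $H$ is non-elementary and proceed in three steps. First, produce a single RH-hyperbolic $g \in H$: otherwise every infinite-order element of $H$ is parabolic, and a Schottky-type argument using the convergence dynamics and almost malnormality of $\mcP$ (any two parabolics with distinct fixed points have an RH-hyperbolic product after sufficient powering) forces all such elements to share a common parabolic fixed point $p$; then $H\leq\mathrm{Stab}(p)$, so $H$ is parabolic, contradicting non-elementarity. Second, produce an RH-hyperbolic $h\in H$ independent of $g$: if every RH-hyperbolic element of $H$ shared the fixed pair $\{g^+,g^-\}$ with $g$, then (using that no parabolic element fixes an RH-hyperbolic fixed point) $H$ would preserve $\{g^+,g^-\}$ and be elementary; so some RH-hyperbolic $h\in H$ has $\{h^+,h^-\}\neq\{g^+,g^-\}$, and if these pairs share a point one replaces $h$ by a suitable conjugate and/or product in $g$ and $h$ (using the dynamics to move overlapping fixed points off) to obtain disjoint fixed pairs. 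Third, apply the standard ping-pong: for sufficiently large $N$, the four fixed points $g^{\pm}, h^{\pm}$ admit pairwise disjoint neighborhoods such that $g^{\pm N}$ and $h^{\pm N}$ map complements of one into the corresponding attracting neighborhood, and the ping-pong lemma gives that $\langle g^N, h^N\rangle$ is free of rank $2$.

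The main obstacle is the second step: finding a genuinely independent RH-hyperbolic element when the available candidates have fixed pairs that partially overlap $\{g^+,g^-\}$. The solution is to extract three RH-hyperbolic elements with pairwise distinct fixed pairs and then take appropriate products or conjugates, using the convergence dynamics to guarantee that a suitable combination is RH-hyperbolic with fixed pair disjoint from $\{g^+,g^-\}$. This is a standard construction in the theory of convergence groups but requires some care to verify the resulting element is not accidentally parabolic or torsion.
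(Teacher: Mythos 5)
The paper itself offers no proof of this statement: it is quoted from Tukia's paper on convergence groups (his Theorem 2U), so what you have written is a reconstruction of Tukia's argument rather than an alternative to anything proved here. Your overall architecture is the standard and essentially correct one: elementary subgroups have $|\Lambda H|\le 2$ (your first paragraph is fine, modulo noting that the limit set of a finite-index subgroup equals that of the group), and the free subgroup comes from ping-pong with two independent RH-hyperbolic elements. The genuine gap is in your first step of the second statement. You produce an RH-hyperbolic element of $H$ by arguing about the \emph{infinite-order} elements of $H$: if all of them are parabolic they must share a fixed point $p$, whence $H\le\mathrm{Stab}(p)$ is parabolic. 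This only functions if $H$ has an infinite-order element at all. Nothing rules out that $H$ is an infinite torsion group -- the peripheral subgroups of a relatively hyperbolic pair are arbitrary finitely generated groups, so infinite torsion subgroups genuinely occur -- and the assertion that such a subgroup must be elementary (indeed parabolic) is exactly Lemma \ref{lem:itpar} of the paper, which is deduced \emph{from} the theorem you are proving and therefore cannot be invoked. This is not a cosmetic omission: producing a loxodromic (or excluding non-elementary torsion subgroups) is the hardest part of Tukia's theorem.

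To close the gap you need an independent argument for the torsion case, for instance: if $H$ contains no RH-hyperbolic element, then every element acts elliptically or parabolically on the cusped space, so by the classification of isometric group actions on hyperbolic spaces the action of $H$ (unbounded, since the action is proper and $H$ is infinite) is horocyclic and $H$ fixes a unique boundary point $p$; if $p$ is a parabolic point then $H$ is parabolic, while if $p$ were a conical point one needs something like the Busemann-function argument in the proof of Proposition \ref{p:elementary} to show $H$ would be finite. Two smaller points. First, even when infinite-order parabolics exist and share the fixed point $p$, concluding $H\le\mathrm{Stab}(p)$ requires a word about torsion elements: if $t\in H$ did not fix $p$, then $tat^{-1}$ would be a parabolic with fixed point $tp\ne p$ and your powering trick would already yield an RH-hyperbolic element. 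Second, the complication you flag in step two -- fixed pairs overlapping in exactly one point -- cannot occur: in a convergence group two loxodromic elements have either equal or disjoint fixed-point pairs, so once some RH-hyperbolic $h\in H$ has $\{h^{\pm}\}\ne\{g^{\pm}\}$ it is automatically independent of $g$, and the ``three elements'' workaround is unnecessary.
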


\begin{proposition} \label{p:elementary}
 Let $(G,\mc{P})$ be relatively hyperbolic, and let $H \le G$.  Then $H$ is elementary if and only if exactly one of the following occurs:
\begin{enumerate}
 \item $H$ is finite;
 \item $H$ is parabolic (conjugate into some element of $\mc{P})$ and $|\Lambda H| = 1$; or
 \item\label{virtualZ} $H$ is virtually infinite cyclic, and contains an RH hyperbolic element. In this case $|\Lambda H| = 2$.
\end{enumerate}
\end{proposition}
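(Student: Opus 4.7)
The plan is to prove both directions of the iff together with mutual exclusivity of the three cases. The ``if'' direction is essentially by definition: a finite subgroup is virtually cyclic (hence elementary), while (2) and (3) match the definition of elementary directly. The three cases will be mutually exclusive because (1) is distinguished by finiteness, and (2) and (3) by $|\Lambda H|=1$ versus $|\Lambda H|=2$; equivalently, a subgroup of a conjugate of some $P\in\mc{P}$ cannot contain an RH-hyperbolic element, since each of its infinite order elements is parabolic and fixes the unique parabolic point of that conjugate.

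For the ``only if'' direction, I would assume $H$ is elementary and infinite (the case $H$ finite being case (1)). By definition $H$ is virtually cyclic or parabolic. If $H$ is parabolic, then $H\le P^g$ for some $P\in\mc{P}$, $g\in G$, so $\Lambda H\subseteq \Lambda P^g=\{p\}$, where $p$ is the unique parabolic fixed point of $P^g$. An infinite subgroup of $G$ has nonempty limit set (by an orbit-accumulation argument in the cusped space $X(G,\mc{P})$), so $|\Lambda H|=1$ and we are in case (2). Otherwise $H$ is virtually infinite cyclic; choose an infinite order $h\in H$ generating a cyclic subgroup of finite index. Since $h$ has infinite order, it is either RH-hyperbolic or parabolic. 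If $h$ is RH-hyperbolic, then $\{h^{+\infty},h^{-\infty}\}\subseteq \Lambda H$, while Theorem \ref{t:Tukia non-elem} applied to the elementary subgroup $H$ forces $|\Lambda H|\le 2$; together these give $|\Lambda H|=2$, placing us in case (3).

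The main obstacle is the remaining subcase, in which $H$ is virtually infinite cyclic but generated up to finite index by a \emph{parabolic} element $h$; here I would argue that $H$ must actually be parabolic and so falls into case (2). After replacing $\langle h\rangle$ by a characteristic finite-index cyclic subgroup (e.g.\ some $\langle h^n\rangle$), $H$ normalizes this cyclic group and hence permutes its fixed point set on $\partial(G,\mc{P})$, which is the singleton $\{p\}$ consisting of the parabolic point fixed by $h$. So $H$ fixes $p$, and by geometric finiteness of the action on the Bowditch boundary (\cite[Proposition 6.15]{bowditch:relhyp}), $\mathrm{Stab}_G(p)$ is a conjugate of some $P_i\in\mc{P}$; hence $H$ is parabolic. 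This reduction, together with the auxiliary fact that infinite subgroups have nonempty limit set, is where the peripheral structure of $(G,\mc{P})$ enters most delicately; the rest of the argument is a clean dichotomy driven by Tukia's theorem.
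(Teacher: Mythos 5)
Your proof is correct, but it takes a genuinely different route from the paper's. You argue the converse directly from Definition \ref{def:elementary} ($H$ virtually cyclic or parabolic), splitting on whether a finite-index generator $h$ is RH-hyperbolic or parabolic, and using the element trichotomy, the uniqueness of the fixed point of an infinite-order parabolic, and the fact that stabilizers of parabolic points are exactly the maximal parabolic subgroups; Tukia's Theorem \ref{t:Tukia non-elem} enters only to cap $|\Lambda H|\leq 2$. The paper instead works from the characterization of elementary subgroups as those preserving a finite subset of $X(G,\mc{P})\cup\partial(G,\mc{P})$ (the unproved lemma preceding the proposition): it deduces $\Lambda H$ is finite, rules out $|\Lambda H|>2$ by Tukia, eliminates the possibility of a single \emph{conical} limit point by a Busemann-function/proper-action counting argument showing such an $H$ would be finite, and in the case $|\Lambda H|=2$ produces an RH-hyperbolic element as a product of two elements with controlled Gromov product, concluding virtual cyclicity from the classification of isometries of hyperbolic spaces \cite{cdp}. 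The paper's argument therefore effectively proves the harder half of that unproved lemma (any subgroup with finite limit set is of one of the three types), which yours does not; yours is shorter and avoids the two delicate limiting arguments, at the price of leaning on standard convergence-group facts that the paper only states (the trichotomy of elements and that parabolic point stabilizers are conjugates of elements of $\mc{P}$). One small repair to your key step: ``characteristic finite-index cyclic subgroup'' is not quite the right justification, because $\langle h\rangle$ need not be normal in $H$, and a subgroup characteristic in $\langle h\rangle$ is only guaranteed to be normal in $H$ when $\langle h\rangle\trianglelefteq H$. Use instead the normal core of $\langle h\rangle$ in $H$: it has finite index, is contained in $\langle h\rangle$, hence equals $\langle h^{n}\rangle$ for some $n\geq 1$, and is normal in $H$; with this substitution your argument that $H$ fixes the parabolic point of $h$ and is therefore parabolic goes through verbatim.
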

\begin{proof}
It is straightforward to see that each of the three types listed are elementary, which proves one direction of the theorem.

 Conversely, suppose that $H$ is infinite and elementary.  The action of $H$ on $X(G,\mc{P})$ is properly discontinuous, so the finite set preserved by $H$ must be contained in $\partial(G,\mc{P})$.  
 Since $\Lambda H$ is the minimal closed $H$-invariant set, we see that $\Lambda H$ is finite.
 
 If $\Lambda H$ contains more than $2$ points, then $H$ contains a nonabelian free group and $\Lambda H$ is infinite, by \cite[Theorem 2U]{Tukia94}.  Therefore, we are concerned with the cases where $|\Lambda H|$ has size $1$ or $2$.
 
 Suppose first that $\Lambda H = \{ p \}$.  If $p$ is a bounded parabolic point, then $H$ is parabolic, as required.
 
 Suppose then that $\Lambda H = \{ p \}$ and that $p$ is a conical limit point. We know that elements of $G$ are either finite order, infinite order parabolic or RH-hyperbolic (which are also infinite order).
 An infinite cyclic subgroup generated by an RH-hyperbolic element has a pair of limit points, so if $|\Lambda H| = 1$ then $H$ can contain no RH-hyperbolic elements.  On the other hand, if $H$ contains an infinite-order parabolic element, then it would have to fix $p$, which contradicts the assumption that $p$ is not a parabolic limit point.  Thus, every element of $H$ is finite-order, and hence elliptic. Therefore (since $p$ is $H$-invariant) each $h \in H$ coarsely preserves the level sets of a Busemann function $\beta_x$ based at $p$ (with the same implicit constant behind the word `coarse' for each element of $H$).  Let $D$ be the maximal difference $\beta_p(x) - \beta_p(h.x)$ over $x \in X$ and $h \in H$.  Let $x \in X$ be a base point, and $\gamma$ a geodesic ray from $x$ to $p$.  Since $p$ is not parabolic, $\gamma$ must intersect the Cayley graph of $G$ in $X$ infinitely many times.  Let $K$ be a bound on the size of the number of vertices in a ball of radius $D + 10\delta$ in $X$ which meets the Cayley graph of $G$.  We claim that (in case $p$ is $H$-invariant) $|H| \le K$, contradicting the hypothesis that $H$ is infinite.  Indeed, suppose that $H$ has distinct elements $h_1, \ldots , h_{K+1}$.  Then the geodesics $h_i . \gamma$ are all eventually within $2\delta$ of each other, which implies that at a point $y$ on $\gamma$ near the Cayley graph the points $h_i .y $ are all within $D + 2\delta$ of each other.  However, this means that there are $i$ and $j$ (with $i \ne j$) so that $h_i . y = h_j . y$.  Since $G$ acts freely on $X$, this is a contradiction.
 
 We are left with the case that $|\Lambda H | = 2$.  Suppose that $\Lambda H = \{ p,q \}$.  Suppose further that $H$ contains no RH-hyperbolic elements.
  In this case, given the basepoint $x \in X(G,\mc{P})$, there are elliptic or parabolic elements $h_1, h_2 \in H$ with the property that $d(h_1.x,x)$ and $d(h_2.x,x)$ are as large as we like, but the Gromov product $(h_1.x,h_2.x)_x$ is bounded.  In particular, by taking $h_1.x$ approximating $p$ and $h_2.x$ approximating $q$, we can ensure that 
\[ \min\{d(h_1.x,x),d(h_2.x,x)\}\geq 100 (h_1.x,h_2.x)_x+100\delta. \]
A standard argument (see, eg \cite[Chapitre 9, Lemme 2.3]{cdp}) shows that $h_1h_2$ is an RH-hyperbolic element.  This contradicts the assumption that $H$ has no RH-hyperbolic elements.

Therefore, in case $|\Lambda H| = 2$, we see that $H$ contains an RH-hyperbolic element.
Since $H$ preserves a pair of points in $\partial(G,\mcP)$, it is virtually cyclic.
(For example, this can be seen by noting that the action of $G$ on the cusped space is proper and then applying the classification of isometries of $\delta$-hyperbolic spaces \cite[Chapitre 9]{cdp}).\end{proof}

We immediately deduce the following:
\begin{lemma}\label{lem:elementary is fg}
 If $(G,\mc{P})$ is relatively hyperbolic and each element of $\mc{P}$ is slender then every elementary subgroup of $G$ is slender, and in particular finitely generated.
\end{lemma}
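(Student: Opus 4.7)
The plan is to derive the lemma directly from the classification of elementary subgroups given in Proposition~\ref{p:elementary}: every elementary $H \leq G$ is either (i) finite, (ii) parabolic, i.e.\ conjugate into some $P\in\mc{P}$, or (iii) virtually infinite cyclic. I will verify slenderness in each of these three cases, and then note that slender trivially implies finitely generated, since any group is a subgroup of itself.

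First I would dispose of case (i): a finite group has only finitely many subgroups, all of which are automatically finitely generated. For case (ii), $H$ is conjugate into some $P\in\mc{P}$, so any subgroup $H'\leq H$ is isomorphic to a subgroup of $P$; by the slenderness hypothesis on $\mc{P}$, such $H'$ is finitely generated. For case (iii), I would pick a finite-index infinite cyclic subgroup $C\leq H$ and, for any $H'\leq H$, observe that $H'\cap C$ has finite index in $H'$ and, being a subgroup of a cyclic group, is itself cyclic. Since $H'$ then has a finitely generated subgroup of finite index, $H'$ itself is finitely generated (in fact virtually cyclic).

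There is no substantive obstacle here; the argument is entirely formal once Proposition~\ref{p:elementary} is in hand, and essentially amounts to checking that each of the three possible types of elementary subgroup is closed under passing to subgroups, at least up to finite generation. The one place requiring a moment's thought is case (iii), where one invokes the standard fact that a group with a finitely generated subgroup of finite index is itself finitely generated. Everything else follows immediately from the definitions of \emph{slender} and \emph{elementary}.
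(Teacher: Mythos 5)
Your proposal is correct and matches the paper's reasoning: the paper derives Lemma~\ref{lem:elementary is fg} immediately from the classification in Proposition~\ref{p:elementary}, exactly as you do, with the three cases (finite, parabolic, virtually cyclic) each handled by the routine observations you give. No gaps; the case-(iii) detail about a finite-index cyclic subgroup intersecting any subgroup in finite index is the standard fact and is used correctly.
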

\begin{lemma}\label{lem:small is fg}
  Let $(G,\mcP)$ be relatively hyperbolic, where the elements of $\mcP$ are slender.  Then every small subgroup of $G$ is finitely generated.
\end{lemma}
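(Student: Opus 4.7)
The plan is to show that every small subgroup of $G$ is in fact already elementary, and then to invoke Lemma \ref{lem:elementary is fg}. So let $H\le G$ be small. If $H$ is finite there is nothing to do, so assume $H$ is infinite. Because $G$ acts properly on the cusped space $X(G,\mc{P})$, any infinite orbit of $H$ accumulates somewhere on $\partial(G,\mc{P})$, and so the limit set $\Lambda H$ is nonempty.

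Since $H$ contains no nonabelian free subgroup, the second statement in Theorem \ref{t:Tukia non-elem} (Tukia's theorem) prevents $|\Lambda H|>2$: otherwise $H$ would be non-elementary and would contain a nonabelian free subgroup generated by two independent RH-hyperbolic elements. Therefore $|\Lambda H|\in\{1,2\}$, and Proposition \ref{p:elementary} then forces $H$ to fall into one of the two infinite elementary cases of that proposition, namely parabolic with $|\Lambda H|=1$, or virtually infinite cyclic containing an RH-hyperbolic element with $|\Lambda H|=2$.

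At this point the slenderness of the peripheral subgroups enters through Lemma \ref{lem:elementary is fg}: every elementary subgroup of $G$ is slender, and in particular finitely generated. Combined with the previous paragraph this finishes the proof. The argument is short, and I do not anticipate any real obstacle; the only step needing a bit of care is verifying that an \emph{infinite} subgroup has a nonempty limit set (so that the case analysis of Proposition \ref{p:elementary} actually applies), which follows immediately from properness of the action on the cusped space.
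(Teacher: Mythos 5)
Your proof is correct and takes essentially the same route as the paper's: a small subgroup must be elementary, since by Theorem \ref{t:Tukia non-elem} any non-elementary subgroup contains a nonabelian free group, and elementary subgroups are finitely generated by Lemma \ref{lem:elementary is fg} because the peripherals are slender. The only difference is cosmetic: your detour through limit sets and the case analysis of Proposition \ref{p:elementary} is unnecessary, as the second sentence of Tukia's theorem already gives ``small implies elementary'' directly, which is exactly how the paper argues.
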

\begin{proof}
  Let $H<G$ be small.  If $H$ is elementary then $H$ is finitely generated by Lemma \ref{lem:elementary is fg}.
  
  However, if $H$ is non-elementary then it contains a nonabelian free group by Theorem \ref{t:Tukia non-elem}, and so it cannot be small.
\end{proof}

Note that a slender group is small but a small group may not be slender.  Slender groups have the following useful characterization due to Dunwoody and Sageev.
\begin{lemma} \cite[Lemma 1.1]{DunwoodySageev99}\label{lem:slender tree}
A group $H$ is slender if and only if for every subgroup $K \le H$, every action of $K$ on a tree either has a fixed point or has a (setwise) invariant axis.
\end{lemma}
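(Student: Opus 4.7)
My plan is to prove both implications separately, using standard tools about group actions on trees in the spirit of Serre.

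For the forward direction (slender implies the tree property), suppose $K \le H$ acts on a tree $T$, with $K$ slender and hence finitely generated. Assuming $K$ has no global fixed point, I want to produce a $K$-invariant axis. By the Serre alternative, either $K$ contains an element $k$ acting hyperbolically on $T$, or every element of $K$ is elliptic and (lacking a fixed point) $K$ fixes a unique end $e$. In the second case, each elliptic $k \in K$ fixing a point $x$ and fixing the end $e$ must fix the entire geodesic ray from $x$ to $e$ (since such a ray is unique and its endpoints are fixed); because $K$ is finitely generated and any two rays to $e$ eventually coincide in a tree, the intersection of the fixed rays of a finite generating set is a $K$-fixed ray, contradicting the no-fixed-point assumption. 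So there is some hyperbolic $k$ with axis $A$. If some $h \in K$ did not preserve $A$, then $A$ and $hA$ would be distinct axes in $T$, and high powers of $k$ and $hkh^{-1}$ would generate a rank-$2$ free subgroup by ping-pong; but a free group of rank $2$ is not slender (its commutator subgroup is not finitely generated), contradicting slenderness of $K$. Hence all of $K$ preserves $A$.

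For the converse I would argue by contrapositive. If $H$ is not slender, then some subgroup of $H$ contains an infinite strictly ascending chain of finitely generated subgroups $K_1 \subsetneq K_2 \subsetneq \cdots$; let $K$ be their union, which is itself a subgroup of $H$. I would build a tree $T$ with vertex set $\bigsqcup_{n \ge 1} K/K_n$ and edges $\{gK_n, gK_{n+1}\}$ for each $g \in K$ and $n \ge 1$. Connectivity is clear from $K = \bigcup K_n$; the absence of cycles follows because any putative cycle would contain a local level-minimum ``valley'', where the unique-parent property $gK_n \mapsto gK_{n+1}$ forces an immediate backtrack. The natural left-multiplication $K$-action has no fixed vertex, since fixing $gK_n$ would force $K \subseteq gK_n g^{-1}$, hence $K_n = K$, contradicting the strict ascent. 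Level-$1$ vertices are leaves, and any infinite non-backtracking ray must eventually ascend monotonically, so $T$ has a unique end (the ``top''); in particular $T$ has no bi-infinite geodesic, so $K$ preserves no axis either. This $K$-action on $T$ violates the tree property.

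The main technical step, I expect, is the end-fixing subcase of the forward direction: one must see that an elliptic isometry fixing an end also fixes the whole ray toward that end from any of its fixed points, and then combine this with finite generation to intersect such rays in a tree. The ping-pong step and the verification that the nested coset construction is a one-ended tree should be routine by comparison.
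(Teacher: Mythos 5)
Your converse direction is fine: the coset tree of a strictly ascending chain of finitely generated subgroups is the standard construction (essentially what Dunwoody--Sageev do), and since that tree has a unique end it contains no line at all, so the action of the union has neither a fixed point nor an invariant axis. The genuine gap is in the forward direction, at the step ``if $h$ does not preserve $A$, then $A\neq hA$ and high powers of $k$ and $hkh^{-1}$ generate a rank-$2$ free group by ping-pong.'' Ping-pong for two hyperbolic isometries of a tree requires the overlap $A\cap hA$ to be finite (shorter than the translation lengths of the powers used), and passing to powers does not help when the overlap is an infinite ray. Concretely, in $BS(1,2)=\langle a,t\mid tat^{-1}=a^2\rangle$ acting on its Bass--Serre tree, $t$ and $ata^{-1}$ are hyperbolic with distinct axes which share a ray toward the end fixed by the whole group, and $BS(1,2)$ is metabelian, so no powers of these two elements generate a free group. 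That group is of course not slender, but your argument never invokes slenderness to rule out this configuration, so the ``focal'' case --- $K$ fixes an end $e$, contains hyperbolic elements, and all axes share rays toward $e$ --- is simply not treated, and it is exactly the case where the naive ping-pong deduction fails.

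This is the case where slenderness must be used in a different way, via ascending chains rather than free subgroups. In the focal case one has the homomorphism $\beta\colon K\to\mathbb{Z}$ given by translation length toward $e$; its kernel $N$ consists of elliptic elements fixing rays toward $e$, and $N=\bigcup_{n\ge 0}k^{n}N_{0}k^{-n}$, where $k$ is hyperbolic translating toward $e$ along $A$ and $N_{0}$ is the pointwise stabilizer of the ray from a base vertex of $A$ to $e$. If this chain increases strictly infinitely often, its union $N$ is a non--finitely generated subgroup of $H$, contradicting slenderness. If it stabilizes, then $N$ is normalized by $k$ and hence equals the fixator of every ray $[k^{m}x_{0},e)$ with $m\in\mathbb{Z}$, so $N$ fixes the whole axis $A$ pointwise; since $K=\langle N,k\rangle$, the line $A$ is $K$-invariant. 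Adding this argument closes the gap (it is in effect how the cited Dunwoody--Sageev lemma is proved, the paper itself giving no proof). A smaller point: your appeal to a ``Serre alternative'' asserting that an all-elliptic, fixed-point-free action fixes an end also needs justification, although for finitely generated $K$ that subcase is vacuous, since a finitely generated group all of whose elements are elliptic already has a global fixed point by Serre's lemma.
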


A general principle of relatively hyperbolic groups is that extreme pathology is usually confined to the parabolic subgroups.  The following is a well-known example of that principle.
\begin{lemma}\label{lem:itpar}
  Let $(G,\mcP)$ be relatively hyperbolic.  If $H<G$ is infinite torsion, then $H$ is parabolic.
\end{lemma}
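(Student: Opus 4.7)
The plan is a two-step reduction relying entirely on machinery already set up in this section, namely Tukia's theorem (Theorem \ref{t:Tukia non-elem}) and the classification of elementary subgroups (Proposition \ref{p:elementary}).

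First, I would show that $H$ must be elementary. By Theorem \ref{t:Tukia non-elem}, any non-elementary subgroup of $G$ contains a nonabelian free subgroup, generated in fact by two independent RH-hyperbolic elements. A nonabelian free group contains elements of infinite order, so this would contradict the hypothesis that every element of $H$ has finite order. Therefore $H$ is elementary.

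Next, I would run through the three mutually exclusive cases for an elementary subgroup given by Proposition \ref{p:elementary}. Case (1), that $H$ is finite, is ruled out directly by the hypothesis that $H$ is infinite. Case (3), that $H$ is virtually infinite cyclic and contains an RH-hyperbolic element, is ruled out because RH-hyperbolic elements have infinite order, contradicting the assumption that $H$ is torsion. Only case (2) remains, i.e.\ $H$ is parabolic (conjugate into some element of $\mc{P}$), which is precisely the desired conclusion.

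I do not foresee any real obstacle here; the heavy lifting is entirely contained in the results cited. The only point warranting a moment's care is the second step's case analysis, where one must remember that an RH-hyperbolic element is by definition of infinite order, so that the torsion hypothesis immediately excludes case (3) and forces the parabolic case (2).
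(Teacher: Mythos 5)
Your proof is correct and follows essentially the same route as the paper: the paper also rules out non-elementarity via Theorem \ref{t:Tukia non-elem} (a torsion group cannot contain a nonabelian free subgroup) and then notes that among the elementary types in Proposition \ref{p:elementary} only the parabolic case is possible for an infinite torsion group. Your write-up merely spells out the case analysis a bit more explicitly.
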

\begin{proof}
Since $H$ is torsion, it contains no nonabelian free subgroup, so $H$ must be elementary by Theorem \ref{t:Tukia non-elem}.  However, the only possibility from the list of elementary subgroups given by Proposition \ref{p:elementary} for an infinite torsion group is if $H$ is parabolic.
\end{proof}

The following lemma will be used to show certain extensions of elementary groups coming from the Rips machine are still elementary.
\begin{lemma} \label{l:no peripheral-by-cyclic}
Suppose that $(G,\mc{P})$ is relatively hyperbolic.  \\
 Any elementary-by-(virtually abelian) subgroup of $G$ is elementary.\\
 Any (infinite parabolic)-by-(virtually abelian) subgroup of $G$ is parabolic.
\end{lemma}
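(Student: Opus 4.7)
The plan is to split into cases according to the Tukia/Proposition \ref{p:elementary} trichotomy for the normal elementary subgroup $N \lhd H$, using in each case that $H$ must permute the limit set $\Lambda N \subseteq \partial(G,\mc{P})$ and that virtually abelian quotients contain no free subgroups.

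First, if $N$ is finite, then $H$ is (finite)-by-(virtually abelian), hence itself virtually abelian. A virtually abelian subgroup of $G$ contains no nonabelian free subgroup, so by Theorem \ref{t:Tukia non-elem} it is elementary. This handles the first statement in the degenerate case and does not occur in the second statement.

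Next, suppose $N$ is infinite parabolic, so $\Lambda N = \{p\}$ for a bounded parabolic point $p$ (fixed by some conjugate of an element of $\mc{P}$). Because $N$ is normal in $H$, the set $\Lambda N$ is $H$-invariant, so $H$ fixes $p$. The stabilizer of any bounded parabolic point in $\partial(G,\mc{P})$ is itself the associated maximal parabolic subgroup, so $H$ is parabolic. This proves the second statement, and contributes to the first.

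Finally, suppose $N$ is virtually infinite cyclic, containing an RH-hyperbolic element $g$ with fixed point set $\Lambda N = \{p,q\}$. Since $N \lhd H$, the group $H$ preserves $\{p,q\}$ setwise, so a subgroup $H_0 \le H$ of index at most $2$ fixes both $p$ and $q$. I would argue that the setwise stabilizer of $\{p,q\}$ in $G$ is virtually cyclic: any element of $H_0$ coarsely preserves a quasi-axis $\gamma$ of $g$ in $X(G,\mc{P})$, giving a homomorphism from $H_0$ to the coarse translation group of $\gamma$ (essentially $\mathbb{R}$), whose kernel is finite by properness of the action of $G$ on $X(G,\mc{P})$ (the same ``$K$-bound'' ball argument as in the proof of Proposition \ref{p:elementary}). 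Hence $H_0$, and therefore $H$, is virtually cyclic, and in particular elementary.

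The only mildly non-routine step is the virtual cyclicity of $\stab_G(\{p,q\})$ in the third case, and this is a standard application of properness of the $G$-action on the cusped space combined with the classification of isometries of a hyperbolic space as in \cite[Chapitre 9]{cdp}; the rest is bookkeeping with the trichotomy of Proposition \ref{p:elementary}.
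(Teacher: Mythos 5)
Your proposal is correct in substance and follows the same trichotomy on the normal elementary subgroup $N$ that the paper uses, but it diverges from the paper's argument in the finite-kernel case. In the two infinite cases you essentially reproduce the paper: when $N$ is infinite parabolic, your appeal to ``the stabilizer of a bounded parabolic point is the maximal parabolic subgroup'' is interchangeable with the paper's almost-malnormality argument; when $N$ is virtually cyclic with an RH-hyperbolic element, your coarse-translation/properness argument for the virtual cyclicity of $\stab_G(\{p,q\})$ is exactly the fact the paper invokes at the end of Proposition \ref{p:elementary}, so you could simply cite that proposition, as the paper's proof does. The genuine difference is the case $N$ finite: the paper runs a longer analysis (infinite torsion quotients are parabolic by Lemma \ref{lem:itpar}; otherwise lift an infinite-order element and do a commutator/fixed-point argument split according to whether the lift is parabolic or loxodromic), whereas you shortcut via smallness and Theorem \ref{t:Tukia non-elem} (equivalently Lemma \ref{lem:small is elementary}); this is shorter and avoids the delicate bookkeeping, at the cost of concluding only ``elementary'' rather than locating $H$ as parabolic or virtually cyclic, which is all the lemma asks for anyway. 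One caveat: your intermediate assertion that a (finite)-by-(virtually abelian) group is itself virtually abelian is false in general without finite generation (an infinite extraspecial $2$-group is finite-by-abelian but not virtually abelian), and $H$ is not assumed finitely generated here; however this is harmless, since all you actually use is that $H$ contains no nonabelian free subgroup, which follows directly: a free subgroup meets the finite normal subgroup trivially (free groups are torsion-free) and so would embed in the virtually abelian quotient, which is impossible. With that one sentence repaired, your proof is complete.
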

\begin{proof}
Let $H$ be elementary-by-(virtually abelian).  There is a short exact sequence
\[	1 \to K \to H \to A \to 1	,	\]
where $A$ is virtually abelian.  There are a number of cases.

Suppose first that $K$ is infinite parabolic.  If $K$ is contained in a maximal parabolic subgroup $P$, then since $P$ is almost malnormal, all of $H$ is contained in $P$. 
In particular $H$ is parabolic, hence elementary.

Suppose then that $K$ is virtually cyclic with an RH-hyperbolic element.  Then the limit set of $K$ has two elements.  Moreover, since $K$ is normal in $H$, the limit set of $K$ is $H$-invariant, and so the limit set of $H$ also has two elements and $H$ is virtually cyclic, by Proposition \ref{p:elementary}.

Suppose next that $K$ is finite.  If $A$ is infinite torsion, then so is $H$.  Any infinite torsion subgroup of a relatively hyperbolic group is parabolic by Lemma \ref{lem:itpar}, so we can suppose that $A$ has an infinite order element $a$.  Let $\tilde{a}\in H$ map to $a$.  The subgroup $H_0 = \langle K,\tilde{a}\rangle$ is virtually infinite cyclic.  
We argue differently, depending on whether $\tilde{a}$ is parabolic or loxodromic.  If parabolic, $H_0$ fixes a unique $p\in \partial(G,\mcP)$; if loxodromic, $H_0$ fixes two points $p,q$ in $\partial(G,\mcP)$.  (No element can exchange them because $H_0$ maps homomorphically onto $\bZ$.)

Suppose $\tilde{a}$ is parabolic, fixing $p\in \partial(G,\mcP)$.  Note that $p$ is the unique fixed point for $\tilde{a}$.  Let $h\in H$.  The commutators of $h$ and $\tilde{a}^n$ are in the finite subgroup $K$, so we have, for some $k\in K$ and some $0<i<j$, $[h,\tilde{a}^i]=[h,\tilde{a}^j]=k$.  Thus
\[ h p = h\tilde{a}^i p = k \tilde{a}^i h p\mbox{, and}\]
\[ h p = h\tilde{a}^j p = k \tilde{a}^j h p.\]
So both $k\tilde{a}^i$ and $k\tilde{a}^j$ fix $q = hp$.  It follows that $\tilde{a}^i q = k^{-1}q =\tilde{a}^j q$, so $\tilde{a}^{j-i}$ fixes $q$.  We deduce $q = p$ so $h$ fixes $p$ and lies in the same parabolic subgroup as $H_0$.

Finally, suppose that $\tilde{a}$ is loxodromic with fixed points $\{p,q\}\subseteq \partial(G,\mc{P})$.  If $h\in H$, we see that $h$ preserves the pair $\{p,q\}$ by an argument similar to that in the last paragraph.  Since $H$ preserves a pair of points in $\partial(G,\mcP)$, it is virtually cyclic, by Proposition \ref{p:elementary}.
\end{proof}

\subsection{Dehn filling of groups}\label{ss:Dehn}

\begin{definition} \label{def:Dehnfill}
Suppose that $G$ is a group and $\mcP = \{ P_1 ,\ldots , P_n \}$ is a collection of subgroups.  A {\em  filling} (sometimes \emph{Dehn filling}) of $(G,\mcP)$ is determined by a collection of subgroups $K_i \unlhd P_i$ and is given by the quotient map (called the \emph{filling map})
\[	\phi \co G \to G(K_1,\ldots , K_n )	,	\]
where the group $G(K_1,\ldots , K_n)$ denotes $G / K$ where $K$ is the normal closure in $G$ of $\cup K_i$, and $\phi$ is the natural quotient map.
The subgroups $K_1,\ldots,K_n$ are called the {\em filling kernels}.
\end{definition}

\begin{definition}
Suppose that $(G,\mcP)$ is as in Definition \ref{def:Dehnfill}.  We say that a property $\mc{S}$ of groups holds {\em for all sufficiently long fillings} of $(G,\mcP)$ if there is a finite set $\mc{B} \in G \smallsetminus \{ 1 \}$ so that for any choice of filling kernels $K_1, \ldots , K_n$ so that $K_i \cap \mc{B} = \emptyset$ the group $G(K_1,\ldots , K_n)$ satisfies $\mc{S}$.
\end{definition}

The following is the main result of relatively hyperbolic Dehn filling.

\begin{theorem} \label{t:RHDF} \cite{osin:peripheral}, cf. \cite{rhds}
Let $(G,\mcP)$ be relatively hyperbolic and let $\mc{F} \subset G$ be finite.  For sufficiently long fillings
\[	\phi \co G \to G(K_1, \ldots , K_n) 	\]
of $(G,\mcP)$ the following properties hold:
\begin{enumerate}
\item For each $i$, the natural map $ P_i/K_i \to G(K_1, \ldots , K_n)$ is injective;
\item The pair $(G(K_1,\ldots , K_n), \{ P_1/K_1, \ldots , P_n/K_n \})$ is relatively hyperbolic; and
\item $\phi$ is injective on $\mc{F}$.
\end{enumerate}
\end{theorem}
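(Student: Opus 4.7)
The plan is to work geometrically in the cusped space $X = X(G,\mc{P})$, and for each candidate filling $\phi\co G\to \bar{G} = G(K_1,\ldots,K_n)$ to construct a corresponding cusped space $\bar{X} = X(\bar{G}, \bar{\mc{P}})$ for the would-be filled pair. First I would define a natural $G$--equivariant (where $G$ acts on $\bar{X}$ through $\phi$) simplicial map $\hat{\phi}\co X \to \bar{X}$ which extends $\phi$ on the Cayley graph and sends the combinatorial horoball over $gP_i$ to the horoball over $\phi(g)\phi(P_i)$ by the horoball construction applied to the quotient $P_i\to P_i/K_i$. Under the natural identifications, $\hat\phi$ is precisely the quotient by the $G$--normal closure of $\bigcup_i K_i$ acting on $X$, and it is surjective and simplicial.

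The key geometric input is a \emph{local injectivity} lemma: for every $R>0$ there is a finite set $\mc{B}\subset G\smallsetminus\{1\}$ so that, whenever every $K_i$ is disjoint from $\mc{B}$, the restriction of $\hat\phi$ to any ball of radius $R$ in $X$ is an isometric embedding into $\bar{X}$. In the Cayley graph portion this amounts to excluding short nontrivial elements of the normal closure of $\bigcup K_i$; in a horoball it reduces to the observation that, down to a given depth, the combinatorial horoball built from $P_i/K_i$ agrees combinatorially with the quotient of the horoball built from $P_i$, provided $K_i$ has no short nontrivial elements. Next I would use local injectivity to show that $\bar{X}$ is $\delta'$--hyperbolic for some $\delta'$ depending only on $\delta$: a geodesic triangle in $\bar{X}$ lifts locally to a polygon in $X$ whose only departures from being an honest triangle are pairs of ``jumps'' deep inside horoballs corresponding to applications of filling relators, and a standard local-to-global thin-triangles argument (as in \cite{osin:peripheral,rhds}) then transports thinness from $X$ down to $\bar{X}$. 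This proves conclusion (2).

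Conclusion (1), the injectivity of $P_i/K_i\hookrightarrow \bar{G}$, drops out of local injectivity applied to the horoball over $P_i$: any $p\in P_i$ with $\phi(p)=1$ would, if $p\notin K_i$, yield a short nontrivial element of the normal closure detectable inside a bounded ball, contradicting the lemma. Conclusion (3), injectivity of $\phi$ on the prescribed finite $\mc{F}$, is obtained by first enlarging $R$ (and hence $\mc{B}$) so that every element of $\mc{F}\cdot\mc{F}^{-1}\smallsetminus\{1\}$ is represented within the $R$--ball about $1$ in $X$, and then invoking local injectivity of $\hat\phi$ on that ball.

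The hard part will be proving uniform hyperbolicity of $\bar{X}$. The subtlety is that a short geodesic triangle in $\bar{X}$ need not lift to anything close to a geodesic triangle in $X$: the natural lift can detour through horoballs whenever filling relators are applied, and a priori such detours destroy the metric control needed to import thinness from $X$. One must show that these detours are strictly confined to horoballs and controlled by the horoball geometry, so that a thin configuration upstairs still projects to a thin configuration downstairs, with the constants depending only on $\delta$ and not on the particular filling. This is essentially the technical heart of \cite{osin:peripheral} and \cite{rhds}, and getting the hyperbolicity constant uniform across all sufficiently long fillings is the principal obstacle.
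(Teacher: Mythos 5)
This theorem is not proved in the paper at all: it is quoted from Osin \cite{osin:peripheral} (cf.\ the authors' own \cite{rhds}), so there is no internal proof to compare against, and your outline is best read as a sketch of the cusped-space approach of \cite{rhds} (the same approach that underlies Theorem \ref{thm:uniformdelta}). As a proof, however, it has a genuine gap. Your ``local injectivity lemma'' --- for every $R$ there is a finite $\mc{B}\subset G\smallsetminus\{1\}$ so that $K_i\cap\mc{B}=\emptyset$ for all $i$ forces $\hat\phi$ to be an isometric embedding on every $R$--ball of $X$ --- is not an input to the theorem; it essentially \emph{is} the theorem. It immediately contains conclusions (1) and (3), and the justification you offer (``this amounts to excluding short nontrivial elements of the normal closure of $\bigcup K_i$'') restates the desired conclusion rather than proving it: the set $\mc{B}$ may only constrain the filling kernels $K_i\leq P_i$ themselves, whereas the kernel of $\phi$ is the full normal closure $K=\langle\langle \bigcup_i K_i\rangle\rangle$, and the entire difficulty of group-theoretic Dehn filling is to show that avoiding a finite set inside the peripheral subgroups prevents the normal closure from containing short elements anywhere in $G$. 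This requires genuine machinery --- Osin's van Kampen diagram and relative isoperimetric function arguments, the preferred-paths/Greendlinger-type argument of \cite{rhds}, or the rotating-families approach of \cite{DGO} --- none of which appears in the plan.

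The same issue affects conclusion (2): you explicitly defer the uniform hyperbolicity of $\bar X$ (the control of geodesic triangles whose lifts detour through horoballs) to ``the technical heart of \cite{osin:peripheral} and \cite{rhds}.'' There is also a mild circularity in the intended order of argument: in the actual proofs one does not first establish that $R$--balls embed and then deduce hyperbolicity; rather, hyperbolicity of the quotient cusped space (or the algebraic filling statement) is established first, and the ball-embedding statement, quoted in this paper as Theorem \ref{thm:uniformdelta}(2) from the appendix to \cite{agol:virtualhaken}, is a consequence. So your proposal is a reasonable roadmap that correctly identifies where the work lies, but the two steps it leaves to the references are exactly the content of the theorem, and without them it is not a proof.
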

\begin{notation}
  We'll also sometimes indicate a filling map by \[\phi\co (G,\mcP) \to (\barG,\barP)\] to emphasize the peripheral structure.  Here $\barP$ is the collection $\{\phi(P)\mid P\in\mcP\}$.
\end{notation}

In fact, there is uniform control on the geometry of the cusped spaces of the quotients.  Once generating sets are fixed for $G$ and the $P_i$, we can use the images of these generating sets to build the cusped spaces for the quotients after filling.  The cusped spaces are then determined completely.  In particular, the cusped space for a quotient $(\bar{G},\bar{\mc{P}})$ is an isometrically embedded subgraph of the quotient of $X(G,\mc{P})$ by the action of the kernel of the filling map $K$.  (The only difference between the two graphs is that the quotient by the action of $K$ may contain doubled edges and self-loops not present in $X(\bar{G},\bar{\mc{P}})$.)
\begin{theorem}\label{thm:uniformdelta} \cite[Theorem A.43(1)]{agol:virtualhaken}
  Using the cusped spaces just described, if $(G,\mc{P})$ is relatively hyperbolic, then:
  \begin{enumerate}
  \item There is a $\delta>0$ so that for all sufficiently long fillings $(G,\mc{P})\to (\bar{G},\bar{\mc{P}})$, the cusped space of $(\bar{G},\bar{\mc{P}})$ is $\delta$--hyperbolic, and
  \item Fix any (finite) ball $B\subseteq X(G,\mc{P})$.  For all sufficiently long fillings, the map $X(G,\mc{P})\to X(\bar{G},\bar{\mc{P}})$ restricts to an embedding of $B$ whose image is a metric ball in $X(\bar{G},\bar{\mc{P}})$.
  \end{enumerate}
\end{theorem}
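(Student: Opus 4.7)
The plan is to establish conclusion (2) first by a direct combinatorial analysis of the cusped space, then to derive conclusion (1) via a compactness-style argument. For (2), fix a ball $B=B_R(x_0)\subseteq X(G,\mc{P})$. Its finite vertex set meets only finitely many combinatorial horoballs, whose indexing cosets involve a finite set $F\subseteq G$. Let $F'=\{g^{-1}g'\mid g,g'\in F\}$ and, for each $i$, set $F_i'=F'\cap P_i$, also finite. Provided the filling kernels $K_i$ avoid $F_i'\smallsetminus\{1\}$ and the filling map $\phi$ is injective on $F'$ (both ensured for sufficiently long fillings by Theorem \ref{t:RHDF}(3)), distinct vertices of $B$ have distinct images in $X(\bar{G},\bar{\mc{P}})$: vertices in the Cayley graph part are separated because $\phi$ is injective on $F'$, and vertices inside horoballs remain distinct because their indexing cosets stay distinct and the depth coordinates are preserved by the construction of combinatorial horoballs. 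Adjacencies and distances are preserved because combinatorial horoballs are built from purely local data depending only on the peripheral subgroup and its generating set.

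For (1), I would argue by contradiction. Suppose no uniform $\delta$ works, and let $\delta_0$ denote the hyperbolicity constant of $X(G,\mc{P})$. Choose an exhaustion of $G\smallsetminus\{1\}$ by finite sets $\mc{B}_n$ and fillings $\phi_n\co(G,\mc{P})\to(\bar{G}_n,\bar{\mc{P}}_n)$ avoiding $\mc{B}_n$ whose cusped spaces $X_n$ fail to be $n$-hyperbolic. Witness each failure by a geodesic triangle $\Delta_n\subset X_n$ of bounded perimeter $P_n$ that is not $n$-thin. Using the $\bar{G}_n$-action, translate $\Delta_n$ to lie inside a ball of radius at most $P_n$ about the basepoint. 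By (2), for $n$ sufficiently large this ball embeds isometrically as a metric ball into $X(G,\mc{P})$, so $\Delta_n$ lifts to a geodesic triangle in $X(G,\mc{P})$ that fails to be $\delta_0$-thin as soon as $n>\delta_0$, contradicting the hyperbolicity of $X(G,\mc{P})$.

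The main obstacle is the lifting step in (1). One must verify that a geodesic in the quotient contained in an embedded metric ball actually lifts to a geodesic in the cover; this follows from the metric-ball embedding statement in (2), since that conclusion rules out shortcuts through the quotient identifications for paths whose lifts stay inside $B$. The other delicate ingredient is ruling out identifications between distinct horoballs meeting the chosen ball, which reduces to the finite condition that no element of any filling kernel lies in $F'$. Both issues are handled by appealing to Theorem \ref{t:RHDF} and choosing the long-filling threshold large enough in terms of the chosen ball radius.
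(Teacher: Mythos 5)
Your argument for part (1) has a fatal quantifier problem, rooted in the fact that hyperbolicity is not a local property. A geodesic triangle in $X_n$ witnessing the failure of $n$--thinness necessarily has a side of length greater than $2n$, so the region you need to transport back to $X(G,\mc{P})$ has diameter growing at least linearly in $n$. To lift it using part (2) you would need the filling $\phi_n$ to be sufficiently long \emph{relative to the radius} $P_n\gtrsim n$, but $\phi_n$ was chosen first (it only avoids $\mc{B}_n$), and the ``sufficiently long'' threshold in (2) depends on the ball and may be far larger than $\mc{B}_n$; no diagonal argument repairs this, because the counterexample fillings are handed to you before you know which radius you need. Moreover, since your proof of (1) uses nothing about the structure of Dehn filling kernels beyond (2), it would prove the general statement ``a stably faithful sequence of quotients of a hyperbolic space whose balls of radius tending to infinity embed isometrically is uniformly hyperbolic,'' which is false: for $G=F_2=\langle a,b\rangle$ and $G_n=\langle a,b\mid [a^n,b^n]\rangle$, the kernel meets the ball of radius $n-1$ trivially (check via the quotient $\bZ/n \ast \bZ/n$), so balls of radius about $n/4$ embed isometrically, yet $G_n$ contains $\bZ\oplus\bZ$ and is not hyperbolic at all. (There is also a secondary problem with the translation step: $G$ does not act cocompactly on the cusped space, since depth in horoballs is preserved, so a fat triangle deep in a horoball cannot be translated near the basepoint.) This is why the paper does not prove the theorem but cites Theorem A.43 of the appendix to Agol's paper, where part (1) is obtained by a quantitative reworking of the Dehn filling machinery (uniform control of how the kernel $K=\langle\langle K_1\cup\dots\cup K_n\rangle\rangle$ acts on the cusped space), not by a compactness argument.

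Part (2) is essentially correct in spirit but has a gap as written: injectivity on the vertices of $B$ is not enough, since the image must be a metric ball, i.e.\ the quotient map must not create shortcuts, and the relevant danger comes from nontrivial elements of the full kernel $K$ of the filling, not merely from elements of the filling kernels $K_i$ (your final reduction conflates these), nor is it controlled by injectivity of $\phi$ on $F'$ (which does not prevent $\phi(g^{-1}g')$ from lying in $\bar{P}_i$ when $g^{-1}g'\notin P_i$). The correct reduction is: if $B=B_R(x_0)$ and $d(x,ky)<d(x,y)$ for some $x,y\in B$ and $k\in K\smallsetminus\{1\}$, then $k$ moves a point of $B_R(x_0)$ into $B_{3R}(x_0)$; since $G$ acts freely on the vertices of the locally finite graph $X(G,\mc{P})$, the set $S=\{g\neq 1\mid gB_R(x_0)\cap B_{3R}(x_0)\neq\emptyset\}$ is finite, and applying Theorem \ref{t:RHDF}(3) to $S\cup\{1\}$ gives $K\cap S=\emptyset$ for all sufficiently long fillings. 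That single finite condition rules out both vertex identifications and shortcuts, and surjectivity onto the metric ball is automatic because edge paths lift; but no analogous rescue exists for your argument for part (1).
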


\begin{definition}
A sequence $\{ \eta_i \co G \to \bar{G}_i \}$ of fillings is {\em stably faithful} if $\eta_i$ is faithful on the ball of radius $i$ about $1$ in $G$.
\end{definition}

\begin{corollary} \label{cor:stably faithful}
If $\mc{Q}$ is a property of groups and it is {\em not} the case that all sufficiently long fillings of a relatively hyperbolic pair $(G,\mc{P})$ satisfy $\mc{Q}$ then there is a stably faithful sequence of fillings $\{ \eta_i \co G \to \bar{G}_i \}$ so that for each $i$ the group $\bar{G}_i$ does not satisfy $\mc{Q}$.
\end{corollary}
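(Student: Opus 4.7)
The plan is to peel the sequence off one filling at a time: at stage $i$ I would use clause (3) of Theorem \ref{t:RHDF} to locate a finite ``forbidden set'' that forces injectivity on the ball $B_i$ of radius $i$ about $1 \in G$, and then invoke the failure of $\mc{Q}$ to pass to long fillings to produce a bad filling that avoids that forbidden set.

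More precisely, fix a finite generating set for $G$ and let $B_i$ denote the ball of radius $i$ about $1$. Since $B_i$ is finite, clause (3) of Theorem \ref{t:RHDF} says that the property ``$\phi$ is injective on $B_i$'' holds for all sufficiently long fillings of $(G,\mc{P})$. By Definition \ref{def:fill}, this yields a finite set $\mc{B}_i \subset G\smallsetminus\{1\}$ such that whenever filling kernels $K_1^{(i)},\ldots,K_n^{(i)}$ satisfy $K_j^{(i)}\cap \mc{B}_i=\emptyset$ for every $j$, the associated filling is injective on $B_i$.

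Next I would unpack the hypothesis. Negating Definition \ref{def:fill}, the assumption that $\mc{Q}$ does not hold for all sufficiently long fillings says: for every finite $\mc{B}\subset G\smallsetminus\{1\}$ there exist filling kernels $K_j \unlhd P_j$ with $K_j\cap \mc{B}=\emptyset$ for all $j$ such that the resulting quotient fails $\mc{Q}$. Applying this negation to $\mc{B}=\mc{B}_i$ produces, for each $i$, a filling $\eta_i\co G\to \bar{G}_i$ whose kernels avoid $\mc{B}_i$ and whose target fails $\mc{Q}$. By the choice of $\mc{B}_i$, the map $\eta_i$ is injective on $B_i$, so $\{\eta_i\}$ is stably faithful and each $\bar{G}_i$ fails $\mc{Q}$, as required.

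There is no serious obstacle here; the argument is essentially a diagonal extraction that combines clause (3) of Theorem \ref{t:RHDF} with the precise form of Definition \ref{def:fill}. The only point to watch is the order of quantifiers: the forbidden set $\mc{B}_i$ guaranteeing injectivity on $B_i$ must be fixed first, and only then is the filling $\eta_i$ chosen using the negated hypothesis applied to that particular $\mc{B}_i$.
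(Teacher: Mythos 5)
Your argument is correct and is exactly the intended one: the paper gives no separate proof of this corollary, treating it as an immediate consequence of Theorem \ref{t:RHDF}(3) (applied with $\mc{F}$ the ball of radius $i$) together with the negation of the ``sufficiently long'' quantifier in Definition \ref{def:fill}, which is precisely your diagonal extraction. Your remark about fixing the forbidden set $\mc{B}_i$ before invoking the negated hypothesis is the right point to be careful about, and you handle it correctly.
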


\begin{lemma} \label{lem:stably hyperbolic}
Suppose that $(G,\mc{P})$ is relatively hyperbolic and that $g \in G$ is an RH-hyperbolic element.  Then for sufficiently long fillings $G \twoheadrightarrow \bar{G}$ the image of $g$ is RH-hyperbolic in $(\bar{G},\bar{\mc{P}})$.
\end{lemma}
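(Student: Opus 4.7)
The plan is to produce a $\bar g$-invariant bi-infinite quasi-geodesic in the cusped space $X(\bar G, \bar{\mc P})$; its two endpoints at infinity will give $\bar g$ two distinct fixed points in the Bowditch boundary, so $\bar g$ is neither elliptic nor parabolic and is therefore RH-hyperbolic.

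Fix the basepoint $x_0 \in X(G, \mc P)$ equal to the vertex $1 \in G$. Since $g$ is RH-hyperbolic, the orbit map $n \mapsto g^n x_0$ is a $(\lambda, c)$-quasi-geodesic in $X(G, \mc P)$ for some $\lambda \ge 1$ and $c \ge 0$ (from the quasi-axis property recalled in the preliminaries). By Theorem \ref{thm:uniformdelta}(1) there is a uniform $\delta > 0$ such that $X(\bar G, \bar{\mc P})$ is $\delta$-hyperbolic for all sufficiently long fillings. The standard local-to-global principle for quasi-geodesics in $\delta$-hyperbolic spaces then yields a threshold length $L$ and new constants $(\lambda', c')$ with the property that any path which is a $(\lambda, c)$-quasi-geodesic on every sub-segment of length at most $L$ is globally a $(\lambda', c')$-quasi-geodesic.

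Next, I would choose $M$ large enough that the finite sub-orbit $\{g^n x_0 : -M \le n \le M\}$ has diameter greater than $L$; this is possible since $g$ has positive stable translation length. Let $R$ be a bound on pairwise distances within this sub-orbit. By Theorem \ref{thm:uniformdelta}(2), for all sufficiently long fillings the ball of radius $R$ about $x_0$ in $X(G, \mc P)$ embeds as a metric ball in $X(\bar G, \bar{\mc P})$. Consequently the images $\bar g^n x_0$ for $-M \le n \le M$ have the same pairwise distances in the filled cusped space as the corresponding points $g^n x_0$ in $X(G, \mc P)$. By $\bar g$-equivariance, this isometric comparison transfers to every $(2M+1)$-window of the bi-infinite sequence $\{\bar g^n x_0\}_{n \in \mathbb Z}$.

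Therefore this bi-infinite orbit is an $L$-local $(\lambda, c)$-quasi-geodesic in $X(\bar G, \bar{\mc P})$, hence a global $(\lambda', c')$-quasi-geodesic by the local-to-global principle. Its two ideal endpoints are $\bar g$-fixed and distinct, giving a quasi-axis for $\bar g$, so $\bar g$ is RH-hyperbolic in $(\bar G, \bar{\mc P})$. The only real obstacle one might anticipate is uniform geometric control over filled cusped spaces together with the preservation of metric balls, but these are exactly the two clauses of Theorem \ref{thm:uniformdelta}; what remains is a routine hyperbolic geometry argument.
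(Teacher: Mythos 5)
Your proposal is correct and takes essentially the same route as the paper's proof: uniform $\delta$--hyperbolicity of the filled cusped spaces (Theorem \ref{thm:uniformdelta}(1)), the embedded-ball statement (Theorem \ref{thm:uniformdelta}(2)), and the local-to-global principle for quasi-geodesics, applied to a $g$--invariant quasi-geodesic whose image then serves as a quasi-axis for $\bar g$.  The differences are cosmetic: you use the discrete orbit $\{g^n x_0\}$ instead of a continuous quasi-axis, which means the degenerate ``loop'' case the paper excludes separately is ruled out automatically by the lower quasi-geodesic bound on a bi-infinite path; and for the claim that pairwise distances are preserved you should apply Theorem \ref{thm:uniformdelta}(2) to a somewhat larger ball (radius $2R$, say), so that geodesics in $X(\bar G,\bar{\mc P})$ between image points stay inside the embedded ball and can be lifted.
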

\begin{proof}
  Let $\delta$ be so that cusped spaces of sufficiently long fillings of $(G,\mc{P})$ are all $\delta$--hyperbolic.  The existence of such a $\delta$ is the first part of Theorem \ref{thm:uniformdelta}, and we assume that the cusped space for $(G,\mc{P})$ is $\delta$-hyperbolic also.

  Let $g$ be an RH-hyperbolic element of $G$.  Then for some $\lambda,\epsilon$, $g$ preserves a $(\lambda,\epsilon)$--quasigeodesic axis $\gamma$, moving every point on $\gamma$ a distance of $|g|$, the translation length of $g$.  By \cite[Proposition 3.1.4]{cdp} there are $k,\lambda',\epsilon'$ (depending only on $\delta,\lambda,\epsilon$) so that every $k$--local $(\lambda,\epsilon)$--quasigeodesic is a global $(\lambda',\epsilon')$--quasigeodesic.  By quasigeodesic stability \cite[Th\'eor\`eme 3.1.2]{cdp} any such quasigeodesic lies in an $R_0$--neighborhood of a geodesic with the same endpoints, where $R_0$ depends only on $\delta,\lambda',\epsilon'$.  

Let $R = 2k+|g|+2R_0$, and
let $B$ be an $R$--ball centered on some point of $\gamma$.  
For sufficiently long fillings $B$ embeds in the cusped space of the filling, by the second part of Theorem \ref{thm:uniformdelta}.
Let $\bar{\gamma}$ be the image of $\gamma$.  We claim that $\bar\gamma$ is a $k$--local $(\lambda,\epsilon)$--quasigeodesic.  Indeed, let $\sigma$ be a subsegment of $\bar\gamma$ of length $k$.  There is a $k$--neighborhood of $\sigma$ contained in the image of $g^iB$ for some $i$, so $\sigma$ is the isometric image of a subsegment of the $(\lambda,\epsilon)$--quasigeodesic $\gamma$.  

If $\bar\gamma$ were a loop, it
would be a $(\lambda',\epsilon')$--quasigeodesic loop, which by quasigeodesic stability would have length at most $R_0$.  Since $R>2 R_0$, this is not the case.

It follows that $\bar\gamma$ is an infinite quasigeodesic, preserved by the image of $g$, so the image of $g$ is RH-hyperbolic. 
\end{proof}

\begin{remark} 
 We sketch another, almost formal proof of Lemma \ref{lem:stably hyperbolic}:  Let $E(g)$ be the maximal elementary subgroup containing the RH-hyperbolic element $g$ and note that $(G,\mcP\cup \{E(g)\})$ is relatively hyperbolic \cite[Corollary 1.7]{osin:bg}.  Extend the filling of $(G,\mcP)$ by a trivial filling of $E(g)$.  Then use almost malnormality in the quotient.  
\end{remark}

\subsection{Peripheral splittings} \label{ss:Peripheral Splittings}

\begin{definition} [Peripheral splitting]
Suppose that $(G,\mcP)$ is a relatively hyperbolic group.  A {\em peripheral splitting} of $(G,\mcP)$ is a bipartite graph of groups with fundamental group $G$ where the vertex groups of one color are precisely the peripheral subgroups $\mcP$.
\end{definition}

\begin{proposition} \cite[Proposition 5.1]{bowditch:peripheral} \label{prop:edge group parabolic}
The relatively hyperbolic pair $(G,\mcP)$ has a nontrivial peripheral splitting if and only if it has a nontrivial splitting over a (not necessarily maximal) parabolic subgroup in which all peripheral subgroups are elliptic.
\end{proposition}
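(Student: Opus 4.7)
My plan is to prove the two directions separately; the forward implication is essentially a definition chase, while the reverse implication is the substantive content.

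\textbf{Forward direction.} A nontrivial peripheral splitting is by definition a bipartite graph of groups with fundamental group $G$, one color class of whose vertex groups is exactly $\mcP$ up to conjugacy. Every edge joins a peripheral vertex to a non-peripheral one, hence every edge stabilizer is contained in some $P \in \mcP$ and is therefore parabolic. Each $P \in \mcP$ is a vertex group, hence elliptic in the Bass-Serre tree. This is the required splitting.

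\textbf{Reverse direction.} Begin with a nontrivial minimal action $G \curvearrowright T$ whose edge stabilizers are parabolic and in which every $P \in \mcP$ acts elliptically. The goal is to construct a bipartite $G$-tree $T'$ whose color classes are the conjugates of elements of $\mcP$ and a complementary collection of ``non-peripheral'' vertices. First simplify $T$ so that each peripheral $gPg^{-1}$ fixes a unique vertex: if $gPg^{-1}$ fixes an edge $e$, then $\stab(e)$ is an infinite parabolic containing $gPg^{-1}$, and by almost malnormality of $\mcP$ this forces $\stab(e)=gPg^{-1}$, so $e$ can be collapsed. After this reduction, form $T'$ by equivariantly inserting, on each edge $e$ of $T$ with infinite parabolic stabilizer $H$, a new peripheral vertex whose stabilizer is the unique maximal peripheral containing $H$ (unique, again, by almost malnormality), with both halves of the subdivided edge carrying stabilizer $H$. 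Inserting a vertex of matching stabilizer in any remaining edge between two non-peripheral vertices restores bipartiteness. Quotienting by $G$ yields the desired peripheral splitting, and nontriviality is preserved because collapsing all inserted edges recovers the simplified $T$.

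\textbf{Main obstacle.} The substantive technical work lies in the combinatorial bookkeeping: the $G$-equivariant identifications of inserted vertices must produce an actual tree (no cycles introduced) and $G$ must act without inversions. Almost malnormality of $\mcP$ is the central ingredient, making both the collapse step valid and the assignment ``infinite parabolic edge stabilizer $\mapsto$ containing peripheral'' well-defined and equivariant. Edges with finite or trivial stabilizers need separate handling since no canonical maximal peripheral contains them; they can be retained by inserting a non-peripheral vertex of matching stabilizer so as to preserve bipartiteness without altering the essential structure of the splitting. A final consistency check shows that the constructed peripheral splitting remains nontrivial, so as not to produce the degenerate case where $G$ itself becomes one of the vertex groups.
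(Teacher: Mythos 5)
The paper gives no argument for this statement at all (it is quoted from Bowditch \cite{bowditch:peripheral}), so your proposal has to stand on its own, and its central step does not work as described. Your forward direction is fine modulo the routine remark that one can collapse to a single edge orbit while staying nontrivial. In the reverse direction, however, the operation ``insert on each edge $e$ with stabilizer $H$ a new peripheral vertex whose stabilizer is the maximal peripheral $\hat H\supseteq H$'' is not a legitimate move on $G$--trees. If each edge gets its own midpoint, the midpoint stabilizer is forced to be $H$, not $\hat H$ (declaring it to be $\hat H$ as a graph-of-groups insertion replaces $A\ast_H B$ by $A\ast_H\hat H\ast_H B$ and changes the fundamental group); if instead you equivariantly identify the midpoints of all edges assigned to the same $\hat H$ (the only way the stabilizer can actually become $\hat H$), the result is typically not a tree. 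Concretely, take a one-edge splitting $G=A\ast_H B$ with $H$ infinite, $H\le P\le A$, $H\neq P$, $P\in\mcP$: the edges whose stabilizers lie in $P$ are exactly the $P$--translates of the original edge, and they all share the endpoint stabilized by $A$, so identifying their midpoints produces $[P:H]$ parallel edges between that vertex and the new $P$--vertex. Almost malnormality makes the assignment $H\mapsto\hat H$ well defined, but it does not prevent these cycles; the missing content is precisely the folding of all those half-edges into a single edge with group $P$ (yielding $A\ast_P P\ast_H B$), or equivalently Bowditch's construction of a bipartite tree from a $G$--invariant family of subtrees, each edge lying in exactly one member — the same device (\cite[Lemma 3.5]{bowditch:peripheral}) that this paper uses in the proof of Lemma \ref{lem:make peripheral elliptic}. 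One must in particular show that the edges assigned to a given peripheral span a subtree whose setwise stabilizer is that peripheral; that is where the real work lies, and it is absent from your argument.

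There are further structural problems. Your preliminary step collapses any edge fixed by a peripheral conjugate; when the given splitting is itself a one-edge splitting over a maximal peripheral $P$ (a case the proposition must cover), this collapses the whole tree to a point, whereas the correct output is the subdivided splitting $A\ast_P P\ast_P B$ — so the reduction is actively harmful rather than harmless. The definition of a peripheral splitting also requires the peripheral-colour vertex groups to be \emph{precisely} the conjugates of all elements of $\mcP$: peripherals containing no (infinite) edge stabilizer never appear in your construction and must be attached as additional valence-one peripheral vertices. Finally, subdividing an edge with finite or trivial stabilizer by a \emph{non-peripheral} midpoint does not restore bipartiteness, since both endpoints of such an edge are non-peripheral; handling these edges (and finite peripherals, which defeat the almost-malnormality arguments you invoke) needs a separate idea, not bookkeeping.
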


\subsection{Automorphisms}
The following notion is required in the proof of Theorem \ref{t:maintheorem}.

\begin{definition}
Suppose that $G$ is a group and $\mc{P}$ is a collection of subgroups.  Let $\mathrm{Inn}(G)$ be the set of inner automorphisms of $G$, and let
\[ \Aut_{\mc{P}}(G) = \left\{\phi\in \Aut(G)\mid \forall P\in \mc{P},\ \exists \psi\in \mathrm{Inn}(G)\mbox{ so }\phi|_P = \psi|_P
\right\}. \]
\end{definition}
Thus elements of $\Aut_{\mc{P}}(G)$ are those automorphisms which restrict to an inner automorphism on each $P \in \mc{P}$ (a different inner automorphism for each $P$, possibly).

The following is a key example of an automorphism in $\Aut_{\mc{P}}(G)$.

\begin{example} \label{Ex:AutPG}
Suppose that $(G,\mc{P})$ is relatively hyperbolic and that $C$ is a two-ended subgroup whose center contains an RH-hyperbolic element $c$.  Suppose that $G = A \ast_C B$ and that each element of $\mc{P}$ is elliptic in this splitting (i.e. conjugate into $A$ or into $B$).  Let $\tau_c \co G \to G$ be the \emph{Dehn twist about $c$}, defined as
$\tau_c(a) = a$ if $a \in A$ and $\tau_c(b) = b^c$ if $b \in B$.  Then $\tau_c \in \Aut_{\mc{P}}(G)$.
\end{example}

Since the kernel of a filling map is the normal closure in $G$ of subgroups of elements of $\mc{P}$, the following is clear.

\begin{lemma}\label{lem:retain stably faithful}
Suppose that $(G,\mc{P})$ is a group pair, and that $\phi \in \Aut_{\mc{P}}(G)$.  If
\[	\eta \co G \to G(N_1,\ldots,N_m)	\]
is a Dehn filling of $(G,\mc{P})$ then $\ker(\eta \circ \phi) = \ker(\eta)$.

In particular, if $\{ \eta_i \co G \to \bar{G}_i \}$ is a stably faithful sequence of fillings and $\{ \phi_i \}$ is any sequence from $\Aut_{\mc{P}}(G)$ then $\{ \eta_i \circ \phi_i \}$ is a stably faithful sequence of fillings.
\end{lemma}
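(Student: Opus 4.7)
The plan is to observe that any $\phi \in \Aut_{\mc{P}}(G)$ preserves the filling kernel $\ker(\eta)$. The key is to unpack the hypothesis: for each $P_i \in \mc{P}$ there exists $g_i \in G$ so that $\phi|_{P_i}$ is conjugation by $g_i$, and in particular $\phi(N_i) = g_i N_i g_i^{-1}$ for each filling kernel $N_i \unlhd P_i$.

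Next I would use the fact that $\ker(\eta)$ is the normal closure $K$ in $G$ of $N_1 \cup \cdots \cup N_m$. Since the normal closure of a subset of $G$ is unchanged when its elements are conjugated by arbitrary elements of $G$, the subset $\bigcup_i \phi(N_i) = \bigcup_i g_i N_i g_i^{-1}$ has the same normal closure as $\bigcup_i N_i$, namely $K$. On the other hand, because $\phi$ is an automorphism, $\phi(K)$ equals the normal closure of $\phi(\bigcup_i N_i) = \bigcup_i \phi(N_i)$, so $\phi(K) = K$. Applying $\phi^{-1}$, we obtain $\ker(\eta \circ \phi) = \phi^{-1}(\ker \eta) = \phi^{-1}(K) = K = \ker(\eta)$, which is the first assertion.

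For the stably faithful statement, I would observe that whether a homomorphism $\psi \co G \to H$ is injective on a subset $B \subseteq G$ is determined by $\ker(\psi)$: it is injective on $B$ if and only if $B B^{-1} \cap \ker(\psi) = \{1\}$. Applying the first part index by index, $\ker(\eta_i \circ \phi_i) = \ker(\eta_i)$, so $\eta_i \circ \phi_i$ is injective on the ball of radius $i$ about $1$ in $G$ if and only if $\eta_i$ is. Hence stable faithfulness of $\{\eta_i\}$ is equivalent to stable faithfulness of $\{\eta_i \circ \phi_i\}$.

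The lemma is essentially a direct unraveling of the definition of $\Aut_{\mc{P}}(G)$ and of the fact that Dehn filling kernels are defined as normal closures, so I do not anticipate any real obstacle; the only subtlety worth stating carefully is that $\phi$ need not send $N_i$ to $N_i$ on the nose, only to a conjugate, which is exactly why the normal-closure formulation of the kernel is used.
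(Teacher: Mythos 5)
Your proof is correct and is exactly the argument the paper has in mind: the paper states the lemma without proof, noting only that it is clear "since the kernel of a filling map is the normal closure in $G$ of subgroups of elements of $\mc{P}$," and your write-up simply spells out that observation (conjugating the $N_i$ by the elements $g_i$ implementing $\phi|_{P_i}$ does not change the normal closure, so $\phi(K)=K$). The stably faithful conclusion then follows just as you say, since injectivity on the ball of radius $i$ depends only on the kernel.
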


\section{Elementary splittings}\label{sec:boundsplit}

 Let $(G,\mc{P})$ be relatively hyperbolic.  Recall that a subgroup $H \le G$ is {\em elementary} if it is either finite, conjugate into some element of $\mc{P}$ or two-ended.
In this section, we are interested in splittings of relatively hyperbolic groups over elementary subgroups.  We call such splittings {\em elementary splittings}.   We are also interested in elementary splittings of quotients (under Dehn fillings).

Bowditch \cite[Proposition 5.2]{bowditch:peripheral} proves that certain splittings over parabolic subgroups can be improved to splittings in which all parabolic subgroups act elliptically (See also \cite[Lemma 2.1]{sela:dio1}).  We need a slightly different statement, but the proof we give below is essentially Bowditch's.    
\begin{lemma}\label{lem:make peripheral elliptic}
  Let $(G,\mc{P})$ be relatively hyperbolic, and suppose that $G$ admits a splitting over a parabolic or abelian subgroup so that every multi-ended element of $\mc{P}$ is elliptic.  Then $G$ admits a splitting over a parabolic or abelian subgroup so that every element of $\mc{P}$ is elliptic.

  If the original splitting was over a parabolic subgroup, then so is the $\mcP$--elliptic splitting.
\end{lemma}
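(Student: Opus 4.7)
My plan is to mirror the argument of Bowditch in \cite[Proposition 5.2]{bowditch:peripheral}. Let $T$ be the Bass--Serre tree of the given splitting of $G$ over parabolic (or abelian) edge groups; since every multi-ended $P \in \mc{P}$ is elliptic by hypothesis, any non-elliptic peripheral is one-ended. I will modify $T$ equivariantly to produce a new splitting in which \emph{every} peripheral is elliptic, and in which edges remain in the same class (parabolic or abelian).

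First I would analyze, for each non-elliptic $P \in \mc{P}$, the minimal $P$-invariant subtree $T_P \subseteq T$. Its edges have $P$-stabilizers of the form $P \cap C^g$, where $C$ is an edge stabilizer of $T$. Because $P$ is one-ended, Stallings' theorem rules out a splitting of $P$ over a finite subgroup, so every such $P \cap C^g$ must be infinite. In the parabolic case, $C \subseteq Q \in \mc{P}$, so the infinite intersection $P \cap Q^g$ forces $Q^g = P$ by almost malnormality of $\mc{P}$, and hence $C^g \subseteq P$; the edge stabilizers of $T_P$ are parabolic subgroups of $P$ itself. In the abelian case the stabilizers are automatically abelian.

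Next I would construct the modified tree $T'$ as the quotient of $T$ by the equivalence relation generated by $v \sim w$ whenever both $v$ and $w$ lie in $g T_Q$ for some $g \in G$ and some non-elliptic $Q \in \mc{P}$. Each equivalence class is a connected union of convex subtrees, hence itself a subtree of $T$; uniqueness of paths in $T$ then guarantees that $T'$ is a tree (any edge of $T$ with both endpoints in one class lies entirely in some $gT_Q$ and is collapsed, so no loops appear). In $T'$, every peripheral is elliptic by design, anything elliptic in $T$ remains elliptic, and edges of $T'$ are precisely those edges of $T$ not collapsed, so their stabilizers are unchanged and still parabolic (resp.\ abelian).

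The main technical obstacle is ensuring $T'$ is nontrivial. I would handle this inductively, collapsing one peripheral orbit at a time: if collapsing the $G$-orbit of a single $T_P$ already yields a point, then $T = G \cdot T_P$ and the $P$-action on $T_P$ itself supplies a nontrivial splitting of $P$ with edge groups as analyzed above. Using almost malnormality together with self-normalization of peripheral subgroups (any $g \in G$ with $P \cap gPg^{-1}$ infinite must lie in $P$) one verifies that distinct $G$-translates of $T_P$ share no edges; the bipartite tree of translates and their shared vertices then provides an alternative $G$-tree in which $P$ is elliptic and the edge groups remain in the required class. Iterating through the finitely many non-elliptic peripheral orbits completes the proof, and the preservation of parabolicity at each stage yields the final clause of the statement.
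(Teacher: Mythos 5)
Your plan is, at bottom, the same argument the paper uses (following Bowditch): pass to the minimal subtree $T_P$ of a non-elliptic (hence one-ended) peripheral, use one-endedness and minimality to see that edge stabilizers of $T_P$ meet $P$ in infinite subgroups, use almost malnormality to show distinct translates of $T_P$ meet in at most one vertex and that the setwise stabilizer of $T_P$ is $P$, and then build the bipartite incidence tree; your collapse-first preprocessing is a harmless variant (the paper simply starts from a one-edge splitting, in which case one is always in your ``degenerate'' case). The genuine gap is in the abelian case. You prove that the full $G$-stabilizer $H$ of an edge of $T_P$ is contained in $P$ only when edge groups are parabolic; in the abelian case you merely remark that $P\cap H$ is abelian, and later assert that ``almost malnormality together with self-normalization'' shows distinct translates of $T_P$ share no edges. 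As stated this deduction fails: if $e$ is an edge of $T_P\cap gT_P$ with stabilizer $H$, then minimality and one-endedness give that $P\cap H$ and $gPg^{-1}\cap H$ are both infinite, but two infinite subgroups of an abelian group can intersect trivially, so this does not yet make $P\cap gPg^{-1}$ infinite, and almost malnormality cannot be invoked. The missing ingredient, which the paper supplies, is that $H\le P$ holds in the abelian case as well: every $h\in H$ centralizes the infinite subgroup $H\cap P$, so $H\cap P\le P\cap hPh^{-1}$, whence $h\in P$ by almost malnormality. Without $H\le P$ your degenerate-case construction is unjustified on two counts: you need every edge of $T$ to lie in \emph{exactly} one translate of $T_P$ for the bipartite incidence graph to be a tree, and you need the setwise stabilizer of $T_P$ to be exactly $P$ for the edge groups of that tree to be parabolic (note they become parabolic, not abelian, which is still within the allowed class, but your phrase ``remain in the required class'' glosses over this).

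A smaller point: in the non-degenerate case you check that $T'$ has edges, but nontriviality also requires that $G$ fix no vertex of $T'$. A fixed vertex that is the image of an uncollapsed vertex contradicts nontriviality of the original splitting, but a fixed vertex that is the image of a collapsed class would be a proper $G$-invariant subtree of $T$; so you should first replace $T$ by a minimal $G$-invariant subtree (which preserves all your hypotheses) before collapsing.
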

\begin{proof}
  Let $T$ be the Bass-Serre tree for a one-edge splitting of $G$ over a parabolic or abelian subgroup, so that every multi-ended element of $\mc{P}$ is elliptic.  If every $P\in \mc{P}$ is elliptic in this tree, then there is nothing to prove.  

  So assume that some (necessarily one-ended) $P_0\in \mc{P}$ fixes no vertex of $T$, and let $T_0$ be a minimal $P_0$--invariant tree.  Fix some edge $e$ of $T_0$, and let $H$ be the stabilizer of $e$ in $G$.  Since $P_0$ is one-ended, $H\cap P_0$ must be infinite.  Since $H$ is an edge stabilizer, it is either parabolic or abelian.  
  \begin{claim}
    $H<P_0$.
  \end{claim}
  \begin{proof}
    Suppose that $H$ is parabolic.  Since $H\cap P_0$ is infinite, and $\mc{P}$ is almost malnormal, $H<P_0$.

    Suppose that $H$ is abelian.  Let $H_0 = H\cap P_0$, which we have observed is infinite.  Note that $H_0$ is contained in $hP_0h^{-1}\cap P_0$ for any $h\in H$.  But then $h\in P_0$, by almost malnormality of $P_0$.
  \end{proof}
  \begin{claim}
    Let $P \neq P_0$ be an element of $\mc{P}$.  Then $P$ fixes a point in $T$.
  \end{claim}
  \begin{proof}
    If $P$ didn't fix a point, $P$ would split over some $H' = P\cap H^g<P\cap P_0^g$.  By almost malnormality of $\mc{P}$, this group $H'$ would have to be finite, implying that $P$ was not one-ended.  But we are assuming multi-ended peripheral groups act elliptically.
  \end{proof}
  \begin{claim}
    $P_0$ is the setwise stabilizer of $T_0$.  In other words, $P_0 = \{g\in G\mid gT_0 = T_0\}$.  Moreover, if $g \not\in P_0$ then $| gT_0 \cap T_0 | \le 1$.
  \end{claim}
  \begin{proof}
    Let $g\in G$, and suppose that $e$ is an edge in $T_0 \cap gT_0$.  We proved above that if $H$ is the stabilizer of $e$ then $H < P_0$.  We want to show $g\in P_0$.  The stabilizer of $g^{-1}e$ is $g^{-1}H g$.  Some edge of the graph of groups $P_0\backslash T_0$ is labeled by a $P_0$-conjugate of the edge stabilizer $g^{-1}H g\cap P_0$.  Since $T_0$ is minimal, the corresponding one-edge splitting is nontrivial.  Since $P_0$ is one-ended, $g^{-1}H g\cap P_0$ is infinite, and so  $g\in P_0$ by almost malnormality of $P_0$.
  \end{proof}
  Now every edge in $T$ is in exactly one translate of $T_0$, so we have a partition of the edges of $T$ to which we can apply the construction from \cite[Lemma 3.5]{bowditch:peripheral}.  Namely, we let $S$ be a bipartite tree with red vertices equal to the set $\mc{A}$ of translates of $T_0$, and black vertices equal to the vertex set $V(T)$ of $T$.  Connect $v\in V(T)$ to $gT_0\in \mc{A}$ if and only if $v\in gT_0$.  Edge stabilizers are conjugate into $P_0$, so they are parabolic.  
  
  Finally, we explain why all elements $P \in \mc{P}$ fix points in $S$.  It is clear that $P_0$ fixes the vertex of $S$ corresponding to $T_0$.  On the other hand, if $P \in \mc{P}$ is not $P_0$ then it fixes a vertex of $T$, which is still a vertex of $S$.
\end{proof}

We next want to prove a result (Lemma \ref{lem:2csplit} below) saying that we can improve splittings of fillings to $(2,C)$--acylindrical splittings.  We'll need a result about uniformity of almost malnormality in fillings.  
\begin{definition}
  A collection of subgroups $\mcP$ of $G$ is \emph{$C$--almost malnormal} if there is a constant $C$ so that
 \[\#(P_1\cap g P_2 g^{-1})>C\mbox{ for }g\in G\mbox{ and } P_1,P_2\in \mcP \]
implies $P_2 = P_1$ and $g\in P_1$.
\end{definition}
The following is well-known.
\begin{lemma}\label{lem:CAM}
  Suppose $(G,\mcP)$ is relatively hyperbolic.  Then $\mcP$ is $C$--almost malnormal for some $C$.
\end{lemma}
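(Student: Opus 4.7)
The plan is to argue by contradiction using the geometry of the cusped space $X=X(G,\mcP)$, which is $\delta$--hyperbolic and on which $G$ acts freely and properly. If no such $C$ existed, then by finiteness of $\mcP$ we could pass to a subsequence and fix $P_1,P_2\in\mcP$ together with elements $g_n\in G$ (with either $P_1\neq P_2$ or $g_n\notin P_1$) such that $H_n:=P_1\cap g_nP_2g_n^{-1}$ satisfies $|H_n|\to\infty$. The qualitative almost malnormality of $\mcP$ (recorded just before the statement) guarantees that each $H_n$ is finite, so every element of $H_n$ has finite order. Each $H_n$ simultaneously stabilizes the two distinct horoballs $B_1$ and $g_nB_2$ in $X$, and hence every $h\in H_n$ fixes the two distinct parabolic points $\xi_1,\xi_n\in\partial(G,\mcP)$.

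The main geometric step is to produce, for each $n$, a point $c_n\in X$ at which the orbit of $H_n$ is uniformly small: using the standard quasi-center construction for a finite group acting by isometries on a $\delta$--hyperbolic space, one obtains $c_n$ with $d(c_n,hc_n)\le 4\delta$ for every $h\in H_n$. Because $G$ acts freely on $X$, the orbit map $h\mapsto hc_n$ embeds $H_n$ into $B(c_n,4\delta)$, so $|H_n|\le\#B(c_n,4\delta)$. Choosing the basepoint of the quasi-center construction to lie on a bi-infinite geodesic $\gamma_n$ joining $\xi_1$ and $\xi_n$ forces $c_n$ to lie within $O(\delta)$ of $\gamma_n$, since by quasi-geodesic stability each $h\in H_n$ sends $\gamma_n$ into its own $2\delta$--neighborhood.

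The principal obstacle is that the cusped space does not have uniformly bounded local geometry: combinatorial horoballs in $X$ have vertex valence that grows with depth, so that $\#B(c_n,4\delta)$ need not be bounded independently of $n$. Overcoming this requires controlling the position of $c_n$, using the fact that $\gamma_n$ is a genuine geodesic (not merely a quasi-geodesic) joining two \emph{distinct} parabolic points, so that the coarse fixed-set of $H_n$ lies in a portion of $X$ whose local geometry is bounded uniformly in $n$; counting then gives a contradiction with $|H_n|\to\infty$. This delicate last step is the hardest piece of the argument; alternatively, the lemma can be deduced from the uniform-almost-malnormality / finite-width results for peripheral subgroups of relatively hyperbolic groups established by Osin (see \cite{osin:peripheral}).
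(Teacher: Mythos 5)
Your outline correctly identifies the right setting (the $\delta$--hyperbolic cusped space, the fact that an infinite-index intersection fixes two distinct parabolic points, freeness of the $G$--action, and a counting argument), but it stops exactly at the step that carries the content of the lemma, and the step you defer is a genuine gap, not a routine verification. Producing a quasi-center $c_n$ with $d(c_n,hc_n)\le 4\delta+2$ for all $h\in H_n$ only helps if $c_n$ lies in a region of uniformly bounded local geometry, and your control ``$c_n$ is within $O(\delta)$ of $\gamma_n$'' does not give this: $\gamma_n$ penetrates arbitrarily deep into the two combinatorial horoballs, and at depth $m$ a ball of radius $4\delta$ contains on the order of all elements of the peripheral coset within distance $2^m\cdot O(\delta)$, which is unbounded in $m$. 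Worse, since $H_n\le P_1$, large finite subgroups of $P_1$ move deep points of the horoball of $P_1$ very little, so the quasi-center of an $H_n$--orbit should be expected to sit deep in that horoball --- precisely where the count fails. The fallback citation is also not a repair: \cite{osin:peripheral} (and \cite{farb:relhyp}, which the paper cites) give only the qualitative statement that the intersections are finite, not a uniform bound $C$, which is what is being proved.

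The paper's proof avoids the quasi-center entirely and instead exhibits a specific point of bounded depth that is moved a bounded amount. Take a bi-infinite geodesic $\gamma$ between the two parabolic points which is vertical inside both horoballs, and parametrize it so that $\gamma(0)$ lies on the frontier of the first horoball, hence in the Cayley graph. For $f$ in the intersection, $f\gamma$ has the same ideal endpoints, and deep inside each horoball $\gamma$ and $f\gamma$ pass through vertices at distance $1$ from one another (this uses the combinatorial horoball structure, where same-depth vertices over nearby coset elements are adjacent); thinness of the resulting quadrilateral then gives $d(\gamma(0),f\gamma(0))\le 2\delta+1$. Since $G$ acts freely and preserves the Cayley graph, the intersection acts freely on a subset of the Cayley graph of diameter at most $2\delta+1$, whose cardinality is bounded in terms of $\delta$ and the valence of the Cayley graph alone. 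If you want to keep your sequence/contradiction framing you may, but you must replace the quasi-center step by an argument of this kind that pins the displaced point to bounded depth; as written, the proposal's ``delicate last step'' is exactly the lemma.
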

\begin{proof}
  Let $P_1$ and $P_2$ be distinct conjugates of elements of $\mc{P}$.  Let $F = P_1\cap P_2$. 
  We use the action on the (combinatorial) cusped space $X(G,\mcP)$ with respect to some fixed generating set for $G$.  Since $(G,\mcP)$ is relatively hyperbolic, this space is $\delta$--hyperbolic for some $\delta>0$.  
  For each $i$, the subgroup $P_i$ preserves a combinatorial horoball $H_i$ whose center $e_i\in \partial X$ is the parabolic fixed point of $P_i$.  Choose a biinfinite geodesic $\gamma$ from $e_1$ to $e_2$ which is vertical inside $H_1$ and $H_2$.  
  We parametrize $\gamma$ so that $\gamma(0)$ is in the frontier of $H_1$; in particular $\gamma(0)$ is in the Cayley graph of $G$.

  For $f\in F\setminus \{1\}$ the geodesics $\gamma$ and $f\gamma$ form an ideal bigon.  Moreover, deep inside $H_1$ and $H_2$, the geodesics pass through vertices which are distance $1$ from one another.  In particular, we may choose large subsegments $\gamma([-N,R])$ and $f\gamma([-N,R'])$ whose endpoints are distance $1$ from one another.  From this it is easy to deduce $d(\gamma(0),f\gamma(0))\leq 2\delta+1$.    Thus $F$ acts freely on a subset of the Cayley graph of $G$ of diameter at most $2\delta+1$.  We can bound the cardinality of this set in terms of $\delta$ and the size of the generating set for $G$.
\end{proof}

The following is proved by a straightforward adaptation of the methods for analyzing height discussed in the Appendix to \cite{agol:virtualhaken}.  
\begin{proposition}\cite{gm:qcdf}\label{prop:Calmostmalnormal}
  If $(G,\mc{P})$ is relatively hyperbolic, and $\mcP$ is $C$--almost malnormal, then for all sufficiently long fillings $(\barG,\barP)$ of $(G,\mcP)$, the collection $\barP$ is $C$--almost malnormal.
\end{proposition}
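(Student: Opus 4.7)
The plan is to argue by contradiction via a limiting sequence of fillings, in the spirit of the height analysis from the appendix of \cite{agol:virtualhaken}. Suppose the conclusion fails, so there is a sequence of arbitrarily long fillings $\phi_i \co (G,\mcP) \to (\bar{G}_i, \bar{\mcP}_i)$ none of which is $C$--almost malnormal. For each $i$, pick witnesses $\bar{P}_1^{(i)}, \bar{P}_2^{(i)} \in \bar{\mcP}_i$ and $\bar{g}_i \in \bar{G}_i$ such that $|\bar{P}_1^{(i)} \cap \bar{g}_i\bar{P}_2^{(i)}\bar{g}_i^{-1}| > C$ while it is not the case that both $\bar{P}_1^{(i)} = \bar{P}_2^{(i)}$ and $\bar{g}_i \in \bar{P}_1^{(i)}$. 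Since $\mcP$ is finite, we may pass to a subsequence on which $\bar{P}_j^{(i)} = \phi_i(P_j)$ for fixed $P_1, P_2 \in \mcP$.

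The first step is to use Theorem \ref{thm:uniformdelta}(1) to fix a uniform hyperbolicity constant $\delta$ valid for every $X(\bar{G}_i,\bar{\mcP}_i)$, and then to run the displacement argument of Lemma \ref{lem:CAM} inside each $\bar{X}_i$: any element $\bar{f}$ of the intersection displaces the basepoint $1$ (on the frontier of the horoball of $\bar{P}_1^{(i)}$) by at most $2\delta + 1$, and by the symmetric argument based at $\bar{P}_2^{(i)}$ the conjugate $\bar{q} := \bar{g}_i^{-1}\bar{f}\bar{g}_i \in \bar{P}_2^{(i)}$ satisfies the same bound. Choose $C+1$ distinct intersection elements $\bar{f}_1,\dots,\bar{f}_{C+1}$ with associated $\bar{q}_j = \bar{g}_i^{-1}\bar{f}_j\bar{g}_i$. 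Applying Theorem \ref{thm:uniformdelta}(2) to the ball of radius $2\delta+1$ about $1$ in $X(G,\mcP)$, for large $i$ this ball embeds isometrically and respecting horoball structure into $X(\bar{G}_i,\bar{\mcP}_i)$. Hence each $\bar{f}_j$ lifts to a unique $f_j \in P_1$ and each $\bar{q}_j$ to a unique $q_j \in P_2$, with $|f_j|_X, |q_j|_X \leq 2\delta+1$, the resulting $C+1$-tuples being distinct.

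The key reduction is to show that for some lift $g \in G$ of $\bar{g}_i$ we have $f_j = g q_j g^{-1}$ in $G$ for every $j$. Given this, $\{f_1,\dots,f_{C+1}\}$ is a set of more than $C$ distinct elements of $P_1 \cap g P_2 g^{-1}$, so $C$--almost malnormality of $\mcP$ in $G$ forces $P_1 = gP_2g^{-1}$ and $g \in P_1$; pushing forward under $\phi_i$ yields $\bar{P}_1^{(i)} = \bar{g}_i\bar{P}_2^{(i)}\bar{g}_i^{-1}$ and $\bar{g}_i \in \bar{P}_1^{(i)}$, contradicting the choice of witnesses. Thus the problem reduces to showing $h_j := f_j(gq_jg^{-1})^{-1} \in \ker(\phi_i)$ is trivial in $G$ for every $j$.

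This last step is the main obstacle and the place where the adaptation of Agol's height analysis genuinely enters. The naive word-length bound on $h_j$ fails because the lift $g$ of $\bar{g}_i$ may have arbitrarily large cusped-space length as $i$ varies, so $|h_j|_X$ is not obviously bounded independently of $i$. The resolution is to lift the ideal bigon geometry from $\bar{X}_i$ (a geodesic joining the two parabolic fixed points, together with its $\bar{f}_j$-image) through the local isometric embedding of a sufficiently large ball in $X(G,\mcP)$, which becomes available once one shows that the failure of $C$--almost malnormality forces the two horoballs in $\bar{X}_i$ to be close in a manner depending only on $\delta$ and $C$. The lifted picture then produces a concrete vertex $v_j \in X(G,\mcP)$ at which $h_j$ has displacement depending only on $\delta$; combined with normality of $\ker(\phi_i)$ and the fact that for sufficiently long fillings $\ker(\phi_i)$ has no nontrivial element producing small displacement at such controlled vertices (a consequence of Theorem \ref{thm:uniformdelta}(2)), this forces $h_j = 1$ and completes the contradiction.
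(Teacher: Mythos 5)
A preliminary remark: the paper does not actually prove Proposition \ref{prop:Calmostmalnormal}; it cites \cite{gm:qcdf} and says the result follows by adapting the height analysis in the appendix to \cite{agol:virtualhaken}. So your attempt can only be measured against that intended argument. Your setup is reasonable but already needs repairs: the bigon argument of Lemma \ref{lem:CAM} bounds the displacement of the intersection elements at the points where a geodesic joining the two parabolic fixed points exits the two horoballs, not at the basepoint $1$; this is fixable by replacing the witness $\bar{g}_i$ by $p_1^{-1}\bar{g}_i p_2$ with $p_1\in\bar{P}_1^{(i)}$, $p_2\in\bar{P}_2^{(i)}$. Also, the unique short lift $f_j$ of $\bar{f}_j$ provided by Theorem \ref{thm:uniformdelta}(2) is only known to lie in $P_1\ker(\phi_i)$, not in $P_1$, so even the statement ``each $\bar{f}_j$ lifts to a unique $f_j\in P_1$'' requires an argument.

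The genuine gap is the final step, which you rightly flag as the main obstacle but do not close. The claim that failure of $C$--almost malnormality ``forces the two horoballs in $\bar{X}_i$ to be close in a manner depending only on $\delta$ and $C$'' is unjustified, and the bigon argument cannot deliver it: it only shows that the $>C$ intersection elements displace the two exit points of the connecting geodesic by at most $2\delta+1$, and more than $C$ elements can coarsely fix two points that are arbitrarily far apart (for instance, in $F\times F_2$ with $F$ finite and peripheral subgroups $F\times\langle a\rangle$ and $F\times\langle b\rangle$, every pair of conjugates shares the central copy of $F$ while the corresponding horoballs are arbitrarily far apart; sharing finite subgroups puts no metric constraint on the horoballs). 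Without such a closeness bound, the strategy of ``lifting the ideal bigon through a sufficiently large embedded ball'' has a quantifier problem: the radius of the ball embedded by Theorem \ref{thm:uniformdelta}(2) is fixed before the filling is chosen, whereas the region you need to lift contains (a lift of) $\bar{g}_i$, whose length within the $i$-th filled group is not controlled by how long that filling is. Consequently the existence of a lift $g$ of $\bar{g}_i$ with $f_j=gq_jg^{-1}$ in $G$ --- which is exactly the content of the proposition --- is asserted rather than proved; establishing it is precisely where the height-analysis machinery of \cite{agol:virtualhaken} (lifting paths between horoballs through the quotient by the filling kernel) must genuinely be deployed, and your sketch substitutes the unsupported closeness claim for that work.
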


\begin{definition}
  An action $G\acts T$ on a tree is \emph{$(k,C)$--acylindrical} if the stabilizer of any segment of length at least $k+1$ has cardinality at most $C$.  
\end{definition}
The proof of the following result is similar to that of Lemma \ref{lem:make peripheral elliptic}.
\begin{lemma}\label{lem:2csplit}
  Suppose that $(G,\mc{P})$ is relatively hyperbolic, where $\mcP$ is a $C$--almost malnormal collection of slender subgroups.  If $G$ admits a nontrivial splitting over a parabolic group, then $G$ admits a nontrivial $(2,C)$--acylindrical splitting over a parabolic group.
\end{lemma}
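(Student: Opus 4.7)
The plan is to follow the template of the proof of Lemma~\ref{lem:make peripheral elliptic}. Let $T$ be the Bass-Serre tree of the given nontrivial parabolic splitting of $G$. By Lemma~\ref{lem:slender tree}, each maximal parabolic $K$ of $G$ either fixes a vertex of $T$ or preserves a unique axis in $T$; write $A_K \subseteq T$ for the corresponding minimal $K$-invariant subtree.

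The proof will be a two-step refinement. First, apply the bipartite collapse of Lemma~\ref{lem:make peripheral elliptic} to every non-elliptic maximal parabolic $K$: replace each $A_K$ (and all its $G$-translates) by a star with central vertex $v_K$ of stabilizer $K$, attached to each $T$-vertex of $A_K$. The one-endedness argument of Lemma~\ref{lem:make peripheral elliptic} will be replaced here by $C$-almost malnormality: when $K$ is not virtually cyclic, the kernel of its action on $A_K$ is an infinite subgroup of $K$, so if $A_K$ and $A_{K'}$ were to share an edge for a distinct maximal parabolic $K' \neq K$ then $|K \cap K'| > C$, forcing $K = K'$---a contradiction. This produces a new $G$-tree $T'$ in which every maximal parabolic is elliptic, with edge stabilizers still parabolic. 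Second, Proposition~\ref{prop:edge group parabolic} (Bowditch) then yields a nontrivial peripheral splitting $S$ of $(G,\mc{P})$, which by definition is bipartite with one color class consisting of the (conjugates of the) elements of $\mc{P}$.

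The peripheral splitting $S$ is then automatically $(2,C)$-acylindrical. Any length-$3$ segment of the bipartite tree $S$ contains two distinct peripheral-color vertices $v_{K_1}, v_{K_2}$ with stabilizers maximal parabolics $K_1, K_2$. A subgroup $H$ stabilizing the whole segment satisfies $H \subseteq K_1 \cap K_2$, so if $|H| > C$ then $C$-almost malnormality gives $K_1 = K_2$, and the self-normalization of peripherals (itself a consequence of $C$-almost malnormality, since any $g \in N_G(P) \setminus P$ would produce $|P \cap gPg^{-1}| = |P| > C$ with $g \notin P$) forces $v_{K_1} = v_{K_2}$, contradicting that these are distinct vertices on a length-$3$ segment.

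The hard part will be the first step in the case where some maximal parabolic $K$ is virtually cyclic and acts non-elliptically on $T$ with finite axis-action kernel: $C$-almost malnormality then gives no direct control on the edge stabilizers along $A_K$, and so the argument of Lemma~\ref{lem:make peripheral elliptic} does not transfer verbatim. I expect this case to be handled by a short separate argument---for instance by folding $A_K$ or by absorbing its axis edges into neighbouring vertex groups---exploiting the fact that a virtually cyclic peripheral contributes only a controlled amount of combinatorics to the construction.
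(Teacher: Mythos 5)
The deferred ``hard case'' is not a technicality you can postpone: it is where your strategy breaks, because the intermediate statement you are aiming for is genuinely stronger than Lemma \ref{lem:2csplit} and can fail under its hypotheses. Your plan is to make \emph{every} maximal parabolic elliptic and then invoke Proposition \ref{prop:edge group parabolic}, i.e.\ to produce a nontrivial peripheral splitting of $(G,\mcP)$. But a two-ended $K\in\mcP$ acting hyperbolically on $T$ (your virtually cyclic case, where the kernel of $K\acts A_K$ is finite) may be hyperbolic in \emph{every} splitting of $G$ over a parabolic subgroup, so that no nontrivial peripheral splitting exists at all. Concretely, take $G$ a closed genus-$2$ surface group and $\mcP=\{\langle\gamma\rangle,\langle\gamma'\rangle\}$ for a filling pair of simple closed curves with $\mcP$ almost malnormal (as in Examples \ref{ex:hypotheses are required} and \ref{ex:slender required}): $G$ splits over the parabolic $\langle\gamma\rangle$, yet $(G,\mcP)$ admits no proper peripheral splitting since $\gamma'$ is hyperbolic in any cyclic splitting in which $\gamma$ is elliptic. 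This is exactly why Lemma \ref{lem:make peripheral elliptic} carries the hypothesis that the multi-ended elements of $\mcP$ are elliptic; without it the axis-collapse cannot be carried out, and no amount of ``folding or absorbing axis edges'' will produce the splitting your step 2 needs, because it does not exist. (Your transversality argument for non-virtually-cyclic $K$ is essentially right, though the deduction should pass through the maximal parabolic containing the infinite edge stabilizer rather than asserting $|K\cap K'|>C$ directly, and your observation that a peripheral splitting is automatically $(2,C)$--acylindrical is correct.)

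The paper's proof avoids the problem by never trying to make all peripherals elliptic. After reducing to the case $\#E>C$ (if $\#E\leq C$ the given tree is already acylindrical), it tracks only the single maximal parabolic $P$ containing the edge group $E$. If $P$ is elliptic, the original Bass--Serre tree is itself $(2,C)$--acylindrical, since the stabilizer of a long segment lies in the intersection of two distinct conjugates of $P$. If $P$ is not elliptic, one uses slenderness to get the invariant line $l_P$, checks that distinct translates of $l_P$ meet in at most a point, and forms the bipartite tree whose vertices are the translates of $l_P$ together with the vertices of $T$; edge stabilizers are parabolic (they contain $E$ with index at most $2$), and $(2,C)$--acylindricity follows because the stabilizer of three consecutive edges preserves two distinct translates of $l_P$ and hence lies in $P^g\cap P^h$ with $g^{-1}h\notin P$. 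The two-ended peripherals acting hyperbolically are simply left alone; the target is a $(2,C)$--acylindrical parabolic splitting, not a peripheral one, so their ellipticity is never needed. To repair your write-up you would have to abandon the detour through Proposition \ref{prop:edge group parabolic} and argue acylindricity directly along these lines.
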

\begin{proof}
   Let $G\acts T$ be the Bass-Serre tree for a one-edge splitting of $G$ over a parabolic subgroup $E$.  We may suppose that $E<P$ for some $P\in \mcP$.  If $\#E\leq C$, there is nothing to prove, so we suppose that $\#E>C$.
  There are two cases, depending on whether or not $P$ acts elliptically on $T$.

  Suppose first that $P$ fixes a point of $T$.  If $P$ fixes an edge, then in fact $P=E$, and each edge stabilizer is equal to $P^g$ for some $g\in G$.  Since $\mcP$ is $C$--almost malnormal, any segment of length at least $2$ has stabilizer of size at most $C$, so $G\acts T$ is actually $(1,C)$--acylindrical.

  If $P$ fixes a vertex, but no edge of $T$, and $\sigma$ is a segment in $T$ of length at least $3$, then the stabilizer of $\sigma$ is again contained in the intersection of a pair of conjugates of $P$, and so has size bounded above by $C$.  Thus $G\acts T$ is $(2,C)$--acylindrical.

  Suppose now $P$ is not elliptic.  Since $P$ is slender, Lemma \ref{lem:slender tree} implies that $P$  preserves some line $l_P\subseteq T$ on which $P$ acts either by translations or as an infinite dihedral group.  In either case every edge in $l_P$ has the same stabilizer, namely $E$.

We claim that for any $g \not\in P$ we have $|g.l_P \cap l_P | \le 1$.  If not, there is an edge $e$ in this intersection. 
Then the stabilizer of $e$ is contained in the intersection of two distinct conjugates of $P$, which has size at most $C$.  This contradicts the assumption that $\#E>C$.

We can now form a simplicial $G$-tree $\hat{T}$ as follows:  The vertices are the $G$-translates of $l_P$ along with the vertices of $T$ and we join $g.l_P$ and $v$ when $v \in g.l_P$.  It is easy to see that this is a tree (see \cite[Lemma 3.5]{bowditch:peripheral}) upon which $G$ acts.  The edge stabilizers are subgroups of $P$ which are either $E$ or else have $E$ as an index $2$ subgroup (depending on whether $P$ acts by translations or dihedrally on $l_P$).  In particular they are parabolic.

Moreover, if we take three consecutive edges in $\hat{T}$ and an $\gamma$ element of $G$ which stabilizes them, then $\gamma$ stabilizes two different lines $g. l_P$ and $h.l_P$.  This implies that 
$\gamma \in P^g \cap P^h$ which has size at most $C$.  Therefore the $G$-action on $\hat{T}$ has parabolic edge stabilizers and is $(2,C)$-acylindrical.
\end{proof}

\begin{proposition} \label{prop:acylindrical}
Suppose that $(G,\mc{P})$ is a relatively hyperbolic group, so that $\mcP$ is $C$--almost malnormal.  For all sufficiently long co-slender fillings $(G,\mc{P}) \to (\bar{G},\barP)$, if $\bar{G}$ admits a nontrivial splitting over a parabolic group then $\bar{G}$ admits a nontrivial $(2,C)$-acylindrical splitting over a parabolic group.
\end{proposition}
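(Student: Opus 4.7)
The plan is to apply Lemma \ref{lem:2csplit} to the filled pair $(\bar{G}, \bar{\mathcal{P}})$, once enough control on that pair has been established.  Lemma \ref{lem:2csplit} requires three things of its input: relative hyperbolicity of the pair, $C$--almost malnormality of the peripheral collection, and slenderness of the peripheral subgroups.  All three will be arranged as consequences of the filling being long and co-slender.

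First, I would fix $C$ as in the hypothesis, so that $\mathcal{P}$ is $C$--almost malnormal.  Then, for all sufficiently long fillings, Theorem \ref{t:RHDF} ensures that $(\bar{G},\bar{\mathcal{P}})$ is relatively hyperbolic; and Proposition \ref{prop:Calmostmalnormal} ensures that $\bar{\mathcal{P}}$ is again $C$--almost malnormal (with the same constant $C$).  These two conditions depend only on the length of the filling, not on the particular filling kernels, so they can be imposed uniformly by enlarging the finite set $\mathcal{B}\subseteq G\smallsetminus\{1\}$ in Definition \ref{def:fill}.

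Next, the co-slender hypothesis says precisely that each element $\bar{P} = P/K$ of $\bar{\mathcal{P}}$ is slender.  Thus the filled pair $(\bar{G},\bar{\mathcal{P}})$ satisfies the full hypotheses of Lemma \ref{lem:2csplit}: it is relatively hyperbolic, with $\bar{\mathcal{P}}$ a $C$--almost malnormal collection of slender subgroups.

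Finally, assuming $\bar{G}$ admits a nontrivial splitting over a parabolic subgroup (necessarily a parabolic subgroup of $\bar{G}$ with respect to $\bar{\mathcal{P}}$), Lemma \ref{lem:2csplit} directly produces a nontrivial $(2,C)$--acylindrical splitting of $\bar{G}$ over a parabolic subgroup, which is exactly the conclusion.  There is no real obstacle here: the proposition is essentially a formal corollary obtained by verifying the hypotheses of Lemma \ref{lem:2csplit} in the filled setting, with Proposition \ref{prop:Calmostmalnormal} doing the nontrivial work of preserving the almost malnormality constant.
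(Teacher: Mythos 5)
Your argument is correct and follows essentially the same route as the paper: the paper's proof simply takes the filling long enough to apply Proposition \ref{prop:Calmostmalnormal} and then invokes Lemma \ref{lem:2csplit}, with the co-slender hypothesis supplying the slenderness of the filled peripherals exactly as you note. Your write-up just spells out the verification of the hypotheses in more detail.
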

\begin{proof}
  We suppose that the filling $(\barG,\barP)$ is long enough to apply Proposition \ref{prop:Calmostmalnormal}, and then apply Lemma \ref{lem:2csplit}.
\end{proof}

\section{Torsion in fillings}\label{sec:torsion}

The main result of this section is the following, which should be of independent interest.  
\footnote{A similar result about hyperbolically embedded subgroups
(but about only elements of finite order rather than finite subgroups) is proved in \cite[Theorem 7.19]{DGO}.}

\begin{theorem} \label{thm:torsion in fillings}
Suppose that $(G,\mc{P})$ is a relatively hyperbolic pair.  For all sufficiently long fillings
\[	\eta \co (G,\mc{P}) \to (\bar{G},\bar{\mc{P}})	\]
any finite subgroup of $\bar{G}$ is either
\begin{enumerate}
\item Conjugate into some element of $\bar{\mc{P}}$; or
\item The isomorphic image (under $\eta$) of some finite subgroup of $G$.
\end{enumerate}
\end{theorem}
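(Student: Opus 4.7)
Plan: Let $\delta$ bound the hyperbolicity constants of the cusped spaces of all sufficiently long fillings (Theorem \ref{thm:uniformdelta}(1)), and fix a finite subgroup $F \le \barG$. A standard argument in $\delta$--hyperbolic geometry provides a vertex $y \in X(\barG,\barP)$ whose $F$--orbit has diameter at most some $D_0 = D_0(\delta)$: $F$ permutes the set of centers of the smallest closed ball enclosing any orbit $Fp$, this set has diameter $O(\delta)$ in a $\delta$--hyperbolic space, and rounding any such center to a nearest vertex costs at most $1$.

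We distinguish two cases based on the location of $y$. If $y$ lies at distance greater than $D_0$ from the Cayley graph of $\barG$, then $y$ sits inside some horoball $H$, and because distinct combinatorial horoballs are joined only via the Cayley graph, every vertex within distance $D_0$ of $y$ also lies in $H$. Hence $Fy \subseteq H$, so $F$ stabilizes $H$ and is therefore conjugate into the corresponding $\bar P \in \barP$, giving case (1) of the conclusion. Otherwise $y$ lies within distance $D_0$ of some Cayley graph vertex $v \in \barG$, and $Fy$ is contained in the ball $B \subseteq X(\barG,\barP)$ of radius $R := 2D_0$ about $v$.

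In this second case we lift. Theorem \ref{thm:uniformdelta}(2) and $G$--equivariance ensure that, for sufficiently long fillings, $B$ is the isometric image of the radius--$R$ ball $\tilde B \subseteq X(G,\mc{P})$ about some lift $\tilde v \in G$ of $v$. Let $\tilde x \in \tilde B$ be the unique preimage of $y$, and for each $g \in F$ let $\tilde x_g \in \tilde B$ be the unique preimage of $gy$. Since $G$ acts freely on the vertices of $X(G,\mc{P})$, there is a unique $\tilde g \in G$ with $\tilde g \tilde x = \tilde x_g$, and the triangle inequality gives $d(\tilde g \tilde v, \tilde v) \le 2R$. Thus $\tilde g$ lies in the conjugate $\tilde v\, \mc{E}\, \tilde v^{-1}$ of the finite set $\mc{E} := \{ h \in G : d(h x_0, x_0) \le 2R \}$, where $x_0$ is the identity vertex, and every triple product $\tilde g \tilde h\, \widetilde{gh}^{-1}$ (for $g,h \in F$) lies in $\tilde v\, \mc{F}_0\, \tilde v^{-1}$ with $\mc{F}_0 := \mc{E}\cdot\mc{E}\cdot\mc{E}^{-1}$, a finite set depending only on $(G,\mc{P})$.

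Finally, invoke Theorem \ref{t:RHDF}(3) with $\mc{F}_0 \cup \{1\}$: for all sufficiently long fillings $\eta$ is injective on this set, and since $\ker(\eta)$ is normal, injectivity passes to the conjugate $\tilde v (\mc{F}_0 \cup \{1\}) \tilde v^{-1}$. The equation $\eta(\tilde g \tilde h \widetilde{gh}^{-1}) = 1$ then forces $\tilde g \tilde h = \widetilde{gh}$ in $G$, so $\tilde F := \{\tilde g : g \in F\}$ is a subgroup of $G$. Because $\barG$ acts freely on vertices of $X(\barG,\barP)$, the $gy$ (hence the $\tilde x_g$ and the $\tilde g$) are pairwise distinct, so $\eta|_{\tilde F} \co \tilde F \to F$ is an isomorphism, establishing case (2). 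The main obstacle is enforcing uniformity: the constants $\delta$, $D_0$, $R$, and above all $\mc{F}_0$ must depend only on $(G,\mc{P})$, so that $\mc{F}_0$ can be specified in advance of choosing the filling; Theorem \ref{t:RHDF}(3) then delivers the required injectivity.
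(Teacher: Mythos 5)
Your proposal is correct and takes essentially the same route as the paper: a uniform hyperbolicity constant for long fillings gives a finite orbit of bounded diameter, the deep-horoball case forces conjugacy into a peripheral subgroup, and the near-Cayley-graph case lifts the finite subgroup back to $G$ using injectivity of the filling map on a uniformly bounded region. The only cosmetic difference is bookkeeping: the paper conjugates the finite subgroup into a $20\delta$--ball about the identity and pulls it back through the bijection of $40\delta$--balls in the two cusped spaces, whereas you lift the orbit over an arbitrary Cayley vertex, define the lifts via freeness of the vertex actions, and verify multiplicativity of the section by applying Theorem \ref{t:RHDF}(3) to the fixed finite set $\mc{E}\cdot\mc{E}\cdot\mc{E}^{-1}$ together with normality of the kernel---the same mechanism in a different guise.
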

We'll need the following lemma, which is used again in Section \ref{ss:controlsplit}.
\begin{lemma}\label{l:quasicenter}
  Let $F$ be a finite group acting on a $\delta$--hyperbolic space $X$.  Then $F$ has an orbit of diameter at most $4\delta+2$.
\end{lemma}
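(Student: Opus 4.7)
The plan is to find a ``quasi-circumcenter'' of a single $F$-orbit and then exploit $\delta$-hyperbolicity to show this point is nearly fixed by $F$. The cusped spaces in the applications are connected graphs (with the standard length metric), hence geodesic, so I will work in a geodesic $\delta$-hyperbolic metric space.

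First I would pick any $x_0 \in X$ and consider the finite orbit $A = F \cdot x_0$. Let
\[ r = \inf_{y \in X}\ \max_{a \in A} d(y,a) \]
be the circumradius of $A$, and choose $c \in X$ with $\max_{a \in A} d(c,a) \leq r + \tfrac{1}{2}$ (such $c$ exists by the definition of the infimum). Since $A$ is $F$-invariant, every translate $fc$ ($f \in F$) is also such a ``near-center'' of $A$: $\max_{a \in A} d(fc,a) \leq r + \tfrac{1}{2}$.

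The key step is to use hyperbolicity to force $d(c,fc)$ to be small. Fix $f \in F$, let $m$ be the midpoint of a geodesic from $c$ to $fc$, and fix any $a \in A$. Applying the thin triangle condition to the triangle $c, fc, a$, the point $m$ lies within $\delta$ of some point $p \in [c,a] \cup [fc,a]$. In either case $|d(c,p) - d(c,m)| \leq \delta$ (respectively $|d(fc,p) - d(fc,m)| \leq \delta$), so combining with $d(c,a),\ d(fc,a) \leq r + \tfrac{1}{2}$ gives
\[ d(m,a) \ \leq\ r + \tfrac{1}{2} + 2\delta - \tfrac{1}{2} d(c,fc). \]
If $d(c,fc) > 4\delta + 1$, the right-hand side would be strictly less than $r$ for every $a \in A$, contradicting the definition of $r$ as an infimum. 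Hence $d(c,fc) \leq 4\delta + 1$ for every $f \in F$, so $\mathrm{diam}(F \cdot c) = \max_f d(c,fc) \leq 4\delta + 1 \leq 4\delta + 2$.

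The main (minor) obstacle is the bookkeeping in the thin-triangle step---one must consider $p \in [c,a]$ and $p \in [fc,a]$ separately---but both cases give the same estimate symmetrically. The slack of $+2$ (rather than $+1$) in the statement is comfortable: it absorbs the gap between the infimum $r$ and a realized near-minimizer, and leaves room for rounding in case one wishes to take $c$ to be a vertex of the underlying graph.
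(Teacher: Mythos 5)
Your proof is correct and follows essentially the same route as the paper: take a (near-)circumcenter of a single $F$--orbit, observe that its $F$--translates are also near-circumcenters, and bound their mutual distance by $4\delta+2$. The only difference is that the paper simply cites the quasicenter lemma \cite[Lemma III.$\Gamma$.3.3]{bridhaef:book} for that last bound, whereas you reprove it directly via the thin-triangle/midpoint argument (implicitly using that $X$ is geodesic, which holds in the cusped-space applications and is also assumed in the cited lemma).
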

\begin{proof}
  Choose some $x_0\in X$.  Then $Fx_0$ is a nonempty bounded set, which has some radius $r$ (the radius is the infimum of those $R$ so that $Fx_0$ is contained in an $R$--ball about some point).  
  An \emph{$\epsilon$--quasicenter} for $Fx_0$ is a point $c$ so that $Fx_0\subseteq \{x\mid d(x,c)\leq r+\epsilon\}$.  Setting $\epsilon=1$, it is clear that there is at least one $1$--quasicenter $c$ for $Fx_0$, and that $fc$ is a $1$--quasicenter for $Fx_0$, for any $f\in F$.
  By \cite[Lemma III.$\Gamma$.3.3]{bridhaef:book}, the set of $1$--quasicenters has diameter at most $4\delta+2$.
\end{proof}

\begin{proof}[Proof of Theorem \ref{thm:torsion in fillings}]
By Theorem \ref{thm:uniformdelta},
there is a constant $\delta$ so that (i) The cusped space for $(G,\mc{P})$ is $\delta$-hyperbolic; and (ii) For all sufficiently long fillings the cusped space of the quotient is $\delta$-hyperbolic.  We may assume that all fillings we consider satisfy this condition, and we fix such a $\delta\geq 1$.  Let $X$ be the cusped space for $(G,\mc{P})$ and for a filling $(\bar{G},\bar{\mc{P}})$ we let the associated cusped space be $\bar{X}$.

Now take a filling which is long in the sense of the above paragraph and which also induces a bijection between the ball of radius $40\delta$ about $1$ in $X$ and the ball of radius $40\delta$ about $1$ in $\bar{X}$.

Suppose that $Q \le \bar{G}$ is a finite subgroup.
Lemma \ref{l:quasicenter} implies that there is a $Q$--orbit $B \subseteq \bar{X}$ of diameter at most $4\delta + 2$.

{\bf Case 1:}  Suppose that $B$ does not intersect the $(4\delta+2)$--neighborhood of the Cayley graph of $\bar{G}$ in $\bar{X}$.  Then since the diameter of $B$ is at most $4\delta + 2$, $B$ is entirely contained in a single horoball of $\bar{X}$, which implies that $Q$ is conjugate into some $\bar{P} \in \bar{\mc{P}}$.

{\bf Case 2:}  Now suppose that some $x \in B$ lies in the $(4\delta+2)$--neighborhood of the Cayley graph.  We recall from \cite{rhds} that vertices of the cusped space but not in the Cayley graph correspond to triples $(gP,g,n)$, where $P\in \bar{\mcP}$ and $n\in\bN =\{1,2,\ldots\}$.  We extend this labeling scheme to the Cayley graph by referring to $g\in \bar{G}$ using the (not uniquely defined) triple $(gP,g,0)$.
(The number $n$ is the distance from the Cayley graph.) A single combinatorial horoball is spanned by the vertices of the form $(hP,g,n)$ for $hP$ fixed.  

By assumption our $x = (gP,g,k)$ with $k \le 4\delta+2$.  Since the action of $\bar{G}$ on $\bar{X}$ is depth-preserving, we may assume that all other elements of $B$ have the form $(g'P,g',k)$ for the same $k$.  Consider the set
\[	B_0 = \{ h \mid (hP,h,k) \in B	\}		\]
(where we consider an element $g$ of $\bar{G}$ to be contained in $\bar{X}$ via the embedding $g \mapsto (gP,g,0)$). Then $Q \cdot B_0 = B_0$ and the diameter of $B_0$ is less than $20\delta$.

Moreover, $Q^g$ stabilizes $g^{-1}B_0$, and $1 = g^{-1} g \in g^{-1}B_0$.  This means that $Q^g = Q^g \cdot 1 \subset g^{-1}B_0$, which puts the subgroup $Q^g$ in the ball of radius $20\delta$ about $1$ in $\bar{X}$.

The filling induces a bijection between the $40\delta$ balls about the identity in $X$ and $\bar{X}$.  Let $\hat{Q}$ be the preimage of $Q^g$ under this bijection.  We claim that $\hat{Q}$ is a finite subgroup.  Indeed, for any $h_1$, $h_2\in Q^{g}$, let $h_3 = h_1h_2$.  For $i\in \{1,2,3\}$, let $\tilde{h}_i$ be the unique element of $\hat{Q}$ projecting to $h_i$.  Note that $\tilde{h}_1\tilde{h}_2$ lies in the $40\delta$ ball about $1$, and projects to $h_3\in Q^g$.  Since $\tilde{h}_3$ also projects to $h_3$, and the projection is injective on the ball of radius $40\delta$, we have $\tilde{h}_1\tilde{h}_2 = \tilde{h}_3$.  Since $h_1$ and $h_2$ were arbitrary, $\hat{Q}$ is a subgroup.

Letting $h$ be any element of $G$ which maps to $g$ under the filling, we see that $\hat{Q}^{h^{-1}}$ maps isomorphically to $Q$ under the filling, as required.
\end{proof}

\subsection{Controlling splittings over finite and two-ended subgroups}\label{ss:controlsplit}

\begin{definition}
  For $(G,\mcP)$ relatively hyperbolic, let $\mc{F}$ be the set of subgroups of $G$ which are either finite non-parabolic, or contained in the intersection of two distinct maximal parabolic subgroups.  Define $C(G,\mcP) = \max\{\#F\mid F\in\mc{F}\}$.
\end{definition}
\begin{lemma}
  For $(G,\mcP)$ relatively hyperbolic, $C(G,\mcP)<\infty$.  
\end{lemma}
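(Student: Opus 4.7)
The plan is to bound the two kinds of subgroups constituting $\mc{F}$ separately, and then take the maximum of the two bounds.

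First I would handle the subgroups of the second kind. By Lemma \ref{lem:CAM}, there is a constant $C_1$ so that whenever $P_1 = gPg^{-1}$ and $P_2 = hQh^{-1}$ are distinct conjugates of elements of $\mc{P}$, we have $\#(P_1 \cap P_2) \leq C_1$. Hence any subgroup of the second kind has cardinality at most $C_1$.

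Next I would bound the finite non-parabolic subgroups using the cusped space $X = X(G,\mc{P})$ and the argument from the proof of Theorem \ref{thm:torsion in fillings}. Let $\delta$ be a hyperbolicity constant for $X$. Suppose $F \leq G$ is finite and non-parabolic. By Lemma \ref{l:quasicenter}, there is an $F$-orbit $B \subseteq X$ of diameter at most $4\delta + 2$. If $B$ lies outside the $(4\delta+2)$-neighborhood of the Cayley graph of $G$, then $B$ is contained in a single horoball of $X$, so $F$ would be conjugate into some $P \in \mc{P}$, contradicting the assumption that $F$ is non-parabolic. Therefore some $x \in B$ is at depth at most $4\delta+2$ in some horoball. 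Exactly as in Case 2 of the proof of Theorem \ref{thm:torsion in fillings}, the vertical projections of the elements of $B$ to the Cayley graph form a set $B_0$ of diameter at most some explicit constant (a small multiple of $\delta$), and for any $g \in B_0$ the conjugate $F^g$ lies in a ball of bounded radius about $1$ in the Cayley graph of $G$. Because $G$ acts freely on its Cayley graph, $\#F = \#F^g$ is bounded by a constant $C_2$ depending only on $\delta$ and the generating set of $G$.

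Setting $C(G,\mc{P}) = \max\{C_1, C_2\}$ then gives the desired finite bound. I do not anticipate a serious obstacle here; the only minor care needed is tracking the geometric constants in Case 2 so that the bound on $\#F^g$ really is uniform, but this is exactly what is carried out (in the filled setting, which subsumes the unfilled one by taking the trivial filling) in the proof of Theorem \ref{thm:torsion in fillings}.
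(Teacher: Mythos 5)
Your proposal is correct and follows essentially the same route as the paper: bound intersections of distinct maximal parabolics via Lemma \ref{lem:CAM}, and bound finite non-parabolic subgroups via the quasicentered orbit of Lemma \ref{l:quasicenter}, which must come within $4\delta+2$ of the Cayley graph and hence has uniformly bounded size. The only (cosmetic) difference is that the paper bounds $\#F=\#B$ directly by the size of a ball near the Cayley graph, whereas you detour through the conjugation step from Case 2 of Theorem \ref{thm:torsion in fillings}; both yield the same uniform bound.
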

\begin{proof}
  Let $F<G$ be finite and non-parabolic.  By Lemma \ref{l:quasicenter}, there is an $F$--orbit $B$ of diameter at most $4\delta+2$ in the cusped space.  Since $F$ is non-parabolic, $B$ is not contained entirely in a single horoball.  In particular $B$ contains a point within $4\delta+2$ of the Cayley graph, and so its size can be bounded above in terms of $\delta$ and the valence of the Cayley graph.  Since $\#F=\#B$, the size of such a group is bounded uniformly.

  If $F$ is in the intersection of two parabolics, the size of $F$ is bounded by Lemma \ref{lem:CAM}.
\end{proof}

We note the following corollary of Theorem \ref{thm:torsion in fillings} and Proposition \ref{prop:Calmostmalnormal}.
\begin{corollary}\label{cor:uniformC}
  Let $(G,\mcP)$ be relatively hyperbolic.  For all sufficiently long fillings $(\barG,\barP)$, we have $C(\barG,\barP)\leq C(G,\mcP)$.
\end{corollary}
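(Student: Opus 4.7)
The plan is to combine Theorem \ref{thm:torsion in fillings} with Proposition \ref{prop:Calmostmalnormal}, which between them handle the two kinds of subgroups defining $\mc{F}$. First I would observe that, by the very definition of $C(G,\mcP)$, the collection $\mcP$ is already $C(G,\mcP)$--almost malnormal: any element $x$ of an intersection $P_1\cap gP_2g^{-1}$ with $(P_1,g,P_2)$ witnessing distinctness generates a finite subgroup sitting in $\mc{F}$, so such intersections have size at most $C(G,\mcP)$.

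Next I would take the filling $(\barG,\barP)$ to be long enough that two things hold simultaneously: (i) by Proposition \ref{prop:Calmostmalnormal}, $\barP$ is $C(G,\mcP)$--almost malnormal; and (ii) by Theorem \ref{thm:torsion in fillings}, every finite subgroup of $\barG$ is either conjugate into some element of $\barP$ or is the isomorphic image of a finite subgroup of $G$ under the filling map $\eta$.

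Now let $\bar{F}<\barG$ belong to the set defining $C(\barG,\barP)$. I would split into two cases. If $\bar{F}$ is contained in the intersection of two distinct maximal parabolic subgroups of $\barG$ — both of which are conjugates of elements of $\barP$ — then $C(G,\mcP)$--almost malnormality of $\barP$ immediately gives $\#\bar{F}\leq C(G,\mcP)$. If instead $\bar{F}$ is finite and non-parabolic in $\barG$, then Theorem \ref{thm:torsion in fillings} produces a finite $\hat{F}<G$ mapping isomorphically onto $\bar{F}$. Here I would verify the small point that $\hat{F}$ is non-parabolic in $G$: if $\hat{F}$ were conjugate into some $P\in\mcP$, then $\bar{F}=\eta(\hat{F})$ would be conjugate into $\eta(P)\in\barP$, contradicting the assumption. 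Hence $\hat{F}\in\mc{F}(G,\mcP)$, giving $\#\bar{F}=\#\hat{F}\leq C(G,\mcP)$.

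There is essentially no serious obstacle; the result is an almost immediate consequence of the two cited tools. The only step requiring any care is the final verification that the isomorphic lift of a non-parabolic finite subgroup of $\barG$ is non-parabolic in $G$, which is straightforward because $\eta$ sends conjugates of parabolics to conjugates of parabolics.
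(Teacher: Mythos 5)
Your proposal is correct and takes essentially the same route as the paper, which offers no written proof beyond remarking that the corollary follows from Theorem \ref{thm:torsion in fillings} and Proposition \ref{prop:Calmostmalnormal}: your two-case split---lifting finite non-parabolic subgroups of $\barG$ to $G$ via Theorem \ref{thm:torsion in fillings} (with the easy check that the lift is non-parabolic), and bounding subgroups lying in two distinct maximal parabolics via persistence of $C(G,\mcP)$--almost malnormality---is exactly the intended combination, and the preliminary observation that $\mcP$ is $C(G,\mcP)$--almost malnormal is also what the paper uses implicitly (e.g.\ when applying Proposition \ref{prop:acylindrical} with $C=C(G,\mcP)$ in Corollary \ref{cor:finitesplittings}). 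One small phrasing fix: to bound $\#(P_1\cap gP_2g^{-1})$ by $C(G,\mcP)$ you should cite that the whole intersection is itself a subgroup contained in two distinct maximal parabolics and hence belongs to $\mc{F}$, since bounding the order of each element, as written, does not by itself bound the order of the subgroup.
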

In particular there is a bound on the size of non-parabolic finite subgroups of sufficiently long fillings, which we use in the following.  
\begin{corollary}\label{cor:finitesplittings}
  Suppose that $(G,\mcP)$ is relatively hyperbolic.  For all sufficiently long co-slender fillings $(G,\mcP)\to (\barG,\barP)$, if $\barG$ admits a nontrivial splitting over a finite group, then $\barG$ admits a nontrivial $(2,C)$--acylindrical splitting over a finite or parabolic group, where $C  = C(G,\mcP)$.
\end{corollary}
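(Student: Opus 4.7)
The plan is to reduce to two cases depending on whether the finite edge group of the given splitting is parabolic, and to dispatch the parabolic case via Lemma~\ref{lem:2csplit}.

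Set $C = C(G,\mcP)$. The first step is to observe that $\mcP$ itself is $C$--almost malnormal: this is immediate from the definition of $C(G,\mcP)$, since for distinct maximal parabolic subgroups $P_1,P_2$ the intersection $P_1\cap P_2$ lies in the set $\mc{F}$ used to define $C(G,\mcP)$. I would then take the filling long enough that Proposition~\ref{prop:Calmostmalnormal} applies, so that $\barP$ is also $C$--almost malnormal, and long enough that Corollary~\ref{cor:uniformC} gives $C(\barG,\barP)\le C$; in particular every non-parabolic finite subgroup of $\barG$ has size at most $C$. The co-slender hypothesis ensures each $\bar{P}\in\barP$ is slender, and $(\barG,\barP)$ is relatively hyperbolic by Theorem~\ref{t:RHDF}.

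Now suppose $\barG$ admits a nontrivial splitting over a finite group $E$, with associated Bass--Serre tree $T$. If $E$ is non-parabolic, then $\#E\le C$, so every edge stabilizer of $T$ (being a conjugate of $E$) has size at most $C$. The stabilizer of any segment of length at least one is contained in a single edge stabilizer, so the given splitting is already $(0,C)$--acylindrical, hence $(2,C)$--acylindrical, over the finite group $E$. If instead $E$ is parabolic, then $\barG$ admits a nontrivial splitting over a parabolic subgroup, and the pair $(\barG,\barP)$ satisfies the hypotheses of Lemma~\ref{lem:2csplit}: it is relatively hyperbolic with $\barP$ a $C$--almost malnormal collection of slender subgroups. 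That lemma then produces a nontrivial $(2,C)$--acylindrical splitting of $\barG$ over a parabolic subgroup, completing the argument.

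There is no real obstacle: the corollary is essentially an assembly of results already established in the paper. The key inputs are Corollary~\ref{cor:uniformC}, which uniformly bounds non-parabolic finite subgroups in sufficiently long fillings and makes the non-parabolic case immediate, together with Lemma~\ref{lem:2csplit}, which already handles the parabolic edge-group case once almost malnormality and slenderness of $\barP$ are in hand.
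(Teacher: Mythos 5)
Your argument is correct and is essentially the paper's proof: the same case split on whether the finite edge group is parabolic, with Corollary~\ref{cor:uniformC} handling the non-parabolic case via the $(0,C)$--acylindricity observation, and the parabolic case handled by Proposition~\ref{prop:Calmostmalnormal} plus Lemma~\ref{lem:2csplit} — which is exactly the content of Proposition~\ref{prop:acylindrical} that the paper cites there. The only difference is that you inline that proposition (and make explicit the observation that $\mcP$ is $C(G,\mcP)$--almost malnormal, which the paper leaves implicit), so there is nothing substantive to change.
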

\begin{proof}
  Let $(G,\mcP)\to (\barG,\barP)$ be long enough that both Proposition \ref{prop:acylindrical} and Corollary \ref{cor:uniformC} apply, and suppose that $\barG$ splits nontrivially over a finite group $F$.  If $F$ is non-parabolic, then Corollary \ref{cor:uniformC} implies that $|F|\leq C$, so the Bass-Serre tree corresponding to the splitting over $F$ is already $(0,C)$--acylindrical.

If $F$ is parabolic, then \ref{prop:acylindrical} implies there is a $(2,C)$--acylindrical splitting over some parabolic group.  
\end{proof}

We next examine two-ended non-parabolic subgroups.
\begin{lemma}\label{lem:finite in twoended}
  Let $(G,\mcP)$ be relatively hyperbolic, let $E<G$ be two-ended and non-parabolic, and let $F<E$ be finite.  Then $\#F\leq 2C(G,\mcP)$.
\end{lemma}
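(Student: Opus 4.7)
Since $E$ is two-ended and non-parabolic, Proposition \ref{p:elementary} tells us that $E$ is virtually infinite cyclic and contains an RH-hyperbolic element $g$. The plan is to combine the standard structure theorem for virtually infinite cyclic groups with almost malnormality of the family of maximal parabolic subgroups. Recall that a virtually infinite cyclic group $E$ has a unique maximal finite normal subgroup $F_0$, with quotient $E/F_0$ isomorphic either to $\Z$ or to the infinite dihedral group $D_\infty$. Since the only finite subgroups of $\Z$ and $D_\infty$ have order at most $2$, the image in $E/F_0$ of any finite subgroup $F \le E$ has order at most $2$, and hence $\#F \le 2\cdot\#F_0$. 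It therefore suffices to show $\#F_0 \le C(G,\mc{P})$.

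I would split this into two cases according to whether $F_0$ is parabolic or not. If $F_0$ is non-parabolic, the bound $\#F_0 \le C(G,\mc{P})$ is immediate from the definition of $C(G,\mc{P})$.

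If instead $F_0$ is parabolic, I would pick a maximal parabolic subgroup $P$ with $F_0 \le P$. Because $g$ is RH-hyperbolic it is not contained in any maximal parabolic subgroup, so $g \notin P$; almost malnormality then forces $gPg^{-1} \neq P$, for otherwise $gPg^{-1} \cap P = P$ is infinite and almost malnormality would put $g$ inside $P$. Since $F_0$ is normal in $E$, we have $F_0 = gF_0g^{-1} \subseteq gPg^{-1}$, so $F_0 \subseteq P \cap gPg^{-1}$, the intersection of two distinct maximal parabolic subgroups. By the definition of $C(G,\mc{P})$ this intersection has size at most $C(G,\mc{P})$, giving $\#F_0 \le C(G,\mc{P})$. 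The real content is this parabolic case, which reduces to the observation that an RH-hyperbolic element cannot normalize a maximal parabolic subgroup; everything else is routine bookkeeping.
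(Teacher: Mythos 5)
Your proof is correct in spirit and runs parallel to the paper's: both arguments reduce to a finite subgroup $F_0$ with $\#F\le 2\#F_0$, bound $\#F_0$ by $C(G,\mcP)$ immediately when $F_0$ is non-parabolic, and otherwise place $F_0$ inside the intersection of two distinct maximal parabolic subgroups using an infinite-order element of $E$. The reductions differ in detail: the paper sets $F_0=F\cap E_0$, where $E_0$ is the index-$\le 2$ subgroup of $E$ fixing the two limit points, chooses an infinite-order $\alpha\in E$ centralizing $F_0$, and argues with parabolic fixed points ($F_0$ fixes both $p$ and $\alpha p\ne p$, since the non-parabolic $\alpha$ fixes no parabolic point); you instead take $F_0$ to be the maximal finite normal subgroup of $E$, get $\#F\le 2\#F_0$ from the $\Z$ or $D_\infty$ quotient, and use normality of $F_0$ together with conjugation by the RH-hyperbolic element $g$. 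Either reduction is fine.

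The one step that does not work in the paper's stated generality is ``otherwise $gPg^{-1}\cap P=P$ is infinite.'' This paper explicitly allows finite peripheral subgroups, so the maximal parabolic $P$ containing $F_0$ need not be infinite, and almost malnormality says nothing when the intersection is finite; indeed a finite maximal parabolic can be normalized by an RH-hyperbolic element (think of $P\times\Z$ hyperbolic relative to the finite factor $P$), so $gPg^{-1}\ne P$ can genuinely fail at the level of subgroups. The paper's proof sidesteps this by working with parabolic fixed points rather than with the subgroups: a non-parabolic infinite-order element fixes no parabolic point, so $F_0$ stabilizes the two \emph{distinct} parabolic points $p$ and $\alpha p$ (in your setup, $p$ and $gp$, using $F_0=gF_0g^{-1}\le \stab(p)\cap\stab(gp)$), and no infinitude of $P$ is needed; this is also the reading of ``two distinct maximal parabolic subgroups'' in the definition of $C(G,\mcP)$ that the paper's argument implicitly uses (stabilizers of distinct parabolic points, i.e.\ distinct peripheral cosets). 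Your argument is complete whenever $P$ is infinite; to cover finite $P$, replace the almost-malnormality step by this fixed-point argument.
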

\begin{proof}
    The two-ended group $E$ preserves a pair of points $E^{\pm \infty}$ in $\partial (G,\mcP)$.
  Let $E_0<E$ be the subgroup which fixes these points.  (If $E$ maps onto an infinite dihedral group $E_0$ is index two; otherwise $E=E_0$.)  Let $F_0 = F\cap E_0$, and let $\alpha\in E$ be an infinite order element which centralizes $F_0$.  If $F_0$ is non-parabolic, then we know $|F_0| \le C(G,\mc{P})$ by definition.  Suppose then that $F_0$ is parabolic, and contained in $gPg^{-1}$, for $P \in \mc{P}$.    Let $p$ be the parabolic fixed point for $gPg^{-1}$.
  Then for any $f\in F_0$ we have $f\alpha p = \alpha f p = \alpha p$, so that $f$ fixes both $p$ and $\alpha p$.  Note that we know $\alpha$ is not parabolic, since $E$ is not parabolic, so we know that $\alpha p \ne p$.  Thus $F_0$ is in the intersection of two distinct maximal parabolic subgroups and $\#F_0\leq C(G,\mcP)$.  The result follows.
\end{proof}

If $E$ is a maximal two-ended subgroup of $G$, where $(G,\mcP)$ is relatively hyperbolic, and $E$ is not already parabolic, then $(G,\mcP\sqcup\{E\})$ is also relatively hyperbolic \cite[Corollary 1.7]{osin:bg}.  The next lemma tells us how $C(G,\mcP)$ changes after augmenting the peripheral structure in this way.
\begin{lemma}\label{lem:augment peripheral}
  Suppose that $(G,\mcP)$ is relatively hyperbolic and $E<G$ is a maximal two-ended subgroup of $G$ which is not parabolic.  Then $C(G,\mcP\sqcup\{E\})\leq 2C(G,\mcP)$.  
\end{lemma}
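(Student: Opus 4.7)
My plan is to take an arbitrary $F$ in the family defining $C(G,\mcP\sqcup\{E\})$ and show $\#F\leq 2C(G,\mcP)$ by a case analysis. The ingredients I will use are: $(G,\mcP\sqcup\{E\})$ is itself relatively hyperbolic by \cite[Corollary 1.7]{osin:bg} (cited earlier in the paper), so by Lemma \ref{lem:CAM} the enlarged peripheral collection is almost malnormal; Lemma \ref{lem:finite in twoended}, which bounds finite subgroups of two-ended non-parabolic subgroups by $2C(G,\mcP)$; and the elementary observation that any subgroup which is parabolic in $(G,\mcP)$ is automatically parabolic in $(G,\mcP\sqcup\{E\})$.

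First, suppose $F$ is finite and not parabolic in $(G,\mcP\sqcup\{E\})$. Then by the last observation $F$ is also not parabolic in $(G,\mcP)$, so $F$ lies in the defining family of $C(G,\mcP)$ and $\#F\leq C(G,\mcP)$. Otherwise, $F$ sits inside $Q_1\cap Q_2$ with $Q_1\neq Q_2$ distinct maximal parabolic subgroups of $(G,\mcP\sqcup\{E\})$; each $Q_i$ is a $G$--conjugate of some element of $\mcP$ or of $E$. If both $Q_1$ and $Q_2$ are conjugates of elements of $\mcP$, then as subgroups of $G$ they remain distinct maximal parabolic subgroups of the original structure, and $F$ again lies in the defining family of $C(G,\mcP)$, so $\#F\leq C(G,\mcP)$. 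Otherwise at least one $Q_i$, say $Q_1$, is a conjugate $gEg^{-1}$, which is two-ended and, because $E$ is by hypothesis non-parabolic in $(G,\mcP)$, itself non-parabolic in $(G,\mcP)$. By almost malnormality of $\mcP\sqcup\{E\}$ the intersection $Q_1\cap Q_2$ is finite, hence so is $F$; now Lemma \ref{lem:finite in twoended}, applied to the finite subgroup $F$ of the two-ended non-parabolic group $gEg^{-1}$, yields $\#F\leq 2C(G,\mcP)$.

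Combining the cases gives the desired bound $C(G,\mcP\sqcup\{E\})\leq 2C(G,\mcP)$. The argument is essentially bookkeeping, and I do not anticipate a serious technical obstacle. The only subtleties are tracking what ``maximal parabolic'' means relative to each of the two peripheral structures, and a quick verification that a conjugate of $E$ cannot coincide with a conjugate of some element of $\mcP$ (which is immediate from $E$ being non-parabolic in $(G,\mcP)$), so that the subcase division above is actually exhaustive.
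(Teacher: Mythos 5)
Your proof is correct and takes essentially the same route as the paper: the case analysis reduces to noting that non-parabolic finite subgroups and intersections of two $\mcP$--parabolics are already counted by $C(G,\mcP)$, while any intersection involving a conjugate of $E$ is a finite subgroup of a two-ended non-parabolic group and is bounded by $2C(G,\mcP)$ via Lemma \ref{lem:finite in twoended}. Your extra remarks (almost malnormality of the enlarged collection, and that conjugates of $E$ are not $\mcP$--parabolic) just make explicit what the paper leaves implicit.
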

\begin{proof}
  Let $\mcP' = \mcP\sqcup \{E\}$.
  Let $F$ be a finite subgroup of $G$ which is not parabolic with respect to $\mcP'$.  Then $F$ is not parabolic with respect to $\mcP$ either, so $\#F\leq C(G,\mcP)$.  Likewise, if $F$ is in the intersection of two $\mcP$--parabolic subgroups, then $\#F\leq C(G,\mcP)$.  

  It remains to consider intersections of conjugates of $E$ with each other or with other parabolics.  But Lemma \ref{lem:finite in twoended} bounds the size of finite subgroups of $E$ by $2C(G,\mcP)$.
\end{proof}

\begin{lemma}\label{lem:2csplit two-ended}
  Let $(G,\mcP)$ be relatively hyperbolic, where each element of $\mcP$ is slender, and let $C = 2C(G,\mcP)$.  If $G$ admits a nontrivial splitting over a two-ended non-parabolic subgroup, then $G$ admits a nontrivial $(2,C)$--acylindrical splitting over an elementary subgroup.
\end{lemma}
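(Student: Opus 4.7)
The plan is to reduce to Lemma \ref{lem:2csplit} by augmenting the peripheral structure so that the given edge group becomes parabolic. Suppose $G$ splits nontrivially over a two-ended non-parabolic subgroup $E_0$. By Proposition \ref{p:elementary}, $E_0$ contains an RH-hyperbolic element, so it sits inside a unique maximal two-ended subgroup $E$ of $G$ (namely the setwise stabilizer of the pair of fixed points in $\partial(G,\mcP)$ of any RH-hyperbolic element of $E_0$). Since $E_0$ is not contained in any conjugate of an element of $\mcP$, neither is $E$, and hence $E$ is non-parabolic. By \cite[Corollary 1.7]{osin:bg}, the pair $(G,\mcP')$ with $\mcP' = \mcP \sqcup \{E\}$ is then relatively hyperbolic.

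Next I would verify the hypotheses of Lemma \ref{lem:2csplit} for $(G,\mcP')$. Every element of $\mcP'$ is slender: those in $\mcP$ by hypothesis, and $E$ because it is two-ended. Any intersection of two distinct conjugates of elements of $\mcP'$ is contained in the intersection of two distinct maximal parabolic subgroups of $(G,\mcP')$, so by the definition of $C(G,\mcP')$ and by Lemma \ref{lem:augment peripheral}, the collection $\mcP'$ is $C$-almost malnormal with $C = 2 C(G,\mcP)$. Moreover, the original splitting of $G$, with edge group $E_0 \subseteq E$, is now a splitting over a subgroup that is parabolic with respect to $\mcP'$.

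Applying Lemma \ref{lem:2csplit} to $(G,\mcP')$ then produces a nontrivial $(2,C)$-acylindrical splitting of $G$ over some subgroup $H$ that is parabolic with respect to $\mcP'$. If $H$ is conjugate into an element of $\mcP$, then $H$ is parabolic in $(G,\mcP)$ and hence elementary. If instead $H$ is conjugate into $E$, then $H$ is a subgroup of a two-ended group, so either finite or two-ended, and again elementary in $(G,\mcP)$. Either way, we obtain the desired splitting.

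I do not anticipate a substantive obstacle: the approach is essentially a clean reduction, with all the real work packaged into the cited results. The one point requiring attention is the verification that $E$ is non-parabolic (so that Osin's augmentation result applies) and that the almost malnormality constant for the augmented structure is controlled by $2C(G,\mcP)$, both of which follow immediately from earlier lemmas.
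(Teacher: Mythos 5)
Your proof is correct and follows essentially the same route as the paper: augment the peripheral structure by the maximal two-ended subgroup containing the edge group (using Osin's result for relative hyperbolicity), bound the new constant via Lemma \ref{lem:augment peripheral}, apply Lemma \ref{lem:2csplit}, and note that the resulting $\mcP'$--parabolic edge group is elementary for $(G,\mcP)$. The extra details you supply (uniqueness of the maximal two-ended overgroup, non-parabolicity of $E$) are correct but implicit in the paper's version.
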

\begin{proof}
  Suppose $G$ splits over a two-ended non-parabolic subgroup $E$, and let $\hat{E}$ be the maximal two-ended subgroup of $G$ containing $E$.  Let $\mcP' = \mcP\sqcup\{\hat{E}\}$
  As we remarked before Lemma \ref{lem:augment peripheral}, $(G,\mcP')$ is relatively hyperbolic.  By Lemma \ref{lem:augment peripheral}, $C(G,\mcP')\leq C = 2C(G,\mcP)$.  Lemma \ref{lem:2csplit} implies that $G$ admits a $(2,C)$--acylindrical splitting over a $\mcP'$--parabolic subgroup $S$.  If $S$ is not $\mcP$--parabolic, then it must be conjugate to a subgroup of $E$, so it is either finite or $2$--ended.
\end{proof}

\begin{proposition} \label{l:two-ended acylindrical}
 Suppose that $(G,\mc{P})$ is relatively hyperbolic, and let $C=2C(G,\mcP)$.  For all sufficiently long co-slender fillings $(G,\mc{P}) \to (\bar{G},\bar{\mc{P}})$, if $\bar{G}$ admits a nontrivial splitting over a two-ended non-parabolic subgroup then $\bar{G}$ admits a nontrivial $(2,C)$--acylindrical splitting over an elementary subgroup.
\end{proposition}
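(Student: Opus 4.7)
The plan is to combine Lemma \ref{lem:2csplit two-ended} (the unfilled version of the statement) with the uniform control on finite subgroups under filling given by Corollary \ref{cor:uniformC}. First I would choose the filling long enough so that Theorem \ref{t:RHDF} applies, giving that $(\bar{G},\bar{\mc{P}})$ is relatively hyperbolic, and also long enough that Corollary \ref{cor:uniformC} applies, giving the bound $C(\bar{G},\bar{\mc{P}}) \le C(G,\mc{P})$.

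Next, because the filling is co-slender, each element $\bar{P} \in \bar{\mc{P}}$ is slender. Thus the pair $(\bar{G},\bar{\mc{P}})$ satisfies exactly the hypotheses of Lemma \ref{lem:2csplit two-ended} (relative hyperbolicity with slender peripherals). Now suppose $\bar{G}$ admits a nontrivial splitting over a two-ended non-parabolic subgroup; applying Lemma \ref{lem:2csplit two-ended} to $(\bar{G},\bar{\mc{P}})$ yields a nontrivial $(2,2C(\bar{G},\bar{\mc{P}}))$-acylindrical splitting of $\bar{G}$ over an elementary subgroup. Since $2C(\bar{G},\bar{\mc{P}}) \le 2C(G,\mc{P}) = C$, such a splitting is \emph{a fortiori} $(2,C)$-acylindrical, which is the desired conclusion.

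The key steps in order are: (1) invoke Theorem \ref{t:RHDF} to pass to $(\bar{G},\bar{\mc{P}})$ as a relatively hyperbolic pair; (2) use co-slenderness to confirm the peripheral subgroups are slender after filling, so Lemma \ref{lem:2csplit two-ended} is available; (3) apply Corollary \ref{cor:uniformC} so that the acylindricity constant for $\bar{G}$ is controlled by the one for $G$. No genuinely new argument is required beyond these invocations, so I expect no serious obstacle: the substantive work was already done in Lemmas \ref{lem:finite in twoended} and \ref{lem:augment peripheral} (which bound the effect of adjoining a maximal two-ended subgroup to $\mc{P}$) and in Lemma \ref{lem:2csplit two-ended} itself. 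The proof therefore follows the same template as Proposition \ref{prop:acylindrical} and Corollary \ref{cor:finitesplittings}.
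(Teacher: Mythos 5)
Your proposal is correct and follows essentially the same route as the paper: the paper's proof simply invokes Corollary \ref{cor:uniformC} to control $C(\bar{G},\bar{\mc{P}})$ for long fillings and then applies Lemma \ref{lem:2csplit two-ended} to $(\bar{G},\bar{\mc{P}})$, with co-slenderness supplying the slenderness of the filled peripherals. Your extra care in noting that a $(2,2C(\bar{G},\bar{\mc{P}}))$--acylindrical splitting is a fortiori $(2,C)$--acylindrical is a harmless (indeed slightly more precise) version of the same step.
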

\begin{proof}
  By Corollary \ref{cor:uniformC}, $C(\barG,\barP) = C(G,\mcP)$ for sufficiently long fillings.  Apply Lemma \ref{lem:2csplit two-ended}.
\end{proof}

\section{Limiting actions on $\R$-trees} \label{s:rtrees}

In this section, we give the proofs of the main Theorems \ref{t:easy maintheorem} and \ref{t:maintheorem}.  In each case we assume that the theorem is false and investigate a sequence of longer and longer fillings contradicting the conclusion.

\begin{lemma}\label{lem:setup}
 Suppose that $(G,\mc{P})$ is relatively hyperbolic and a counterexample either to Theorem \ref{t:easy maintheorem} or to Theorem \ref{t:maintheorem}.
 
Let $C=2C(G,\mcP)$.  There is a stably faithful sequence of $\MultiEnded$--finite co-slender fillings $\eta_i \co (G,\mc{P}) \to (\bar{G}_i,\bar{\mc{P}}_i)$ so that for each $i$ the group $\bar{G}_i$ admits a nontrivial $(2,C)$-acylindrical elementary splitting.  In case $(G,\mcP)$ is a counterexample to Theorem \ref{t:maintheorem}, we may assume these splittings are over parabolic or finite subgroups.
\end{lemma}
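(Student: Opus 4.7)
The plan is to extract a suitable sequence of fillings by combining the counterexample hypothesis with an exhaustion of $G$ by balls, and then to upgrade each resulting splitting to a $(2,C)$--acylindrical one via the three results at the end of Section \ref{sec:torsion}. First I would note that $G$ is non-elementary: $G$ is one-ended in either theorem (for Theorem \ref{t:easy maintheorem}, the hypothesis rules out finite splittings of $G$, and a finite $G$ would make the theorem vacuous), $G\notin\mcP$ by the standing convention, and being one-ended excludes virtually cyclic, so Proposition \ref{p:elementary} rules out $G$ itself being elementary. By Theorem \ref{t:Tukia non-elem}, $G$ then contains an RH-hyperbolic element $g$. I would fix a finite generating set for $G$ and, for each $n\in\bN$, take $\mcB_n\subseteq G\smallsetminus\{1\}$ to be the union of the ball of radius $n$ about $1$ with the finite bad sets produced by Theorem \ref{t:RHDF} (applied to this ball), Lemma \ref{lem:stably hyperbolic} (applied to $g$), Corollary \ref{cor:finitesplittings}, Proposition \ref{prop:acylindrical}, and Proposition \ref{l:two-ended acylindrical}.

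By the counterexample hypothesis, for each $n$ one can select an $\MultiEnded$--finite co-slender filling $\eta_n\co (G,\mcP)\to (\bar{G}_n,\bar{\mcP}_n)$ whose kernels avoid $\mcB_n$ and whose quotient fails the conclusion of the relevant theorem. Such a sequence $\{\eta_n\}$ will be stably faithful by Theorem \ref{t:RHDF}, and $\eta_n(g)$ will be RH-hyperbolic in $(\bar{G}_n,\bar{\mcP}_n)$ by Lemma \ref{lem:stably hyperbolic}, so each $\bar{G}_n$ will be an infinite finitely generated group. For Theorem \ref{t:easy maintheorem}, the failure of the conclusion directly produces a nontrivial elementary splitting of $\bar{G}_n$. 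For Theorem \ref{t:maintheorem}, the failure means that $\bar{G}_n$ either admits a nontrivial splitting over a parabolic subgroup or is not one-ended; in the latter case $\bar{G}_n$ has more than one end, and Stallings' theorem supplies a nontrivial splitting of $\bar{G}_n$ over a finite subgroup.

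In every case the edge group of this splitting will be finite, parabolic, or two-ended non-parabolic. I would then apply Corollary \ref{cor:finitesplittings}, Proposition \ref{prop:acylindrical}, or Proposition \ref{l:two-ended acylindrical}, respectively, to upgrade the splitting to a nontrivial $(2,C)$--acylindrical elementary splitting of $\bar{G}_n$, with $C=2C(G,\mcP)$. Here Lemma \ref{lem:CAM} together with the definition of $C(G,\mcP)$ ensures that $\mcP$ is $C$--almost malnormal, and any $(2,C')$--acylindrical splitting with $C'\le C$ is automatically $(2,C)$--acylindrical, so the three upgrading results all yield splittings fitting the same uniform constant. In the setting of Theorem \ref{t:maintheorem} only the finite and parabolic alternatives arise, so the edge groups are as claimed. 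There is no substantial obstacle beyond the uniform bookkeeping that packages the finitely many bad sets coming from the auxiliary results into a single sequence $\{\mcB_n\}$; the argument is otherwise a direct assembly of previously-established facts.
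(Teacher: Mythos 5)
Your proposal is correct and follows essentially the same route as the paper: extract a stably faithful sequence of $\MultiEnded$--finite co-slender fillings witnessing the failure of the relevant conclusion (as in Corollary \ref{cor:stably faithful}), then upgrade the resulting splittings to $(2,C)$--acylindrical ones via Corollary \ref{cor:finitesplittings}, Proposition \ref{prop:acylindrical}, and Proposition \ref{l:two-ended acylindrical}. You merely make explicit some bookkeeping the paper leaves implicit (the choice of bad sets, the Stallings step, and the infiniteness of the quotients via Lemma \ref{lem:stably hyperbolic}), which is fine.
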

\begin{proof}
 Suppose first that $(G,\mc{P})$ is a counterexample to Theorem \ref{t:easy maintheorem}.  Then there is a stably faithful sequence of $\MultiEnded$--finite co-slender fillings $(G,\mc{P}) \to (\bar{G}_i,\bar{\mc{P}}_i)$ so that each $\bar{G}_i$ admits a nontrivial elementary splitting.  By Corollary \ref{cor:finitesplittings}, Proposition \ref{prop:acylindrical}, and Proposition \ref{l:two-ended acylindrical} these splittings can be modified to $(2,C)$--acylindrical elementary splittings, for all sufficiently large $i$.

 Now suppose that $(G,\mc{P})$ is a counterexample to Theorem \ref{t:maintheorem}.  In this case, there is a stably faithful sequence of $\MultiEnded$--finite co-slender fillings $\eta_i \co (G,\mc{P}) \to (\bar{G}_i,\bar{\mc{P}}_i)$ so that each $\bar{G}_i$ admits a nontrivial splitting over either a finite or a parabolic subgroup.  By Corollary \ref{cor:finitesplittings} and Proposition \ref{prop:acylindrical}, these splittings can be modified to $(2,C)$--acylindrical splittings over finite or parabolic subgroups, for all sufficiently large $i$.
\end{proof}

We now suppose that we have a relatively hyperbolic pair $(G,\mc{P})$, and that we have a stably faithful sequence of $\MultiEnded$--finite co-slender fillings $\eta_i\co (G,\mc{P}) \to (\bar{G}_i,\bar{\mc{P}}_i)$ so that each $\bar{G}_i$ admits a nontrivial $(2,C)$-acylindrical elementary splitting.

Choose a finite generating set $\mc{A}$ for $G$.  Each $\bar{G}_i$ acts $(2,C)$--acy\-lin\-dri\-cally on the Bass-Serre tree $T_i$ of its splitting, and the map $\eta_i$ induces an action of $G$ on $T_i$.  Choose a base vertex $x_i \in T_i$ which is \emph{centrally located} in the sense that it minimizes the function $x\mapsto \max\limits_{a\in \mc{A}}d_T(x,ax)$ on $T_i$.  

Given any action $\lambda \co G \to \Isom(T)$ (for some tree $T$ with centrally located basepoint $x$) define a length (with respect to $\mc{A}$)
\[	\| \lambda \| = \max_{a \in \mc{A}} d_T(x,\lambda(a).x)	.	\]
Thus by choosing a $(2,C)$--acylindrical Bass-Serre tree $T_i$ and centrally located point $x_i\in T_i$ as above, we obtain a length.  Abusing notation slightly, we call this number $\|\eta_i\|$; we'll use it as a scaling factor.

\begin{definition}
  Let $(T,x)$ be a basepointed tree, and let $(G,\mc{P})$ be relatively hyperbolic, and generated by $\mc{A}$.
  An action $\eta\co G\to \mathrm{Isom}(T)$ is \emph{shortest} (with respect to $\mc{P}$ and $\mc{A}$) if, for every $\phi\in \Aut_{\mc{P}}(G)$, we have
  \begin{equation}\label{shortest}
    \|\eta\circ \phi\|\ge \|\eta\|
  \end{equation}
\end{definition}

We summarize the assumptions we'll make for the remainder of this section.
\begin{assumption}\label{assumption:shortest}
$(G,\mcP)$ is a counterexample to either Theorem \ref{t:easy maintheorem} or Theorem \ref{t:maintheorem}.  Let $C = 2C(G,\mcP)$. 
  \begin{enumerate}
  \item\label{a:small fg} Small subgroups of $G$ are finitely generated.  
\item\label{a:G 1-ended} $G$ is one-ended and admits no proper peripheral splitting.
  \item\label{a:sfaith}  $(\bar{G}_i,\bar\mcP_i)$ is a stably faithful sequence of $\MultiEnded$--finite co-slender fillings of $(G,\mc{P})$ exhibiting the fact that $(G,\mcP)$ is a counterexample; for each $i$, there is a $(2,C)$--acylindrical splitting of $\bar{G}_i$ as in the conclusion of Lemma \ref{lem:setup}.
  \item\label{a:short} For each $i$, the action $\eta_i$ on the associated Bass-Serre tree is shortest (with respect to $\mc{P}$ and $\mc{A}$).
  \end{enumerate}
\end{assumption}
Lemma \ref{lem:retain stably faithful} implies that we can arrange Assumption \ref{assumption:shortest}.\eqref{a:short} without disturbing Assumption \ref{assumption:shortest}.\eqref{a:sfaith}.

Assumption \eqref{a:short} is only required in the proof of Theorem \ref{t:maintheorem}, not in the proof of Theorem \ref{t:easy maintheorem}.  However, making this assumption in both cases causes no harm.

We let $D_i = \|\eta_i\|$, and note that $D_i>0$, or the tree $T_i$ would have a global fixed point.

We thus obtain a sequence of actions of $G$ on simplicial trees.  
In case the sequence $\{ D_i \}$ is bounded, we obtain a limiting action on a simplicial tree $T_\infty$ by noting that the Lyndon length functions on some subsequence converge to a $\bZ$--valued Lyndon length function.  This implies there is an invariant simplicial subtree by \cite{chiswell,alperinmoss}.
If on the other hand the sequence $\{ D_i \}$ is unbounded, then by rescaling the metrics by $\frac{1}{D_i}$ we obtain a limiting action of $G$ on an $\R$-tree $T_\infty$ (see \cite[Theorem 3.3]{bestvina:handbook}).  
We assume in either case that the $\R$-tree $T_\infty$ has no proper $G$-invariant subtree (by passing to such a subtree if necessary).  This minimal subtree contains the basepoint $x_\infty$ which is a limit of the basepoints $x_i\in T_i$.

\begin{lemma}
The action of $G$ on $T_\infty$ has no global fixed point.
\end{lemma}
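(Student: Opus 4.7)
My plan is to argue by contradiction, combining (a) a lower bound on the displacement of $x_\infty$ by some generator with (b) the minimality of $T_\infty$.

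For (a), since $\mc{A}$ is finite, I may pass to a subsequence on which some fixed $a^\ast \in \mc{A}$ achieves $d_{T_i}(x_i, a^\ast x_i) = D_i$ for every $i$ (this exists simply by pigeonhole, since at each index some generator realizes the max defining $D_i$). By the construction of the limit tree (Bestvina--Paulin in the rescaled case where $D_i \to \infty$, Chiswell/Alperin--Moss in the bounded case where I may further assume $D_i$ is a constant positive integer $D$), the limit distance $d_{T_\infty}(x_\infty, a^\ast x_\infty)$ equals $\lim_i d_{T_i}(x_i, a^\ast x_i)/D_i = 1$ in the rescaled case, or $D \geq 1$ in the bounded case. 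In either situation it is strictly positive, so $a^\ast$ does not fix $x_\infty$.

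For (b), I observe that for any isometry $g$ of an $\R$-tree, the fixed-point set $\mathrm{Fix}(g)$ is a closed convex subset, hence a (possibly empty) subtree. Consequently $\mathrm{Fix}(G) = \bigcap_{g \in G} \mathrm{Fix}(g)$ is a $G$-invariant subset of $T_\infty$, and when nonempty it is a subtree. If $G$ had a global fixed point then $\mathrm{Fix}(G)$ would be a nonempty $G$-invariant subtree, so by the minimality of $T_\infty$ assumed in the setup it would equal all of $T_\infty$. In particular $x_\infty$ would be fixed by every element of $G$, giving $d_{T_\infty}(x_\infty, a^\ast x_\infty) = 0$ and contradicting (a). Therefore no global fixed point exists.

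There is no real obstacle here; the only technical point is the compatibility of the limit metric with the sequence of displacements at the basepoint, which is built into both constructions of $T_\infty$. Note that central location of $x_i$ is not needed for this particular lemma beyond ensuring $D_i > 0$ (which is equivalent to the action on $T_i$ having no global fixed point); it will play a role later in the analysis.
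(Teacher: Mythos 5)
Your argument is valid as a deduction from the setup as literally stated, and part (a) is exactly the paper's first observation: after rescaling, some generator moves $x_\infty$ distance $1$ (or $D\geq 1$ in the bounded case). Where you genuinely differ is in excluding a fixed point elsewhere in $T_\infty$: you appeal to minimality of $T_\infty$ together with the setup's assertion that this minimal subtree contains $x_\infty$, whereas the paper argues directly from the choice of basepoints. In the unbounded case, a global fixed point would be approximated by points of $T_i$ (in fact by orbit points $\eta_i(g)x_i$) at which every generator has displacement much smaller than $D_i$; precomposing with the inner automorphism $\ad_{g}$, which lies in $\Aut_{\mc{P}}(G)$, would then make $\eta_i$ strictly shorter for large $i$, contradicting Assumption \ref{assumption:shortest}.\eqref{a:short} (equivalently, contradicting the central location of the $x_i$). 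In the bounded case the paper argues instead that a global fixed point would make the limiting simplicial splitting trivial, forcing the splittings of $\bar{G}_i$ to be trivial for large $i$, contrary to their choice. Your route is shorter and uniform in the two cases; the paper's is self-contained relative to the construction.

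The caveat is that the fact you lean on --- that the minimal subtree contains $x_\infty$ --- is asserted in the paper without proof, and it is precisely where central location enters: if the limiting action were trivial, ``the'' minimal invariant subtree is not even well defined (any fixed point is an invariant subtree) and need not contain $x_\infty$, so taking that assertion for granted comes uncomfortably close to assuming the conclusion; the paper's displacement/shortening argument is essentially what justifies it. Relatedly, your closing remark that central location is needed only to guarantee $D_i>0$ is not correct: $D_i>0$ holds for any basepoint, since the actions $G\acts T_i$ factor through nontrivial splittings of $\bar{G}_i$, while it is exactly central location (every point of $T_i$ has maximal generator-displacement at least $D_i$), or the shortest assumption applied to inner automorphisms, that prevents the rescaled limit from acquiring a global fixed point away from the basepoint's orbit; with badly chosen basepoints a rescaled limit can perfectly well be a trivial action.
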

\begin{proof}
Suppose first that the stretching factors $\{ D_i \}$ diverge.  Note that by construction the basepoint $x_\infty$ is not a global fixed point, since some generator in $\mc{A}$ moves $x_\infty$ distance $1$.  Because the inner automorphisms of $G$ are in $\Aut_{\mc{P}}(G)$, from a global fixed point we could conjugate to find a homomorphism which moves $x_i$ a shorter distance for sufficiently large $i$, contradicting Assumption \ref{assumption:shortest}.

In case the stretching factors don't diverge, the limiting $\R$-tree $T_\infty$ is simplicial, so we obtain a graph of groups decomposition of $G$ coming from a limit of actions on the Bass-Serre trees of the $\bar{G}_i$.  If there were a global fixed point for the $G$-action on $T_\infty$, this splitting would be trivial.  However, if the splitting of $G$ induced by $T_\infty$ is trivial it is easy to see that for sufficiently large $i$ the splitting of $\bar{G}_i$ is trivial, in contradiction to the choice of the $\bar{G}_i$.
\end{proof}

\begin{lemma} \label{lem:cyclic is elliptic}
Suppose that $P \in \mc{P}$ is multi-ended. Then $P$ acts elliptically on $T_\infty$.
\end{lemma}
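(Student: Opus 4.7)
The plan is to exploit the $\MultiEnded$--finite condition on the fillings to force the image of $P$ to be finite, then use that finite groups act with fixed points on trees, and finally pass to the limit and promote ``every element elliptic'' to ``a common fixed point'' via Helly's theorem for subtrees.

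First, I would observe that since $P$ is multi-ended and each filling $\eta_i$ is $\MultiEnded$--finite (Assumption \ref{assumption:shortest}.\eqref{a:sfaith}), the filling kernel $K_i \lhd P$ has finite index in $P$, so the image $\eta_i(P)\cong P/K_i$ is a finite subgroup of $\bar{G}_i$ for every $i$ in the sequence. A finite group acting on a simplicial tree has a global fixed point (take the barycenter of any orbit), so $\eta_i(P)$ fixes a vertex of the Bass-Serre tree $T_i$. In particular, every $p\in P$ has $T_i$--translation length $0$ for all $i$.

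Next, I would pass to the limit. In the bounded case, where $T_\infty$ is obtained as a simplicial tree from a $\bZ$--valued Lyndon length function, and in the unbounded case, where $T_\infty$ is the $\bR$--tree obtained after rescaling by $1/D_i$, translation lengths of individual elements pass to the limit (in the rescaled case, $0$ rescales to $0$). Consequently $\ell_{T_\infty}(p)=0$ for every $p\in P$, so each element of $P$ acts elliptically on $T_\infty$.

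The final step is to upgrade this to a global fixed point for the whole subgroup $P$. Since peripheral subgroups are finitely generated by standing convention, fix a finite generating set $p_1,\ldots,p_n$ of $P$, and let $F_j\subseteq T_\infty$ denote the (nonempty, closed) fixed subtree of $p_j$. For any $j,k$, the product $p_jp_k\in P$ is also elliptic by the previous step; if $F_j\cap F_k$ were empty, a standard tree computation would give $\ell_{T_\infty}(p_jp_k)=2\,d(F_j,F_k)>0$, contradicting ellipticity of $p_jp_k$. Thus the subtrees $F_1,\ldots,F_n$ pairwise intersect, and Helly's theorem for subtrees of a tree yields $\bigcap_j F_j\neq\emptyset$. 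Any point of this intersection is fixed by every generator, hence by all of $P$, so $P$ acts elliptically on $T_\infty$. I do not anticipate a serious obstacle: the only subtle point is checking that the argument is the same in the simplicial and $\bR$--tree regimes, but Helly's theorem and the elliptic-times-elliptic computation apply uniformly in both settings.
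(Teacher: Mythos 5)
Your argument is correct, and its first half is exactly the paper's: $\MultiEnded$--finiteness makes $\eta_i(P)$ finite, hence elliptic on each $T_i$. Where you diverge is in how ellipticity is transported to $T_\infty$. The paper argues geometrically with fixed points: since the generators of $P$ have bounded word length in $\mc{A}$, they move the centrally located basepoint $x_i$ by at most a bounded multiple of $D_i$, so a fixed point of $P$ in $T_i$ lies within a bounded multiple of $D_i$ of $x_i$ and therefore survives the rescaled limit, directly giving a point of $T_\infty$ fixed by $P$. You instead pass only the \emph{length functions} to the limit (translation length $0$ for every element of $P$, via $\ell(g)=\max(0,d(x,g^2x)-d(x,gx))$ and convergence of the rescaled based length functions), and then rebuild a global fixed point inside $T_\infty$ using Serre's lemma (elliptics with disjoint fixed trees have hyperbolic product) together with Helly for finitely many subtrees, using that $P$ is finitely generated (which holds by the paper's standing conventions on peripheral subgroups). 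Both routes are standard and sound; yours avoids having to argue that $\mathrm{Fix}_{T_i}(P)$ stays near the basepoint and that such points persist in the limit, at the cost of invoking, implicitly, the semisimplicity of isometries of $\bR$--trees (vanishing translation length forces a fixed point, since $\bR$--trees admit no inversions or parabolic isometries) --- a true fact, but it is the one ingredient you should cite or prove rather than take for granted.
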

\begin{proof}
For any multi-ended $P \in \mc{P}$, the $\MultiEnded$--finiteness assumption implies that the image of $P$ in each $\bar{G}_i$ is finite, and so $P$ acts elliptically on each $T_i$.  On the other hand, a generator for $P$ has some word length in $\mc{A}$, which means that this generator moves the basepoint $x_i$ a distance which is a bounded multiple of $D_i$. This implies that a fixed point for $P$ in $T_i$ is distance a bounded multiple of $D_i$ from $x_i$, which in turn implies that these fixed points persist in the limit, so $P$ fixes a point in $T_\infty$. 
\end{proof}

\begin{lemma}
The scaling factors $D_i$ are unbounded.
\end{lemma}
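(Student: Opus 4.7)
The plan is to argue by contradiction. Suppose $\{D_i\}$ is bounded. Then the integer-valued translation length functions $\ell_{T_i}$ take, on each element of $G$, only finitely many distinct values, so after passing to a subsequence we may assume $\ell_{T_i}$ converges pointwise to a length function $\ell_\infty$. By \cite{chiswell,alperinmoss}, $\ell_\infty$ is realized by a minimal $G$-action on a simplicial tree $T_\infty$; the previous lemma guarantees this action has no global fixed point.

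I would next establish that every edge stabilizer $H=\mathrm{Stab}_G(e)$ of $T_\infty$ is elementary in $(G,\mc{P})$. For each finitely generated subgroup $H_0 \le H$, length function convergence together with integrality of $\ell_{T_i}$ shows that for all large $i$ the actions $H_0\acts T_i$ and $H_0\acts T_\infty$ have identical length functions, so by the standard correspondence these restricted actions are $H_0$-equivariantly isomorphic on their minimal invariant subtrees. In particular $H_0$ fixes an edge $e_i$ of $T_i$, so $\eta_i(H_0)$ lies inside the elementary subgroup $\mathrm{Stab}_{\bar{G}_i}(e_i)$. Now suppose $H$ were non-elementary; by Theorem \ref{t:Tukia non-elem} it would contain a rank-two free subgroup $\langle a,b\rangle$ generated by independent RH-hyperbolic elements. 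Taking $H_0=\langle a,b\rangle$, for all large $i$ we obtain $\eta_i(\langle a,b\rangle)\le E_i$, an elementary subgroup of $\bar{G}_i$. By Lemma \ref{lem:stably hyperbolic}, $\eta_i(a)$ and $\eta_i(b)$ are RH-hyperbolic, so Proposition \ref{p:elementary} forces $E_i$ to be two-ended and non-parabolic, with $\eta_i(a)$ and $\eta_i(b)$ sharing a common pair of fixed points. Lemma \ref{lem:finite in twoended} and Corollary \ref{cor:uniformC} jointly bound the order of any finite subgroup of such an $E_i$ by a constant $C'$ depending only on $(G,\mc{P})$; consequently there is an integer $k\ge 1$ independent of $i$ so that $\eta_i(a)^k$ and $\eta_i(b)^k$ both lie in a common infinite cyclic subgroup of $E_i$ and therefore commute. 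This yields $[a^k,b^k]\in\ker\eta_i$ for all large $i$, contradicting stable faithfulness since $[a^k,b^k]\neq 1$ in the free group $\langle a,b\rangle$.

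Having shown that $T_\infty$ furnishes a nontrivial elementary splitting of $G$, I would conclude as follows. For a counterexample to Theorem \ref{t:easy maintheorem} this directly contradicts the hypothesis that $G$ admits no nontrivial elementary splitting. For a counterexample to Theorem \ref{t:maintheorem}, Assumption \ref{assumption:shortest}.\eqref{a:sfaith} forces $\eta_i(H)$ to be finite or parabolic. A finite $H$ would contradict the one-endedness in Assumption \ref{assumption:shortest}.\eqref{a:G 1-ended}; and a virtually cyclic $H$ containing an RH-hyperbolic element $g$ would have $\eta_i(H)$ two-ended (Lemma \ref{lem:stably hyperbolic}), ruled out. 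Thus $H$ is parabolic. Combining Lemma \ref{lem:cyclic is elliptic} (for multi-ended peripherals) with Lemma \ref{lem:make peripheral elliptic} (applied, if necessary, to the approximating $T_i$ so that all their peripherals are elliptic) produces a splitting of $G$ over a parabolic subgroup with every peripheral elliptic, which Proposition \ref{prop:edge group parabolic} then upgrades to a proper peripheral splitting of $G$, again contradicting Assumption \ref{assumption:shortest}.\eqref{a:G 1-ended}.

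The main obstacle is the elementariness of the edge stabilizers. The argument above resolves it by blending stable hyperbolicity of RH-hyperbolic elements (Lemma \ref{lem:stably hyperbolic}) with uniform control on torsion in two-ended subgroups of long fillings (Corollary \ref{cor:uniformC}), producing a uniform power $k$ whose commutator witnesses a collision with stable faithfulness.
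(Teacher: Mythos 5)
Your overall route is the paper's: assume $\{D_i\}$ is bounded, pass to the simplicial limit $T_\infty$, analyze an edge stabilizer $H$, rule out parabolic $H$ via Lemma \ref{lem:cyclic is elliptic}, Lemma \ref{lem:make peripheral elliptic} and Proposition \ref{prop:edge group parabolic}, and get the final contradiction either from the no-elementary-splitting hypothesis of Theorem \ref{t:easy maintheorem} or, for Theorem \ref{t:maintheorem}, from the fact that the approximating splittings are over finite or parabolic subgroups while $H$ contains an RH-hyperbolic element whose image stays RH-hyperbolic (Lemma \ref{lem:stably hyperbolic}). Your uniform-power commutator $[a^k,b^k]$ plays exactly the role of the paper's observation that two-ended groups satisfy a law, and that part is fine: the bound on finite subgroups coming from Lemma \ref{lem:finite in twoended} and Corollary \ref{cor:uniformC} does give a uniform $k$ (for instance via the normal core of a finite-index infinite cyclic subgroup of $E_i$).

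The genuine gap is the bridging step that puts $\eta_i(H_0)$ inside an \emph{edge} stabilizer of $T_i$. Pointwise convergence of the integer-valued length functions only gives that $\ell_{T_i}$ vanishes on any fixed finite subset of $H_0$ for large $i$; by Serre's lemma the finitely generated group $H_0$ then fixes a \emph{vertex} of $T_i$, and the length-function correspondence you invoke is vacuous here: when the limiting length function vanishes on $H_0$ there is no minimal-subtree comparison to exploit, and nothing in that argument forces $H_0$ to fix an edge. Since vertex stabilizers of $T_i$ need not be elementary (nor finite-or-parabolic), the containment $\eta_i(H_0)\le \mathrm{Stab}_{\bar{G}_i}(e_i)$, on which both halves of your proof rest (including the phrase ``forces $\eta_i(H)$ to be finite or parabolic''), is unjustified as written. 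The claim is true, but it needs the geometric approximation rather than bare length functions: the endpoints $p,q$ of the edge $e$ of $T_\infty$ are limits of points $p_i,q_i\in T_i$ with $d(p_i,q_i)$ bounded away from $0$ for large $i$, and each of the finitely many elements of $H$ you actually use (say $a$, $b$, or the RH-hyperbolic $g$) moves $p_i$ and $q_i$ a distance tending to $0$, hence by integrality fixes both for large $i$, hence fixes the nondegenerate segment $[p_i,q_i]$ and so lies in an edge stabilizer of $T_i$. This is precisely the technique the paper uses for arc stabilizers in Lemma \ref{lem:arc stab structure}, and it is what lies behind the sentence in the paper's proof asserting that the splitting of $\bar{G}_i$ must be over a two-ended non-parabolic subgroup; alternatively one can detect the fixed edge from finitely many length-function values by bringing in a hyperbolic element of $T_\infty$ whose axis crosses $e$. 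With that repair, the rest of your argument goes through and agrees with the paper's.
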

\begin{proof}
Suppose not.  Then the limiting tree $T_\infty$ is a (minimal) simplicial tree.  Let $E$ be an edge stabilizer for this tree.  Since $G$ is one-ended, $E$ is infinite.  We claim that $E$ cannot be parabolic.  Indeed, by Lemma \ref{lem:cyclic is elliptic}, any multi-ended peripheral subgroup of $\mcP$ acts elliptically on $T_\infty$.  So by Lemma \ref{lem:make peripheral elliptic}, if $E$ were parabolic there would be a nontrivial splitting of $G$ in which all elements of $\mcP$ were elliptic.  By Proposition \ref{prop:edge group parabolic}, $G$ would admit a proper peripheral splitting.

Since $E$ is infinite non-parabolic, it contains an RH-hyperbolic element $g$.  (Either it is non-elementary, in which case we can apply Theorem \ref{t:Tukia non-elem}, or it is elementary, and we are in case \eqref{virtualZ} of Proposition \ref{p:elementary}.)

By Lemma \ref{lem:stably hyperbolic}, the image of $g$ in $\bar{G}_i$ is RH-hyperbolic for large $i$.    This means that the elementary splitting of $\bar{G}_i$ giving rise to $T_i$ is not over a parabolic subgroup or a finite subgroup, and so must be over a two-ended non-parabolic subgroup.

Here the argument diverges depending on whether $G$ is a counterexample to Theorem \ref{t:easy maintheorem} or \ref{t:maintheorem}.

In case $G$ is a counterexample to Theorem \ref{t:maintheorem}, we have already reached a contradiction, since the $T_i$ are assumed to come from parabolic or finite splittings.

In case $G$ is a counterexample to Theorem \ref{t:easy maintheorem}, we conclude that $E$ is nonelementary.  In particular $E$ contains a nonabelian free subgroup.  The bound in Lemma \ref{lem:finite in twoended} implies that infinitely many edge groups $E_i < \bar{G}_i$ are isomorphic.  Note that the edge groups $E_i$ are all quotients of $E<G$.  
But since every
two-ended group satisfies a law, there is no stably faithful sequence of homomorphisms from a group containing a nonabelian free group to a fixed two-ended group.  Thus in this case we have also reached a contradiction.
\end{proof}

Since the scaling factors $D_i$ are unbounded, the limiting $\R$-tree $T_\infty$ may not be simplicial.  In order to obtain our contradiction to prove Theorem \ref{t:easy maintheorem}, we apply the Rips machine (in its version from \cite{Guirardel:Rtrees}) to find an elementary splitting of $G$.  For the contradiction for Theorem \ref{t:maintheorem}, we have to undertake a more refined analysis of the limiting $\R$-tree (still using the results from \cite{Guirardel:Rtrees}) and then use Sela's `Shortening Argument' to argue that for large $i$ the action $\eta_i$ is not shortest, contrary to Assumption \ref{assumption:shortest}.

We start by analyzing the arc stabilizers of the $G$-action on $T_\infty$.

\begin{lemma} \label{lem:arc stab structure}
Let $I$ be a nondegenerate arc in $T_\infty$.  The stabilizer of $I$ fits into a short exact sequence
\[ 1\to N\to \stab(I)\to A\to 1, \]
where $A$ is abelian, and $N$ is finite of order at most $C$.
\end{lemma}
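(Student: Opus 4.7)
Set $H=\stab(I)$; I will take $N=[H,H]$ and $A=H/N$, and show directly that $|[H,H]|\le C$. Every finitely generated subgroup of $[H,H]$ is contained in the subgroup generated by finitely many commutators of elements of $H$, so it suffices to bound $|K|$ for an arbitrary finitely generated $K\le [H,H]$. Writing each generator of $K$ as a product of commutators, let $F=\{g_1,h_1,\dots,g_m,h_m\}\subseteq H$ be the (finite) list of elements involved.

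\medskip

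\noindent\textbf{Geometry of characteristic sets.} Pick interior points $p,q$ of $I$ at distance $\ell>0$ apart, with approximations $p_i,q_i\in T_i$, so $I_i:=[p_i,q_i]$ has length $L_i\sim\ell\, D_i$. For each $f\in F\subseteq\stab(I)$ we have $d_i(\eta_i(f)\cdot p_i,p_i)=o(D_i)$ and similarly for $q_i$, hence the characteristic set $E_i(f)$ of $\eta_i(f)$ (fixed subtree if elliptic, axis if hyperbolic) lies within $o(D_i)$ of both $p_i$ and $q_i$. In a tree, the alternatives in which $E_i(f)\cap I_i$ is empty or a single point both force, by a standard bridge argument, $L_i\le o(D_i)$, which fails for large $i$. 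Hence $E_i(f)\cap I_i$ is a sub-segment of $I_i$ of length $L_i-o(D_i)$, on which $\eta_i(f)$ acts either trivially (elliptic case) or by translation through $\tau_i(f)=o(D_i)$ along $I_i$ (hyperbolic case, where the axis must contain this sub-segment).

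\medskip

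\noindent\textbf{Commutators and conclusion.} For any $g,h\in F$, intersecting the two long sub-segments above yields a sub-segment of $I_i$ of length $L_i-o(D_i)$ on which $\eta_i(g)$ and $\eta_i(h)$ each act as translations (possibly trivial) along $I_i$ of magnitude $o(D_i)$. An elementary tree computation then shows that $\eta_i([g,h])$ acts trivially on the sub-segment obtained by trimming $\tau_i(g)+\tau_i(h)$ from each end. Intersecting over the finitely many commutators expressing the generators of $K$ produces a sub-segment $J_i\subseteq I_i$ of length $L_i-o(D_i)\ge 3$ for large $i$, on which every element of $\eta_i(K)$ acts trivially. By $(2,C)$-acylindricity of $\bar G_i\acts T_i$, this gives $|\eta_i(K)|\le C$. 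Since stable faithfulness ensures that for any finite $S\subseteq K$ and sufficiently large $i$ the restriction $\eta_i|_S$ is injective, we conclude $|K|\le C$. Letting $K$ range over all finitely generated subgroups of $[H,H]$ gives $|[H,H]|\le C$, completing the argument.

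\medskip

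\noindent\textbf{Main obstacle.} The technical heart of the plan is the tree-geometry in the hyperbolic case: one must verify both that an axis coming within $o(D_i)$ of two points of $I_i$ separated by $L_i\gg o(D_i)$ must share a long sub-segment with $I_i$ (rather than merely branch off $I_i$ at nearby points), and that the commutator of two small-magnitude translations along this shared sub-segment fixes a sub-arc of length $L_i-o(D_i)$. The bridge calculation addresses the first point, and a direct computation on the line addresses the second.
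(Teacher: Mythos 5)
Your proof is correct and follows essentially the same route as the paper's (which is modeled on Sela's acylindrical accessibility argument): take $N$ to be the commutator subgroup of $\stab(I)$, express commutators via a finite set $F\subseteq\stab(I)$, note that for large $i$ the elements of $F$ move the approximating points $p_i,q_i$ a distance that is negligible compared to $D_i$ and hence act (nearly) as translations on a long subsegment of $[p_i,q_i]$, so the commutators stabilize a segment of length at least $3$, and conclude with $(2,C)$--acylindricity together with stable faithfulness. Your additional detail on characteristic sets, the bridge argument, and the reduction to finitely generated subgroups simply makes explicit what the paper leaves implicit.
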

\begin{proof}
This argument is very similar to the argument from the proof of \cite[Proposition 1.2(i), page 531]{sela:acyl}.
We show that the commutator subgroup of $\stab(I)$ has cardinality bounded above by $C$.

Indeed, let $\gamma_1,\ldots,\gamma_n$ be distinct elements of the commutator subgroup of $\stab(I)$.  
For large $i$ the set $\{\gamma_1,\ldots,\gamma_n\}$ embeds in $\bar{G}_i$, since the sequence is stably faithful.
For each $\gamma_j$ fix some expression of $\gamma_j$ as $\prod_{k=1}^{n_j} [\alpha_{j,k},\beta_{j,k}]$, a product of commutators of elements of $\stab(I)$; let $F$ be the set of $\alpha_{j,k},\beta_{j,k}$ which occur in one of these expressions.  

The endpoints $p$ and $q$ of $I$ are limits of sequences $\{p_i\}$, $\{q_i\}$ of vertices in the approximating Bass-Serre trees, so the distances $\frac{d(p_i,q_i)}{D_i}$ tend to $d(p,q)$.  The elements of $F$ all stabilize $I$,  so for sufficiently large $i$ we have 
$\max\{d(p_i,fp_i),d(q_i,fq_i)\}<\frac{D_i}{100}$ for all $f\in F$, and $D_i>1000$.  
It follows that there is a large subsegment of $[p_i,q_i]$ on which all the elements of $F$ act like translations (or act trivially).  
It follows that all the elements $\gamma_1,\ldots,\gamma_n$ lie in the stabilizer of this segment.  Since the action of $\bar{G}_i$ on $T_i$ is $(2,C)$--acylindrical, $n\leq C$, as required.
\end{proof}
Recall that a group is \emph{small} if it has no nonabelian free subgroup.  Finite-by-abelian groups are clearly small.
\begin{corollary}\label{cor:stab small}
  The action $G\acts T_\infty$ has small arc stabilizers.
\end{corollary}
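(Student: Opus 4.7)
The plan is to deduce this as an essentially immediate consequence of Lemma \ref{lem:arc stab structure}, which describes every nondegenerate arc stabilizer as finite-by-abelian. The remaining content is the assertion that finite-by-abelian implies small, so the proof should reduce to that general fact.

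Concretely, let $I$ be a nondegenerate arc in $T_\infty$ and apply Lemma \ref{lem:arc stab structure} to obtain a short exact sequence
\[ 1 \to N \to \stab(I) \to A \to 1, \]
with $N$ finite of order at most $C$ and $A$ abelian. Suppose for contradiction that $F \leq \stab(I)$ is a nonabelian free subgroup. Then the image of $F$ in $A$ is abelian, hence $F$ factors through its intersection with $N$, i.e.\ $F \cap N$ has finite index in $F$. But a finite-index subgroup of a nonabelian free group is itself a nonabelian free group (of rank $\geq 2$), in particular infinite, contradicting $\#N \leq C$. Therefore $\stab(I)$ contains no nonabelian free subgroup and is small. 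Since the arc $I$ was arbitrary, the action $G \acts T_\infty$ has small arc stabilizers.

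There is no substantive obstacle here; the only thing to be careful about is to invoke the structural Lemma \ref{lem:arc stab structure} (rather than any geometric argument about $T_\infty$) and to spell out the one-line group-theoretic fact that a finite-by-abelian group cannot contain $F_2$.
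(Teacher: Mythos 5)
Your overall route is exactly the paper's: the corollary is read off from Lemma \ref{lem:arc stab structure} together with the observation that a finite-by-abelian group is small (the paper states this observation, without proof, immediately before the corollary). However, your justification of that observation contains a false step. You claim that since the image of a putative nonabelian free subgroup $F \le \stab(I)$ in the abelian quotient $A$ is abelian, the kernel $F \cap N$ must have finite index in $F$. That implication is wrong: a nonabelian free group has infinite abelian quotients (its abelianization is $\Z^n$ with $n \ge 2$), so an abelian image does not force the kernel to have finite index, and the finite-index subgroup you then invoke need not exist.

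The repair is one line and keeps everything else intact: since $F/(F\cap N)$ embeds in the abelian group $A$, the kernel $F\cap N$ contains the commutator subgroup $[F,F]$, which in a nonabelian free group is itself a nonabelian free group, in particular infinite. This contradicts $\#N\le C$. With that substitution your argument is correct and coincides with the paper's intended proof.
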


\begin{lemma}\label{lem:tripod stabilizers}
Let $Y$ be a nontrivial tripod in $T_\infty$.  The stabilizer of $Y$ is finite of order at most $C$.
\end{lemma}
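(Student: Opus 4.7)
The plan is to refine the argument of Lemma~\ref{lem:arc stab structure}, using the third direction of the tripod $Y$ to show that elements of $\stab(Y)$, pulled back to the approximating trees $T_i$, actually fix a long sub-segment pointwise rather than merely acting on it by abelian translations. Let $g_1,\ldots,g_n \in \stab(Y)$ be distinct; by stable faithfulness they remain distinct in $\bar{G}_i$ for all large $i$. It suffices to exhibit, in $T_i$, a common segment of length at least $3$ fixed pointwise by every $g_j$, for then $(2,C)$-acylindricity forces $n\leq C$. Write $p,q,r$ for the three extreme points of $Y$ and $c$ for its center, and let $\delta>0$ be the minimum arm length $\min_v d_{T_\infty}(c,v)$. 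Choose approximations $p_i,q_i,r_i,c_i \in T_i$ such that, after rescaling by $1/D_i$, they converge to $p,q,r,c$. Each $g_j$ then satisfies $d_{T_i}(g_j v_i, v_i) = o(D_i)$ for every $v\in\{p,q,r,c\}$.

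The first step is to show that each $g_j$ acts elliptically on $T_i$ for all large $i$. If $g_j$ were hyperbolic with axis $A_j$, then each of $p_i,q_i,r_i$ would lie within $d_{T_i}(g_j v_i, v_i)/2 = o(D_i)$ of $A_j$. But three points within $o(D_i)$ of a single line in a tree cannot form a tripod with all three arms of length of order $D_i$: the median map is $1$-Lipschitz in its inputs, so the median also lies within $o(D_i)$ of $A_j$, and on a line the median of three points is just the middle one, so the shortest arm of the approximate tripod must itself be $o(D_i)$, contradicting the lower bound $\delta D_i / 2$ coming from the limit. Hence $g_j$ fixes a non-empty convex subtree $F_j \subset T_i$.

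The second step identifies a long sub-segment of $[p_i,q_i]$ fixed pointwise by $g_j$. Since $d_{T_i}(p_i, F_j) = d_{T_i}(g_j p_i, p_i)/2 = o(D_i)$ and likewise for $q_i$, if $F_j$ and $[p_i,q_i]$ were disjoint the unique bridge would give $d_{T_i}(p_i, F_j) + d_{T_i}(q_i, F_j) \geq d_{T_i}(p_i, q_i)$, incompatible with $d_{T_i}(p_i,q_i)\geq (d_{T_\infty}(p,q)/2) D_i$ for large $i$; the same calculation rules out a single-point intersection. Thus $F_j \cap [p_i,q_i] = [a_j, b_j]$ is a sub-segment with $d_{T_i}(p_i, a_j), d_{T_i}(q_i, b_j) = o(D_i)$, fixed pointwise by $g_j$. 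Intersecting over the finite collection $j = 1,\ldots, n$ yields a common pointwise-fixed sub-segment $[\max_j a_j, \min_j b_j]$ of length at least $d_{T_i}(p_i,q_i) - o(D_i)$, which exceeds $3$ for $i$ sufficiently large, and $(2,C)$-acylindricity then delivers $n \leq C$. The only slightly delicate point is ruling out the disjoint and single-point cases for $F_j \cap [p_i, q_i]$; once those are dispatched, the remainder is bookkeeping plus acylindricity.
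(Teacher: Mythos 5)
Your proof is correct and follows essentially the same route as the paper's: approximate the fat tripod in the acylindrical trees $T_i$, show the elements of $\stab(Y)$ (which remain distinct by stable faithfulness) must fix a segment of length at least $3$ there, and invoke $(2,C)$--acylindricity. The only difference is that you spell out the elementary tree geometry (ruling out hyperbolicity via the median, then locating the fixed subtree's overlap with $[p_i,q_i]$) that the paper leaves implicit in the assertion that the $g_j$ fix a large tripod centered at $z_i$.
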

\begin{proof}
This argument is very similar to one in Sela \cite[Proposition 1.2(ii), page 531]{sela:acyl}, but modified slightly to deal with torsion.  Let $g_1,\ldots,g_n\subseteq \stab(Y)$, let $p,q,r$ be the leaves of $Y$, and let $z$ be its center.  There are sequences of points $p_i,q_i,r_i\in T_i$ approximating $p,q,r$, and it is not hard to see that if $z_i$ is the center of the tripod spanned by $\{p_i,q_i,r_i\}$, then the sequence $\{z_i\}$ tends to $z$.  For sufficiently large $i$, the distances $d(p_i,g_jp_i), d(q_i,g_j p_i), d(r_i,g_jr_i)$ are all much smaller than $d(p_i,z_i), d(q_i,z_i), d(r_i,z_i)$, and these latter three distances are large.  It follows that, for large $i$, all the $g_j$ fix a large tripod centered at $z_i\in T_i$; in particular they lie in the stabilizer of some segment of length at least three.

Since the splittings giving rise to the trees $T_i$ are $(2,C)$--acylindrical, and the sequence $\bar{G}_i$ is stably faithful, we have $n\leq C$ for sufficiently large $i$.
\end{proof}

An arc $I$ of a $G$--tree $T$ is \emph{unstable} if it contains a nondegenerate subarc $J$ with strictly larger stabilizer.
The following result follows easily from the last two lemmas, see \cite[Proposition 4.2]{RipsSela94}, and says that the action is ``almost superstable''.  

\begin{lemma} \label{lem:unstable arc}
The stabilizer in $G$ of an unstable arc in $T_\infty$ has order at most $C$.
\end{lemma}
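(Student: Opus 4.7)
The plan is to adapt the commutator / approximation argument from the proof of Lemma \ref{lem:arc stab structure} and combine it with the tripod stabilizer bound from Lemma \ref{lem:tripod stabilizers}.  Suppose $I$ is unstable and fix a non-degenerate sub-arc $J \subseteq I$ with $\stab(J) \supsetneq \stab(I)$, together with an element $g \in \stab(J) \setminus \stab(I)$.  My first task is to produce a non-degenerate tripod.  Since $g$ fixes $J$ pointwise but not $I$, $gI \ne I$ yet $J \subseteq I \cap gI$.  After enlarging $J$ to the maximal sub-arc of $I$ fixed pointwise by $g$, at an endpoint $a$ of $J$ the element $g$ must send the direction into $I \setminus J$ to a distinct direction (otherwise $g$ would fix a slightly larger sub-arc of $I$, contradicting maximality), and this new direction lies along $gI$.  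Hence $I \cup gI$ contains a non-degenerate tripod $Y$ at $a$ with legs into $J$, into $I \setminus J$, and into $gI \setminus I$.  Lemma \ref{lem:tripod stabilizers} therefore gives $|\stab(Y)| \le C$.

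Next, for any $h \in \stab(I)$ I consider the commutator $[h,g]$.  Following the proof of Lemma \ref{lem:arc stab structure}, for all sufficiently large $i$ the elements $h$ and $g$ both act as near-translations on a common long sub-segment $J_i'$ of the approximant of $J$ in $T_i$, and hence commute there; thus $[h,g]$ fixes $J_i'$ pointwise.  For large enough $i$, $|J_i'| \ge 3$, so by $(2,C)$-acylindricity of the action of $\bar{G}_i$ on $T_i$, only at most $C$ distinct elements of $\bar{G}_i$ arise this way.  Stable faithfulness, applied to any fixed finite subset of $\stab(I)$, then shows that the map $\Phi \colon \stab(I) \to G$, $h \mapsto [h,g]$, has image of cardinality at most $C$.

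The fibers of $\Phi$ are cosets of $C_G(g) \cap \stab(I)$ in $\stab(I)$.  For $\eta \in C_G(g) \cap \stab(I)$, commutativity with $g$ together with $\eta I = I$ gives $\eta(gI) = g(\eta I) = gI$, so $\eta$ preserves the arc $gI$ setwise; as $\eta$ also fixes the non-degenerate sub-arc $J \subseteq gI$ pointwise, $\eta$ must act as the identity on all of $gI$.  Thus $\eta \in \stab(I \cup gI) \subseteq \stab(Y)$, and $|C_G(g) \cap \stab(I)| \le |\stab(Y)| \le C$.  Combining the two bounds, $|\stab(I)|$ is controlled in terms of $C$ (with a slight refinement or redefinition of the constant delivering the precise bound stated).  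I expect the main obstacle to lie in the second paragraph's transfer from $T_\infty$ to $T_i$: verifying carefully that $[h,g]$ genuinely fixes a segment of length at least three in $T_i$ rather than merely approximately fixing something near $J_i$.  This is handled by expressing $[h,g]$ as a fixed word in a finite set of elements of $\stab(I) \cup \{g\}$ and running the near-translation argument of Lemma \ref{lem:arc stab structure} verbatim.
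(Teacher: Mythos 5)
Your proposal follows the route the paper intends: the paper gives no detailed argument for Lemma \ref{lem:unstable arc}, saying only that it follows easily from Lemmas \ref{lem:arc stab structure} and \ref{lem:tripod stabilizers} in the manner of \cite[Proposition 4.2]{RipsSela94}, and your three ingredients --- the nondegenerate tripod inside $I\cup gI$ built from a maximal subarc fixed by $g\in\stab(J)\smallsetminus\stab(I)$, the near-translation/commutator argument in the approximating $(2,C)$--acylindrical trees, and the observation that $C_G(g)\cap\stab(I)$ fixes $I\cup gI$ and hence the tripod --- are exactly those two lemmas together with the machinery already used to prove them. The individual steps are sound; one sentence worth adding is that at the branch point the $g$--image of the direction into $I\smallsetminus J$ differs not only from that direction (by maximality) but also from the direction into $J$, since $g$ fixes the latter and permutes the directions at a fixed point injectively --- this is what makes the tripod genuinely nondegenerate.

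The one real gap is quantitative: your count gives $|\stab(I)|\le |\mathrm{im}\,\Phi|\cdot|C_G(g)\cap\stab(I)|\le C^2$, not the bound $C$ in the statement, and the closing claim that a ``slight refinement'' of the constant delivers the stated bound is not substantiated; nothing in your argument prevents elements of $\stab(I)$ from acting as small nontrivial translations on $T_i$, so you cannot directly place all of $\stab(I)$ in the stabilizer of a length-three segment. The fix is a short bootstrap from what you have already proved: since $\stab(I)$ is finite (your $C^2$ bound), every element of $\stab(I)$ has finite order, hence acts elliptically on each simplicial tree $T_i$; as it moves the approximating endpoints $p_i,q_i$ of $I$ by $o(D_i)$, its fixed subtree contains all of $[p_i,q_i]$ except $o(D_i)$--neighborhoods of the endpoints. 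Thus any finite subset of $\stab(I)$ fixes a common segment of length at least $3$ for large $i$, and $(2,C)$--acylindricity together with stable faithfulness gives $|\stab(I)|\le C$ on the nose. (For the paper's purposes the distinction is immaterial anyway: the only use of Lemma \ref{lem:unstable arc}, in Corollary \ref{cor:tree decomp} feeding Theorem \ref{thm:Guirardel}, is that unstable arc stabilizers are finite, since $G$ is one-ended and so does not split over them.)
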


The following is an immediate corollary of Theorem \ref{t:Tukia non-elem}.
\begin{lemma}\label{lem:small is elementary}
  Let $(G,\mcP)$ be relatively hyperbolic.  Then all small subgroups of $G$ are elementary.
\end{lemma}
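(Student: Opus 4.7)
The plan is to prove the contrapositive directly from Theorem \ref{t:Tukia non-elem}. Recall that a subgroup $H \le G$ is small precisely when it contains no nonabelian free subgroup. Theorem \ref{t:Tukia non-elem} asserts that every non-elementary subgroup of a relatively hyperbolic pair $(G,\mc{P})$ contains a nonabelian free subgroup (indeed one generated by two independent RH-hyperbolic elements).

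First I would suppose, for contradiction, that $H \le G$ is small but not elementary. Applying Theorem \ref{t:Tukia non-elem} to the non-elementary subgroup $H$ then produces a nonabelian free subgroup $F_2 \le H$, which directly contradicts smallness.

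Since there are essentially no further steps, the main obstacle is simply verifying that Theorem \ref{t:Tukia non-elem} applies without modification. This is immediate because the hypothesis there is only that $H$ is a subgroup of $G$ for the relatively hyperbolic pair $(G,\mc{P})$, and no finiteness or other condition on $H$ is imposed. Hence there is no subtlety and the lemma follows in one line.
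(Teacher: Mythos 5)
Your proof is correct and is exactly the paper's argument: the lemma is stated there as an immediate corollary of Theorem \ref{t:Tukia non-elem}, since a non-elementary subgroup contains a nonabelian free subgroup and hence cannot be small. Nothing further is needed.
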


We now complete the proof of Theorem \ref{t:easy maintheorem}, by appealing to a result of Guirardel.  Recall that Corollary \ref{cor:stab small} tells us that $T_\infty$ has small arc stabilizers.

\begin{theorem}(simplified version of \cite[Corollary 5.3]{Guirardel:Rtrees})\label{Guirardeltheorem:easy}
  Let $G$ be a finitely generated group for which any small subgroup is finitely generated, and suppose $G\acts T$ where $T$ is an $\bR$--tree with small arc stabilizers.  Then $G$ splits over a small subgroup.  
\end{theorem}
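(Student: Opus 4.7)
The statement is a simplified form of a theorem of Guirardel, and a complete proof requires the full Rips--Bestvina--Feighn machinery for group actions on $\bR$-trees, so I will only sketch the plan. First, by replacing $T$ with a minimal $G$-invariant subtree we may assume the action is minimal. The core tool is the structure theorem for minimal actions of finitely generated groups on $\bR$-trees with small arc stabilizers: $T$ admits a $G$-equivariant decomposition as a graph of actions whose transverse skeleton is a simplicial $G$-tree, and whose non-degenerate vertex spaces fall into a short list of types---axial (a line on which the stabilizer acts through a dense abelian quotient), surface (coming from a measured foliation on a compact $2$-orbifold), and exotic/Levitt (``thin'') components.

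If the simplicial skeleton has any non-degenerate edge, we are immediately finished: it yields a splitting of $G$ whose edge group is contained in an arc stabilizer of $T$, which is small by hypothesis. Otherwise $T$ reduces to a single non-simplicial component and we extract a splitting from its internal structure. In the axial case the action factors through an abelian quotient of $G$ on a line; the kernel of this factorization is an extension of a point stabilizer (which has small action on the transverse structure) by an arc stabilizer, and splits $G$ over a small subgroup. In the surface case, an essential simple closed geodesic in the underlying $2$-orbifold provides a splitting of the orbifold fundamental group, which pulls back to a splitting of $G$ over a virtually cyclic (hence small) subgroup.

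The hardest case---and the main obstacle---is the exotic/Levitt component. Here the $G$-action has no obvious simplicial splitting, and naively one might fear that it is genuinely ``irreducible''. Guirardel's key observation is that the hypothesis that small subgroups of $G$ are finitely generated prevents infinite ascending chains in the directed systems that arise when iterating the Rips decomposition; this rules out the truly pathological Levitt components and lets one extract, after finitely many steps, a transverse covering yielding a splitting over a small subgroup. Without the finite-generation hypothesis there exist non-splittable minimal actions (for example limits involving infinite-kernel elementary subgroups), so this assumption is used in an essential way, and verifying that it suffices is the most delicate part of the argument.
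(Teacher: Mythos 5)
The paper does not actually prove this statement: it is quoted verbatim as a simplified version of \cite[Corollary 5.3]{Guirardel:Rtrees}, and the only argument implicitly expected is the deduction from Guirardel's corollary. That deduction is short: the hypothesis that small subgroups of $G$ are finitely generated gives the ascending chain condition on arc stabilizers (an increasing union of small subgroups is small, hence finitely generated, so a strictly increasing chain of arc stabilizers must terminate), and the subgroups over which Guirardel's corollary splits $G$ are built from arc stabilizers by (virtually) cyclic or finitely generated abelian extensions, hence are small because small-by-abelian groups are small. Your proposal instead attempts to outline the proof of Guirardel's theorem itself via the Rips machine, which is a different (and much more ambitious) route than the paper takes.

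Judged as a proof, the sketch has a genuine gap precisely at the point you flag as hardest. The ``structure theorem'' you invoke --- a graph-of-actions decomposition with axial, surface, and exotic/Levitt vertex actions for an arbitrary minimal action of a finitely generated group with small arc stabilizers --- is not available in that generality. Guirardel's decomposition theorem (quoted later in the paper as Theorem \ref{thm:Guirardel}) requires the ascending chain condition together with control of tripod and unstable-arc stabilizers (or else $G$ already splits over such a stabilizer), and its conclusion contains no exotic components at all: exotic components arise only for geometric trees, where the thin structure directly produces a splitting over an extension of an arc stabilizer; a general tree must then be handled by approximating it by geometric trees and using the ACC to relate arc stabilizers of the approximations (whose arcs may be crushed to points of $T$) to arc stabilizers of $T$. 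This approximation-and-transfer step is the actual content of Guirardel's proof, and your sketch replaces it with the assertion that finite generation of small subgroups ``rules out the truly pathological Levitt components after finitely many steps,'' which both misidentifies the role of that hypothesis (it enters only through the ACC) and is not an argument. Two smaller points: the edge groups of the skeleton of a transverse covering are intersections of point stabilizers with vertex-tree stabilizers, not in general subgroups of arc stabilizers (one gets smallness in the surface and axial cases because the kernel of the vertex action fixes arcs, and from nondegenerate simplicial edges otherwise), so that step needs rephrasing; and the claim that without the finite-generation hypothesis there exist minimal actions with small arc stabilizers admitting no small splitting is unsubstantiated and should be deleted.
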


Recall that we are assuming (see Assumption \ref{assumption:shortest}.\eqref{a:small fg}) that small subgroups of $G$ are finitely generated, so this assumption in Theorem \ref{Guirardeltheorem:easy} is satisfied.

Recall the statement of Theorem \ref{t:easy maintheorem}.
\easymaintheorem*
\begin{proof}
  If $(G,\mcP)$ is a counterexample to this theorem, we have shown in this section how to build a fixed-point free minimal action of $G$ on an $\bR$--tree $T_\infty$ from a sequence of Bass-Serre trees of $(2,C)$--acylindrical splittings of a stably faithful sequence of fillings $G\to \bar{G}_i$.  Corollary \ref{cor:stab small} says that arc stabilizers for this $\bR$--tree are small.  Since small subgroups of $G$ are finitely generated, we can apply Theorem \ref{Guirardeltheorem:easy} to conclude that $G$ splits over a small, and hence elementary subgroup, which is a contradiction.
\end{proof}

In order to conclude the proof of Theorem \ref{t:maintheorem}, we need to get into more details of the Rips machine and then apply (a version of) Sela's `Shortening Argument'.

\subsection{Proof of Theorem \ref{t:maintheorem}}

Recall the statement:
\maintheorem*

The argument given above to prove Theorem \ref{t:easy maintheorem} produces an elementary splitting.  Such a splitting contradicts the hypotheses of Theorem \ref{t:easy maintheorem}, but in case the splitting is over a non-parabolic (and hence two-ended) subgroup, it does not contradict the hypotheses of Theorem \ref{t:maintheorem}.  To prove Theorem \ref{t:maintheorem} we must work harder to obtain a contradiction,
further analyzing the limiting action on the $\R$-tree $T_\infty$ and using this analysis to argue that
for large enough $i$ the action $\eta_i$ can be shortened using an automorphism in $\Aut_{\mcP}(G)$.  This will 
contradict the Standing Assumption \ref{assumption:shortest}.\eqref{a:short}.  We now give more details.

We'll use a result of Guirardel (Theorem \ref{thm:Guirardel} below) to show that the action of $G$ on $T_\infty$ can be decomposed into a `transverse covering':

\begin{definition} \cite[Definition 1.4]{Guirardel:Rtrees}
 A {\em transverse covering} of an $\R$-tree $T$ is a covering of $T$ by a family of subtrees $\mc{Y} = \left( Y_v \right)_{v \in V}$ such that
\begin{enumerate}
 \item[$\bullet$] every $Y_v$ is a closed subtree of $T$;
 \item[$\bullet$] every arc of $T$ is covered by finitely many subtrees of $\mc{Y}$;
 \item[$\bullet$] for $v_1 \ne v_2 \in V$, $Y_{v_1} \cap Y_{v_2}$ contains at most one point.
\end{enumerate}
When $T$ comes equipped with an isometric action of a group $G$, we always require the family $\mc{Y}$ to be $G$-invariant.
\end{definition}

Given a transverse covering $\mc{Y}$ of $T$, one can define a bipartite simplicial tree by taking one family of vertices $V_1(\mc{Y})$ to be the set of subtrees $Y \in \mc{Y}$ and $V_0(S)$ to be the set of points $x \in T$ which lie in at least two distinct subtrees from $\mc{Y}$.  Join an edge from $x \in V_0(S)$ to $Y \in V_1(S)$ when $x \in Y$.  This builds a simplicial tree $S$, called the {\em skeleton} of $\mc{Y}$.  If there is a group $G$ acting on $T$ and $\mc{Y}$ then $G$ naturally acts on the skeleton $S$.

The data of a $G$-equivariant transverse covering and skeleton $S$ give a {\em graph of actions} decomposition for the $G$-action on $T$.  

Two types of tree actions which do not decompose in this way are \emph{surface type} and \emph{axial}, which we now define.
\begin{definition} \label{def:surface tree}
  A $G$--tree $T$ is \emph{of surface type} if there is an epimorphism $G\to \pi_1\Sigma$, where $\Sigma$ is a hyperbolic $2$--orbifold (possibly with boundary) and $T$ is the $\bR$--tree dual to a filling measured lamination on $\Sigma$ with no closed leaves.  (Here $\pi_1\Sigma$ refers to the orbifold fundamental group of $\Sigma$.)
\end{definition}
\begin{remark} \label{rem:surface arc stabs}
Consider a surface-type tree $T$ with the group $\pi_1\Sigma$ acting as in Definition \ref{def:surface tree} above, and let $I$ be a nontrivial arc in $T$.  The stabilizer of $I$ in $\pi_1\Sigma$ is trivial.
\end{remark}

\begin{definition}\label{def:axial tree}
  A $G$--tree $T$ is \emph{axial} if $T\cong\bR$ and $G$ acts as a finitely generated indiscrete subgroup of $\Isom(\bR)$.
\end{definition}
See \cite{Guirardel:Rtrees} and \cite{Guirardel04} for more details.  The key result we need is the following:

\begin{theorem}\cite[Corollary 5.3]{Guirardel:Rtrees} \label{thm:Guirardel}
  Suppose $G$ is a finitely generated group for which any small subgroup is finitely generated, and suppose $G\acts T$ where $T$ is an $\bR$--tree with small arc stabilizers.  Suppose further that $G$ does not split over any tripod stabilizer or over the stabilizer of an unstable arc.  Then $T$ has a transverse covering giving a graph of actions in which every vertex action is either
\begin{enumerate}
\item simplicial;
\item of surface type; or
\item axial.
\end{enumerate}
\end{theorem}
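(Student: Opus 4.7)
The plan is to follow the Rips machine approach developed by Bestvina--Feighn and refined by Guirardel. The strategy has three phases: encode the action $G \acts T$ by a finite band complex, run a decomposition procedure that reorganizes the band complex into ``indecomposable'' standard pieces glued along simplicial edges, and translate the decomposition back to a transverse covering of $T$, then classify the resulting vertex actions using the hypotheses.

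For the setup, fix a finite generating set $S$ for $G$ and a basepoint $x_0 \in T$, and let $K$ be the convex hull of $S \cdot x_0$. Each $s \in S$ defines a partial isometry of $K$ between $s^{-1}K\cap K$ and $K \cap sK$, so $(K, S)$ is a system of partial isometries (equivalently, a band complex) whose dual $\mathbb R$-tree recovers the minimal $G$-invariant subtree of $T$. The hypothesis that small subgroups of $G$ are finitely generated will be used repeatedly to ensure that the vertex stabilizers appearing later in the argument are visible in this finite approximation and can be recognized on the band complex.

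Next, I would apply the Rips machine to this band complex. The machine consists of a finite list of elementary moves (collapse, slide, thin) that preserve the dual tree while simplifying the combinatorics. Its output is a decomposition whose components fall into four standard types: simplicial, surface (interval-exchange), axial (toral), or thin/Levitt. Each component corresponds to a subtree $Y_v \subset T$, and taking all $G$-translates yields a $G$-invariant family $\mathcal Y$. One then verifies the three axioms of a transverse covering: closedness is automatic; covering of every arc by finitely many pieces follows because an arc meets only finitely many translates of $K$ and $K$ meets only finitely many pieces of the decomposition; and the condition that two distinct pieces meet in at most one point is built into the Rips decomposition, since two distinct indecomposable components of a band complex share at most a single point by construction.

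The final and main step is to rule out the thin/Levitt type under our hypotheses, and this is the deepest part of the argument (and the one where I expect to spend the most effort). A thin component with non-trivial arc stabilizers produces, through its interval-exchange-like self-similarity, arcs whose stabilizer grows strictly on a subarc (unstable arcs) or tripods whose prongs carry larger stabilizers than their center. Using a Bass--Serre-type argument, together with the fact that arc stabilizers are small and that small subgroups of $G$ are finitely generated, each such configuration can be converted into a non-trivial splitting of $G$ over the stabilizer of a tripod or the stabilizer of an unstable arc. Under our hypothesis that no such splitting exists, no thin component can appear, leaving only the three types listed in the conclusion. The delicate technical work here is to control the combinatorics of the thin component carefully enough to produce an actual splitting rather than merely a small-stabilizer subgroup, and this is precisely the content of Guirardel's main theorem from which this corollary is drawn.
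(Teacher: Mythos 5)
The paper does not prove this statement at all: it is quoted (as Corollary 5.3 of Guirardel's paper on actions on $\mathbb R$--trees) and used as a black box, so there is no internal argument to compare yours against. Judged on its own merits, your sketch correctly identifies the strategy behind Guirardel's result: encode the action by a system of partial isometries on a finite subtree, run the (extended) Rips machine, sort the components into simplicial, surface, axial, and thin/Levitt type, and show that a thin component forces a splitting of $G$ over a tripod stabilizer or the stabilizer of an unstable arc, so that under the stated hypotheses only the three listed types can occur.

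As a proof, however, there is a genuine gap at the very first step. The tree dual to the finite system $(K,S)$, where $K$ is the convex hull of $S\cdot x_0$, recovers (the minimal subtree of) $T$ only when the action is \emph{geometric}; for a general action it only approximates $T$. Guirardel's actual argument approximates $T$ by a directed system of geometric actions $T_n$, dual to systems of isometries on an exhaustion $K_1\subseteq K_2\subseteq\cdots$ of $T$ by finite trees, applies the Rips-machine classification to each $T_n$, and then must show that the graph-of-actions decomposition --- or, failing that, a splitting over a tripod stabilizer or an unstable-arc stabilizer --- persists in the strong limit. That limiting step is where the hypotheses that arc stabilizers are small and that small subgroups of $G$ are finitely generated are really needed (they provide the ascending chain condition and the finite generation of the relevant stabilizers used to stabilize the decompositions along the sequence), not merely to ``recognize vertex stabilizers on the band complex'' as you suggest. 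Likewise, your claim that distinct pieces meet in at most one point ``by construction'' is something that must be verified for the limit tree when checking the transverse-covering axioms. None of this contradicts your outline --- it is the route Guirardel takes --- but as written your argument only treats the geometric case and defers the substantial remaining work to the cited source.
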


\begin{corollary} \label{cor:tree decomp}
  The tree $G\acts T_\infty$ constructed above has a decomposition as a graph of actions where each vertex action is either simplicial, of surface type, or axial.  Moreover elements of $G$ which act elliptically on $T_\infty$ also act elliptically on the skeleton, $S$, of this graph of actions.
\end{corollary}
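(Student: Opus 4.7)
The proof is essentially an application of Theorem \ref{thm:Guirardel} to the action $G \acts T_\infty$ constructed earlier in this section. The plan is to verify the hypotheses of that theorem, then deduce the ``moreover'' clause from the structure of a graph of actions.

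First I would verify the hypotheses of Theorem \ref{thm:Guirardel}. The group $G$ is finitely generated by the standing assumption that $(G,\mc{P})$ is relatively hyperbolic, and all small subgroups of $G$ are finitely generated by Assumption \ref{assumption:shortest}\eqref{a:small fg}. The action $G \acts T_\infty$ has small arc stabilizers by Corollary \ref{cor:stab small}. It remains to show that $G$ splits neither over a tripod stabilizer nor over an unstable arc stabilizer. By Lemma \ref{lem:tripod stabilizers} every tripod stabilizer has order at most $C$, and by Lemma \ref{lem:unstable arc} every unstable arc stabilizer has order at most $C$; in particular both are finite. By Assumption \ref{assumption:shortest}\eqref{a:G 1-ended} the group $G$ is one-ended, so by Stallings' theorem on ends of groups it admits no nontrivial splitting over a finite subgroup. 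This rules out splittings over tripod or unstable arc stabilizers and lets us invoke Theorem \ref{thm:Guirardel}, producing the desired transverse covering and graph of actions decomposition whose vertex actions are simplicial, of surface type, or axial.

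For the ``moreover'' clause, suppose $g \in G$ acts elliptically on $T_\infty$ and fix a point $x \in T_\infty$ with $gx = x$. Since $\mc{Y}$ covers $T_\infty$, there exists some $Y_v \in \mc{Y}$ with $x \in Y_v$. I would argue in two cases. If $x$ lies in at least two distinct subtrees of $\mc{Y}$, then by construction $x$ is a vertex of $V_0(S)$ in the skeleton; since $g$ fixes $x$ it fixes this vertex of $S$. Otherwise $Y_v$ is the unique member of $\mc{Y}$ containing $x$. Then $gY_v \in \mc{Y}$ also contains $gx = x$, so by uniqueness $gY_v = Y_v$, and $g$ fixes the vertex of $V_1(S)$ corresponding to $Y_v$. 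In either case $g$ fixes a vertex of $S$ and hence acts elliptically on the skeleton.

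No step here presents a serious obstacle; the only substantive input beyond the preceding lemmas is Stallings' theorem, used to convert finiteness of tripod and unstable-arc stabilizers into the nonsplitting hypothesis required by Theorem \ref{thm:Guirardel}. The ellipticity statement is a direct consequence of the combinatorial definition of the skeleton and the fact (built into the definition of transverse covering) that any two distinct members of $\mc{Y}$ meet in at most one point.
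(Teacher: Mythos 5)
Your proposal is correct and follows essentially the same route as the paper: the hypotheses of Theorem \ref{thm:Guirardel} are verified exactly as you do, via Corollary \ref{cor:stab small}, Lemmas \ref{lem:tripod stabilizers} and \ref{lem:unstable arc}, and one-endedness from Assumption \ref{assumption:shortest} to rule out splittings over the (finite) tripod and unstable-arc stabilizers. The only divergence is the ``moreover'' clause: the paper simply cites \cite[Lemma 1.15]{Guirardel:Rtrees}, whereas you prove it directly from the definition of the skeleton; your two-case argument (the fixed point either lies in two or more subtrees and is itself a vertex of $V_0(S)$, or lies in a unique $Y_v$, which is then invariant, giving a fixed vertex of $V_1(S)$) is valid, so this substitution is harmless and merely makes the proof self-contained.
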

\begin{proof}
  Corollary \ref{cor:stab small} says that the arc stabilizers of $G\acts T_\infty$ are small.  Lemmas \ref{lem:tripod stabilizers} and \ref{lem:unstable arc} imply that stabilizers of tripods and of unstable arcs are always finite.  By Assumption \ref{assumption:shortest}.\eqref{a:G 1-ended}, $G$ is one-ended, and in particular $G$ doesn't split over one of these stabilizers.  Theorem \ref{thm:Guirardel} therefore gives us a graph of actions as specified.

By \cite[Lemma 1.15]{Guirardel:Rtrees}, elements of $G$ which act elliptically on $T_\infty$ also act elliptically on $S$.
\end{proof}
For the remainder of the section, we fix this graph of actions decomposition just obtained.  The vertex trees $Y_v$ are subtrees of $T_\infty$ and we also refer to them as \emph{components} of the action $G\acts T_\infty$.

\begin{lemma}
  $G\acts T_\infty$ has no axial components.
\end{lemma}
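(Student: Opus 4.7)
The plan is to argue by contradiction: assume $Y_v$ is an axial component with setwise stabilizer $G_v$ acting indiscretely on $Y_v\cong \bR$, and then extract a proper peripheral splitting of $G$, contradicting Assumption \ref{assumption:shortest}.\eqref{a:G 1-ended}.

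First I would identify $G_v$ as peripheral. By Lemma \ref{lem:arc stab structure} every nondegenerate arc stabilizer in $T_\infty$ is finite-by-abelian with finite part of order at most $C$; since the pointwise $G_v$-stabilizer of $Y_v$ lies inside every such arc stabilizer and acts trivially on the arc, it contains no nonabelian free subgroup, and consequently $G_v$ itself (an extension of a small group by a finitely generated indiscrete subgroup of $\Isom(\bR)$) is small.  By Lemma \ref{lem:small is elementary} $G_v$ is elementary; since $G_v$ is infinite and an indiscrete finitely generated subgroup of $\Isom(\bR)$ cannot be virtually cyclic, $G_v$ must be parabolic, contained in some conjugate $P_0$ of an element of $\mc{P}$.

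Next I would show $P_0 = G_v$. If $P_0$ were multi-ended, Lemma \ref{lem:cyclic is elliptic} would force $G_v$ to act elliptically on $T_\infty$, contradicting indiscreteness; so $P_0$ is one-ended.  Moreover $P_0$ cannot fix a point of $T_\infty$, since otherwise $G_v$ would fix the closest-point projection of that point onto $Y_v$, again contradicting dense orbits.  By the co-slender hypothesis each $\eta_i(P_0)$ is slender, so by Lemma \ref{lem:slender tree} either fixes a point or preserves an axis in $T_i$; a fixed-point case along a subsequence would yield a $P_0$-fixed point in $T_\infty$ via convergence of basepoints, which is impossible.  Hence for all sufficiently large $i$, $\eta_i(P_0)$ preserves an axis $\ell_i$, and by $(2,C)$-acylindricity coming from Assumption \ref{assumption:shortest}.\eqref{a:sfaith} the setwise stabilizer of $\ell_i$ is virtually cyclic, so $\eta_i(P_0)$ is virtually cyclic.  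The rescaled axes $\ell_i$ converge to a line $\ell_\infty \subseteq T_\infty$ preserved setwise by $P_0$.  Any hyperbolic $g \in G_v$ on $T_\infty$ has a unique axis, which is both $Y_v$ (from the axial $G_v$-action) and $\ell_\infty$ (as the rescaled limit of the axis of $\eta_i(g)$ inside $\ell_i$), so $Y_v = \ell_\infty$.  Thus $P_0 \subseteq \stab(Y_v) = G_v$, giving $P_0 = G_v$.

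With $P_0 = G_v$ peripheral, I would finish by collapsing the non-axial components of the graph of actions to obtain a simplicial $G$-tree in which the axial vertex has stabilizer the peripheral subgroup $P_0$ and the edges incident to it are stabilized by finite subgroups of $P_0$, hence parabolic.  Collapsing further to a single orbit of such an edge produces a nontrivial splitting of $G$ over a parabolic subgroup; by Lemma \ref{lem:cyclic is elliptic} and Corollary \ref{cor:tree decomp} all multi-ended peripherals act elliptically on this splitting.  Lemma \ref{lem:make peripheral elliptic} then supplies a nontrivial splitting of $G$ over a parabolic subgroup in which every element of $\mc{P}$ acts elliptically, and Proposition \ref{prop:edge group parabolic} converts this into a proper peripheral splitting of $G$, contradicting Assumption \ref{assumption:shortest}.\eqref{a:G 1-ended}.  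The delicate step is the identification $P_0 = G_v$, where the co-slender hypothesis and $(2,C)$-acylindricity must together force $\eta_i(P_0)$ to be virtually cyclic preserving an axis whose rescaled limit matches $Y_v$.
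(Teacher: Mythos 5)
Your proof follows the paper's argument in its essentials: you show the axial vertex group $G_v$ is parabolic (the paper gets elementarity from Lemma \ref{l:no peripheral-by-cyclic}, where you use smallness of the kernel plus Lemma \ref{lem:small is elementary}, which is fine), then extract from the skeleton a splitting of $G$ over a subgroup of $G_v$, make the multi-ended peripherals elliptic (Lemma \ref{lem:cyclic is elliptic}, Corollary \ref{cor:tree decomp}, Lemma \ref{lem:make peripheral elliptic}), and contradict Assumption \ref{assumption:shortest}.\eqref{a:G 1-ended} via Proposition \ref{prop:edge group parabolic}. Three remarks. First, your entire second step, establishing $P_0=G_v$ via co-slenderness and limits of invariant axes, is unnecessary: all the endgame uses is that the edge groups incident to the axial vertex are parabolic, and that is automatic because they are subgroups of the parabolic group $G_v$. (As written that step also glosses real points: the limit of the lines $\ell_i$ is only seen to be a line once you bring in the hyperbolic element $g\in G_v$, and the elliptic alternative needs finite generation of $P_0$ together with the displacement bound used in the proof of Lemma \ref{lem:cyclic is elliptic}.) Second, your assertion that the edges at the axial vertex are stabilized by \emph{finite} subgroups of $P_0$ is unjustified: such an edge group contains, with index at most two, the kernel $K$ of the map $G_v\to\Isom(\bR)$, and $K$ is only known to be small; in fact the paper uses one-endedness to conclude that these edge groups are \emph{infinite} parabolic. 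Your argument survives because only ``parabolic'' is needed, so the word ``finite'' should simply be deleted. Third, you implicitly assume the axial component is attached to something: one must rule out $Y_v=T_\infty$ (in which case the skeleton has no edges), which the paper does by observing that $G_v$ is parabolic and no peripheral subgroup equals $G$, so $G_v\neq G$ and $Y_v$ is a proper subtree with an attaching point.
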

\begin{proof}
Suppose that $G \acts T_\infty$ has an axial component, $T_v$.  Then the associated vertex group $G_v$ admits a short exact sequence
\[	1 \to K \to G_v \to A \to 1	,	\]
where $K$ acts trivially on $T_v$ and $A$ is an indiscrete subgroup of $\mathrm{Isom}(\R)$.  Since $T_v$ contains an arc, $K$ is small by Corollary \ref{cor:stab small} and hence elementary by Lemma \ref{lem:small is elementary}.  By Lemma \ref{l:no peripheral-by-cyclic} this implies that $G_v$ is elementary.  However, $A$ is an indiscrete subgroup of $\mathrm{Isom}(\R)$, and so $G_v$ cannot be finite or two-ended, so $G_v$ is parabolic.  Since no $P\in \mcP$ can equal $G$, we have $G_v\neq G$, and so $T_v \neq T_\infty$.  

In particular, there is some point $x\in T_v$ where another tree is attached.  Let $e$ be the edge in the skeleton $S$ between the vertex corresponding to $x$ and the one corresponding to $T_v$, and let $E$ be the stabilizer of $e$.  Note that $E$ contains $K$ as a subgroup of index at most two, and that $G$ splits over $E$.  Since $G$ is one-ended (Assumption \ref{assumption:shortest}.\eqref{a:G 1-ended}), $K$ (and hence $E$) must be infinite parabolic.

By contracting edges of $S$ not in the orbit of $e$ to points, we obtain a Bass-Serre tree $\bar{S}$ for the splitting of $G$ over $E$.  By Lemma \ref{lem:cyclic is elliptic}, multi-ended elements of $\mcP$ act elliptically on $T_\infty$, hence on $S$ (see Corollary \ref{cor:tree decomp}), and hence on $\bar{S}$.  By Lemma \ref{lem:make peripheral elliptic}, there is a splitting of $G$ over a parabolic subgroup in which all elements of $\mcP$ are elliptic.  Therefore by Proposition \ref{prop:edge group parabolic} there is a proper peripheral splitting, in contradiction to Assumption \ref{assumption:shortest}.\eqref{a:G 1-ended}.
This implies that $G \acts T_\infty$ has no axial components, as required.
\end{proof}
Thus, every component of the decomposition of $G\acts T_\infty$ is simplicial or of surface type.

Given the finite generating set $\mc{A}$ for $G$ and the basepoint $x_\infty$ of $T_\infty$, we consider the arcs in $T_\infty$ of the form $[x_\infty, a.x_\infty]$ for $a \in \mc{A}$.  Since there is no global fixed point, these arcs are not all trivial.

There are two cases.  The first is when at least one of these arcs intersects a surface type component.  The second is when all of the arcs are contained in (unions of) simplicial components.

We deal with surface-type components first.
\begin{lemma} \label{lem:faithful on surface}
Suppose that $Y_v$ is a surface-type component of the graph of actions decomposition of $T_\infty$, and that $G_v$ is the stabilizer in $G$ of $Y_v$.  The kernel $N_v$ of the action of $G_v$ on $Y_v$ has order at most $C$.
\end{lemma}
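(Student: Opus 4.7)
The plan is to identify $N_v$ with the kernel of the map $G_v \twoheadrightarrow \pi_1\Sigma$ of Definition \ref{def:surface tree}, then show $N_v$ pointwise fixes a genuine tripod of $T_\infty$, and finally invoke Lemma \ref{lem:tripod stabilizers} to bound its order by $C$.

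First I would observe that the $G_v$-action on $Y_v$ factors as the epimorphism $G_v \twoheadrightarrow \pi_1\Sigma$ followed by the dual action of $\pi_1\Sigma$ on $Y_v$. This second action is faithful: by Remark \ref{rem:surface arc stabs} every nondegenerate arc of $Y_v$ has trivial stabilizer in $\pi_1\Sigma$, so any $\gamma \in \pi_1\Sigma$ acting trivially on $Y_v$ would stabilize every arc and hence be trivial. Thus $N_v$ is precisely the kernel of $G_v \twoheadrightarrow \pi_1\Sigma$, and in particular $N_v$ pointwise fixes $Y_v \subseteq T_\infty$.

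Next I would argue that $Y_v$ contains a nondegenerate tripod $Y$. Since $\Sigma$ is a hyperbolic $2$-orbifold, $\pi_1\Sigma$ contains a non-abelian free subgroup, so it is neither finite nor virtually abelian; in particular $\pi_1\Sigma$ cannot act on a point or on a line (as an indiscrete subgroup of $\Isom(\bR)$) without being small. Since the $\pi_1\Sigma$-action on $Y_v$ is faithful and minimal, $Y_v$ is neither a point nor a line, so it must contain three points not sharing a common arc — equivalently, a nondegenerate tripod $Y$. Because $N_v \subseteq \stab_G(Y_v) \subseteq \stab_G(Y)$, Lemma \ref{lem:tripod stabilizers} gives $|N_v| \leq |\stab_G(Y)| \leq C$, as required.

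The only mildly delicate step is the existence of a tripod in $Y_v$; the rest is essentially book-keeping with the surface-type definition and Remark \ref{rem:surface arc stabs}. I do not expect serious obstacles: the branching of the dual tree to a filling measured lamination on a hyperbolic orbifold with no closed leaves is classical, and once a tripod is produced in $Y_v$, Lemma \ref{lem:tripod stabilizers} does all the remaining work.
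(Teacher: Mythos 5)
Your argument is correct and is essentially the paper's own proof: the paper likewise observes that any surface-type $Y_v$ contains a nontrivial tripod, that elements of $N_v$ must fix it, and then applies Lemma \ref{lem:tripod stabilizers} (your extra work identifying $N_v$ with the kernel of $G_v\twoheadrightarrow\pi_1\Sigma$ and justifying the existence of the tripod is fine, just more detail than the paper records). One small notational slip: the chain $N_v\subseteq \stab_G(Y_v)\subseteq\stab_G(Y)$ is not literally valid, since the setwise stabilizer of $Y_v$ is all of $G_v$, which need not preserve $Y$; what you mean, and what the argument actually uses, is that $N_v$ fixes $Y_v$ pointwise and hence fixes the tripod $Y$, so $N_v$ lies in the stabilizer to which Lemma \ref{lem:tripod stabilizers} applies.
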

\begin{proof}
Any such $Y_v$ contains a nontrivial tripod, so any element of $N_v$ must fix a tripod.  Thus $N_v$ has order at most $C$ by Lemma \ref{lem:tripod stabilizers}.
\end{proof}

The following lemma describes the structure of the splitting of $G$ induced by a surface-type vertex tree in the decomposition of $Y_v$, and is important for proving below that the shortening automorphisms constructed are in $\Aut_{\mc{P}}(G)$.

\begin{lemma}\label{lem:surface vertex}
Suppose that the graph of actions decomposition of $G$ acting on $T_\infty$ contains a surface-type vertex tree $Y_v$ and vertex group $G_v$, with corresponding $2$-orbifold $\Sigma$.  Then
\begin{enumerate}
\item Each attaching point for $Y_v$ (to other sub-trees in the decomposition) corresponds to a boundary component of the universal cover $\tilde\Sigma$, yielding a splitting of $G$ with edge group the subgroup corresponding to this boundary component;
\item Each boundary component of $\Sigma$ arises in this way.
\item\label{hyperbolics in boundary components} Each two-ended subgroup of $G_v$ corresponding to a boundary component contains an RH-hyperbolic element of $G$.
\item Any nontrivial parabolic element contained in $G_v$ has finite order and either corresponds to an orbifold point or is in the kernel of the action of $G_v$ on $T_v$.
\end{enumerate}
\end{lemma}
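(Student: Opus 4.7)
The plan is to deduce the four conclusions in sequence from the structure of the graph-of-actions decomposition, using the hypotheses of Assumption \ref{assumption:shortest}. By Lemma \ref{lem:faithful on surface}, $G_v$ will fit in a short exact sequence $1\to N_v\to G_v\to \pi_1\Sigma\to 1$ with $|N_v|\le C$, and $\pi_1\Sigma$ acts on $Y_v$ as the tree dual to a filling measured lamination on the hyperbolic $2$-orbifold $\Sigma$. I first record that the only points of $Y_v$ with nontrivial $\pi_1\Sigma$-stabilizer are the orbifold cone points (with finite cyclic stabilizer) and the branch points dual to complementary regions containing a boundary lift (with virtually cyclic or infinite dihedral stabilizer, equal to the corresponding boundary subgroup).

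For (1), I take an attaching point $p$ of $Y_v$ and the corresponding edge $e$ of the skeleton $S$; collapsing every $G$-orbit of edges of $S$ outside that of $e$ yields a nontrivial one-edge splitting of $G$ with edge group $\stab_G(p)$. Since $G$ is one-ended by Assumption \ref{assumption:shortest}.\eqref{a:G 1-ended}, this edge group is infinite; modulo the finite kernel $N_v$, its image in $\pi_1\Sigma$ is infinite, so by the setup it must be a boundary subgroup. For (2), the transverse covering from Theorem \ref{thm:Guirardel}, together with the minimality of the $G_v$-action on $Y_v$, forces every branch point of $Y_v$ with infinite $\pi_1\Sigma$-stabilizer to be an attaching point for some adjacent vertex tree, giving the converse identification. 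For (3), I let $B$ be a boundary-component stabilizer arising as such an attaching-point stabilizer and assume for contradiction that $B$ is parabolic. By Lemma \ref{lem:cyclic is elliptic} every multi-ended $P\in\mcP$ acts elliptically on $T_\infty$, hence on $S$ by Corollary \ref{cor:tree decomp}, hence on the one-edge collapse. Lemma \ref{lem:make peripheral elliptic} then produces a parabolic splitting of $G$ with all of $\mcP$ elliptic, which by Proposition \ref{prop:edge group parabolic} is a proper peripheral splitting, contradicting Assumption \ref{assumption:shortest}.\eqref{a:G 1-ended}. So $B$ is non-parabolic, and Proposition \ref{p:elementary}\eqref{virtualZ} provides the RH-hyperbolic element.

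For (4), let $g\in G_v$ be nontrivial and parabolic, lying in a peripheral conjugate $P$. If $g\in N_v$ then $g$ is in the kernel and has order at most $|N_v|\le C$, so assume $\bar g\ne 1$ in $\pi_1\Sigma$. If $\bar g$ is elliptic at a cone point it is torsion, and so is $g$; if $\bar g$ is elliptic at a boundary point then $g$ lies in the subgroup $B$ from (3), and Proposition \ref{p:elementary}\eqref{virtualZ} applied to $B$ forces $g$ to be torsion (every infinite-order element of $B$ is RH-hyperbolic, not parabolic), the torsion fitting as a reflector orbifold point on the boundary. The remaining case, which is the main obstacle, is $\bar g$ acting hyperbolically on $Y_v$, so that $g$ acts hyperbolically on $T_\infty$ with axis $\ell_g\subset Y_v$. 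If $P$ is multi-ended this contradicts Lemma \ref{lem:cyclic is elliptic}. If $P$ is one-ended, I invoke the co-slender hypothesis: each $P/K_i$ is slender and by Lemma \ref{lem:slender tree} preserves either a fixed point or an invariant line $\ell_i\subset T_i$, and since the image of $g$ is hyperbolic for large $i$, the lines $\ell_i$ pass to a $P$-invariant line $\ell_\infty\subset T_\infty$ containing $\ell_g$. The transverse covering property then forces $P\subseteq G_v$, since a line cannot lie in the intersection of two distinct vertex trees. Because arc stabilizers in $Y_v$ are trivial in $\pi_1\Sigma$ (Remark \ref{rem:surface arc stabs}), the image $\bar P\le\pi_1\Sigma$ embeds in $\Isom(\ell_\infty)$, and its index-at-most-two translation subgroup $\bar P_+$ is abelian (coaxial tree translations have trivial commutator). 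As an abelian subgroup of the hyperbolic orbifold group $\pi_1\Sigma$, $\bar P_+$ is virtually cyclic, hence so is $\bar P$, and by Lemma \ref{l:no peripheral-by-cyclic} so is $P$. This makes $P$ two-ended, contradicting the assumption that $P$ is one-ended and closing the argument.
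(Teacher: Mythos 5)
Your arguments for parts (1) and (3) follow the same route as the paper (finite attaching-point stabilizers would give a splitting of $G$ over a finite group, contradicting one-endedness; a parabolic boundary subgroup would give, via Lemma \ref{lem:cyclic is elliptic}, Lemma \ref{lem:make peripheral elliptic} and Proposition \ref{prop:edge group parabolic}, a proper peripheral splitting, contradicting Assumption \ref{assumption:shortest}.\eqref{a:G 1-ended}). For part (4) you take a genuinely different route in the main case: where the paper shows the maximal parabolic $P$ containing the infinite-order element must lie in the virtually free group $G_v$, hence is multi-ended, hence elliptic on $T_\infty$, you instead split into $P$ multi-ended (elliptic, done) versus $P$ one-ended, and in the latter case use co-slenderness, Lemma \ref{lem:slender tree} applied to the images $P/K_i$ acting on $T_i$, and a limiting argument to show $P$ preserves the axis of $g$, forcing $P\le G_v$ and $P$ two-ended. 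This works in outline, though the step ``the lines $\ell_i$ pass to a $P$-invariant line containing $\ell_g$'' needs the standard facts that rescaled translation lengths converge (so the image of $g$ really is hyperbolic on $T_i$ for large $i$) and that points of $A_{g,i}$ at bounded rescaled distance from the basepoints limit onto the axis of $g$ in $T_\infty$; with that filled in, your argument is a legitimate alternative to the paper's, at the cost of invoking co-slenderness, which the paper's proof of this lemma does not need.

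The genuine gap is in part (2). You assert that the transverse covering ``together with the minimality of the $G_v$-action on $Y_v$'' forces every branch point of $Y_v$ with infinite $\pi_1\Sigma$-stabilizer to be an attaching point. This is not a consequence of those ingredients: the definition of a transverse covering places no constraint requiring other vertex trees to meet $Y_v$ at boundary-dual points, and minimality (of either $G_v\acts Y_v$ or $G\acts T_\infty$) does not rule out a boundary-dual point that lies in no other subtree of the family --- for instance, a surface-type tree with the trivial covering by itself has no attaching points at all, yet is a perfectly good transverse covering of a minimal tree. Statement (2) is not a formal property of the decomposition; it is where the one-endedness hypothesis must enter. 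The paper's argument is: if some boundary component of $\Sigma$ is unused, an essential arc from that boundary component back to itself is dual to a splitting of $\pi_1\Sigma$ over the trivial group in which all cone points, all other boundary subgroups, and hence all attaching-point stabilizers of $Y_v$ are elliptic; pulling back through the finite kernel $N_v$ and combining with the skeleton, this yields a nontrivial splitting of $G$ over a finite subgroup, contradicting Assumption \ref{assumption:shortest}.\eqref{a:G 1-ended}. Your proof of (2) never invokes one-endedness, so it cannot be repaired without adding an argument of this kind.
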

\begin{proof}
By Lemma \ref{lem:faithful on surface} the map from $G_v$ to $\pi_1\Sigma$ has kernel of order at most $C$.
If there is an attaching point for $Y_v$ that corresponds to a point other than a boundary component, the stabilizer in $\pi_1\Sigma$ is finite, and this would lead to a splitting of $G$ over a finite subgroup, contrary to Assumption \ref{assumption:shortest}.\eqref{a:G 1-ended}.

If there is an `unused' boundary component, an essential arc from this component back to itself would also yield a splitting of $G$ over a finite subgroup.

Suppose that $B$ is a two-ended subgroup of $G_v$ corresponding to a boundary component of $\Sigma$, and suppose that $b \in B$ is infinite order.  We have to prove that $b$ is RH-hyperbolic.  Suppose instead that $b$ is parabolic.  Since parabolic subgroups are almost malnormal, this implies that in fact $B$ is parabolic.  However, the graph of actions of $G \acts T_\infty$ gives a splitting of $G$ over $B$.  Now, every multi-ended $P\in \mcP$ acts elliptically by Lemma \ref{lem:cyclic is elliptic}.  Lemma \ref{lem:make peripheral elliptic} says there is a splitting over a parabolic subgroup where all $P\in \mcP$ are elliptic.  Finally Proposition \ref{prop:edge group parabolic} gives a proper peripheral splitting, which contradicts Assumption \ref{assumption:shortest}.\eqref{a:G 1-ended}.  Thus $b$ must be RH-hyperbolic. 

Finally, suppose that $p$ is a nontrivial parabolic element contained in $G_v$.  Since the only torsion elements of $G_v$ are in the kernel of the $G_v$-action on $T_v$ or else correspond to orbifold points, we need only rule out the case where $p$ has infinite order.  Suppose that there is such an infinite order parabolic element $p$ contained in $G_v$, and let $P$ be the maximal parabolic subgroup containing $p$.  First note that we have already proved that the (two-ended) boundary subgroups of $G_v$ contain RH-hyperbolic elements, which means that $p$ is not contained in such a subgroup, and also the intersection of such a subgroup with $P$ is finite.  

Theorem \ref{thm:Guirardel} gives an action of $G$ on the simplicial skeleton $S$ of the tree $T_\infty$.
If $P$ is not contained in $G_v$, then the action of $P$ on $S$ gives a nontrivial splitting of $P$ over a finite group (the intersection of $P$ with some boundary subgroup of $G_v$), which implies that $P$ is multi-ended.  However, the multi-ended subgroups of $\mc{P}$ act elliptically on $T_\infty$ by Lemma \ref{lem:cyclic is elliptic}, and so on $S$ by Corollary \ref{cor:tree decomp}.  This implies that $P$ is entirely contained in $G_v$.  However, $G_v$ is a virtually free group, so this implies that $P$ is also virtually free, and so again we know that $P$ acts elliptically on $T_\infty$.  But the only subgroups of $G_v$ that act elliptically on $G_v$ correspond to orbifold points or boundary components, and $p$ is not in either kind of subgroup.  This contradiction implies that there are no infinite order parabolic elements in $G_v$, as required.
\end{proof}

Suppose that an arc of the form $[x_\infty,a.x_\infty]$ intersects a surface vertex tree (for some $a \in \mc{A}$).  In the case that $G$ is torsion-free, Rips--Sela \cite{RipsSela94} explain how to obtain an automorphism of $G$ which shortens the action on $T_\infty$, and therefore shortens all but finitely many of the approximating actions.  In \cite[Theorem 4.15]{ReinfeldtWeidmann}, Reinfeldt--Weidmann adapt this argument in the presence of torsion.   One of the subtleties they must deal with is that not all automorphisms of $\pi_1(\Sigma)$ need induce automorphisms of $G_v$ because the map from $G_v$ to $\pi_1(\Sigma)$ may have finite kernel.  We have the additional requirement that our shortening automorphisms must lie in $\Aut_{\mcP}(G)$.  Both these issues are dealt with in the following slight strengthening of \cite[Lemma 4.17]{ReinfeldtWeidmann}.

\begin{lemma} \label{lem:RW}
 Let $\Gamma$ be a finitely presented group and $\mc{H} = \{ H_1, \ldots , H_k \}$ a finite collection of cyclic, malnormal subgroups of $\Gamma$.  Suppose that there is a short exact sequence
 \[	1 \to E \to \tilde{\Gamma} \stackrel{\pi}{\to} \Gamma \to 1	\]
 where $E$ is finite.  Let $\tilde{\mc{H}} = \{ \pi^{-1}(H_i) \}$.  Let $A'$ be the group of automorphisms $\sigma \in \Aut_{\mc{H}}(\Gamma)$ that (i) lift to $\Aut_{\tilde{\mc{H}}}(\tilde{\Gamma})$ and (ii) such a lift acts as the identity on $E$.  Then $A'$ has finite index in $\Aut_{\mc{H}}(\Gamma)$.
\end{lemma}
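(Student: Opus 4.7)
The plan is to build on \cite[Lemma 4.17]{ReinfeldtWeidmann}, which handles the analogous statement without peripheral structure, by adding an analysis of how the peripheral inner automorphisms lift. The cited lemma supplies a finite-index subgroup $A_0 \leq \Aut(\Gamma)$ consisting of those $\sigma$ that lift to some $\tilde\sigma \in \Aut(\tilde\Gamma)$ with $\tilde\sigma|_E = \mathrm{id}_E$. Then $A_0 \cap \Aut_{\mc{H}}(\Gamma)$ has finite index in $\Aut_{\mc{H}}(\Gamma)$, so it suffices to find a further finite-index subgroup whose members admit such a lift additionally lying in $\Aut_{\tilde{\mc{H}}}(\tilde\Gamma)$.

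Fix $H_i = \langle h_i \rangle \in \mc{H}$ with a lift $\tilde h_i \in \tilde\Gamma$, so that $\tilde H_i = \pi^{-1}(H_i) = \langle \tilde h_i, E \rangle$. For $\sigma \in A_0 \cap \Aut_{\mc{H}}(\Gamma)$, the malnormality of $H_i$ gives $N_\Gamma(H_i) = H_i$; hence an element $g_i \in \Gamma$ with $\sigma|_{H_i} = \ad_{g_i}|_{H_i}$ is well-defined modulo $H_i$. Choose a lift $\tilde g_i \in \tilde\Gamma$. Any lift $\tilde\sigma$ of $\sigma$ that acts trivially on $E$ then satisfies
\[ \tilde\sigma(\tilde h_i) = \tilde g_i \tilde h_i \tilde g_i^{-1} \cdot e_i(\sigma) \]
for some $e_i(\sigma) \in E$. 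The condition that $\tilde\sigma|_{\tilde H_i}$ coincide with the restriction to $\tilde H_i$ of some inner automorphism $\ad_{\tilde g_i'}$ of $\tilde\Gamma$ reduces, in view of $\tilde\sigma|_E = \mathrm{id}$, to requiring $\tilde g_i' \in C_{\tilde\Gamma}(E)$ and $e_i(\sigma) \in E_i^{\star}$, where $E_i^{\star} \subseteq E$ is the finite set of commutators $[f, \tilde h_i]$ with $f \in \tilde g_i^{-1} C_{\tilde\Gamma}(E)$.

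Finally, I would argue that, for a compatible choice of lift and after absorbing further finite-index constraints (including that $\sigma$ preserve the finite-index subgroup $\pi(C_{\tilde\Gamma}(E)) \subseteq \Gamma$), the assignment $\sigma \mapsto (e_i(\sigma))_i$ becomes a crossed homomorphism from a finite-index subgroup of $A_0 \cap \Aut_{\mc{H}}(\Gamma)$ into the finite module $\prod_i E$. Its preimage of the finite subset $\prod_i E_i^{\star}$ then contains a finite-index subgroup, which is the desired $A'$. The main obstacle is the cocycle verification: one must carefully track the composition $\tilde\sigma_1 \circ \tilde\sigma_2$ on $\tilde h_i$ and isolate the twisting by the conjugation action of $\tilde\Gamma$ on $E$. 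Finiteness of $\tilde\Gamma/C_{\tilde\Gamma}(E)$ and malnormality of $H_i$ together ensure that this twisting can be trivialized on a finite-index subgroup, reducing the cocycle to an honest homomorphism into the finite group $\prod_i E$, whose kernel is a finite-index subgroup of $\Aut_{\mc{H}}(\Gamma)$ contained in $A'$.
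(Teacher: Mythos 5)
Your proposal does not follow the paper's route, and as written it has a genuine gap at its central step. The paper invokes \cite[Lemma 4.17]{ReinfeldtWeidmann} in the form: the subgroup $A\le \Aut_{\mc{H}}(\Gamma)$ of automorphisms that lift to $\Aut_{\tilde{\mc{H}}}(\tilde{\Gamma})$ already has finite index in $\Aut_{\mc{H}}(\Gamma)$; the only thing left to do is to restrict lifts to $E$, obtaining a map $A\to\Aut(E)$ with finite target, whose kernel is $A'$. You instead attribute to that lemma a different statement (a finite-index subgroup $A_0\le\Aut(\Gamma)$ of automorphisms lifting with the lift trivial on $E$, with no control of the peripheral structure), and then you try to recover the $\Aut_{\tilde{\mc{H}}}$--condition by hand. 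That recovery is precisely the substantive content, and you do not carry it out: the ``cocycle verification'' you defer is the whole proof. Concretely, $e_i(\sigma)$ is not well defined without choices --- lifts of $\sigma$ that are trivial on $E$ differ by automorphisms of $\tilde{\Gamma}$ inducing the identity on $\Gamma$, and these can move $\tilde{h}_i$ by elements of $E$; the conjugator $g_i$ is only defined modulo $H_i$; and $\tilde{g}_i$ is only defined modulo $E$. To make $\sigma\mapsto(e_i(\sigma))_i$ a crossed homomorphism you would need these choices to be made multiplicatively in $\sigma$, i.e.\ essentially a homomorphic section of the lifting map, which you have not produced and which is not available in general.

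There are also concrete errors in the reduction itself. The set $E_i^{\star}$ as you define it is neither finite nor contained in $E$: for $f\in\tilde{g}_i^{-1}C_{\tilde{\Gamma}}(E)$ one has $[f,\tilde{h}_i]\in E$ only when $\pi(f)$ centralizes $h_i$, i.e.\ (by malnormality) $\pi(f)\in H_i$, so only $f\in \tilde{g}_i^{-1}C_{\tilde{\Gamma}}(E)\cap\tilde{H}_i$ are relevant. After this correction the condition for $\sigma\in A'$ is not ``$e_i(\sigma)$ lies in a fixed finite subset'': it requires that the coset $g_iH_i$ contain an element admitting a lift in $C_{\tilde{\Gamma}}(E)$, and the admissible values of $e_i(\sigma)$ then depend on $\sigma$ through $\tilde{g}_i$; requiring that $\sigma$ preserve $\pi(C_{\tilde{\Gamma}}(E))$ does not address this. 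So even granting your (misattributed) black box $A_0$, the argument does not close. The fix is either to quote the Reinfeldt--Weidmann lemma in the form the paper uses (finite index of the subgroup lifting to $\Aut_{\tilde{\mc{H}}}(\tilde{\Gamma})$) and then pass to the kernel of the restriction homomorphism to the finite group $\Aut(E)$, or to genuinely prove the lifting-with-peripheral-structure statement, which is a real argument and not a remark.
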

\begin{proof}
 Let $A$ be the subgroup of $\Aut_{\mc{H}}(\Gamma)$ that lift to $\Aut_{\tilde{\mc{H}}}(\tilde{\Gamma})$.  Then \cite[Lemma 4.17]{ReinfeldtWeidmann} states that $A$ has finite index in $\Aut_{\mc{H}}(\Gamma)$.  However, each element of $A$ lifts to an element of $\Aut_{\tilde{\mc{H}}}(\tilde{\Gamma})$ which preserves $E$, so we get a homomorphism from $A$ to $\Aut(E)$, which is finite.  Then $A'$ is the kernel of this homomorphism.
\end{proof}
(We apply the lemma with $\tilde{\Gamma} = G_v$, $\Gamma = \pi_1(\Sigma)$, and $\mc{H}$ the boundary subgroups of $\pi_1(\Sigma)$ and finite cyclic subgroups corresponding to cone points of $\pi_1\Sigma$.  Since $E$ may contain parabolic elements, we need our automorphisms to act as the identity on $E$.)

We can now follow the proof of \cite[Proposition 4.16]{ReinfeldtWeidmann} exactly as written, except using $A'$ in place of their subgroup $S$ (which is $A$ in Lemma \ref{lem:RW} above), to find the shortening automorphism of $G_v$, and then extend it to an automorphism of $G$ as in \cite[$\S4$]{ReinfeldtWeidmann} (cf. \cite[Proposition 5.4]{RipsSela94}).  

The extension is most naturally described in terms of a coarsening of the graph of groups decomposition of $G$ coming from $G\acts S$.  Let $\Lambda$ be the quotient of $S$ by the action of $G$, let $\pi\co S\to \Lambda$ be the natural quotient map, and let $\mathrm{st}(v)$ be the open star of $v$ in $\Lambda$.  Let $\bar{S}$ be the tree obtained from $S$ by smashing connected components of $\pi^{-1}(\Lambda\smallsetminus\mathrm{st}(v))$ to points.  This gives a new graph of groups decomposition for $G$, with underlying graph $\bar{\Lambda}$, which still contains the vertex $v$, and has one additional vertex for each component of $\Lambda\smallsetminus\{v\}$.  The edge groups incident to $v$ are the same as those in the original graph of groups.  The following is a consequence of the description of the extension given in \cite{RipsSela94}.
\begin{lemma}\label{lem:partialconjugation}
The extension $\bar{\alpha}\co G_v\to G_v$ of a shortening automorphism $\alpha$ to $G$ satisfies:
\begin{enumerate}
\item $\bar{\alpha}|G_v = \alpha$, and
\item For each $w\neq v$ in $\bar{\Lambda}$, the restriction $\bar{\alpha}|G_w = \ad_g$ for some $g\in G_v$.
\end{enumerate}
\end{lemma}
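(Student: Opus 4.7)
\emph{Plan.} I will construct $\bar\alpha \co G \to G$ directly from the Bass--Serre presentation of $G = \pi_1(\bar\Lambda)$, following the recipe of \cite[Proposition~5.4]{RipsSela94} (with the cone-point adaptation of \cite{ReinfeldtWeidmann}), and then simply read off the two conclusions from the construction.

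Recall that $\alpha \in \Aut_{\mc{H}}(G_v)$, where $\mc{H}$ consists of the edge subgroups in $\bar\Lambda$ incident to $v$ (the boundary subgroups of $\pi_1\Sigma$, lifted to $G_v$) together with the finite cyclic cone-point subgroups. Thus for each edge $e$ of $\bar\Lambda$ incident to $v$ there is some $g_e \in G_v$ with $\alpha|_{G_e} = \ad_{g_e}|_{G_e}$. I will then define $\bar\alpha$ as follows: set $\bar\alpha|_{G_v} = \alpha$; for each vertex $w \neq v$ of $\bar\Lambda$, choose a distinguished edge $e(w)$ between $v$ and $w$ and set $\bar\alpha|_{G_w} = \ad_{g_{e(w)}}|_{G_w}$; and for each remaining edge $e = [v,w]$ with associated stable letter $t_e$, set $\bar\alpha(t_e) = g_{e(w)}\, t_e\, g_e^{-1}$.

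The content to check is that this prescription respects the graph-of-groups relations of $\bar\Lambda$: for each edge $e=[v,w]$ and each $h \in G_e$, the image of the identification $i_v(h) = t_e\, i_w(h)\, t_e^{-1}$ (or simply $i_v(h) = i_w(h)$ for the distinguished edge) must be a valid relation in $G$. Both sides reduce to the identity $\alpha|_{G_e} = \ad_{g_e}|_{G_e}$, which holds by hypothesis; the adjustment to the stable letters was engineered precisely to make this work. The same recipe applied to $\alpha^{-1}$ produces a two-sided inverse, so $\bar\alpha$ is an automorphism of $G$. Conclusion~(1) is built into the definition, and conclusion~(2) is immediate since, for every $w \neq v$ in $\bar\Lambda$, $\bar\alpha|_{G_w} = \ad_{g_{e(w)}}|_{G_w}$ with $g_{e(w)} \in G_v$. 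The only delicate point is bookkeeping the stable letters when $\bar\Lambda$ has multiple edges or loops at $v$; this is handled by the formula above and does not introduce anything new in the orbifold-with-torsion setting, since the conjugating elements are constrained to lie in $G_v$, which is exactly what (2) records.
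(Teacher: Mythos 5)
Your construction is essentially the paper's proof: the paper does not argue this lemma independently but simply invokes the description of the extension in \cite[Proposition 5.4]{RipsSela94} (as adapted in \cite{ReinfeldtWeidmann}), which is exactly the recipe you write out, and conclusions (1) and (2) are then read off from the definition just as you do. Two small corrections to the write-up. First, your stable-letter formula is inconsistent with your stated convention: with the relation $i_v(h)=t_e\,i_w(h)\,t_e^{-1}$, with $\bar\alpha|_{G_v}=\alpha$ and $\bar\alpha|_{G_w}=\ad_{g_{e(w)}}$, applying $\bar\alpha$ to the relation forces $\bar\alpha(t_e)=g_e\,t_e\,g_{e(w)}^{-1}$; with your choice $\bar\alpha(t_e)=g_{e(w)}\,t_e\,g_e^{-1}$ the verification leaves an uncancelled factor $g_e^{-1}g_{e(w)}$ conjugating $i_w(h)$, so the relation is not preserved as written. (Note also that the coarsened graph $\bar\Lambda$ is bipartite with $v$ on one side, since the skeleton $S$ is bipartite and smashing components of $\pi^{-1}(\Lambda\smallsetminus\mathrm{st}(v))$ preserves this, so there are in fact no loops at $v$ to worry about.) Second, the claim that ``the same recipe applied to $\alpha^{-1}$ produces a two-sided inverse'' needs the conjugators specified: from $\alpha|_{i_v(G_e)}=\ad_{g_e}$ one gets $\alpha^{-1}|_{i_v(G_e)}=\ad_{h_e}$ with $h_e=\alpha^{-1}(g_e)^{-1}$, and with these choices the composite of the two extensions is literally the identity on $G_v$, on each $G_w$, and on each stable letter; with arbitrary choices of conjugator one only gets a surjection, which does not immediately give injectivity. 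Both are routine fixes and do not affect the structure of the argument, which matches the paper's.
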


Though we don't know that parabolics are elliptic on $S$, the next lemma shows they are elliptic on $\bar{S}$.
\begin{lemma}
  Let $P\in \mcP$.  Then $P$ fixes a point of $\bar{S}$.  
\end{lemma}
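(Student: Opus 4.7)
The plan is to split into two cases according to whether $P$ is one-ended or multi-ended, using the structural information about edge stabilizers of $\bar{S}$ provided by Lemma \ref{lem:surface vertex}.

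The multi-ended case should be almost immediate. By Lemma \ref{lem:cyclic is elliptic}, any multi-ended $P\in\mcP$ acts elliptically on $T_\infty$, and by Corollary \ref{cor:tree decomp} it therefore acts elliptically on the skeleton $S$. Since $\bar{S}$ is obtained from $S$ by a $G$-equivariant collapse (smashing the connected components of $\pi^{-1}(\Lambda\smallsetminus\mathrm{st}(v))$ to points), any vertex of $S$ fixed by $P$ maps to a vertex of $\bar{S}$ fixed by $P$.

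The one-ended case requires a little more work, and is where the main obstacle lies: a priori $P$ need not act elliptically on $S$, so we must argue directly on $\bar{S}$. Here I would identify the edge stabilizers of $\bar{S}$: by construction the edges of $\bar{S}$ are in bijection with the $G$-orbits of edges of $S$ incident to a translate of $v$, with the same stabilizers. Since $v$ is the vertex corresponding to the surface component $Y_v$, Lemma \ref{lem:surface vertex}(1) and (3) imply that every such edge stabilizer is conjugate into a two-ended boundary subgroup $B$ of $G_v$ that contains an RH-hyperbolic element. I would then show that $P\cap B^g$ is finite for every $g\in G$: any infinite order element of $B^g$ is commensurable with a conjugate of the RH-hyperbolic generator of $B$, hence RH-hyperbolic; but every infinite order element of $P$ is parabolic, and no element is simultaneously parabolic and RH-hyperbolic.

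Consequently $P$ acts on $\bar{S}$ with finite edge stabilizers, so the induced splitting of $P$ is a splitting over a finite subgroup. Since $P$ is one-ended, Stallings' theorem forces this splitting to be trivial, i.e.\ $P$ fixes a vertex of $\bar{S}$. Combining both cases completes the proof.
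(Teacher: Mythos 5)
Your proof is correct and takes essentially the same route as the paper's: finiteness of the intersection of $P$ with the (conjugates of) boundary edge subgroups, via Lemma \ref{lem:surface vertex}\eqref{hyperbolics in boundary components} and the fact that no element is both parabolic and RH-hyperbolic, combined with ellipticity of multi-ended peripherals on $T_\infty$, hence on $S$ (Corollary \ref{cor:tree decomp}), hence on $\bar{S}$; the paper simply packages your two cases as a single contradiction argument (non-elliptic $\Rightarrow$ splits over a finite subgroup $\Rightarrow$ multi-ended $\Rightarrow$ elliptic). The only cosmetic omission in your case split is finite $P$, which of course fixes a point of any tree.
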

\begin{proof}
  If not, $\bar{S}$ would give a splitting of $P$ over some $P_0$ contained in  boundary subgroup of $G_v$.  By Lemma \ref{lem:surface vertex}.\eqref{hyperbolics in boundary components}, $P_0$ must be finite.  But then $P$ is multi-ended, so it fixes a point of $T_\infty$, hence (Corollary \ref{cor:tree decomp}) fixes a point of $S$, hence fixes a point of $\bar{S}$.
\end{proof}

Now let $P\in \mcP$, and consider a shortening automorphism $\alpha$ chosen to lie in the subgroup $A'$ of $\Aut_{\tilde{H}}(G_v)$ as in the conclusion of Lemma \ref{lem:RW}.  Let $\bar{\alpha} \in \Aut(G)$ be the extension satisfying the conclusions of Lemma \ref{lem:partialconjugation}.  Since $P$ fixes a point of $\bar{S}$, it is conjugate into a vertex group of the graph of groups $\bar{\Lambda}$.  If the corresponding vertex is not $v$, then Lemma \ref{lem:partialconjugation} implies that $\bar{\alpha}|P$ is $\ad_g$ for some $g$.  If the corresponding vertex is $v$, then $P$ must actually be finite and project to a finite cyclic group in $\pi_1\Sigma$.  Lemma \ref{lem:RW} implies that $\bar\alpha$ restricts to the identity on $P$.  Thus $\bar{\alpha}\in \Aut_{\mcP}(G)$, as required, providing the required contradiction in this case.
To summarize, we have the following:

\begin{lemma}
 The arcs $[x_\infty,a.x_\infty]$ never intersect a surface component in a non-degenerate arc.
\end{lemma}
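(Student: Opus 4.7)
The plan is to argue by contradiction: assuming some arc $[x_\infty, a x_\infty]$ meets a surface-type component $Y_v$ in a nondegenerate arc, I will construct an automorphism $\bar\alpha \in \Aut_{\mcP}(G)$ that strictly shortens the action on $T_\infty$, then invoke Lemma \ref{lem:retain stably faithful} and convergence of the rescaled Bass-Serre trees $\tfrac{1}{D_i} T_i \to T_\infty$ to obtain $\|\eta_i \circ \bar\alpha\| < \|\eta_i\|$ for large $i$, contradicting Standing Assumption \ref{assumption:shortest}.\eqref{a:short}.

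First I will produce a shortening automorphism of the surface vertex. By Lemma \ref{lem:faithful on surface}, the kernel $E = \ker(G_v \to \pi_1\Sigma)$ is finite of order at most $C$. Let $\mc{H}$ be the collection of boundary subgroups of $\pi_1\Sigma$ together with the finite cyclic subgroups corresponding to cone points of $\Sigma$; these are malnormal in $\pi_1\Sigma$. Following the Reinfeldt--Weidmann shortening argument \cite[Proposition 4.16]{ReinfeldtWeidmann}, but with the finite-index subgroup $A' \le \Aut_{\mc{H}}(\pi_1\Sigma)$ of Lemma \ref{lem:RW} in place of their subgroup $S$, I obtain $\alpha \in A'$ together with a lift $\tilde\alpha \in \Aut(G_v)$ that acts as the identity on $E$, restricts to an inner automorphism on each element of $\mc{H}$, and whose extension strictly shortens the action at some generator meeting $Y_v$ while not lengthening any other.

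Next I extend $\tilde\alpha$ to an automorphism $\bar\alpha \in \Aut(G)$ using the coarsened graph-of-groups $\bar\Lambda$ with distinguished vertex $v$ introduced above (compare \cite[Proposition 5.4]{RipsSela94}); by Lemma \ref{lem:partialconjugation}, $\bar\alpha|_{G_v} = \tilde\alpha$ and $\bar\alpha|_{G_w} = \ad_g$ for some $g \in G_v$ whenever $w \neq v$. To verify $\bar\alpha \in \Aut_{\mcP}(G)$, let $P \in \mcP$. By the lemma immediately preceding the statement, $P$ fixes a vertex of $\bar S$ and hence is conjugate into some $G_w$. If $w \neq v$, then $\bar\alpha|_P$ is inner. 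If $w = v$, then (after conjugation) $P \le G_v$; Lemma \ref{lem:surface vertex}(\ref{hyperbolics in boundary components}) shows $P$ cannot lie in a boundary two-ended subgroup of $G_v$, and Lemma \ref{lem:surface vertex}(4) shows $G_v$ contains no infinite-order parabolics, so $P$ is finite and is either contained in $E$ or conjugate into a cone-point subgroup from $\mc{H}$. In the first case $\tilde\alpha|_P = \mathrm{id}$ by construction of $A'$, and in the second $\tilde\alpha|_P$ is inner because $\alpha \in \Aut_{\mc{H}}(\pi_1\Sigma)$. Thus $\bar\alpha \in \Aut_{\mcP}(G)$ in all cases, and combining Lemma \ref{lem:retain stably faithful} with the convergence $\tfrac{1}{D_i} T_i \to T_\infty$ yields the desired contradiction.

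The main obstacle will be the last check, namely that $\bar\alpha$ acts as an inner automorphism on every $P \in \mcP$. It is precisely to control the behavior on finite parabolic subgroups of $G_v$ (those sitting inside the finite kernel $E$) that we must work with the refined subgroup $A'$ of Lemma \ref{lem:RW} rather than all of $\Aut_{\mc{H}}(\pi_1\Sigma)$; correspondingly, the structural results of Lemma \ref{lem:surface vertex} are used essentially to ensure that no infinite parabolic pathology can be hiding inside $G_v$.
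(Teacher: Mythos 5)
Your proposal is correct and follows essentially the same route as the paper: a shortening automorphism of $G_v$ obtained from the Reinfeldt--Weidmann argument run inside the subgroup $A'$ of Lemma \ref{lem:RW}, extended over the coarsened decomposition $\bar{\Lambda}$ via Lemma \ref{lem:partialconjugation}, with membership in $\Aut_{\mcP}(G)$ checked by the same case analysis (using that each $P\in\mcP$ fixes a point of $\bar{S}$ and that parabolics meeting $G_v$ are finite by Lemma \ref{lem:surface vertex}), yielding the contradiction with Assumption \ref{assumption:shortest}.\eqref{a:short}. Your slightly finer split of the $w=v$ case into $P\le E$ versus cone-point subgroups matches the paper's intent and is fine.
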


Therefore, we are left with the case that all of the arcs $[x_\infty,a.x_\infty]$ are covered by arcs from simplicial subtrees.  Since $T_\infty$ is a minimal $G$-tree, this implies that $T_\infty$ is in fact simplicial.  In this case, the proof that for sufficiently large $i$ the action $\eta_i$ can be shortened is very similar to the argument in \cite[Theorem 2.5]{sela:acyl}.  Once again, see \cite[$\S4$]{ReinfeldtWeidmann} for details in the presence of torsion.  As in the case the arcs intersect surface-type pieces, the key is to check that the automorphisms used to shorten are in $\Aut_{\mc{P}}(G)$.  It is also important that the two-ended subgroups found as edge groups in the splitting induced by $T_\infty$ have infinite center, since then there are many Dehn twists with which to shorten.

\begin{lemma} \label{lem:not parabolic}
In case $T_\infty$ is simplicial, edge stabilizers are two-ended subgroups which contain (infinite order) RH-hyperbolic elements.  These two-ended subgroups have infinite center.
\end{lemma}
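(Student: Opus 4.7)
The plan is to split the lemma into the structural claim (that edge stabilizers $E$ are two-ended and contain an RH-hyperbolic element) and the centrality claim ($Z(E)$ is infinite), proving them in that order.

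For the structural claim, let $E$ stabilize an edge of $T_\infty$. By Corollary \ref{cor:stab small} together with Lemma \ref{lem:small is elementary}, $E$ is elementary, and Assumption \ref{assumption:shortest}.\eqref{a:G 1-ended} (one-endedness of $G$) forces $E$ to be infinite. I would rule out the parabolic case as follows: Lemma \ref{lem:cyclic is elliptic} says multi-ended elements of $\mcP$ act elliptically on $T_\infty$, so if $E$ were parabolic, Lemma \ref{lem:make peripheral elliptic} would produce a splitting over a parabolic subgroup where every $P\in\mcP$ is elliptic, and Proposition \ref{prop:edge group parabolic} would then supply a proper peripheral splitting, contradicting Assumption \ref{assumption:shortest}.\eqref{a:G 1-ended}. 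Thus $E$ is elementary, infinite, and non-parabolic, so Proposition \ref{p:elementary} forces $E$ to be virtually infinite cyclic and to contain an RH-hyperbolic element.

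For the centrality claim, I would feed $E$ into Lemma \ref{lem:arc stab structure}, giving an exact sequence $1\to N\to E\to A\to 1$ with $N$ finite (of order at most $C$) and $A$ abelian. Since $E$ is virtually $\bZ$ and $A=E/N$ is a finitely generated abelian virtually $\bZ$ group, we have $A=\bZ\oplus T$ with $T$ finite. Choose $\alpha\in E$ lifting a generator of the $\bZ$ summand, so $\alpha$ has infinite order. Because $\Aut(N)$ is finite, some power $\alpha^m$ centralizes $N$. For any $\gamma\in E$, abelianness of $A$ forces $\gamma\alpha\gamma^{-1}\in\alpha N$, so $\gamma\alpha\gamma^{-1}=\alpha c$ for some $c\in N$. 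A short induction, using $\alpha^{-i}c\alpha^i\in N$, gives $(\alpha c)^m=\alpha^m\cdot n$ for some $n\in N$. Since $\alpha^m$ commutes with $n$, we compute
\[
\gamma\alpha^{m|N|}\gamma^{-1}=(\alpha c)^{m|N|}=(\alpha^m n)^{|N|}=\alpha^{m|N|}n^{|N|}=\alpha^{m|N|},
\]
where $n^{|N|}=1$ by Lagrange. Since $\gamma$ was arbitrary, $\alpha^{m|N|}\in Z(E)$ has infinite order.

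The only place I expect a potential obstacle is in ruling out a \emph{dihedral} structure on $E$ (an element conjugating $\alpha$ to its inverse up to torsion), which is exactly what would defeat the centrality argument. However, this is automatic from the finite-by-abelian conclusion of Lemma \ref{lem:arc stab structure}: an equation $\gamma\alpha\gamma^{-1}=\alpha^{-1}n$ with $n\in N$ would combine with $\gamma\alpha\gamma^{-1}\in\alpha N$ to force $\alpha^2\in N$, contradicting that $\alpha$ has infinite order while $N$ is finite. Once the dihedral case is excluded in this way, no further ingredients beyond the lemmas already established are needed.
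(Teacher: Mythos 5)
Your proposal is correct and follows essentially the same route as the paper: edge stabilizers are small hence elementary, infinite by one-endedness, non-parabolic via ellipticity of multi-ended peripherals together with the absence of proper peripheral splittings (Lemmas \ref{lem:cyclic is elliptic}, \ref{lem:make peripheral elliptic} and Proposition \ref{prop:edge group parabolic}), hence two-ended with RH-hyperbolic elements. The paper dismisses the infinite-center claim as immediate from the finite-by-abelian structure in Lemma \ref{lem:arc stab structure}; your computation with $\alpha^{m|N|}$ (and the exclusion of the dihedral case) is simply a careful spelling-out of that immediacy, and it is valid.
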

\begin{proof}
We know that edge stabilizers are small by Corollary \ref{cor:stab small}, and hence elementary.  Since the action is simplicial, $G$ splits over its edge stabilizers.  Therefore, the edge stabilizers are not finite, since $G$ is one-ended.  Also, since multi-ended subgroups of $\mcP$ are elliptic in $T_\infty$ by Lemma \ref{lem:cyclic is elliptic}, and since $G$ admits no proper peripheral splittings, by Assumption \ref{assumption:shortest}.\eqref{a:G 1-ended}, we know that the edge stabilizers are not parabolic.  Hence, they are two-ended and contain infinite order RH-hyperbolic elements.

It remains to see that these two-ended subgroups must have infinite center.  However, this is immediate from the structure of the arc stabilizers in Lemma \ref{lem:arc stab structure}.
\end{proof}

In the simplicial case, the automorphisms are Dehn twists in two-ended subgroups which arise as edge groups in the (simplicial) tree $T_\infty$.  By Lemma \ref{lem:not parabolic} such an edge group must contain an RH-hyperbolic element and have infinite center.  That such automorphisms are in $\Aut_{\mc{P}}(G)$ is the content of Example \ref{Ex:AutPG}, in the case of an amalgam, and is similar in the case of an HNN extension.  

This implies that in fact the action $\eta_i$ arising from the fillings $G \to \bar{G}_i$ are not shortest for large $i$, in contradiction to Assumption \ref{assumption:shortest}.\eqref{a:short}.  Thus, we arrive at the required contradiction, and Theorem \ref{t:maintheorem} is proved.

\section{Fuchsian fillings}\label{s:fuchsian}

This short section deals with the possibility of Dehn fillings which are Fuchsian.  Theorem \ref{t:Fuchsian fillings} below is needed to make certain of the statements in the next section cleaner, and may be interesting in its own right.  We say a group is \emph{Fuchsian} if it is equal to the fundamental group of a hyperbolic orbifold, which is to say it is isomorphic to a discrete subgroup of $\mathrm{Isom}(\bH^2)<O(2,1)$.  It is a consequence of the Convergence Group Theorem \cite{cassonjungreis, gabai:convergence} that every virtually Fuchsian group $G$ fits into a short exact sequence
\[ 1 \to F \to G \to \Gamma \to 1\]
so that $\Gamma$ is Fuchsian and $F$ is finite.

\begin{theorem}\label{t:Fuchsian fillings}
  Suppose that $(G,\mcP)$ is relatively hyperbolic, and every small subgroup of $G$ is finitely generated.  Further suppose that $G$ is not virtually Fuchsian, and admits no small splittings.  Then for all sufficiently long fillings $(\bar{G},\bar{\mc{P}})$ of $(G,\mcP)$, the quotient $\bar{G}$ is not virtually Fuchsian.
\end{theorem}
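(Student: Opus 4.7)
The plan is to argue by contradiction, mirroring the overall strategy of Theorem \ref{t:easy maintheorem} but replacing Bass-Serre trees of splittings with the natural isometric action of a (virtually) Fuchsian group on $\bH^2$. Suppose Theorem \ref{t:Fuchsian fillings} fails; then by Corollary \ref{cor:stably faithful} there is a stably faithful sequence of fillings $\eta_i \co (G,\mc{P}) \to (\bar{G}_i, \bar{\mc{P}}_i)$ with each $\bar{G}_i$ virtually Fuchsian. First observe that $G$ must be non-elementary: otherwise it would be finite, virtually cyclic (both virtually Fuchsian), or parabolic (which would force some $P \in \mc{P}$ to equal $G$, contrary to our standing convention). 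By Theorem \ref{t:Tukia non-elem}, $G$ contains a nonabelian free subgroup, so by stable faithfulness each $\bar{G}_i$ is non-elementary for $i$ large. Each such $\bar{G}_i$ fits into a short exact sequence
\[ 1 \to F_i \to \bar{G}_i \to \Gamma_i \to 1 \]
with $F_i$ finite and $\Gamma_i$ a non-elementary Fuchsian group, and the faithful discrete Fuchsian action of $\Gamma_i$ on $\bH^2$ pulls back to an isometric $G$-action $\rho_i \co G \to \mathrm{Isom}(\bH^2)$.

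Fix a finite generating set $\mc{A}$ of $G$ and set $D_i = \inf_{x \in \bH^2} \max_{a \in \mc{A}} d(x, \rho_i(a)x)$, approximately realized at a basepoint $x_i$. We split into two cases. If $D_i \to \infty$ along a subsequence, rescale $\bH^2$ by $1/D_i$ and apply the Bestvina-Paulin construction \cite[Theorem 3.3]{bestvina:handbook} to extract, after a further subsequence, a nontrivial minimal isometric action of $G$ on an $\bR$-tree $T_\infty$. A subgroup stabilizing a nondegenerate arc of $T_\infty$ consists of elements whose images in $\bar{G}_i$ almost fix arbitrarily long geodesic segments of $\bH^2$; in a discrete subgroup of $\mathrm{Isom}(\bH^2)$ any such subgroup preserves a pair of points at infinity, and so is virtually cyclic, hence small. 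Theorem \ref{Guirardeltheorem:easy} then forces $G$ to split over a small subgroup, contradicting the hypothesis that $G$ admits no small splittings.

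In the remaining case, $D_i$ remains bounded, so after passing to a subsequence $\rho_i(a)$ converges in $\mathrm{Isom}(\bH^2)$ for every $a \in \mc{A}$, defining a limit homomorphism $\rho_\infty \co G \to \mathrm{Isom}(\bH^2)$. We would use J{\o}rgensen's inequality, applied to the images of two independent RH-hyperbolic elements of $G$, to rule out non-discrete algebraic limits: non-discreteness would produce pairs of nontrivial, nearly-commuting elements in the non-elementary discrete groups $\rho_i(G)$, violating J{\o}rgensen's bound for sufficiently large $i$. Hence $\rho_\infty(G)$ is itself a Fuchsian group. For the kernel, any $g \in \ker \rho_\infty$ eventually satisfies $\rho_i(g) = 1$, so $\eta_i(g)$ lies in $F_i$; combining Theorem \ref{thm:torsion in fillings} with stable faithfulness then forces $\ker \rho_\infty$ to be a finite subgroup of $G$, so that $G$ itself is virtually Fuchsian, contradicting the hypothesis. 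The most delicate step will be this bounded case: carefully ruling out algebraic degenerations of the discrete non-elementary representations $\rho_i$, and controlling the size of the kernel of the algebraic limit.
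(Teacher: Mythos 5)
Your overall architecture is the same as the paper's: a stably faithful sequence of virtually Fuchsian fillings, the pulled-back actions on $\bH^2$, and a dichotomy between the degenerate case (rescale, pass to a limit $\bR$--tree, produce a small splitting) and the non-degenerate case (algebraic limit, conclude $G$ is virtually Fuchsian); replacing the paper's use of Bestvina--Feighn by Theorem \ref{Guirardeltheorem:easy} is legitimate and even spares you the stability verification. The genuine gap is your treatment of arc stabilizers in the divergent case. Elements of an arc stabilizer only move the approximating points $p_i,q_i$ by $o(D_i)$, and $D_i\to\infty$, so their images in $\Gamma_i$ are in no sense close to the identity; a subgroup of a discrete group whose elements ``almost fix a long segment'' at scale $D_i$ need not preserve a pair of points at infinity, nor be elementary at any finite stage, and in any case the arc stabilizer is a subgroup of $G$, not of any single $\Gamma_i$, so a statement about images does not transfer without invoking stable faithfulness. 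The conclusion ``virtually cyclic'' is also stronger than the situation yields (a parabolic $\Z^2$, say, could stabilize an arc). What actually works, and is what the paper does, is: commutators of arc-stabilizer elements have images converging to the identity in $O(2,1)$; for large $i$ two such commutators lie in a Zassenhaus neighborhood, so by discreteness they generate an abelian group; stable faithfulness then pulls these relations back, making arc stabilizers (finite-by-)metabelian, hence small. Without this or an equivalent argument, your appeal to Theorem \ref{Guirardeltheorem:easy} is unsupported.

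The bounded case, which you flag as delicate, also needs more than you give. First, non-elementarity of $\rho_i(G)$ for large $i$ does not follow from stable faithfulness alone: there is no common law satisfied by all virtually cyclic groups (their finite normal subgroups are arbitrary finite groups), so injectivity on large balls does not by itself rule out virtually cyclic images. You need a uniform bound on $|F_i|$ --- the paper gets one because $F_i$ is a finite normal subgroup, hence lies in intersections of distinct conjugates of peripherals, so Corollary \ref{cor:uniformC} applies --- and with that bound a law argument (or the paper's device of passing at the outset to a fixed finite $F$ and $\Gamma=G/F$) finishes it. Second, the step ``$\rho_i(g)\to 1$ implies $\rho_i(g)=1$ eventually'' is exactly where J{\o}rgensen's inequality plus the classification of elementary discrete subgroups of $\Isom(\bH^2)$ must be run, and it again uses non-elementarity of $\rho_i(G)$; it is the content of this case, not a formal consequence of algebraic convergence. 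Third, finiteness of $\ker\rho_\infty$ needs the same uniform bound on $|F_i|$; Theorem \ref{thm:torsion in fillings} alone does not bound the orders of the $F_i$. With these repairs your route closes, and it is then essentially the paper's proof with the character-variety dichotomy replaced by the displacement dichotomy.
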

\begin{proof}
  Consider a longer and longer sequence of fillings $(\bar{G}_i,\bar\mcP_i)$ which are virtually Fuchsian, so that each $\bar{G}_i$ fits into a short exact sequence:
\[ 1 \to F_i\to \bar{G}_i\to \Gamma_i \to 1\]
where $F_i$ is finite and $\Gamma_i$ is Fuchsian.  Note that $F_i$ must be contained in every parabolic of $(\bar{G}_i,\bar\mcP_i)$, so by Corollary \ref{cor:uniformC} the size of $F_i$ is uniformly bounded.  By passing to a subsequence, we can suppose that $F_i$ is the isomorphic image of a fixed finite $F< G$.  
This $F$ is the stable kernel of the sequence of fillings $G\to \bar{G}_i$.  Let $\Gamma = G/F$. 

  Each $\Gamma_i$ has a faithful representation as a discrete subgroup of $O(2,1)$.  Each such representation gives a point in the $O(2,1)$--character variety of $G$.  If some subsequence converges, then $\Gamma$ is Fuchsian, contradicting the assumption that $G$ is not virtually Fuchsian.

  The characters therefore diverge.  
  Picking particular representations $\rho_i\co \Gamma\to \mathrm{Isom}(\bH^2)<O(2,1)$, and rescaling appropriately (by some constants $\lambda_i\to 0$), these representations limit to an action on an $\bR$--tree $T$.  (We make sure to conjugate these representations to always have the same, centrally located basepoint.)

  \begin{claim}\label{claim:elementary}
    Arc stabilizers in $T$ are small.
  \end{claim}
  \begin{claim}\label{claim:stable}
    $T$ is a stable $G$--tree.
  \end{claim}
  From these two claims, it follows from \cite[Theorem 9.5]{BF:stable} that $G$ splits over a small-by-abelian (hence small) subgroup, contradicting the hypothesis of no small splitting.  We finish by proving the claims.
  \begin{proof}[Proof of Claim \ref{claim:elementary}]
    Let $I = [p,q]$ be a nondegenerate arc of $T$.  Let $p_i\in \bH^2$ approximate $p$, and let $q_i\in\bH^2$ approximate $q$.  We have $\lambda_id(p_i,q_i)\xrightarrow[i\to\infty]{}d(p,q)$, but for each $g\in \mathrm{Stab}(I)$, 
\[ \max\{\lambda_id(p_i,gp_i),\lambda_id(q_i,gq_i)\}\xrightarrow[i\to\infty]{}0.\]
    One can then show using an elementary hyperbolic geometry argument that for any $a,b,c,d$ in $\Gamma$, the elements $c_1=[a,b]$ and $c_2=[c,d]$ satisfy $\lim\limits_{i\to\infty}\rho_i(c_i)=\lim\limits_{i\to\infty}\rho_i(c_2) = I\in O(2,1)$.  To see this, note that, for large $i$, the discrete group $\rho_i(\langle c_1,c_2\rangle)$ is generated by elements of a Zassenhaus neighborhood of the identity in $O(2,1)$, and must therefore be abelian (cf. \cite[11.6.14]{Beardon}).  Since the $\rho_i$ are stably faithful, we deduce that the stabilizer of $I$ in $\Gamma$ is metabelian.  Thus the stabilizer of $I$ in $G$ is finite-by-metabelian.  In particular it is small.
  \end{proof}
  \begin{proof}[Proof of Claim \ref{claim:stable}]
    By Claim \ref{claim:elementary}, arc stabilizers are small.  Since small subgroups of $G$ are finitely generated, arc stabilizers satisfy the ascending chain condition.  In particular, the action is stable (see \cite[Proposition 3.2.(2)]{BF:stable}).
  \end{proof}
\end{proof}

\section{Behavior of the Bowditch boundary under filling}\label{s:boundaries}

In this section we apply the main results about splittings and fillings to deduce certain consequences about connectivity properties of the boundary under fillings, in particular Theorems \ref{t:no local cutpoints} and \ref{cor:nocutpoints}.  The statements of results are cleanest if we restrict to virtually polycyclic  parabolics, though the alert reader will see that the hypotheses can be weakened in various ways.

\subsection{Literature review of boundaries and splittings}

In this subsection we recall different results about topological properties of boundaries of relatively hyperbolic groups and splittings.

Connectedness was understood first.  Note that \emph{finite} maximal par\-a\-bol\-ics give rise to isolated points in the Bowditch boundary.
\begin{theorem}\cite[Proposition 10.1]{bowditch:relhyp}\label{bowditchthm:connected}
  Let $(G,\mc{P})$ be relatively hyperbolic, and suppose every element of $\mc{P}$ is infinite.  Then $\partial(G,\mc{P})$ is disconnected if and only if $G$ splits non-trivially over a finite subgroup relative to $\mc{P}$.  
\end{theorem}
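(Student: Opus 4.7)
The plan is to treat the two directions of the equivalence separately, as they require quite different ingredients.

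For the direction ``$G$ splits nontrivially over a finite subgroup relative to $\mc{P}$'' $\Rightarrow$ ``$\partial(G,\mc{P})$ is disconnected,'' I would fix such a one-edge splitting with Bass-Serre tree $T$ and finite edge stabilizer $F$. The vertex stabilizers $G_v$ are relatively quasi-convex; each carries a limit set $\Lambda G_v \subseteq \partial(G,\mc{P})$, and each $P \in \mc{P}$ (infinite by hypothesis) is conjugate into some $G_v$ because the splitting is relative. For an edge $e = (v_1,v_2)$, relative quasi-convexity of vertex groups gives $\Lambda G_{v_1} \cap \Lambda G_{v_2} \subseteq \Lambda F$, which is empty since $F$ is finite. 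Removing $e$ partitions $T$ into subtrees $T_1, T_2$; bundling together the limit sets $\Lambda G_v$ for $v \in T_i$ with the conical limit points reached along ends of $T$ lying inside $T_i$ yields two disjoint nonempty closed subsets whose union is all of $\partial(G,\mc{P})$, producing the desired disconnection.

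For the converse, assume $\partial(G,\mc{P}) = U \sqcup V$ is a nontrivial clopen partition. I would try to extract a Stallings-type splitting by working in the cusped space $X = X(G,\mc{P})$ from Definition~\ref{def:RH}, with basepoint $x_0 \in X$. For $R$ large, the shadow at radius $R$ of a point $\xi \in \partial X$ is (by $\delta$-hyperbolicity) a controlled subset of the metric $R$-sphere around $x_0$; by compactness of $\partial(G,\mc{P})$ one can choose a single $R$ such that every such shadow sits entirely in $U$ or entirely in $V$. Let $\Omega \subseteq X$ be the union of the shadows pointing into $U$, and define $A = \{g \in G : g\cdot x_0 \in \Omega\}$. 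Using hyperbolicity and thinness of the frontier of $\Omega$, one checks that $A$ is an almost-invariant subset of $G$ (in Dunwoody's sense) with infinite complement. A Dunwoody-type splitting theorem then delivers a nontrivial splitting of $G$ over a finite subgroup. The splitting can be made relative to $\mc{P}$ by a dynamical argument: each $P \in \mc{P}$ has a unique limit point $p_P$, lying in exactly one of $U, V$; since every $P$-orbit on $\partial(G,\mc{P}) \smallsetminus \{p_P\}$ accumulates at $p_P$, the closed set containing $p_P$ must be $P$-invariant, so $P$ acts elliptically on the resulting Bass-Serre tree.

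The main obstacle will be the converse direction. Standard Stallings-Dunwoody arguments rely on proper cocompact actions on locally finite spaces, whereas the cusped space is not locally finite inside horoballs, so the almost-invariance computation requires care around the cusps. One natural alternative is to replace almost-invariant sets by a Sageev-style cubulation, viewing $(U,V)$ and its $G$-translates as a wallspace on $\partial(G,\mc{P})$ and producing a $G$-action on a CAT(0) cube complex from which a splitting can be extracted. A further delicate point is ruling out the possibility that the edge stabilizer produced is infinite parabolic rather than finite; here one would combine geometric control of the wall in $X$ with the classification of elementary subgroups in Proposition~\ref{p:elementary}, ultimately using that any infinite parabolic's unique limit point sits cleanly in either $U$ or $V$ and so cannot lie on a wall.
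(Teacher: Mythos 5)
The paper does not prove Theorem \ref{bowditchthm:connected} at all: it is quoted verbatim from Bowditch's \emph{Relatively hyperbolic groups} (Proposition 10.1), so your argument can only be measured against that source, not against anything internal to this paper. On its own terms, your forward direction is a reasonable outline: a one-edge splitting over a finite subgroup relative to $\mc{P}$ does disconnect $\partial(G,\mc{P})$ via the tree-of-(cusped-)spaces picture you describe, although the assertion that the two bundled sets are closed (equivalently open) is exactly the point that needs proof and is only asserted. Also a small factual slip: the cusped space of this paper \emph{is} a locally finite graph; the genuine difficulty with Stallings--Dunwoody arguments here is that the $G$--action is proper but not cocompact, not a failure of local finiteness. (Your description of shadows as subsets of the $R$--sphere would also make $A$ finite as written; presumably you intend coarse cones over $U$ and $V$, which can be repaired.)

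The genuine gap is in the converse, at precisely the step carrying the content of the theorem: making the splitting relative to $\mc{P}$. Your mechanism is the claim that, since every $P$--orbit in $\partial(G,\mc{P})\smallsetminus\{p_P\}$ accumulates at $p_P$, the clopen piece containing $p_P$ is $P$--invariant. The accumulation statement is correct (convergence dynamics), but the inference is false: individual elements of $P$ can move points across the partition. For instance, in $(F_2,\{\langle a\rangle\})$ with $F_2=\langle a,b\rangle$, the Bowditch boundary is the end space of $F_2$ with the two ends of each coset $g\langle a\rangle$ identified; the clopen set of (images of) ends beginning with $a^{\pm 1}$ contains the parabolic point, yet $a$ carries the end $a^{-1}b^{\infty}$ to $b^{\infty}$, which lies on the other side. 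Moreover, even granting invariance of one clopen set, ellipticity of $P$ in the tree produced by a Dunwoody-type theorem (or by Sageev duality) does not follow: that tree is built from \emph{all} $G$--translates of the almost invariant set or wall, and what must be shown is that $P$--orbits cross only boundedly many translated walls, using that $p_P$ is a bounded parabolic point. Nor can relativity be retrofitted onto an arbitrary absolute splitting: take $P=\langle a^2b^2\rangle\le F_2$, which is malnormal and quasiconvex (so $(F_2,\{P\})$ is relatively hyperbolic by the criterion cited in Example \ref{ex:slender required}) but is contained in no proper free factor, so $F_2$ admits \emph{no} splitting over a finite (here trivial) subgroup relative to $\{P\}$ even though it visibly splits freely; by the theorem itself, $\partial(F_2,\{P\})$ is connected. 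So a proof that first produces an absolute Stallings splitting and then adjusts it cannot succeed; the $\mc{P}$--ellipticity has to be extracted from the clopen decomposition within the splitting construction itself, and that is where the substance of Bowditch's argument lies and where your proposal currently has no valid argument.
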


If $\partial(G,\mc{P})$ is connected, then Bowditch showed that (global) cut points correspond to peripheral splittings, given some mild conditions on the parabolic subgroups.  
\begin{definition}
  Say a group is \emph{tame} if it is finitely presented and contains no infinite torsion subgroup.
\end{definition}
\begin{theorem} \label{bowditchthm:cutpoint}
  Let $(G,\mc{P})$ be relatively hyperbolic, where the elements of $\mc{P}$ are tame and one or two-ended.  Suppose that $\partial(G,\mc{P})$ is connected.

  Then $\partial(G,\mc{P})$ has a cut point if and only if $(G,\mc{P})$ has a nontrivial peripheral splitting.
\end{theorem}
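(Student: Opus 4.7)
The plan is to establish each direction separately; the forward implication is substantially harder and requires Bowditch's pretree-to-tree machinery. For the reverse direction, I assume $(G,\mc{P})$ admits a nontrivial peripheral splitting with Bass-Serre tree $T$. Pick a peripheral vertex $v$ stabilized by some $P \in \mc{P}$, and set $p = \mathrm{Fix}(P) \in \partial(G,\mc{P})$. To show $p$ is a cut point, observe that $T\smallsetminus\{v\}$ has components $T_1,\ldots,T_k$ with $k \geq 2$ by nontriviality of the splitting; the stabilizer of each $T_i$ has a limit set $\Lambda_i \subseteq \partial(G,\mc{P})$, each $\Lambda_i$ is closed, and together with $\{p\}$ the $\Lambda_i$ cover the whole boundary. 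Almost malnormality of $\mc{P}$ together with the fact that $p$ is the unique fixed point of $P$ ensures the sets $\Lambda_i \smallsetminus \{p\}$ are pairwise disjoint, so $p$ separates $\partial(G,\mc{P})$.

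For the forward direction, assume $c\in \partial(G,\mc{P})$ is a cut point. First I would show that every cut point is a parabolic fixed point: by geometric finiteness, $c$ is either conical or bounded parabolic, and a conical limit point cannot be a cut point, since a suitable contracting sequence $g_n$ together with the fact that each $g_n c$ remains a cut point would force inconsistent local separation behavior near the attracting and repelling fixed points. Next I would follow Bowditch's construction of a $G$-invariant pretree structure on the set of cut points---declaring $b$ to lie between $a$ and $c$ whenever $b$ separates $a$ from $c$ in $\partial(G,\mc{P})$---and then enlarge this pretree by adjoining one vertex for each ``piece,'' i.e.\ each maximal subset of $\partial(G,\mc{P})$ not internally separated by a cut point. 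The tameness and one- or two-endedness of the elements of $\mc{P}$, combined with finite generation of $G$, would then allow one to promote this pretree to a simplicial $G$-tree whose cut-point vertices carry full maximal parabolic subgroups as stabilizers. This tree is automatically bipartite with one color class consisting of peripheral subgroups, yielding the desired peripheral splitting; nontriviality follows from the existence of $c$ as a cut point, which guarantees at least two pieces.

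The main obstacle is the promotion of the cut-point pretree to a genuine simplicial $G$-tree. Here the hypotheses on $\mc{P}$ are indispensable: tameness (no infinite torsion, finite presentability) is what guarantees the action is sufficiently discrete for the pretree to be realized simplicially, and one- or two-endedness of the parabolics ensures that cut-point stabilizers are full maximal parabolic subgroups rather than proper subgroups that would obstruct the bipartite peripheral structure. A secondary subtlety is verifying that no edge of the resulting tree is trivially stabilized or peripherally collapsible, but this follows once cut-point stabilizers are known to coincide with maximal parabolics.
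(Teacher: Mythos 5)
The paper's own proof of this statement is purely by citation: the ``if'' direction is \cite[Theorem 1.2]{bowditch:peripheral}, and the ``only if'' direction combines \cite[Theorem 0.2]{bowditch:connectedness} (under the tameness and one-or-two-endedness hypotheses, every global cut point is a parabolic fixed point) with \cite[Theorem 1.2]{bowditch:limit} (a parabolic cut point yields a proper peripheral splitting). Your proposal instead undertakes to reprove these results, and as written it has genuine gaps in both directions. In the forward direction, the step you dispatch in one sentence --- that a conical limit point cannot be a global cut point --- is precisely the hard theorem: it is the relative analogue of the Bowditch--Swarup cut-point theorem, and its proof is the content of \cite{bowditch:connectedness}, passing from the cut-point pretree to an action on an $\bR$--tree and an accessibility argument; that is exactly where finite presentability and the absence of infinite torsion subgroups are used, not in a later ``discreteness of the pretree'' step as you suggest. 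Likewise, realizing the pretree inside a simplicial $G$--tree, showing every $P\in\mc{P}$ is elliptic, and arranging that one colour class of vertex groups is precisely $\mc{P}$ (so that the splitting is peripheral and nontrivial) is the content of \cite{bowditch:limit}; none of this is automatic, and your claim that one-or-two-endedness forces cut-point stabilizers to be full maximal parabolics is asserted rather than proved.

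The reverse direction also fails as stated. If $v$ is the vertex of the Bass--Serre tree fixed by $P$, then the components of $T\smallsetminus\{v\}$ are in bijection with the edges incident to $v$, so there are typically infinitely many of them (not $T_1,\ldots,T_k$), and the setwise stabilizer of such a component fixes $v$ and the incident edge, hence is a conjugate of an incident edge group --- a parabolic subgroup whose limit set is contained in $\{p\}$. So the sets $\Lambda_i$ do not come close to covering $\partial(G,\mc{P})$, and no separation of $\partial(G,\mc{P})\smallsetminus\{p\}$ is produced. Even under the more charitable reading in which $\Lambda_i$ denotes the limit set of the ``half'' of $G$ on one side, the covering claim is false in general (boundary points of geodesics whose tree-projections are unbounded need not lie in any vertex-group limit set --- compare $F_2=\Z\ast\Z$), and with infinitely many pieces, pairwise disjointness and closedness alone do not yield a separation; Bowditch's actual argument in \cite{bowditch:peripheral} instead assigns to each point of $\partial(G,\mc{P})\smallsetminus\{p\}$ a branch at $v$ and proves this assignment is locally constant. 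So either cite Bowditch as the paper does, or supply the substantial arguments your outline presupposes.
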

\begin{proof}
  Suppose first that $(G,\mc{P})$ has a nontrivial peripheral splitting.  Then \cite[Theorem 1.2]{bowditch:peripheral} implies that $\partial(G,\mc{P})$ has a cut point.  (Note that this direction only requires connectedness of the boundary, and not the extra hypotheses on the parabolics.)

  Conversely, suppose that $\partial(G,\mc{P})$ has a cut point.  Theorem 0.2 of \cite{bowditch:connectedness} implies that, under the hypotheses, every cut point of $\partial(G,\mc{P})$ is a parabolic fixed point.  Theorem 1.2 of \cite{bowditch:limit} then implies that $(G,\mc{P})$ admits a proper peripheral splitting.  
\end{proof}

Finally local cut points are connected to splittings over $2$--ended groups.  Recall that a \emph{continuum} is a connected compact Hausdorff space.  To have a reasonable notion of local cut point, we must restrict attention to locally connected spaces.
\begin{theorem}\cite[Theorem 1.5]{bowditch:peripheral}\label{bowditchthm:locallyconnected}
    Let $(G,\mc{P})$ be relatively hyperbolic, where the elements of $\mc{P}$ are tame and one or two-ended, and suppose that $\partial(G,\mc{P})$ is connected.  Then $\partial(G,\mc{P})$ is locally connected.
\end{theorem}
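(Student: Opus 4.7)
The plan is to follow Bowditch's strategy of reducing to the case with no proper peripheral splitting, then adapting the Bestvina--Mess--Swarup proof of local connectedness for boundaries of one-ended hyperbolic groups. I would argue by contradiction: assume $M := \partial(G,\mc{P})$ is connected but fails to be locally connected at some $x\in M$. A classical result of Whyburn on continua says that this failure produces a non-degenerate subcontinuum $C\subseteq M$ (a ``continuum of convergence'') which is the Hausdorff limit of a null sequence of disjoint subcontinua accumulating at $x$. The goal is to rule this configuration out.

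The first reduction uses Theorem \ref{bowditchthm:cutpoint} and its proof. If $(G,\mc{P})$ admits a nontrivial peripheral splitting, I would decompose $M$ using Bowditch's ``cactus-tree'' structure: the cut points form a $G$-invariant set arranged as a dendrite, and the non-singleton blocks are boundaries of relatively hyperbolic pairs $(G_v,\mc{P}_v)$ associated to the non-peripheral vertex groups, with strictly smaller complexity in the sense of Bowditch. Tameness of the parabolics ensures that this decomposition is itself tame (finite graph of groups at each stage, accessibility-type bound on the complexity), so I can induct. The well-known fact that a Peano-continuum-valued ``dendrite of continua'' (a space obtained by single-point identifications along a dendrite, with Peano continuum pieces) is itself a Peano continuum then transfers local connectedness from the blocks to $M$.

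This reduces the problem to the base case: $(G,\mc{P})$ relatively hyperbolic, $\partial(G,\mc{P})$ connected, and no nontrivial peripheral splitting. By Theorem \ref{bowditchthm:cutpoint} there is then no cut point, so at every $y\in M$ one has $M\smallsetminus\{y\}$ connected. The plan is to imitate the Bestvina--Mess argument using the cusped space $X(G,\mc{P})$ as a classifying space: the cusped space is $\delta$-hyperbolic, $G$ acts geometrically finitely on $M$, and one shows that the compactification $X(G,\mc{P})\cup M$ is an ANR-compactification of $X$ in which $M$ is a $Z$-set. From this, local connectedness of $M$ follows from the one-endedness of the ``relative'' picture combined with standard shape-theoretic arguments (Whyburn's criterion: local connectedness is equivalent to the absence of continua of convergence, which is exactly what a non-singular $Z$-set compactification rules out).

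The main obstacle is handling the parabolic points in this base case. At a conical limit point one gets small connected neighborhoods essentially for free from the convergence dynamics (exactly as for hyperbolic groups). But at a parabolic fixed point $p$ stabilized by $P\in \mc{P}$, one must verify directly that $p$ has a neighborhood basis of connected sets in $M$. For one-ended tame $P$, this uses that the ``horospherical link'' of $p$ in $X(G,\mc{P})$ is a $P$-cocompact, connected neighborhood of $p$, and finite presentability of $P$ gives the ANR hypothesis at $p$. For two-ended $P$ the parabolic point would itself be a cut point, so the no-cut-point reduction in the base case actually forces every two-ended $P\in\mc{P}$ to be absent from the boundary's cut-point set in this case; the hypothesis that $\mc{P}$ consists only of one- or two-ended tame groups is exactly what is needed to make the inductive step and the base-case ANR analysis both go through without wild local behavior (infinite torsion or infinitely generated parabolics would produce non-ANR horospheres and block the argument).
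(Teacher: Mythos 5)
The paper does not actually prove this statement: it is quoted, with the citation \cite[Theorem 1.5]{bowditch:peripheral}, as an external result of Bowditch, so there is no internal argument to compare yours against. Judged on its own merits, your outline does follow the broad shape of Bowditch's actual proof (reduce, via an accessibility statement for peripheral splittings in which tameness is exactly what is needed, to boundaries without global cut points, and prove local connectedness there), but two of your key steps do not hold up as written.

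First, your treatment of two-ended peripherals is wrong: a two-ended parabolic subgroup does not force its fixed point to be a global cut point, so the base case does not ``force every two-ended $P\in\mc{P}$ to be absent.'' The pair $(F,\{C\})$ with $F$ a once-punctured-torus group and $C$ the cusp subgroup (mentioned in the paper after Theorem \ref{cor:nocutpoints}) has Bowditch boundary $S^1$: connected, no cut point, no proper peripheral splitting, yet the peripheral subgroup is two-ended and its fixed point is merely a local cut point. So your base case must handle two-ended parabolic points head-on; they cannot be defined away. Second, the analytic heart of your base case --- ``a $Z$-set/ANR compactification rules out continua of convergence'' --- is not a theorem. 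The Bestvina--Mess local connectedness argument does not follow formally from the $Z$-set property; it needs their specific condition (their $(\ddagger_M)$, verified by a chain-connecting argument deep in the space), and in the relative setting Bowditch instead argues dynamically on the boundary itself (geometrically finite convergence action, connected with no global cut point implies locally connected, as in \cite{bowditch:connectedness}), not shape-theoretically on the cusped space. Relatedly, the cusped space is a graph, so before any ANR statement makes sense one must pass to a suitable simply connected complex, and the claimed ``horospherical link'' analysis at parabolic points is asserted rather than proved. As it stands the proposal is a plausible roadmap that matches the literature's strategy at the top level, but the base case --- which is where all the work lies --- is not established.
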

\begin{definition}
  Let $p\in M$ where $M$ is a locally connected continuum.  The \emph{valence}  $\mathrm{Val}(p)$ is the cardinality of $\mathrm{Ends}(M\smallsetminus\{p\})$.  The point $p$ is called a \emph{local cut point} if $\mathrm{Val}(p)>1$.
\end{definition}

\begin{lemma}\label{lem:bppvalence}
  Suppose $(G,\mc{P})$ is relatively hyperbolic and that $\partial(G,\mc{P})$ is a locally connected continuum.
    If $p$ is a parabolic fixed point which is not a cut point, then $\mathrm{Val}(p)$ is equal to the number of ends of $\mathrm{Fix}(p)$.
\end{lemma}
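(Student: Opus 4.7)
The plan is to show that $P := \mathrm{Fix}(p)$ acts properly discontinuously and cocompactly on $Y := \partial(G,\mc{P}) \setminus \{p\}$, and then invoke the classical fact that a group acting properly and cocompactly by homeomorphisms on a ``nice'' space has the same number of ends as the space.

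First I would identify $P$ with the maximal parabolic subgroup fixing $p$: because $(G,\mc{P})$ is relatively hyperbolic, the action on $\partial(G,\mc{P})$ is geometrically finite, so $p$ is a bounded parabolic point and its stabilizer is a conjugate of some element of $\mc{P}$. Being a bounded parabolic point means, by definition, that $P$ acts cocompactly on $\partial(G,\mc{P}) \setminus \{p\}$. The action of $P$ on $Y$ is properly discontinuous: the full $G$-action on $\partial(G,\mc{P})$ is a convergence action, and removing the fixed point of a parabolic subgroup gives a proper action of that subgroup on the complement (points can accumulate at $p$ but $p \notin Y$).

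Next I would verify that $Y$ has the topological properties needed to identify its ends with those of $P$. Since $\partial(G,\mc{P})$ is compact Hausdorff and $\{p\}$ is closed, $Y$ is locally compact Hausdorff. Local connectedness of $\partial(G,\mc{P})$ passes to the open subset $Y$ (given any $y\in Y$ and neighborhood $U\subseteq Y$, a connected neighborhood of $y$ inside $U$ in $\partial(G,\mc{P})$ is automatically a connected neighborhood in $Y$). The hypothesis that $p$ is not a cut point is exactly the statement that $Y$ is connected. Finally, $Y$ is $\sigma$--compact: a compact fundamental domain $K \subseteq Y$ for the $P$--action satisfies $Y = \bigcup_{g \in P} gK$, and $P$ is countable.

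With these properties in hand, the classical theorem (going back to Hopf, and in the form we need appearing in e.g.\ Geoghegan's book on topological methods in group theory) asserts that for a properly discontinuous cocompact action of a finitely generated group $P$ on a connected, locally compact, locally connected, $\sigma$--compact Hausdorff space $Y$, there is a natural bijection $\mathrm{Ends}(Y) \leftrightarrow \mathrm{Ends}(P)$. Thus
\[
\mathrm{Val}(p) \;=\; \#\mathrm{Ends}(Y) \;=\; \#\mathrm{Ends}(P) \;=\; \#\mathrm{Ends}(\mathrm{Fix}(p)),
\]
which is the desired equality. The main subtlety I anticipate is ensuring that $P$ is finitely generated so that ``ends of $P$'' is the usual notion; this is automatic in the relatively hyperbolic setup where peripherals are assumed finitely generated, but should be flagged. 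Everything else is routine point-set topology together with the bounded-parabolic characterization of geometric finiteness.
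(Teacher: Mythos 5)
Your argument is correct and is essentially the paper's own proof: both use geometric finiteness to see that $p$ is a bounded parabolic point, so that $\mathrm{Fix}(p)$ acts properly discontinuously and cocompactly on $\partial(G,\mc{P})\smallsetminus\{p\}$, which is connected because $p$ is not a cut point, and then identify the ends of the group with the ends of that space. The only difference is that you spell out the point-set hypotheses and the classical ends theorem, which the paper leaves implicit.
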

\begin{proof}
Recall that the action of a relatively hyperbolic group on its Bowditch boundary is \emph{geometrically finite} \cite[Proposition 6.15]{bowditch:relhyp}, meaning that every point is either a \emph{conical limit point} or a \emph{bounded parabolic point}.  
Since $p$ is a parabolic fixed point, it is a bounded parabolic point, which means that $\mathrm{Fix}(p)$ acts properly discontinuously and cocompactly on $\partial(G,\mcP)\smallsetminus \{p\}$.  Since $p$ is not a cut point, $\partial(G,\mcP)\smallsetminus \{ p \}$ is connected.  It follows that $\mathrm{Ends}(\mathrm{Fix}(p)) = \mathrm{Ends}(\partial(G,\mcP)\smallsetminus \{p\})$.
\end{proof}
\begin{lemma}\label{lem:clpvalence}
  Suppose $(G,\mc{P})$ is relatively hyperbolic and that $\partial(G,\mc{P})$ is a locally connected continuum.  There is an $N<\infty$ so that if $p\in \partial(G,\mc{P})$ is not a parabolic fixed point, then $\mathrm{Val}(p)<N$.
\end{lemma}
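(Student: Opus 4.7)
Plan: Since the action of $G$ on $\partial(G,\mcP)$ is geometrically finite \cite[Proposition 6.15]{bowditch:relhyp}, every point that is not a parabolic fixed point is a conical limit point. The proposal is to translate the problem to a combinatorial counting problem in the $\delta$-hyperbolic cusped space $X=X(G,\mcP)$, where local finiteness yields the desired bound. Fix a basepoint $x_0 \in X$; local finiteness of $X$ guarantees that $|B(x_0,R)|<\infty$ for each $R>0$, and this will be our candidate bound $N$ for a suitable $R$.

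Let $p$ be a conical limit point with $V:=\mathrm{Val}(p)$, witnessed by a small connected open neighborhood $U\ni p$ such that $U\setminus\{p\}$ has exactly $V$ components $C_1,\ldots,C_V$. By the conical property, one can find a sequence $\{g_n\}\subseteq G$ and distinct points $a,b\in \partial(G,\mcP)$ so that $g_n(p)\to a$ and $g_n(q)\to b$ uniformly for $q$ in compact subsets of $\partial(G,\mcP)\setminus\{p\}$. For large $n$, this forces $g_n(\partial\setminus U)\subseteq B_\epsilon(b)$ and hence $g_n(U)\supseteq \partial\setminus B_\epsilon(b)$ for any preassigned $\epsilon>0$. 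The images $g_n(C_i)$ therefore partition $(\partial\setminus B_\epsilon(b))\setminus\{g_n(p)\}$ into $V$ clopen subsets, each nonempty for small enough $\epsilon$ and large enough $n$.

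Each of these $V$ clopen pieces should correspond in the cusped space to a combinatorially distinct ``sector'' at bounded distance from $x_0$. More precisely, by choosing the conical sequence $\{g_n\}$ so that $g_n\cdot \gamma_p(t_n)$ lies in $B(x_0,\delta)$ for suitable $t_n\to\infty$ (which exists by the geodesic characterization of conical limit points in the cusped space), we arrange that $g_n(p)$ and the limit direction toward $b$ are both witnessed within distance $R=R(\delta)$ of $x_0$. The $V$ clopen pieces of the partition then trace back to $V$ disjoint connected ``angular regions'' near the sphere $S(x_0,R)$ in $X$, and each such region contains at least one distinct vertex of $X$, so $V\leq |B(x_0,R)|=:N$, independent of $p$. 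The local connectedness hypothesis on $\partial(G,\mcP)$ is used to ensure that these topological clopen pieces correspond to well-defined combinatorial sectors in $X$.

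The main obstacle is the geometric step, turning the topological partition of $(\partial\setminus B_\epsilon(b))\setminus\{g_n(p)\}$ into $V$ combinatorially distinct sectors in the cusped space at a bounded distance from $x_0$. This rests on the standard correspondence, for proper $\delta$-hyperbolic spaces, between ends of $\partial X\setminus\{p\}$ and equivalence classes of geodesic rays modulo bounded fellow-traveling, and requires care because the cusped space is not cocompact; the conical translation step is precisely what compensates for this, ensuring the relevant geometric data remains within a fixed ball around $x_0$.
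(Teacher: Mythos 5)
Your overall strategy---use geometric finiteness to see that $p$ is a conical limit point, then use a conical sequence $(g_n)$ to translate the local picture at $p$ back to a uniformly bounded region of the locally finite cusped space, where counting vertices gives the bound---is exactly the idea behind the result in the literature: the paper itself does not reprove this lemma but cites Bowditch's Proposition 5.5 for the case $\mc{P}=\emptyset$ (where cocompactness of the action on distinct triples plays the role your conical sequence plays) together with Guralnik's observation that essentially the same proof works in the relative setting. So your instinct about which hypothesis substitutes for cocompactness is the right one.

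As a proof, however, the proposal has a genuine gap, and you have flagged it yourself: the assertion that the $V$ clopen pieces $g_n(C_i)\cap\bigl(\partial(G,\mc{P})\smallsetminus B_\epsilon(b)\bigr)$ ``trace back to $V$ disjoint connected angular regions'' meeting the sphere $S(x_0,R)$, each containing its own vertex, with $R$ depending only on $\delta$, is the entire content of the lemma and is nowhere argued. There is no off-the-shelf ``standard correspondence'' between ends of $\partial X\smallsetminus\{p\}$ and fellow-traveling classes of rays that delivers this, and it is precisely here that local connectedness must do real work, whereas in your write-up it appears only as an unexplained assurance. The concrete difficulty is uniformity: each piece $g_n(C_i)$ accumulates at $g_n(p)$ (this accumulation is also what you need, but do not note, to see that the pieces meet $\partial(G,\mc{P})\smallsetminus B_\epsilon(b)$ at all, since $g_n(q)\to b$ for every fixed $q\neq p$), so the pieces are not at uniformly positive distance from one another, and away from $g_n(p)$ their mutual visual distances can shrink as $n$ grows. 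Hence the scale at which the pieces become combinatorially distinguishable in the cusped space may a priori lie far from $x_0$, outside any ball of radius $R(\delta)$. Until you prove a quantitative separation statement---for instance, produce in each piece a point $\xi_i$ with all pairwise Gromov products $(\xi_i\mid\xi_j)_{x_0}$ bounded by a constant depending only on $\delta$, or show (using local connectedness in an essential way) that if geodesics from $g_n(p)$ to $\xi$ and to $\eta$ pass through a common vertex near $x_0$ then $\xi$ and $\eta$ lie in the same piece---the counting step $V\le |B(x_0,R)|$ does not follow. That missing step is the substance of Bowditch's argument, which you would need either to reproduce or, as the paper does, simply to cite.
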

\begin{proof}
  Bowditch proved this in the absolute case $\mc{P}=\emptyset$ \cite[Proposition 5.5]{Bowditch:cutpoints}.  Guralnik \cite[Proposition 4.2]{GuralnikIJAC} points out that essentially the same proof works in the relatively hyperbolic setting.
\end{proof}

\begin{theorem}\label{groffthm}
  Let $(G,\mc{P})$ be relatively hyperbolic, where the elements of $\mc{P}$ are tame and one-ended, and suppose that $\partial(G,\mc{P})$ is connected and has no cut point.  Suppose further that $(G,\mc{P})$ is not virtually Fuchsian.
  
  Then $(G,\mc{P})$ has a local cut point if and only if $G$ splits relative to $\mc{P}$ over a non-parabolic $2$--ended subgroup.
\end{theorem}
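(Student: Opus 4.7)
The plan is to prove both directions separately, with the forward direction relying on Bass--Serre theory plus convergence dynamics and the backward direction relying on Bowditch's theory of the JSJ decomposition as read off from local cut points in the boundary (in the relatively hyperbolic setting, due to Groff, with related work by Guirardel--Levitt and Dahmani).

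For the forward direction, suppose $G$ splits relative to $\mc{P}$ over a non-parabolic $2$--ended subgroup $H$, with Bass--Serre tree $T$. By Proposition \ref{p:elementary}, $\Lambda H = \{p,q\} \subset \partial(G,\mc{P})$ consists of the two fixed points of an RH-hyperbolic element. I would first verify that each vertex group $G_v$ acts as a relatively quasiconvex subgroup, so that $\partial(G,\mc{P})$ decomposes as a Bowditch-style tree of boundaries indexed by $T$: the limit sets of vertex groups are subcontinua glued together along limit sets of edge groups, which are exactly the translates of $\Lambda H$. The edge $e \subset T$ stabilized by $H$ separates $T$ into two half-trees $T_+$ and $T_-$, and the associated subsets of $\partial(G,\mc{P}) \smallsetminus \{p,q\}$ lie in disjoint open sets. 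By taking sufficiently small connected neighborhoods of $p$ and removing $p$, one sees that $p$ has at least two ends, hence is a local cut point.

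For the backward direction, suppose $\partial(G,\mc{P})$ contains a local cut point $p$. By hypothesis each $P\in \mc{P}$ is one-ended, so Lemma \ref{lem:bppvalence} rules out parabolic fixed points as local cut points (such a point would have $\mathrm{Val}(p)=1$). Hence $p$ is a conical limit point with $1 < \mathrm{Val}(p) < \infty$ by Lemma \ref{lem:clpvalence}. I would now invoke the relatively hyperbolic analog of Bowditch's construction from \cite{Bowditch:cutpoints}: using the pattern of local cut points and cut pairs in $\partial(G,\mc{P})$, one builds a canonical $G$-equivariant dendritic quotient whose branch points encode a splitting of $G$ over $2$--ended subgroups, which by the previous paragraph must be non-parabolic (since parabolic fixed points are not local cut points). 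The non-virtually-Fuchsian hypothesis excludes the degenerate case where $\partial(G,\mc{P})$ is a circle (or quotient thereof) with all points being local cut points that do not come from honest edge-splittings, but rather from a ``Fuchsian'' vertex; by Theorem \ref{t:Fuchsian fillings} and the cut-pair analysis, excluding this leaves a genuine edge-splitting.

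The main obstacle is the second direction: making rigorous the passage from the existence of a single local cut point to a splitting, which requires showing that the collection of local cut points and cut pairs is rich enough to produce a nontrivial $G$--equivariant quotient, and verifying that the edge stabilizers in the resulting splitting are genuinely non-parabolic $2$--ended subgroups. Technically this amounts to carefully applying the JSJ machinery of Bowditch/Groff in the relative setting and checking that the hypotheses (tame one-ended parabolics, connected boundary with no cut point, not virtually Fuchsian) ensure the splitting is non-degenerate and of the desired type.
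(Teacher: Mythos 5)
Your forward direction is sound and close in spirit to the paper's: the paper argues directly that a non-parabolic two-ended $H$ is quasi-isometrically embedded in the cusped space and coarsely separates a cusped graph of spaces realizing the splitting, so that the two fixed points of $H$ separate $\partial(G,\mc{P})$; your route through relative quasiconvexity of vertex groups and a tree-of-boundaries decomposition needs a citation or argument for that quasiconvexity, but is essentially the same picture. The problem is the converse, which is the real content of the theorem, and which you yourself flag as ``the main obstacle'': as written, ``invoke the relatively hyperbolic analog of Bowditch's construction'' is a placeholder, not a proof, and the specific steps you omit are exactly where the hypotheses are used.

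Concretely, the missing chain is: (i) local connectedness of $\partial(G,\mc{P})$ (Theorem \ref{bowditchthm:locallyconnected}), needed before valence and the cut-pair machinery make sense; (ii) Lemmas \ref{lem:bppvalence} and \ref{lem:clpvalence} give that \emph{no} point has infinite valence, i.e.\ the boundary is cut-rigid in Guralnik's sense, and it is his Propositions 4.7--4.8 \cite{GuralnikIJAC} that upgrade your single local cut point to an actual cut pair; (iii) the Papasoglu--Swenson pre-tree of cut pairs \cite{PapSwe06,PapSwe11} embeds equivariantly in a simplicial $G$--tree $T$, which Groff \cite{Groff13} identifies as a JSJ tree for elementary splittings relative to $\mc{P}$; (iv) one must rule out finite and parabolic edge stabilizers: finiteness is excluded by connectedness (Theorem \ref{bowditchthm:connected}), and your remark that parabolic fixed points are not local cut points does not by itself show edge groups are non-parabolic --- the paper instead argues that a splitting over a parabolic relative to $\mc{P}$ would yield, via Lemma \ref{lem:make peripheral elliptic}, Proposition \ref{prop:edge group parabolic}, and Theorem \ref{bowditchthm:cutpoint}, a cut point in $\partial(G,\mc{P})$, contradicting the hypothesis; and (v) nontriviality of the action of $G$ on $T$ is where ``not virtually Fuchsian'' enters, through Groff's classification of the vertices (cut points, inseparable cut pairs, necklaces, rigid classes): a global fixed point forces a necklace equal to all of $\partial(G,\mc{P})$, hence $\partial(G,\mc{P})=S^1$ and $(G,\mc{P})$ virtually Fuchsian by the Convergence Group Theorem \cite{cassonjungreis,gabai:convergence}. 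Your appeal to Theorem \ref{t:Fuchsian fillings} at this point is a misattribution: that theorem concerns Dehn fillings of groups with no small splittings and plays no role here.
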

\begin{proof}
  Suppose that $(G,\mc{P})$ splits relative to $\mc{P}$ over a $2$--ended subgroup $H$.  Then $\partial H$ is a two point set which separates $\partial(G,\mc{P})$.  To see this, note that $H$ is quasi-isometrically embedded in the cusped space because it is two-ended and not parabolic.  Considering a cusped graph of spaces realizing the splitting of $G$, we see that $H$ coarsely separates the cusped space for $(G,\mc{P})$ into at least two distinct (deep) components.  From this, it is clear that the pair of fixed points for $H$ separates the boundary $\partial(G, \mc{P})$.  

  Conversely, suppose that $(G,\mc{P})$ satisfies the hypotheses of the theorem, and $\partial(G,\mc{P})$ has a local cut point.  Theorem \ref{bowditchthm:locallyconnected} implies that $\partial(G,\mc{P})$ is locally connected, so we may apply Lemmas \ref{lem:bppvalence} and \ref{lem:clpvalence} to conclude there is no infinite valence point in $\partial(G,\mc{P})$.  It follows that $\partial(G,\mc{P})$ is \emph{cut-rigid} in the sense of Guralnik's paper \cite{GuralnikIJAC}.
Propositions 4.7 and 4.8 of \cite{GuralnikIJAC} can then be used to conclude that $\partial(G,\mc{P})$ has a cut pair.  

  The work of Papasoglu--Swenson in \cite{PapSwe06,PapSwe11} encodes the cut-pair structure of the boundary in a pre-tree which canonically (and therefore $G$--equivariantly) embeds in a simplicial tree $T$.  
Groff \cite[Theorem 5.1]{Groff13} shows this tree is a JSJ tree for elementary splittings of $(G,\mc{P})$, relative to $\mc{P}$.\footnote{Groff requires that the elements of $\mc{P}$ are not themselves properly relatively hyperbolic, but does not use this hypothesis for this result.  This assumption is needed at other points in \cite{Groff13}.}  Since $\partial(G,\mc{P})$ is connected, there is no splitting over a finite group relative to $\mc{P}$ (Theorem \ref{bowditchthm:connected}); since $\partial(G,\mc{P})$ has no cut point, and there are no multi-ended elements of $\mc{P}$, there is no splitting over a parabolic subgroups (Lemma \ref{lem:make peripheral elliptic} and Theorem \ref{bowditchthm:cutpoint}).  Thus every edge stabilizer comes from a splitting over a non-parabolic $2$--ended subgroup relative to $\mc{P}$.  

It remains to establish that the action of $G$ on $T$ has no global fixed point.  Here we use Groff's explicit description of the tree \cite[Section 4]{Groff13}: vertices correspond to
\begin{enumerate}
\item cut points (we do not have any of these),
\item inseparable cut pairs,
\item necklaces, or
\item equivalence classes of points not separated by any cut point or cut pair.
\end{enumerate}
See \cite{Groff13} for the definition of these terms.
Edges correspond to intersection of closures.  If there is a global fixed point in $T$, one quickly sees it must correspond to a \emph{necklace}, which equals $\partial(G,\mc{P})$.  But in this case $\partial(G,\mc{P}) = S^1$ and the pair $(G,\mc{P})$ is virtually Fuchsian, by the Convergence Group Theorem \cite{cassonjungreis, gabai:convergence}.
\end{proof}

\begin{corollary}\label{cor:elementarysplitting}
  Let $(G,\mc{P})$ be relatively hyperbolic, so elements of $\mcP$ are tame and one-ended.  Suppose either $\mcP$ is nonempty or $G$ is hyperbolic but not virtually Fuchsian.
  The following are equivalent:
  \begin{enumerate}
  \item $\partial(G,\mc{P})$ is connected and has no (local or global) cut point.
  \item $(G,\mc{P})$ has no elementary splitting.
  \end{enumerate}
\end{corollary}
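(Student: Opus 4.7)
The plan is to pair each of the three topological conditions characterizing (1)---connectedness, absence of global cut points, absence of local cut points---with a corresponding result from the preceding literature review: Theorems~\ref{bowditchthm:connected}, \ref{bowditchthm:cutpoint}, and \ref{groffthm} respectively. An elementary splitting comes in exactly three flavors (over a finite, parabolic, or non-parabolic two-ended subgroup), and these line up naturally with the three theorems. Under the corollary's standing hypotheses, the elements of $\mc{P}$ are tame and one-ended, so in particular there are no multi-ended peripheral subgroups; this will make Lemma~\ref{lem:make peripheral elliptic} applicable to any parabolic splitting.

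For the direction $(2)\Rightarrow(1)$, I will argue contrapositively in three parallel subcases. If $G$ splits nontrivially over a finite subgroup relative to $\mc{P}$, Theorem~\ref{bowditchthm:connected} disconnects $\partial(G,\mc{P})$. If $G$ splits over a parabolic subgroup, then since no $P\in\mc{P}$ is multi-ended, Lemma~\ref{lem:make peripheral elliptic} produces a splitting over a parabolic subgroup in which every $P\in\mc{P}$ is elliptic; Proposition~\ref{prop:edge group parabolic} converts this into a nontrivial peripheral splitting, so Theorem~\ref{bowditchthm:cutpoint} supplies a global cut point. Finally, if $G$ splits over a non-parabolic two-ended subgroup, Theorem~\ref{groffthm} supplies a local cut point.

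For $(1)\Rightarrow(2)$, I will read the same three theorems in the forward direction. Theorem~\ref{bowditchthm:connected} rules out splittings over finite subgroups. Theorem~\ref{bowditchthm:cutpoint} rules out nontrivial peripheral splittings; but by Proposition~\ref{prop:edge group parabolic} any splitting of $G$ over a parabolic subgroup (with all peripherals elliptic, which we may assume by Lemma~\ref{lem:make peripheral elliptic}) is a peripheral splitting, so there are no parabolic splittings either. Theorem~\ref{groffthm} then rules out splittings over non-parabolic two-ended subgroups.

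The one item that requires separate verification is the ``not virtually Fuchsian" hypothesis of Theorem~\ref{groffthm}. In the case $\mc{P}=\emptyset$ this is part of the corollary's assumption. In the case $\mc{P}\neq\emptyset$, I expect to argue that any one-ended finitely generated subgroup of a virtually Fuchsian group must be of finite index, so a proper one-ended tame peripheral $P$ forces $(G,\mc{P})$ to be degenerate as a relatively hyperbolic pair, hence $G$ itself cannot be virtually Fuchsian in this case either. This small case analysis is the only mild obstacle; the rest of the proof is essentially bookkeeping that assembles the three cited theorems.
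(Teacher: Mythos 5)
There is a genuine gap, and it is a purely logical one: your two directional paragraphs prove the \emph{same} implication. The contrapositive of $(2)\Rightarrow(1)$ is ``if $\partial(G,\mc{P})$ is disconnected or has a (local or global) cut point, then $G$ admits an elementary splitting''; what your second paragraph proves is ``an elementary splitting produces a boundary defect'', which is the contrapositive of $(1)\Rightarrow(2)$, and your third paragraph then proves $(1)\Rightarrow(2)$ again, directly. So the implication $(2)\Rightarrow(1)$ is never established. It does follow in the spirit of your plan, but only from the \emph{converse} halves of the three cited equivalences, which you never invoke, and assembling them requires a cascading argument: if $\partial(G,\mc{P})$ is disconnected, Theorem \ref{bowditchthm:connected} (applicable since one-ended peripherals are infinite) gives a nontrivial splitting over a finite subgroup; if it is connected but has a cut point, Theorem \ref{bowditchthm:cutpoint} gives a nontrivial peripheral splitting, whose edge groups are parabolic, hence an elementary splitting; if it is connected with no cut point but has a local cut point, Theorem \ref{groffthm} (using again that $(G,\mc{P})$ is not virtually Fuchsian) gives a splitting over a non-parabolic two-ended subgroup. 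Each case contradicts (2). This missing direction is not cosmetic: it is exactly the direction used at the end of the proof of Theorem \ref{t:no local cutpoints}, where ``no elementary splittings of $(\barG,\barP^{\mathrm{red}})$'' must be converted back into a statement about the boundary.

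Two smaller points. First, in your contrapositive argument for $(1)\Rightarrow(2)$ you should justify that a splitting over a finite or non-parabolic two-ended subgroup is automatically relative to $\mc{P}$, since Theorems \ref{bowditchthm:connected} and \ref{groffthm} concern splittings relative to $\mc{P}$: each $P\in\mc{P}$ is one-ended, so it cannot split over a finite group, and if it were not elliptic in a splitting over a non-parabolic two-ended subgroup $E$ it would split over an infinite subgroup of some conjugate of $E$, which has finite index there and forces $E$ to be parabolic by almost malnormality of $\mc{P}$ --- a contradiction; hence all peripherals are elliptic. (Your parabolic case correctly handles this via Lemma \ref{lem:make peripheral elliptic} and Proposition \ref{prop:edge group parabolic}.) Second, your verification of the ``not virtually Fuchsian'' hypothesis when $\mc{P}\neq\emptyset$ is essentially right: a one-ended finitely generated subgroup of a virtually Fuchsian group is of finite index, which is incompatible with $P$ being a proper, almost malnormal peripheral subgroup of a relatively hyperbolic pair; this is what the paper leaves implicit in calling the corollary ``immediate'' from the three theorems. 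Apart from the missing direction, your route is the intended one.
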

\begin{proof}
  This is immediate from Theorems \ref{bowditchthm:connected}, \ref{bowditchthm:cutpoint}, and \ref{groffthm}.
\end{proof}

\subsection{Connectedness properties of the boundary after filling}
Now we prove the statements about fillings and boundaries from the introduction, first recalling some notation.
For $(G,\mc{P})$ a relatively hyperbolic group, let $\mc{P}^{\infty}\subseteq \mc{P}$ be the collection of infinite peripheral subgroups, and let $\mc{P}^{\mathrm{red}}$ be the collection of non-hyperbolic subgroups.  Then  $(G,\mc{P}^{\infty})$ and $(G,\mc{P}^{\mathrm{red}})$ are still relatively hyperbolic, but the Bowditch boundaries may be different.  For example, if $(G,\mc{P}^{\mathrm{red}})$ is connected, but $\mc{P}\smallsetminus \mc{P}^{\mathrm{red}}$ contains an infinite cyclic subgroup, then $\partial(G,\mc{P})$ has a local cut point.  The Bowditch boundary of $(G,\mc{P}^{\infty})$ is equal to $\partial (G,\mc{P})$ with its isolated points removed.

Also recall the inductive definition of \emph{polycyclic} groups:  Cyclic groups are polycyclic.  Moreover a group $E$ is polycyclic whenever it fits into a short exact sequence
\[ 1\to P\to E\to C \to 1\]
with $C$ cyclic and $P$ polycyclic.  In particular, finitely generated nilpotent groups are polycyclic.
To apply our results in this setting we need a couple of observations about this class of groups.
\begin{lemma}\label{lem:polycyclic facts}
  Let $P$ be virtually polycyclic.  Then $P$ is tame, slender, and one or two--ended.  Moreover every quotient of $P$ is virtually polycyclic.
\end{lemma}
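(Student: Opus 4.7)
The plan is to reduce from virtually polycyclic to polycyclic and then induct on Hirsch length. First observe that if $P_0 \leq P$ is a polycyclic subgroup of finite index, then each of the four properties passes from $P_0$ to $P$: a subgroup $H \leq P$ is finitely generated iff $H \cap P_0$ is (finite index); an infinite torsion subgroup of $P$ would contain an infinite torsion subgroup of finite index in $P_0$; finite presentation is preserved by finite extensions; the number of ends is a quasi-isometry invariant; and for a quotient $P \twoheadrightarrow Q$, the image of $P_0$ has finite index in $Q$ and is a quotient of $P_0$. So I may assume $P$ is polycyclic with defining short exact sequence
\[ 1 \to N \to P \to C \to 1 \]
with $N$ polycyclic of strictly smaller Hirsch length and $C$ cyclic.

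Slenderness follows directly: given $H \leq P$, the subgroup $H \cap N$ is finitely generated by induction, and $H/(H\cap N)$ embeds in $C$, so $H$ is cyclic-by-finitely-generated and hence finitely generated. Tameness splits into two statements. Finite presentation is inherited under cyclic extensions of finitely presented groups (take a finite presentation of $N$, adjoin one generator for a preimage of a generator of $C$, and finitely many relations recording the conjugation action on a generating set of $N$ and the order relation of $C$ when $C$ is finite); by induction $N$ is finitely presented, so $P$ is. For the absence of infinite torsion subgroups, if $T \leq P$ is torsion then $T \cap N$ is torsion in $N$, hence finite by induction, and $T/(T \cap N)$ embeds as a torsion subgroup of the cyclic group $C$ and so is finite; therefore $T$ is finite. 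The statement about quotients is equally straightforward: if $\pi \co P \twoheadrightarrow Q$, then $\pi(N)$ is a quotient of $N$, polycyclic by induction, and $Q/\pi(N)$ is a quotient of $C$, hence cyclic, so $Q$ is polycyclic.

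For the ends statement the cleanest route is to observe that polycyclic groups are solvable, hence amenable, hence contain no nonabelian free subgroup; this property passes to virtually polycyclic $P$. By Stallings' ends theorem, any finitely generated group with infinitely many ends contains a nonabelian free subgroup, so $P$ has at most two ends. (The statement ``one or two-ended'' in the lemma should be read with the convention that finite $P$ has zero ends; what matters for the applications is that $P$ is never infinitely-ended, so in particular every multi-ended virtually polycyclic group is two-ended, as used in the deduction of Theorem~\ref{cor to main thm} from Theorem~\ref{t:easy maintheorem}.)

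There is no real obstacle here; the only slightly non-formal step is invoking Stallings for the ends dichotomy. Everything else is a one-line induction on Hirsch length using the defining cyclic extensions, and the proof should be presented as such.
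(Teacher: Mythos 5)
Your proof is essentially correct, but there is nothing in the paper to compare it against: Lemma \ref{lem:polycyclic facts} is stated without proof, as a package of standard facts about virtually polycyclic groups, so any correct argument is acceptable. Your reduction to the polycyclic case and the inductive treatment of slenderness, finite presentability, absence of infinite torsion subgroups, quotients, and the ends dichotomy (via amenability and Stallings) is the standard route and all the individual steps are sound. One small repair: the induction parameter should not be Hirsch length, since in the defining extension $1 \to N \to P \to C \to 1$ the quotient $C$ may be \emph{finite} cyclic, in which case $N$ has the \emph{same} Hirsch length as $P$ and your inductive hypothesis is not available; induct instead on the minimal length of a subnormal series with cyclic factors (which does drop by one at each step), and every step of your argument goes through verbatim. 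Two cosmetic points: in the slenderness step, $H$ is (finitely generated)-by-cyclic rather than ``cyclic-by-finitely-generated''; and your remark about finite $P$ having zero ends is the right reading --- the content actually used in the paper (e.g.\ in deducing Theorem \ref{cor to main thm} from Theorem \ref{t:easy maintheorem} and in applying Theorem \ref{bowditchthm:cutpoint}) is only that an infinite virtually polycyclic group is never infinitely-ended, so multi-ended peripheral subgroups are two-ended.
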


Recall the statement of Theorem \ref{cor:nocutpoints}.
\nocutpoints*
\begin{proof}
  By Lemma \ref{lem:polycyclic facts},  the peripheral subgroups $\mc{P}$ satisfy the hypotheses of Bowditch's Theorem \ref{bowditchthm:cutpoint}.  There is no cut point in $\partial(G,\mcP)$, so $(G,\mc{P})$ has no proper peripheral splitting.

  Let $(\bar{G},\bar{\mc{P}})$ be sufficiently long that we can apply Theorem \ref{t:maintheorem}, so that $\bar{G}$ is one-ended, and 
  $(\bar{G},\bar{\mc{P}})$ has no proper peripheral splittings. 

  It may be the case that $\partial(\bar{G},\bar{\mc{P}})$ is disconnected, but this can only be because some elements of $\bar{\mc{P}}$ are finite.  All the elements of $\bar{\mc{P}}^\infty$ are infinite, so Theorem \ref{bowditchthm:connected} implies that $\partial (\bar{G},\bar{\mc{P}}^\infty)$ is connected.
Since there is no nontrivial peripheral splitting of $(\bar{G},\bar{\mc{P}})$, it is easy to see there is also no nontrivial peripheral splitting of $(\bar{G},\bar{\mc{P}}^{\infty})$.  Theorem \ref{bowditchthm:cutpoint} then implies that $\partial (\bar{G},\bar{\mc{P}}^{\infty})$ has no cut point.
\end{proof}

We now prove Theorem \ref{t:no local cutpoints}.
\nolocalcutpoints*
\begin{proof}
The hypothesis implies $G$ is not Fuchsian, since then $\partial(G,\mcP^{\mathrm{red}})$ would either be a Cantor set or $S^1$.  Note that since $\mc{P}$ consists of virtually polycyclic groups, $\mc{P}^{\mathrm{red}}$ consists of one-ended groups.  Corollary \ref{cor:elementarysplitting} implies that $(G,\Pred)$ has no elementary splittings.  It follows that $(G,\mcP)$ has no elementary splittings, since the elementary subgroups of $(G,\mcP)$ and $(G,\Pred)$ coincide.  Now let $(\barG,\barP)$ be a filling which is sufficiently long that Theorems \ref{t:easy maintheorem} and \ref{t:Fuchsian fillings} apply.  Since $(\barG,\barP)$ has no elementary splittings, neither does $(\barG,\barP^{\mathrm{red}})$.  Moreover Theorem \ref{t:Fuchsian fillings} implies $\barG$ is not Fuchsian.  Corollary \ref{cor:elementarysplitting} then implies that $\partial(\barG,\barP^{\mathrm{red}})$ is connected with no local cut points.
\end{proof}

\small

\end{document}